\DeclareFontFamily{U}{rsfs}{} \DeclareFontShape{U}{rsfs}{n}{it}{<->
rsfs10}{} \DeclareSymbolFont{mscr}{U}{rsfs}{n}{it}
\DeclareSymbolFontAlphabet{\scr}{mscr}
\def\mathscr{\scr}
\begin{document}
\def\e#1\e{\begin{equation}#1\end{equation}}
\def\ea#1\ea{\begin{align}#1\end{align}}
\def\eq#1{{\rm(\ref{#1})}}
\theoremstyle{plain}
\newtheorem{thm}{Theorem}[section]
\newtheorem{prop}[thm]{Proposition}
\newtheorem{lem}[thm]{Lemma}
\newtheorem{cor}[thm]{Corollary}
\newtheorem{quest}[thm]{Question}
\newtheorem{conj}[thm]{Conjecture}
\newtheorem{princ}[thm]{Principle}
\newtheorem{prob}[thm]{Problem}
\theoremstyle{definition}
\newtheorem{dfn}[thm]{Definition}
\newtheorem{ex}[thm]{Example}
\newtheorem{rem}[thm]{Remark}
\numberwithin{equation}{section}
\numberwithin{figure}{section}
\def\dim{\mathop{\rm dim}\nolimits}
\def\det{\mathop{\rm det}\nolimits}
\def\rank{\mathop{\rm rank}}
\def\Trace{\mathop{\rm Trace}}
\def\Crit{\mathop{\rm Crit}}
\def\id{\mathop{\rm id}\nolimits}
\def\GL{\mathop{\rm GL}}
\def\SO{\mathop{\rm SO}\nolimits}
\def\Ker{\mathop{\rm Ker}}
\def\Re{\mathop{\rm Re}}
\def\Im{\mathop{\rm Im}}
\def\Aut{\mathop{\rm Aut}}
\def\End{\mathop{\rm End}}
\def\SU{\mathop{\rm SU}}
\def\U{{\rm U}}
\def\Hess{\mathop{\rm Hess}}
\def\area{\mathop{\rm area}}
\def\Hom{\mathop{\rm Hom}\nolimits}
\def\vol{\mathop{\rm vol}\nolimits}
\def\an{{\rm an}}
\def\bs{\boldsymbol}
\def\ge{\geqslant}
\def\le{\leqslant\nobreak}
\def\O{{\mathbin{\cal O}}}
\def\cA{{\mathbin{\cal A}}}
\def\cB{{\mathbin{\cal B}}}
\def\cC{{\mathbin{\cal C}}}
\def\cD{{\mathbin{\scr D}}}
\def\cE{{\mathbin{\cal E}}}
\def\cF{{\mathbin{\cal F}}}
\def\cG{{\mathbin{\cal G}}}
\def\cH{{\mathbin{\cal H}}}
\def\cL{{\mathbin{\cal L}}}
\def\cM{{\mathbin{\cal M}}}
\def\oM{{\mathbin{\smash{\,\,\overline{\!\!\mathcal M\!}\,}}}}
\def\bM{{\mathbin{\kern .25em\ov{\kern -.25em M\kern -.1em}\kern .1em}}}
\def\cO{{\mathbin{\cal O}}}
\def\cP{{\mathbin{\cal P}}}
\def\cS{{\mathbin{\cal S}}}
\def\cT{{\mathbin{\cal T}}}
\def\cQ{{\mathbin{\cal Q}}}
\def\cW{{\mathbin{\cal W}}}
\def\C{{\mathbin{\mathbb C}}}
\def\CP{{\mathbin{\mathbb{CP}}}}
\def\K{{\mathbin{\mathbb K}}}
\def\Q{{\mathbin{\mathbb Q}}}
\def\R{{\mathbin{\mathbb R}}}
\def\N{{\mathbin{\mathbb N}}}
\def\Z{{\mathbin{\mathbb Z}}}
\def\sF{{\mathbin{\mathscr F}}}
\def\al{\alpha}
\def\be{\beta}
\def\ga{\gamma}
\def\de{\delta}
\def\io{\iota}
\def\ep{\epsilon}
\def\la{\lambda}
\def\ka{\kappa}
\def\th{\theta}
\def\ze{\zeta}
\def\up{\upsilon}
\def\vp{\varphi}
\def\si{\sigma}
\def\om{\omega}
\def\De{\Delta}
\def\La{\Lambda}
\def\Si{\Sigma}
\def\Th{\Theta}
\def\Om{\Omega}
\def\Ga{\Gamma}
\def\Up{\Upsilon}
\def\sSi{{\smash{\sst\Si}}}
\def\pd{\partial}
\def\db{{\bar\partial}}
\def\ts{\textstyle}
\def\st{\scriptstyle}
\def\sst{\scriptscriptstyle}
\def\w{\wedge}
\def\sm{\setminus}
\def\bu{\bullet}
\def\op{\oplus}
\def\ot{\otimes}
\def\ov{\overline}
\def\ul{\underline}
\def\bigop{\bigoplus}
\def\bigot{\bigotimes}
\def\iy{\infty}
\def\es{\emptyset}
\def\ra{\rightarrow}
\def\Ra{\Rightarrow}
\def\ab{\allowbreak}
\def\longra{\longrightarrow}
\def\hookra{\hookrightarrow}
\def\dashra{\dashrightarrow}
\def\t{\times}
\def\ci{\circ}
\def\ti{\tilde}
\def\d{{\rm d}}
\def\ha{{\ts\frac{1}{2}}}
\def\md#1{\vert #1 \vert}
\def\bmd#1{\big\vert #1 \big\vert}
\def\bms#1{\big\vert #1 \big\vert^2}
\def\ms#1{\vert #1 \vert^2}
\def\nm#1{\Vert #1 \Vert}
\def\bnm#1{\big\Vert #1 \big\Vert}
\title{Uniqueness results for special Lagrangians and Lagrangian
mean curvature flow expanders in $\C^m$}
\author{Yohsuke Imagi, Dominic Joyce, \\
and Joana Oliveira dos Santos}
\date{}
\maketitle

\begin{abstract} We prove two main results:
\smallskip

\noindent{\bf(a)} Suppose $L$ is a closed, embedded, exact special
Lagrangian $m$-fold in $\C^m$ asymptotic at infinity to the union
$\Pi_1\cup\Pi_2$ of two transverse special Lagrangian planes
$\Pi_1,\Pi_2$ in $\C^m$ for $m\ge 3$. Then $L$ is one of the explicit
`Lawlor neck' family of examples found by Lawlor~\cite{Lawl}.
\smallskip

\noindent{\bf(b)} Suppose $L$ is a closed, embedded, exact
Lagrangian mean curvature flow expander in $\C^m$ asymptotic at
infinity to the union $\Pi_1\cup\Pi_2$ of two transverse
Lagrangian planes $\Pi_1,\Pi_2$ in $\C^m$ for $m\ge 3$. Then $L$ is one
of the explicit family of examples in Joyce, Lee and
Tsui~\cite[Th.s C \& D]{JLT}.
\smallskip

If instead $L$ is immersed rather than embedded, the only extra possibility in {\bf(a)},{\bf(b)} is $L=\Pi_1\cup\Pi_2$.

Our methods, which are new and can probably be used to prove other
similar uniqueness theorems, involve $J$-holomorphic curves,
Lagrangian Floer cohomology, and Fukaya categories from symplectic
topology.
\end{abstract}

\baselineskip 11.5pt plus .5pt

\setcounter{tocdepth}{2}
\tableofcontents

\section{Introduction}
\label{la1}

The goal of this paper is to prove the following two theorems, which are
Theorems \ref{la4thm1} and \ref{la4thm2} below. All notation is defined in~\S\ref{la2}--\S\ref{la3}.
 
\begin{thm} Suppose $L$ is a closed, embedded, exact,
Asymptotically Conical special Lagrangian in\/ $\C^m$ for $m\ge 3,$
asymptotic at rate $\rho<0$ to a union $\Pi_1\cup\Pi_2$ of two
transversely intersecting special Lagrangian planes $\Pi_1,\Pi_2$ in
$\C^m$. Then $L$ is equivalent under an $\SU(m)$ rotation to one of
the `Lawlor necks' $L_{\bs\phi,A}$ found by Lawlor {\rm\cite{Lawl},}
and described in Example\/ {\rm\ref{la3ex1}} below.

If instead\/ $L$ is asymptotic to $\Pi_1\cup\Pi_2$ at rate $\rho<2,$
then $L$ is equivalent to some $L_{\bs\phi,A}$ under an $\SU(m)$
rotation and a translation in $\C^m$.

If instead\/ $L$ is immersed rather than embedded, the only extra possibilities are $L=\Pi_1\cup\Pi_2$ for $\rho<0,$ and\/ $L=\Pi_1\cup\Pi_2+\bs c$ for~$\rho<2$.
\label{la1thm1}
\end{thm}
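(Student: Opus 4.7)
The plan is to show that $L$ coincides with an explicit Lawlor neck having the same asymptotic cone, using symplectic topology rather than PDE-based methods. After applying an $\SU(m)$-rotation we may put $(\Pi_1,\Pi_2)$ in standard position, and we then aim to find a Lawlor neck $L'=L_{\bs\phi,A}$ asymptotic to $\Pi_1\cup\Pi_2$ and prove $L=L'$ (allowing a translation when $\rho<2$).

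First I would verify that the Lawlor family surjects onto admissible asymptotic data: the parameters $\bs\phi=(\phi_1,\dots,\phi_m)$ with $\sum_i\phi_i=0$ are determined by the relative angles of $\Pi_1$ and $\Pi_2$, while $A>0$ gives a one-parameter family of scales. This produces, for any $\Pi_1,\Pi_2$ as in the hypothesis, a candidate $L'$ in the same asymptotic class as $L$, whose translation freedom in $\C^m$ matches the extra parameter appearing when~$\rho<2$.

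Next I would set up Lagrangian Floer cohomology $HF^*(L,L')$ in a Fukaya category of exact, asymptotically conical Lagrangians in $\C^m$ with fixed cone $\Pi_1\cup\Pi_2$. Since the asymptotic cones agree, after a compactly supported Hamiltonian perturbation $L\cap L'$ is a finite set, so the Floer complex is finitely generated. Exactness excludes disc bubbling; the special Lagrangian condition equips $L$ and $L'$ with canonical $\Z$-gradings; and the calibration inequality bounds the symplectic area of every $J$-holomorphic strip between them, which is what controls the moduli-space compactness near the ends. I would then argue that $L$ and $L'$ define isomorphic objects in this Fukaya category, and invoke a Thomas--Yau type uniqueness principle to upgrade the isomorphism to a geometric equality: within each Fukaya-isomorphism class there is at most one exact, graded special Lagrangian representative. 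Combined with Step~1 this forces $L=L'$ in the case $\rho<0$, and $L=L'+\bs c$ for some $\bs c\in\C^m$ in the case $\rho<2$. For the immersed case, the singular object $\Pi_1\cup\Pi_2$ appears as a degenerate representative of the same class, yielding the extra possibilities listed.

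The principal obstacle will be Step~3: rigorously constructing the Fukaya-theoretic framework for noncompact, asymptotically conical Lagrangians with singular asymptotic cones, establishing the required transversality, gluing, and compactness for $J$-holomorphic curves with boundary near the conical ends, and above all converting an algebraic isomorphism in the Fukaya category into an honest pointwise equality of subsets of $\C^m$. This categorical-to-geometric step is the deep content of the theorem. The hypothesis $m\ge 3$ enters here: it ensures vanishing Maslov class, favourable topology of the link of the cone, and absence of low-dimensional sphere and disc bubbles, all of which are essential for the Floer and Fukaya machinery to produce the sharp uniqueness statement.
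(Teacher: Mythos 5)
Your proposal captures the broad philosophy of the paper — Floer cohomology, Fukaya categories, a Thomas--Yau-flavoured uniqueness principle — but there are two genuine gaps, one structural and one conceptual, that would defeat the argument as written.

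\textbf{Gap 1: the compactification.} You propose to build a Fukaya category whose objects are noncompact AC Lagrangians in $\C^m$ with fixed singular cone $\Pi_1\cup\Pi_2$, and you candidly flag the analytic work needed (transversality, gluing, compactness near the conical ends) as the deep content. But this framework does not exist, and the paper does not build it. Instead the paper constructs a partial compactification $M=T^*\cS^m\# T^*\cS^m$ of $\C^m$ (a symplectic Calabi--Yau Liouville manifold, the $A_2$-Milnor fibre), chosen so that every exact AC Lagrangian $L$ with cone $\Pi_0\cup\Pi_{\bs\phi}$ and rate $\rho<0$ extends to a smooth \emph{compact} exact graded Lagrangian $\bar L=L\cup\{\iy_0,\iy_{\bs\phi}\}$ in $M$. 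One then works entirely in the standard compact-exact-graded setting of Seidel's $D^b\sF(M)$, and the needed classification of objects (a distinguished triangle between $\cS_0[n]$ and $\cS_{\bs\phi}[n']$) is exactly Abouzaid--Smith's theorem on plumbings — which is also where $m\ge 3$ really enters, not via Maslov-class vanishing, which already follows from exactness and the grading. So rather than extending Floer theory to the noncompact setting, the paper arranges to never leave the compact one.

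\textbf{Gap 2: distinguishing Lawlor necks with the same cone.} Your key step is the claim that ``within each Fukaya-isomorphism class there is at most one exact, graded special Lagrangian representative''. This is false in the situation at hand: for a fixed cone $\Pi_0\cup\Pi_{\bs\phi}$ (with $\phi_1+\cdots+\phi_m=\pi$, not $=0$ as you wrote — the Lawlor parameters satisfy $\sum\phi_k=\pi$), the whole one-parameter family $\{L_{\bs\phi,A}:A>0\}$ consists of exact AC SL $m$-folds whose compactifications are pairwise isomorphic in $D^b\sF(M)$ (all sit in the same Abouzaid--Smith triangle). The Fukaya-isomorphism class cannot see $A$. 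To select the right neck, the paper introduces the invariant $A(L)\in\R$, the difference of the limits of the potential $f_L$ along the two ends of $L$, and proves (i) that $A(L)>0$ for every exact AC SL with this cone (a nontrivial step using a holomorphic triangle against $\cS_0,\cS_{\bs\phi}$), and (ii) that any two Fukaya-isomorphic $L,L'$ with $A(L)=A(L')$ must agree. Your proposal has no analogue of $A(L)$, and without it the uniqueness claim collapses.

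\textbf{The categorical-to-geometric step.} You call this ``the deep content'' but leave it open. In the paper it is quite explicit: if $L\ne L_{\bs\phi,A}$ with $A=A(L)$, then after a carefully constructed Hamiltonian perturbation $\bar L''$ of $\bar L_{\bs\phi,A}$ (Proposition 4.7, which uses real analyticity of SLs, Theorem A.1, and Morse-theoretic input) every intersection point in $\C^m$ has degree strictly between $0$ and $m$, while the intersections at $\iy_0,\iy_{\bs\phi}$ carry matched potential values. Theorem 2.6(b) (from $\bar L\cong\bar L''$ in $D^b\sF(M)$) then produces a $J$-holomorphic disc with corners of degree $0$ and $m$; those corners must therefore be $\{\iy_0,\iy_{\bs\phi}\}$, and the matched potential values force $\mathop{\rm area}(\Si)=0$ — a contradiction. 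This is the concrete mechanism you would need to supply. The immersed case and the $\rho<2$ case are then separate reductions (monotonicity for the immersed case; an asymptotic-rate improvement theorem to kill the degree-$1$ translational modes for $\rho<2$), which your proposal gestures at correctly but does not justify.
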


\begin{thm} Suppose $L$ is a closed, embedded, exact,
Asymptotically Conical Lagrangian MCF expander in $\C^m$ for $m\ge
3,$ satisfying the expander equation $H=\al F^\perp$ for $\al>0,$
and asymptotic at rate $\rho<2$ to a union $\Pi_1\cup\Pi_2$ of two
transversely intersecting Lagrangian planes $\Pi_1,\Pi_2$ in $\C^m$.
Then $L$ is equivalent under a $\U(m)$ rotation to one of the LMCF
expanders $L_{\bs\phi}^\al$ found by Joyce, Lee and Tsui\/
{\rm\cite[Th.s C \& D]{JLT},} and described in Example\/
{\rm\ref{la3ex2}} below. If instead\/ $L$ is immersed rather than embedded, the only extra possibility is~$L=\Pi_1\cup\Pi_2$.
\label{la1thm2}
\end{thm}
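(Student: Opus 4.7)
My plan is to compare any candidate $L$ with the explicit Joyce--Lee--Tsui expander $L'=L_{\bs\phi}^\al$ of \cite{JLT} asymptotic to the same cone $\Pi_1\cup\Pi_2$ --- the angle parameters $\bs\phi$ are forced by $\Pi_1,\Pi_2$ and the fixed $\al>0$. Both $L$ and $L'$ are exact AC Lagrangians asymptotic at rate $\rho<2$ to the same transverse pair of planes, so at infinity they differ by a Hamiltonian perturbation that decays, and in particular $L\cap L'$ lies in a compact subset of $\C^m$. The aim is then to use $J$-holomorphic curves to show $L=L'$ as subsets, up to a $\U(m)$ rotation.

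The first main step is to set up Lagrangian Floer cohomology $HF^*(L,L')$ in a Fukaya-type category of exact AC Lagrangians in $\C^m$, $\Z$-graded by the Lagrangian phase. The appropriate foundations, adapted from the wrapped or partially wrapped Fukaya category, should make $HF^*(L,L')$ invariant under Hamiltonian isotopies preserving the asymptotic cone. Since $L$ and $L'$ have matched asymptotic cones, one should be able to construct a continuous family of exact AC Lagrangians connecting them, using the asymptotic decay, and conclude $HF^*(L,L')\cong HF^*(L',L')\cong H^*(L';\Z)$, which is nonzero because $L'$ is diffeomorphic to $S^{m-1}\times\R$. Hence $L$ and $L'$ must intersect after any small perturbation, and a Maslov-index count pins down the intersection pattern.

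The second main step combines this intersection forcing with the elliptic rigidity of the expander equation $H=\al F^\perp$, which is the Euler--Lagrange equation for a Gaussian-weighted area functional $\int_L e^{-\al|F|^2/2}$. By a strong maximum principle of Huisken type, two distinct expanders asymptotic to the same cone which touch at any point must already coincide on an open set, hence everywhere; so the existence of genuine intersections forced by the Floer computation is incompatible with $L\neq L'$. This yields $L=L'$ as subsets of $\C^m$, and a direct argument with the $\U(m)$-symmetry upgrades this to the stated equivalence. For the immersed addendum, one tracks the degenerate case where the Floer computation collapses onto the singular asymptotic cone itself, giving $L=\Pi_1\cup\Pi_2$. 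I expect the principal obstacle to be the rigorous foundational setup of Lagrangian Floer cohomology for non-compact AC Lagrangians asymptotic to a singular union of transverse planes: controlling bubbling and escape-to-infinity of $J$-holomorphic strips with such boundary conditions, and sharpening the abstract Floer nontriviality into a geometric intersection statement strong enough to engage the expander PDE and force $L=L'$.
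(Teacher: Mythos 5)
Your overall strategy is in the right family --- compare $L$ with the explicit Joyce--Lee--Tsui expander $L'=L_{\bs\phi}^\al$, use Floer-theoretic nonvanishing to force intersections, and then contradict the expander PDE --- but two of your key steps would not go through, and the paper's actual argument is considerably more delicate at exactly those points.

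First, the claim $HF^*(L,L')\cong HF^*(L',L')$ via ``a continuous family of exact AC Lagrangians connecting them'' is not available. There is no reason for $L$ and $L'$ to be Hamiltonian isotopic, or even isotopic through exact AC Lagrangians with the same cone, \emph{a priori}; that is essentially what one is trying to prove. The paper instead compactifies $\C^m$ to the Liouville manifold $M=T^*\cS^m\#T^*\cS^m$ (\S\ref{la24}), so $L,L'$ become compact exact graded Lagrangians $\bar L,\bar L'$, and then invokes the Abouzaid--Smith classification of objects of $D^b\sF(M)$ (Theorem \ref{la2thm3}, Corollary \ref{la2cor}): \emph{any} such Lagrangian lands in one of two isomorphism classes in $D^b\sF(M)$, and Proposition \ref{la4prop8} pins down which one from the phase data. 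This is deep external input (and the source of the restriction $m\ge 3$); it cannot be replaced by a hoped-for isotopy.

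Second, and more seriously, the ``strong maximum principle of Huisken type'' step is wrong. Lagrangians in $\C^m$ have codimension $m$, not one, and two distinct graded LMCF expanders with the same cone and the same $\al$ can perfectly well intersect transversely; nothing in the Floer computation produces a tangency or a one-sided touching configuration to which a maximum-principle argument could apply. Indeed in the paper's compactified picture $\bar L$ and $\bar L'$ \emph{always} intersect at $\iy_0,\iy_{\bs\phi}$, so mere nonemptiness of the intersection carries no information. What actually produces the contradiction is the $\Z$-grading: Theorem \ref{la2thm1}(b) gives a $J$-holomorphic strip with corners $p,q$ of degree $\mu_{\bar L,\bar L''}(p)=0$ and $\mu_{\bar L,\bar L''}(q)=m$, whereas Proposition \ref{la4prop2}(b) shows that for a pair of transverse graded expanders with the same $\al$, any such strip has $\mu(q)-\mu(p)<m$, because its positive area equals $\frac{2\pi}{\al}\bigl[(\mu(q)-\mu(p))-m\bigr]+(\text{correction})$ in the wrong sign. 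The hard technical content of the paper (Propositions \ref{la4prop6}, \ref{la4prop8}, and the real-analytic and Morse-theoretic lemmas behind them) is to show that this degree inequality survives a Hamiltonian perturbation of $\bar L'$ to $\bar L''$ transverse to $\bar L$, including at the two points at infinity, while controlling the values $f_{\bar L''}(\iy_0),f_{\bar L''}(\iy_{\bs\phi})$ through the invariant $A(L)$. Your proposal contains no substitute for this step, and the maximum-principle idea cannot fill the gap.
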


\begin{rem} We restrict to complex dimensions $m\ge 3$, as our
proofs use results on the derived Fukaya category $D^b\sF(T^*\cS^m\#
T^*\cS^m)$ by Abouzaid and Smith \cite{AbSm}, which are only proved
when $m\ge 3$. Note however that the analogues of Theorems
\ref{la1thm1} and \ref{la1thm2} can be proved by other methods
when~$m=2$.

For Theorem \ref{la1thm1}, special Lagrangian 2-folds in $\C^2$ are
complex curves w.r.t.\ an alternative complex structure $K$ on
$\C^2$, so closed Asymptotically Conical special Lagrangian 2-folds
$L$ asymptotic to $\Pi_1\cup\Pi_2$ are simply conics $L$ in $\C^2$
whose closure $\bar L$ in $\CP^2\supset\C^2$ intersects the line at
infinity $\CP^1=\CP^2\sm\C^2$ in two given points~$\iy_1,\iy_2$.

Lotay and Neves \cite{LoNe} prove the analogue of Theorem
\ref{la1thm2} when $m=2$ by analytic means, without using Fukaya
categories, as we do. They also prove a `local' version of Theorem
\ref{la1thm2} for $m\ge 3$, that for fixed $\Pi_1,\Pi_2,\al$ any $L$
as in Theorem \ref{la1thm2} is rigid, that is, it has no small
deformations. Theorem \ref{la1thm2} answers the first part of Neves~\cite[Question~7.2]{Neve}.
\label{la1rem}
\end{rem}

The methods which we use to prove Theorems \ref{la1thm1} and
\ref{la1thm2} are new, are at least as interesting as the theorems
themselves, and can very probably also be used to prove uniqueness
of other classes of special Lagrangians and solitons for Lagrangian
MCF. They use deep results from symplectic geometry, to do with $J$-{\it holomorphic curves}, {\it Lagrangian Floer cohomology\/} and {\it Fukaya categories}. 

The proof of Theorem \ref{la1thm1} was motivated by a result of Thomas and Yau \cite[Th.~4.3]{ThYa}, who use Lagrangian Floer cohomology to prove that (under some assumptions) a compact special Lagrangian $L$ in a compact Calabi--Yau $m$-fold $M$ is unique in its Hamiltonian isotopy class. 

Also, the idea that Lagrangian Floer cohomology and Fukaya categories should be important in the study of special Lagrangians and Lagrangian MCF, and therefore that methods from symplectic topology should be used to prove new results on special Lagrangians and Lagrangian MCF such as Theorems \ref{la1thm1} and \ref{la1thm2}, was motivated by conjectures of the second author, explained in~\cite{Joyc8}.

We outline the method used to prove Theorems \ref{la1thm1} and
\ref{la1thm2}. First, here are three results proved in Theorem
\ref{la2thm1} and Propositions \ref{la4prop1} and \ref{la4prop2}
below:
\begin{itemize}
\setlength{\parsep}{0pt}
\setlength{\itemsep}{0pt}
\item[(i)] Suppose $(M,\om)$ is a symplectic Calabi--Yau Liouville
manifold of real dimension $2m$, and $L,L'$ are transversely intersecting, exact, graded Lagrangian submanifolds in $M$. Then each intersection point $p\in L\cap L'$ has a {\it
degree\/}~$\mu_{L,L'}(p)\in\Z$.

If $L,L'$ are compact they are objects in the derived Fukaya category
$D^b\sF(M)$. Suppose they are isomorphic in $D^b\sF(M)$. Then
for any generic almost complex structure $J$ on $M$ compatible with
$\om$, there exist points $p,q\in L\cap L'$ with
$\mu_{L,L'}(p)=0$ and $\mu_{L,L'}(q)=m$, and a $J$-holomorphic
disc $\Si$ in $M$ with boundary in $L\cup L'$ and corners at
$p,q$, of the form shown in Figure \ref{la1fig1}.
\begin{figure}[htb]
\centerline{$\splinetolerance{.8pt}
\begin{xy}
0;<1mm,0mm>:
,(-20,0);(20,0)**\crv{(0,10)}
?(.95)="a"
?(.85)="b"
?(.75)="c"
?(.65)="d"
?(.55)="e"
?(.45)="f"
?(.35)="g"
?(.25)="h"
?(.15)="i"
?(.05)="j"
?(.5)="y"
,(-20,0);(-30,-6)**\crv{(-30,-5)}
,(20,0);(30,-6)**\crv{(30,-5)}
,(-20,0);(20,0)**\crv{(0,-10)}
?(.95)="k"
?(.85)="l"
?(.75)="m"
?(.65)="n"
?(.55)="o"
?(.45)="p"
?(.35)="q"
?(.25)="r"
?(.15)="s"
?(.05)="t"
?(.5)="z"
,(-20,0);(-30,6)**\crv{(-30,5)}
,(20,0);(30,6)**\crv{(30,5)}
,"a";"k"**@{.}
,"b";"l"**@{.}
,"c";"m"**@{.}
,"d";"n"**@{.}
,"e";"o"**@{.}
,"f";"p"**@{.}
,"g";"q"**@{.}
,"h";"r"**@{.}
,"i";"s"**@{.}
,"j";"t"**@{.}
,"y"*{<}
,"z"*{>}
,(-20,0)*{\bu}
,(-20,-3)*{p}
,(20,0)*{\bu}
,(20,-3)*{q}
,(0,0)*{\Si}
,(-32,4)*{L}
,(-32,-4)*{L'}
,(32,4)*{L}
,(32.5,-4)*{L'}
\end{xy}$}
\caption{Holomorphic disc $\Si$ with boundary in $L\cup L'$}
\label{la1fig1}
\end{figure}
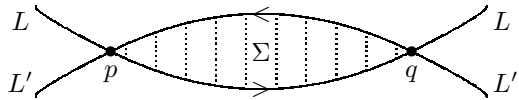
\item[(ii)] Let $M$ be a Calabi--Yau $m$-fold, and $L,L'$ be
transversely intersecting special Lagrangian $m$-folds in $M$.
Then $0<\mu_{L,L'}(p)<m$ for all~$p\in L\cap L'$.
\item[(iii)] Let $L,L'$ be transversely intersecting, graded LMCF
expanders in $\C^m$. Suppose $J$ is any almost complex structure
on $\C^m$ compatible with $\om$, and $\Si$ is a $J$-holomorphic
disc in $\C^m$ with boundary in $L\cup L'$ and corners at
$p,q\in L\cap L',$ as in Figure \ref{la1fig1}.
Then~$\mu_{L,L'}(q)-\mu_{L,L'}(p)<m$.
\end{itemize}

Suppose we have a class $\scr C$ of noncompact special Lagrangian
$m$-folds or LMCF expanders $L$ in $\C^m$ (for Theorems
\ref{la1thm1} and \ref{la1thm2}, $\scr C$ consists of $L$ which are
closed, embedded, exact, and asymptotic at infinity to the union
$\Pi_0\cup\Pi_{\bs\phi}$ of two fixed transverse Lagrangian planes
$\Pi_0,\Pi_{\bs\phi}$ in $\C^m$), and a family ${\scr
D}\subseteq{\scr C}$ of examples that we understand (for Theorems
\ref{la1thm1} and \ref{la1thm2}, $\scr D$ is the family of `Lawlor
necks' $L_{\bs\phi,A}$ \cite{Lawl}, or Joyce--Lee--Tsui expanders
$L_{\bs\phi}^\al$ \cite[Th.s C \& D]{JLT}).

Our goal is to prove that these are the only examples, that is,
${\scr C}={\scr D}$. We perform the following steps:
\begin{itemize}
\setlength{\parsep}{0pt}
\setlength{\itemsep}{0pt}
\item[(A)] Define some cohomological or analytic invariants
$I(L)$ of Lagrangians $L$ in $\scr C$ which distinguish
Lagrangians in $\scr D$. (For Theorems \ref{la1thm1} and
\ref{la1thm2} the graded Lagrangian $L$ has a potential
$f_L:L\ra\R$, and $I(L)\in\R$ is the difference of the limits of
$f_L$ at infinity in the two planes~$\Pi_0,\Pi_{\bs\phi}$).
\item[(B)] Construct a partial compactification $(M,\om)$ of
$\C^m$ which is a symplectic Calabi--Yau Liouville manifold,
such that each $L\in\scr C$ has a compactification $\bar L$ in
$M$ which is a compact, exact, graded Lagrangian. (For Theorems
\ref{la1thm1} and \ref{la1thm2}, $(M,\om)$ is the plumbing
$T^*\cS^m\# T^*\cS^m$.)
\item[(C)] Suppose for a contradiction that $L,L'\in\scr C$
with $I(L)=I(L')$, but $L\ne L'$. We have compactifications
$\bar L,\bar L'$ in $M$. We show that $\bar L\cong\bar L'$ in
$D^b\sF(M)$. (For Theorems \ref{la1thm1} and \ref{la1thm2}, we
use results on $D^b\sF(T^*\cS^m\# T^*\cS^m)$ due to Abouzaid and
Smith~\cite{AbSm}.)
\item[(D)] Let $\bar L''$ be any small Hamiltonian perturbation of $\bar L'$ which is $C^1$-close to $\bar L'$, intersects $\bar L$ transversely, and satisfies some mild asymptotic conditions near $\bar L'\sm L'$. We show that the intersection properties of transverse special Lagrangians in (ii) above, or of transverse LMCF expanders in (iii) above, also hold for the intersection of $\bar L$ and~$\bar L''$.
\item[(E)] We have $\bar L'\cong\bar L''$ in $D^b\sF(M)$ as
$\bar L',\bar L''$ are Hamiltonian isotopic, so $\bar L\cong\bar
L''$ in $D^b\sF(M)$. Therefore (i) gives a curve $\Si$
in $M$ with boundary in $\bar L\cup\bar L''$ and corners at
$p,q$ with $\mu_{\bar L,\bar L''}(p)=0$ and $\mu_{\bar L,\bar
L''}(q)=m$. But $\bar L,\bar L''$ satisfy (ii) or (iii) above by
(D), so this is a contradiction.
\item[(F)] Prove that $I(\scr C)=I(\scr D)$. (For Theorems
\ref{la1thm1} and \ref{la1thm2} we show that $I(L)>0$ for all
$L\in\scr C$.) Then (C)--(E) imply that ${\scr C}={\scr D}$.
\end{itemize}

We begin in \S\ref{la2} with the material on symplectic geometry we
need. Parts of \S\ref{la23}--\S\ref{la26} are new. Section
\ref{la3} discusses special Lagrangians, LMCF expanders, and the Lawlor and Joyce--Lee--Tsui examples in Theorems \ref{la1thm1} and \ref{la1thm2}. Parts of \S\ref{la32} and \S\ref{la35} are new. Section \ref{la4} states and proves our main results. Finally, Appendix \ref{laA} proves results on real analytic functions and Morse theory needed in~\S\ref{la43}--\S\ref{la44}.
\smallskip

\noindent{\it Historical note.} Joyce discussed this project with Imagi during the conference `Special Lagrangians and Related Topics' in Taiwan in July 2011, explaining an outline method to prove Theorem \ref{la1thm1}. In the subsequent two years, Imagi on the one hand, and Joyce and Oliveira dos Santos on the other, worked independently on the project, each unaware that the other was doing so.

In September 2013, Imagi sent Joyce a preprint with a complete proof of a weaker version of Theorem \ref{la1thm1}. At the time, Joyce and Oliveira dos Santos had an incomplete proof, but by a different method which also included Theorem \ref{la1thm2}. We then agreed to combine our projects into one joint paper, and finalized the arguments during a visit of Imagi to Oxford in February 2014.
\smallskip

\noindent{\it Acknowledgements.} The first author was supported by JSPS research fellowship 22-699, and did part of this project during a visit to the SCGP in Stony Brook. The second and third authors were supported by EPSRC grant EP/H035303/1.

The second author would like to thank Mark Haskins for suggesting the problem of proving uniqueness of `Lawlor necks', and Mohammed Abouzaid for explaining how to apply \cite{AbSm} in our situation. The authors would also like to thank Mohammed Abouzaid, Manabu Akaho, Lino Amorim, Kenji Fukaya, Mark Haskins, Kei Irie, Tsuyoshi Kato, Jason Lotay, Andr\'e Neves, Ivan Smith, and Yoshihiro Tonegawa for useful conversations, and to thank the referees.

\section{Symplectic geometry}
\label{la2}

We begin with the material on symplectic geometry we need, which
includes Lagrangian Floer cohomology and Fukaya categories of
compact, exact, graded Lagrangian submanifolds in symplectic
Calabi--Yau domains. Some good references are McDuff and Salamon
\cite{McSa} for elementary symplectic geometry, Seidel \cite{Seid2}
for the more advanced material of \S\ref{la25}, and Abouzaid and
Smith \cite{AbSm} for the example we study in \S\ref{la24} and
\S\ref{la26}. Parts of \S\ref{la23}--\S\ref{la26} are new.

\subsection{Exact symplectic manifolds and Liouville manifolds}
\label{la21}

We begin with some basic definitions in symplectic geometry.

\begin{dfn} A {\it symplectic manifold\/} $(M,\om)$ is a manifold
$M$ of dimension $2m$ with a closed 2-form $\om$ which is
nondegenerate, that is, $\om^m\ne 0$ on $M$.

A {\it Lagrangian\/} in $M$ is an $m$-dimensional submanifold $L$ in
$M$ with~$\om\vert_L=0$.

An {\it almost complex structure\/} $J$ on $M$ is a tensor $J\in
C^\iy(T^*M\ot TM)$ with $J^2=-\id$. We say that $J$ is {\it
compatible with\/} $\om$ if $\om(v,w)=\om(Jv,Jw)$ for all $v,w\in
C^\iy(TM)$, and $g\in C^\iy(S^2(T^*M))$ defined by
$g(v,w)=\om(v,Jw)$ is a positive definite Riemannian metric on $M$.
In index notation $\om=\om_{ab}$ and $J=J_a^b$, with
$J_a^bJ_b^c=-\de_a^c$, $\om_{ab}=J_a^cJ_b^d\om_{cd}$, and
$g_{ab}=J_b^c\om_{ac}$. Every symplectic manifold $(M,\om)$ admits
compatible complex structures $J$.

A $J$-{\it holomorphic curve\/} $\Si$ in $M$ is a Riemann surface
$(\Si,j)$ and a smooth map $u:\Si\ra M$ such that $J\ci\d u=\d u\ci
j:T\Si\ra u^*(TM)$. If $u$ is an embedding then we can think of the
$J$-holomorphic curve $\Si\cong u(\Si)$ as a submanifold of $M$. We
often take $\Si$ to have boundary or corners, and require that $u$
should map $\pd\Si$ to a Lagrangian $L$ or union of Lagrangians
$L_1\cup\cdots\cup L_k$ in~$M$.
\label{la2def1}
\end{dfn}

Next we define exact symplectic manifolds, exact Lagrangians, and
Liouville manifolds. These will be important to us as Lagrangian
Floer cohomology and Fukaya categories are well behaved for exact
Lagrangians in Liouville manifolds.

\begin{dfn} A symplectic manifold $(M,\om)$ is called {\it exact\/}
if $\om$ is exact, that is, $[\om]=0$ in $H^2(M,\R)$. (This is only
possible if $M$ is noncompact, or has boundary.) Then we may choose a 1-form $\la$ on
$M$ with $\d\la=\om$, called a {\it Liouville form}. We often
consider two Liouville forms $\la,\la'$ {\it equivalent\/} if
$\la'=\la+\d h$ for $h:M\ra\R$ a smooth, compactly-supported
function.

Suppose $L$ is a Lagrangian in an exact symplectic manifold
$(M,\om)$ with Liouville form $\la$. Then
$\d(\la\vert_L)=\om\vert_L=0$, so $\la\vert_L$ is a closed 1-form on
$L$. We call $L$ an {\it exact Lagrangian\/} if $\la\vert_L$ is
exact, that is, $[\la\vert_L]=0$ in $H^1(L,\R)$. If $H^1(M,\R)=0$,
this notion of exact Lagrangian is independent of the choice of
Liouville form $\la$ on $M$. If $L$ is an exact
Lagrangian in $(M,\om)$, there exists a smooth {\it potential\/}
$f_L:L\ra\R$ with $\la\vert_L=\d f_L$. If $L$ is connected, $f_L$ is
unique up to $f_L\mapsto f_L+c$ for~$c\in\R$.

If $(M,\om)$ is exact with Liouville form $\la$, there is a unique
vector field $v_\la$ on $M$ such that $v_\la\cdot\om=\la$ called the
{\it Liouville vector field}, and $\cL_{v_\la}\om=\om$, where
$\cL_{v_\la}$ is the Lie derivative. One can attempt to integrate
$v_\la$ to the {\it Liouville flow}, a smooth map $\Psi:M\t\R\ra M$
with $\Psi(p,0)=p$ and $\frac{\d\Psi}{\d t}(p,t)=v_\la
\vert_{(p,t)}$ for all $p\in M$ and $t\in\R$. This $\Psi$ need not
exist, but is unique if it does. If $\Psi$ exists we call
$(M,\om),\la$ a {\it Liouville manifold}.

A {\it Liouville domain\/} $(D,\om),\la$ is a compact symplectic
manifold $(D,\om)$ with boundary $\pd D$ with a Liouville form $\la$
whose Liouville vector field $v_\la$ is transverse to $\pd D$ and
outward-pointing. Then $\pd D$ is a contact manifold, with contact form $\la\vert_{\pd D}$. Every Liouville domain $(D,\om),\la$ has a natural {\it completion\/}
$(\hat D,\hat\om),\hat\la$ which is a Liouville manifold, where
$\hat D\cong D\cup_{\pd D}\bigl(\pd D\t[1,\iy)\bigr)$, and
$\hat\om\vert_D=\om$, $\hat\la\vert_D=\la$ on $D\subset\hat D$, and
$v_{\smash{\hat\la}}\vert_{\pd M\t[1,\iy)}=r\frac{\pd}{\pd r}$,
where $r$ is the coordinate on $[1,\iy)$. A Liouville manifold
$(M,\om),\la$ is called of {\it finite type\/} if it is isomorphic
to the completion of some Liouville domain  $(D,\om),\la$.
\label{la2def2}
\end{dfn}

\begin{ex} Let $\C^m$ have complex coordinates $(z_1,\dots,z_m)$
and its usual complex structure $J$, K\"ahler metric $g$, and
K\"ahler form $\om$ given by
\e
\begin{gathered}
J=\ts i\d z_1\ot\frac{\pd}{\pd z_1}+\cdots+i\d z_m\ot\frac{\pd}{\pd
z_m} -i\d\bar z_1\ot\frac{\pd}{\pd\bar z_1}-\cdots-i\d\bar
z_m\ot\frac{\pd}{\pd\bar z_m},\\
g=\ms{\d z_1}+\cdots+\ms{\d z_m}\;\>\text{and}\;\>
\om=\ts\frac{i}{2}(\d z_1\w\d\bar z_1+\cdots+\d z_m\w\d\bar z_m).
\end{gathered}
\label{la2eq1}
\e
Then $(\C^m,\om)$ is an exact symplectic manifold. Define a real
1-form $\la$ on $\C^m$ by
\e
\la=-\ha\Im(z_1\d\bar z_1+\cdots+z_m\d\bar z_m).
\label{la2eq2}
\e
Then $\d\la=\om$, so $\la$ is a Liouville form for $\C^m$, with
Liouville vector field
\begin{equation*}
v_\la=\ts\ha\bigl(z_1\frac{\pd}{\pd z_1}+\cdots+ z_m\frac{\pd}{\pd
z_m}+\bar z_1\frac{\pd}{\pd\bar z_1}+\cdots+\bar
z_m\frac{\pd}{\pd\bar z_m}\bigr).
\end{equation*}
In spherical polar coordinates with radius
$r=(\ms{z_1}+\cdots+\ms{z_m})^{1/2}$, we have $v_\la=\ha
r\frac{\pd}{\pd r}$. The Liouville flow of $v_\la$ exists, and is
given by
\begin{equation*}
\Psi\bigl((z_1,\ldots,z_m),t\bigr)=\bigl(e^{t/2}z_1,\ldots,e^{t/2}z_m\bigr).
\end{equation*}
Thus $(\C^m,\om),\la$ is a Liouville manifold. For any $R>0$, the
closed ball $\,\ov{\!B}_R$ of radius $R$ about 0 in $\C^m$ gives a Liouville
domain $\bigl(\,\ov{\!B}_R,\om\vert_{\,\ov{\!B}_R}\bigr),\la\vert_{\,\ov{\!B}_R}$, and
$(\C^m,\om),\la$ is its completion.
\label{la2ex1}
\end{ex}

\begin{ex} Suppose $(M,\om)$ is an exact symplectic manifold with
Liouville form $\la$, and $L,L'$ are transversely intersecting exact
Lagrangians in $M$ with potentials $f_L,f_{L'}$. Let $J$ be an
almost complex structure on $M$ compatible with $\om$. The theory of
Lagrangian Floer cohomology discussed in \S\ref{la25} involves
`counting' $J$-holomorphic curves in $M$ with boundary in $L\cup
L'$. We will show by an example how to compute the area of such
$J$-holomorphic curves.

Suppose $\Si$ is a $J$-holomorphic disc $\Si$ in $M$ with boundary
in $L\cup L'$ and corners at $p,q\in L\cap L'$, of the form shown in
Figure \ref{la1fig1}, where the arrows give the orientation of
$\pd\Si$. Then
\e
\begin{split}
\mathop{\rm area}(\Si)&=\int_\Si\om=\int_\Si\d\la=\int_{\pd\Si}\la=
\int_{\!\!\xymatrix@C=21pt{\mathop{\bu}\limits_p \ar[r]_{\text{in
$L$}} & \mathop{\bu}\limits_q }}
\!\!\!\!\!\!\!\!\!\!\!\!\!\!\!\!\!\!\la\,\,\,\,\,\,\,\,\,\,
+\int_{\!\!\xymatrix@C=21pt{\mathop{\bu}\limits_q \ar[r]_{\text{in
$L'$}} & \mathop{\bu}\limits_p }}
\!\!\!\!\!\!\!\!\!\!\!\!\!\!\!\!\!\!\la \\[-5pt]
&= \int_{\!\!\xymatrix@C=21pt{\mathop{\bu}\limits_p \ar[r]_{\text{in
$L$}} & \mathop{\bu}\limits_q }}
\!\!\!\!\!\!\!\!\!\!\!\!\!\!\!\!\!\!\d f_L\,\,\,\,
+\int_{\!\!\xymatrix@C=21pt{\mathop{\bu}\limits_q \ar[r]_{\text{in
$L'$}} & \mathop{\bu}\limits_p }}
\!\!\!\!\!\!\!\!\!\!\!\!\!\!\!\!\!\!\d f_{L'}\,\,\,\,
=f_L(q)\!-\!f_L(p)\!+\!f_{L'}(p)\!-\!f_{L'}(q),
\end{split}
\label{la2eq3}
\e
using $J$ compatible with $\om$ in the first step, $\om=\d\la$ in
the second, Stokes' Theorem in the third, and $\la\vert_L=\d f_L$,
$\la\vert_{L'}=\d f_{L'}$ in the fifth.
\label{la2ex2}
\end{ex}

\subsection[Symplectic Calabi--Yau manifolds and graded
Lagrangians]{Symplectic Calabi--Yaus and graded Lagrangians}
\label{la22}

We now define symplectic Calabi--Yau manifolds and graded
Lagrangians. These will be important to us because Lagrangian Floer
cohomology for graded Lagrangians in symplectic Calabi--Yau
manifolds can be graded over $\Z$ rather than $\Z_2$, and this
$\Z$-grading will be vital in our proofs.

Note that in \S\ref{la31} we will also define {\it Calabi--Yau
manifolds}, which involve the same data $M,\om,J,\Om$ but satisfy
stronger conditions. That is, Calabi--Yau manifolds give examples of
symplectic Calabi--Yau manifolds, but not vice versa.

\begin{dfn} A symplectic manifold $(M,\om)$ is called a {\it symplectic
Calabi--Yau manifold\/} if its first Chern class $c_1(TM)$ is zero
in $H^2(M,\Z)$.

Let $(M,\om)$ be symplectic Calabi--Yau of dimension $2m$, and
choose an almost complex structure $J$ compatible with $\om$, with
associated Hermitian metric $g$. Then we can define the {\it
canonical bundle\/} $K_M$ of complex $(m,0)$-forms on $M$ with respect to $J$. It is a complex line bundle, but not necessarily a holomorphic
line bundle, since $J$ may not be integrable.

Now $K_M$ is trivial as $c_1(K_M)=-c_1(TM)=0$, so we may pick a
nonvanishing section $\Om$ of $K_M$, which we require to satisfy the
normalization condition
\e
\om^m/m!=(-1)^{m(m-1)/2}(i/2)^m\Om\w\bar\Om.
\label{la2eq4}
\e

Let $L$ be a Lagrangian in $M$. Then $\Om\vert_L$ is a complex
$m$-form on $L$. The normalization \eq{la2eq4} of $\Om$ implies that
$\bmd{\Om\vert_L}=1$, where $\md{\,.\,}$ is computed using the
Riemannian metric $g\vert_L$. Suppose $L$ is oriented. Then we have
a volume form $\d V_L$ on $L$ defined using the metric $g\vert_L$
and orientation with $\md{\d V_L}=1$, so $\Om\vert_L=\Th_L\cdot \d
V_L$, where $\Th_L:L\ra\U(1)$ is a unique smooth function, the {\it angle function}, and
$\U(1)=\{z\in\C:\md{z}=1\}$.

This $\Th_L$ induces a morphism of cohomology groups
$\Th_L^*:H^1(\U(1),\Z)\ab\ra H^1(L,\Z)$. The {\it Maslov class\/}
$\mu_L\in H^1(L;\Z)$ of $L$ is the image under $\Th_L^*$ of the
generator of $H^1(\U(1),\Z)\cong\Z$. If $H^1(M,\R)=0$ then $\mu_L$
depends only on $(M,\om),L$ and not on $g,J,\Om$. We call $L$ {\it
Maslov zero\/} if $\mu_L=0$.

A {\it grading\/} or {\it phase function\/} of an oriented
Lagrangian $L$ is a smooth function $\th_L:L\ra\R$ with
$\Th_L=\exp(i\th_L)$, so that $\Om\vert_L=e^{i\th_L}\d V_L$. That
is, $i\th_L$ is a continuous choice of logarithm for $\Th_L$.
Gradings exist if and only if $L$ is Maslov zero. If $L$ is
connected then gradings are unique up to addition of $2\pi n$ for
$n\in\Z$. A {\it graded Lagrangian\/} $(L,\th_L)$ in $M$ is an
oriented Lagrangian $L$ with a grading $\th_L$. Usually we refer to
$L$ as the graded Lagrangian, leaving $\th_L$ implicit.
\label{la2def3}
\end{dfn}

\begin{ex} Let $\C^m,J,\om$ be as in Example \ref{la2ex1}, and
set $\Om=\d z_1\w\cdots\w\d z_m$. Then $(\C^m,\om)$ is symplectic
Calabi--Yau, and $J,\Om$ are as in Definition~\ref{la2def3}.
\label{la2ex3}
\end{ex}

Transverse intersection points $p$ of graded Lagrangians $L,L'$ have
a {\it degree\/} $\mu_{L,L'}(p)\in\Z$ used in the definition of
Lagrangian Floer cohomology.

\begin{dfn} Let $(M,\om)$ be a symplectic Calabi--Yau manifold of
dimension $2m$, and choose $J,\Om$ as in Definition \ref{la2def3}.
Suppose $L,L'$ are graded Lagrangians in $M$, with phase functions
$\th_L,\th_{L'}$, which intersect transversely at~$p\in M$.

By a kind of simultaneous diagonalization, we may choose an
isomorphism $T_pM\cong\C^m$ which identifies
$J\vert_p,g\vert_p,\om\vert_p$ on $T_pM$ with the standard versions
\eq{la2eq1} on $\C^m$, and identifies $T_pL,T_pL'$ with the
Lagrangian planes $\Pi_0,\Pi_{\bs\phi}$ in $\C^m$ respectively,
where
\e
\Pi_0=\bigl\{(x_1,\ldots,x_m):x_j\in\R\bigr\},\;\>
\Pi_{\bs\phi}=\bigl\{({\rm e}^{i\phi_1}x_1,\ldots, {\rm
e}^{i\phi_m}x_m):x_j\in\R\bigr\},
\label{la2eq5}
\e
for $\phi_1,\ldots,\phi_m\in(0,\pi)$. Then $\phi_1,\ldots,\phi_m$
are independent of choices up to order. Define the {\it degree\/}
$\mu_{L,L'}(p)\in\Z$ of $p$ by
\e
\mu_{L,L'}(p)=(\phi_1+\cdots+\phi_m+\th_L(p)-\th_{L'}(p))/\pi.
\label{la2eq6}
\e
This is an integer as $\th_{L'}(p)=\th_L(p)+\phi_1+\cdots+\phi_m\mod\pi\Z$ by \eq{la2eq5}. Exchanging $L,L'$ replaces $\phi_1,\ldots,\phi_m$ by $\pi-\phi_1,\ldots,\pi-\phi_m$, so that
\begin{equation*}
\mu_{L,L'}(p)+\mu_{L',L}(p)=m.
\end{equation*}
Since $\phi_1,\ldots,\phi_m\in(0,\pi)$, we see that
\e
(\th_L(p)-\th_{L'}(p))/\pi<\mu_{L,L'}(p)<(\th_L(p)-\th_{L'}(p))/\pi+m.
\label{la2eq7}
\e

\label{la2def4}
\end{dfn}

\begin{rem} In Definition \ref{la2def3} we first made arbitrary
choices of $J,\Om$ before defining graded Lagrangians. This is not
ideal from the point of view of symplectic geometry, since we prefer to work
only with concepts depending only on $(M,\om)$, and not on a choice
of almost complex structure~$J$.

In fact there is a more abstract, algebraic-topological way to
define graded Lagrangians and the degree $\mu_{L,L'}(p)$ without
choosing $J$, explained by Seidel \cite[\S 2a--\S 2d]{Seid1}. Define
$\mathop{\rm Lag}_+(M)\ra M$ to be the Grassmannian bundle of
oriented Lagrangian planes in $(TM,\om)$. Then choosing $\Om$ above
(up to isotopy) corresponds to choosing a certain $\Z$-cover
$\mathop{\,\,\widetilde{\!\!\rm Lag\!\!}\,\,}_+(M)\ra\mathop{\rm
Lag}_+(M)$. If $L$ is an oriented Lagrangian in $M$, then $TL$ gives
a natural section of the Grassmannian bundle $\mathop{\rm
Lag}_+(M)\vert_L$. A {\it grading\/} on $L$ is a lift of this to a
section of $\mathop{\,\,\widetilde{\!\!\rm
Lag\!\!}\,\,}_+(M)\vert_L$. One can also do a similar thing using
unoriented Lagrangians.

Thus, the notions of grading $\th_L$ and degree $\mu_{L,L'}(p)$ can
be made independent of the choices of $J,\Om$ (though if
$H^1(M,\Z)\ne 0$ the cover $\mathop{\,\,\widetilde{\!\!\rm
Lag\!\!}\,\,}_+(M)\ra\mathop{\rm Lag}_+(M)$ involves a discrete
choice corresponding to the isotopy class of $\Om$), and once we do
choose $J,\Om$ these notions become equivalent to the simpler ones
we gave in Definitions \ref{la2def3} and~\ref{la2def4}.
\label{la2rem1}
\end{rem}

\subsection{Asymptotically Conical Lagrangians in $\C^m$}
\label{la23}

\begin{dfn} A (singular) Lagrangian $m$-fold $C$ in $\C^m$ is called a {\it
cone} if $C=tC$ for all $t>0$, where $tC=\{t\,{\bf x}:{\bf x} \in
C\}$. Let $C$ be a closed Lagrangian cone in $\C^m$ with an isolated
singularity at 0. Then $\Si=C\cap{\cal S}^{2m-1}$ is a compact,
nonsingular Legendrian $(m\!-\!1)$-submanifold of ${\cal S}^{2m-1}$,
not necessarily connected. Let $g_\sSi$ be the metric on $\Si$ induced by
the metric $g$ on $\C^m$ in \eq{la2eq1}, and $r$ the radius function
on $\C^m$. Define $\iota:\Si\t(0,\iy)\ra\C^m$ by
$\iota(\si,r)=r\si$. Then the image of $\iota$ is $C\sm\{0\}$, and
$\iota^*(g)=r^2g_\sSi+\d r^2$ is the cone metric on~$C\sm\{0\}$.

Let $L$ be a closed, nonsingular Lagrangian $m$-fold in $\C^m$, e.g. $L$
could be special Lagrangian, or an LMCF expander. We call $L$
{\it Asymptotically Conical (AC)} with {\it rate} $\rho<2$ and {\it
cone} $C$ if there exist a compact subset $K\subset L$ and a
diffeomorphism $\vp:\Si\t(T,\iy)\ra L\sm K$ for some $T>0$, such
that
\e
\bmd{\nabla^k(\vp-\iota)}=O(r^{\rho-1-k}) \quad\text{as $r\ra\iy$, for all $k=0,1,2,\ldots.$}
\label{la2eq8}
\e
Here $\nabla,\md{\,.\,}$ are computed using the cone
metric~$\iota^*(g)$.
\label{la2def5}
\end{dfn}

The reason for supposing $\rho<2$ in Definition \ref{la2def5} is that then $tL\ra C$ as $t\ra 0$, as submanifolds in $\C^m$ away from 0 in the $C^k$ topology for all $k\ge 0$. That is, $C$ is the tangent cone to $L$ at infinity.

Now let $\phi_1,\ldots,\phi_m\in(0,\pi)$, write
$\bs\phi=(\phi_1,\ldots,\phi_m)$, and define Lagrangian planes
$\Pi_0,\Pi_{\bs\phi}$ in $\C^m$ by
\e
\Pi_0=\bigl\{(x_1,\ldots,x_m):x_j\in\R\bigr\},\;\>
\Pi_{\bs\phi}=\bigl\{({\rm e}^{i\phi_1}x_1,\ldots, {\rm
e}^{i\phi_m}x_m):x_j\in\R\bigr\}.
\label{la2eq9}
\e
Then $\Pi_0,\Pi_{\bs\phi}$ intersect transversely at 0 in $\C^m$, so
$C=\Pi_0\cup\Pi_{\bs\phi}$ is a Lagrangian cone in $\C^m$ with an
isolated singularity at 0. Most of this paper concerns exact,
connected AC Lagrangians $L$ in $\C^m$ with rate $\rho<0$ and cone
$C$. We will define an invariant $A(L)\in\R$ of such Lagrangians:

\begin{dfn} Regard $(\C^m,\om)$ as an exact symplectic manifold with
Liouville form $\la$ as in Example \ref{la2ex1}. Let
$C=\Pi_0\cup\Pi_{\bs\phi}$ be the Lagrangian cone in $\C^m$ above.
Suppose $L$ is an exact, connected, Asymptotically Conical
Lagrangian in $\C^m$ with rate $\rho<0$. Then we may choose smooth
$f_L:L\ra\R$ with $\d f_L=\la\vert_L$, unique up to $f_L\mapsto
f_L+c$. Now $\la\vert_C=0$ (for $\la$ as in \eq{la2eq2}, this is
true for any Lagrangian cone $C$, as the Liouville vector field
$v_\la$ is tangent to $C$), so \eq{la2eq8} implies that
$\bmd{\la\vert_L}=O(r^{\rho-1})$ as~$r\ra\iy$.

As $\rho<0$, we see that there exist unique constants
$c_0,c_{\bs\phi}\in\R$ such that $f_L=c_0+O(r^\rho)$ as $r\ra\iy$ in
the end of $L$ asymptotic to $\Pi_0$, and
$f_L=c_{\bs\phi}+O(r^\rho)$ as $r\ra\iy$ in the end of $L$
asymptotic to $\Pi_{\bs\phi}$. Define $A(L)=c_{\bs\phi}-c_0\in\R$.
This is independent of the choice of potential $f_L$ for
$\la\vert_L$, and is well-defined.
\label{la2def6}
\end{dfn}

\begin{rem} In \cite{Joyc2,Joyc3,Joyc4,Joyc5,Joyc6} we define AC
Lagrangians $L$ to satisfy \eq{la2eq8} for $k=0,1$ only, rather than
all $k\ge 0$. But for AC special Lagrangians \cite[Th.~7.7]{Joyc2}
and AC LMCF expanders \cite[Th.~3.1]{LoNe}, if \eq{la2eq8} holds for
$k=0,1$ then (possibly with different choices of $K,\vp,T$) it also
holds for all $k\ge 0$. In this paper we require \eq{la2eq8} to hold
for all $k\ge 0$, since in Example \ref{la2ex4} below we will see
that any AC Lagrangian $L$ with cone $C=\Pi_0\cup\Pi_{\bs\phi}$ and
rate $\rho<0$ extends to a smooth compact Lagrangian $\bar L$ in a
certain partial compactification $(M,\om)$ of $(\C^m,\om)$. If we
had required \eq{la2eq8} to hold only for $k=0,1$, then $\bar L$
would be only $C^1$ rather than smooth.
\label{la2rem2}
\end{rem}

\subsection{The Liouville manifold $T^*\cS^m\# T^*\cS^m$}
\label{la24}

Suppose $L,L'$ are two exact Asymptotically Conical Lagrangians in
$\C^m$ with cone $C=\Pi_0\cup\Pi_{\bs\phi}$, for
$\Pi_0,\Pi_{\bs\phi}$ as in \eq{la2eq9}. To prove our main results
in \S\ref{la4} we would like to work with the Lagrangian Floer
cohomology $HF^*(L,L')$, but this is not allowed (in the simplest
version of $HF^*$) as $L,L'$ are noncompact.

To deal with this, in the next (rather long) example, we will define
a partial compactification $M$ of $\C^m$ which extends
$(\C^m,\om),\la$ to a symplectic Calabi--Yau Liouville manifold
$(M,\om),\ti\la$, such that $L,L'$ extend to compact, exact
Lagrangians $\bar L,\bar L'$ in $M$. Thus as in \S\ref{la25} we can
form~$HF^*(\bar L,\bar L')$.

\begin{ex} Let $\phi_1,\ldots,\phi_m\in(0,\pi)$, and define
Lagrangian planes $\Pi_0,\Pi_{\bs\phi}$ in $\C^m$ by \eq{la2eq9}.
Define new real coordinates $(x_1,\ldots,x_m,y_1,\ldots,y_m)$ on
$\C^m$ by
\begin{equation*}
x_j=\Re z_j-\cot\phi_j\,\Im z_j, \quad y_j=\Im z_j,\quad
j=1,\ldots,m.
\end{equation*}
In these coordinates $\om=\d x_1\w\d y_1+\cdots+\d x_m\w d y_m$ and
\begin{align*}
\Pi_0&=\bigl\{(x_1,\ldots,x_m ,0,\ldots,0):x_j\in\R\bigr\},\\
\Pi_{\bs\phi}&=\bigl\{(0,\ldots,0,y_1,\ldots,y_m):y_j\in\R\bigr\}.
\end{align*}
Regard $(x_1,\ldots,x_m)$ as coordinates on $\Pi_0$ and
$(y_1,\ldots,y_m)$ as coordinates on $\Pi_{\bs\phi}$. Identify
$(\C^m,\om)\cong T^*\Pi_0\cong T^*\Pi_{\bs\phi}$ as symplectic
manifolds by identifying
\begin{align*}
(x_1,\ldots,x_m,y_1,\ldots,y_m)\in\C^m&\cong
y_1\d x_1+\cdots+y_m\d x_m\in
T^*_{(x_1,\ldots,x_m)}\Pi_0 \\
&\cong -x_1\d
y_1-\cdots-x_m\d y_m\in T^*_{(y_1,\ldots,y_m)}\Pi_{\bs\phi}.
\end{align*}

Write $\cS_0=\Pi_0\amalg\{\iy_0\}$ and
$\cS_{\bs\phi}=\Pi_{\bs\phi}\amalg\{\iy_{\bs\phi}\}$ for the
one-point compactifications of $\Pi_0\cong\R^m$ and
$\Pi_{\bs\phi}\cong\R^m$ by adding points $\iy_0$ at infinity in
$\Pi_0$ and $\iy_{\bs\phi}$ at infinity in $\Pi_{\bs\phi}$. Then
$\cS_0,\cS_{\bs\phi}$ are homeomorphic to the $m$-sphere $\cS^m$. We
make $\cS_0,\cS_{\bs\phi}$ into manifolds diffeomorphic to $\cS^m$ as
follows.

Define $F:(0,\iy)\ra(0,\iy)$ by $F(r)=1/\log(1+r^2)$. Define coordinates $(\ti x_1,\ldots,\ti x_m)$ on $\cS_0\sm\{0\}$ and $(\ti y_1,\ldots,\ti y_m)$ on $\cS_{\bs\phi}\sm\{0\}$ by
\ea
\ti x_j(p)&=\begin{cases} 0, & p=\iy_0, \\
F(r)x_j/r, & p=(x_1,\ldots,x_m)\in\Pi_0\sm\{0\}, \end{cases}
\label{la2eq10}\\
\ti y_j(q)&=\begin{cases} 0, & q=\iy_{\bs\phi}, \\
F(r)y_j/r, & q=(y_1,\ldots,y_m)\in\Pi_{\bs\phi}\sm\{0\}, \end{cases}
\label{la2eq11}
\ea
where $r=(x_1^2+\cdots+x_m^2)^{1/2}$ on $\Pi_0$ and
$r=(y_1^2+\cdots+y_m^2)^{1/2}$ on $\Pi_{\bs\phi}$. Then
$\cS_0,\cS_{\bs\phi}$ have unique smooth structures diffeomorphic to
$\cS^m$ such that $(x_1,\ldots,x_m),(\ti x_1,\ldots,\ti
x_m),(y_1,\ldots,y_m),(\ti y_1,\ldots,\ti y_m)$ are smooth
coordinates on the open sets $\Pi_0\subset\cS_0$,
$\cS_0\sm\{0\}\subset\cS_0$, $\Pi_{\bs\phi}\subset\cS_{\bs\phi}$,
$\cS_{\bs\phi}\sm\{0\}\subset\cS_{\bs\phi}$. We explain the reason for choosing $F(r)=1/\log(1+r^2)$ in Remark~\ref{la2rem3}.

The cotangent bundles $T^*\cS_0$, $T^*\cS_\phi$ are now symplectic
manifolds in the usual way, and as $\Pi_0\subset\cS_0$,
$\Pi_{\bs\phi}\subset\cS_{\bs\phi}$ are open we have open inclusions
$T^*\Pi_0\subset T^*\cS_0$, $T^*\Pi_{\bs\phi}\subset
T^*\cS_{\bs\phi}$, with $T^*\cS_0=T^*\Pi_0\amalg T^*_{\iy_0}\cS_0$,
$T^*\cS_{\bs\phi}=T^*\Pi_{\bs\phi}\amalg
T^*_{\iy_{\bs\phi}}\cS_{\bs\phi}$ on the level of sets. Define a
symplectic manifold $(M,\om)$ by identifying the symplectic
manifolds $T^*\cS_0,T^*\cS_{\bs\phi}$ on their open sets
$T^*\Pi_0\subset T^*\cS_0$, $T^*\Pi_{\bs\phi}\subset
T^*\cS_{\bs\phi}$ by the symplectic identification $T^*\Pi_0
\cong(\C^m,\om)\cong T^*\Pi_{\bs\phi}$ above. Then on the level of
sets we have
\begin{equation*}
M=\C^m\amalg T^*_{\iy_0}\cS_0\amalg T^*_{\iy_{\bs\phi}}\cS_{\bs\phi}
=T^*\cS_0\amalg T^*_{\iy_{\bs\phi}}\cS_{\bs\phi}=
T^*\cS_{\bs\phi}\amalg T^*_{\iy_0}\cS_0.
\end{equation*}

The Liouville form $\la$ on $\C^m$ in \eq{la2eq2} does not extend
continuously to $M$. Here is a way to modify it so that it does. Fix
$T\gg 0$, and let $\eta:\R\ra[-1,1]$ be smooth with $\eta(t)=-1$ for
$t\le -2T$, $\eta(t)=0$ for $-T\le t\le T$, $\eta(t)=1$ for $t\ge
2T$, and $\frac{\d\eta}{\d t}=O(T^{-1})$. Define a smooth 1-form
$\ti\la$ on $\C^m$ by
\ea
\ti\la&=\la+\d h,\;\>\text{where}\;\>
\la=\ha\bigl(x_1\d y_1+\cdots+x_m\d y_m-y_1\d x_1-\cdots-y_m\d x_m\bigr),
\nonumber\\
h&=-\ha\eta(x_1^2+\cdots+x_m^2-y_1^2-\cdots-y_m^2)(x_1y_1+\cdots+x_my_m).
\label{la2eq12}
\ea
Then $\d\ti\la=\om$, and
\begin{equation*}
\ti\la=\begin{cases}\la,
& -T\le \sum_{j=1}^mx_j^2-y_j^2\le T, \\
\sum_{j=1}^m-y_j\d x_j,
& \sum_{j=1}^mx_j^2-y_j^2\ge 2T, \\
\sum_{j=1}^mx_j\d y_j,
& \sum_{j=1}^mx_j^2-y_j^2\le -2T.\end{cases}
\end{equation*}
Here $\sum_{j=1}^m-y_j\d x_j$ is the natural Liouville form on $T^*\Pi_0$ and extends smoothly over $T^*_{\iy_0}\cS_0$ to a Liouville form on $T^*\cS_0$, and $\sum_{j=1}^mx_j\d y_j$ is the natural Liouville form on $T^*\Pi_{\bs\phi}$ and extends smoothly over $T^*_{\iy_{\bs\phi}}\cS_{\bs\phi}$ to $T^*\cS_{\bs\phi}$. Therefore $\ti\la$ extends to a smooth Liouville form on $M$, which we also write as $\ti\la$. One can check that the Liouville flow of $\ti\la$ exists, so $(M,\om),\ti\la$ is a Liouville manifold (of finite type). Note that $\ti\la\vert_{\cS_0}=\ti\la\vert_{\cS_{\bs\phi}}=0$, so $\cS_0,\cS_{\bs\phi}$ are exact Lagrangians, with potentials $f_{\cS_0}=f_{\cS_{\bs\phi}}=0$, and $T^*_{\iy_0}\cS_0,T^*_{\iy_{\bs\phi}}\cS_{\bs\phi}$ are also exact Lagrangians.

Similarly, $J,\Om$ on $\C^m$ do not extend smoothly to $M$, but we can define modified versions $\ti J,\ti \Om$ which do, so that $(M,\om)$ is symplectic Calabi--Yau. As for $\ti\la$, we can arrange that $\ti J=J$ and $\ti\Om=\Om$ when $-T\le \sum_{j=1}^mx_j^2-y_j^2\le T$ for some $T\gg 0$. The Lagrangians $\cS_0,\cS_{\bs\phi},T^*_{\iy_0}\cS_0,T^*_{\iy_{\bs\phi}}\cS_{\bs\phi}$ are all Maslov zero, so can be graded w.r.t.\ $\ti J,\ti\Om$. We choose $\ti J$ so that $\ti J(T_{\iy_0}\cS_0)=T_{\iy_0}(T^*_{\iy_0}\cS_0)$ and $\ti J(T_{\iy_{\bs\phi}}\cS_{\bs\phi})=T_{\iy_{\bs\phi}}(T^*_{\iy_{\bs\phi}}\cS_{\bs\phi})$, and $\ti\Om$ so that $\cS_0,\ab\cS_{\bs\phi},\ab T^*_{\iy_0}\cS_0,\ab T^*_{\iy_{\bs\phi}}\cS_{\bs\phi}$ have constant phase, with gradings
\begin{align*}
\th_{\cS_0}=0,\qquad \th_{\cS_{\bs\phi}}=\phi_1+\cdots+\phi_m,\qquad
\th_{T^*_{\iy_0}\cS_0}=m\pi/2,\\ \text{and}\qquad
\th_{T^*_{\iy_{\bs\phi}}\cS_{\bs\phi}}=\phi_1+\cdots+\phi_m+m\pi/2.
\end{align*}
Equation \eq{la2eq6} now gives indexes of intersection points
\begin{align*}
\mu_{\cS_0,\cS_{\bs\phi}}(0)&=0, & \mu_{\cS_{\bs\phi},\cS_0}(0)&=m, \\
\mu_{\cS_0,T^*_{\iy_0}\cS_0}(\iy_0)&=0, & \text{and}\qquad\qquad
\mu_{\cS_{\bs\phi},T^*_{\iy_{\bs\phi}}\cS_{\bs\phi}}(\iy_{\bs\phi})&=0.
\end{align*}

Now suppose that $L$ is a closed, Asymptotically Conical Lagrangian
in $\C^m$ with cone $C=\Pi_0\cup\Pi_{\bs\phi}$ and rate $\rho<0$, as
in \S\ref{la23}. Define $\bar L=L\amalg\{\iy_0,\iy_{\bs\phi}\}$. We claim that $\bar L$ is a smooth, compact Lagrangian in $(M,\om)$, which intersects $T^*_{\iy_0}\cS_0,T^*_{\iy_{\bs\phi}}\cS_{\bs\phi}$ transversely at $\iy_0,\iy_{\bs\phi}$. To see this, note that the end of $L$ asymptotic to $\Pi_0$ may be written near infinity in $\Pi_0$ as a graph
\begin{align*}
L\supset \Ga_{\d f}=\bigl\{\bigl(x_1+i{\ts\frac{\pd f}{\pd x_1}}(x_1,\ldots,x_m),
\ldots,x_m+i{\ts\frac{\pd f}{\pd x_m}}(x_1,\ldots,x_m)\bigr):&\\
(x_1,\ldots,x_m)\in\R^m,\;\> \md{(x_1,\ldots,x_m)}>R\bigr\}&
\end{align*}
for $R\gg 0$, where $f:\bigl\{(x_1,\ldots,x_m)\in\R^m:$ $\md{(x_1,\ldots,x_m)}>R\bigr\}\ra\R$ is smooth, and \eq{la2eq8} implies that for all $k_1,\ldots,k_m\ge 0$, as $R\ra\iy$ we have
\e
\frac{\pd^{k_1+\cdots+k_m}f(x_1,\ldots,x_m)}{\pd x_1^{k_1}\cdots\pd x_m^{k_m}}=O\bigl(r^{\rho-k_1-\cdots-k_m}\bigr).
\label{la2eq13}
\e

Define $\ti f:\bigl\{(\ti x_1,\ldots,\ti x_m)\in\R^m:$ $0<\md{(\ti x_1,\ldots,\ti x_m)}<F(R)\bigr\}\ra\R$ by $\ti f(\ti x_1,\ldots,\ti x_m)=f(x_1,\ldots,x_m)$, using the alternative coordinates $(\ti x_1,\ldots,\ti x_m)$ from \eq{la2eq10}. Then \eq{la2eq10}, \eq{la2eq13}, $F(r)=1/\log(1+r^2)$ and $\rho<0$ imply that 
\e
\frac{\pd^{k_1+\cdots+k_m}\ti f(\ti x_1,\ldots,\ti x_m)}{\pd\ti x_1^{k_1}\cdots\pd\ti x_m^{k_m}}=O\bigl(\ti r^{-k_1-\cdots-k_m}e^{\rho/2\ti r}\bigr)
\label{la2eq14}
\e
as $\ti r\ra 0$, where $\ti r=(\ti x_1^2+\cdots+\ti x_m^2)^{1/2}=F(r)$. As $\rho<0$, equation \eq{la2eq14} implies that $\ti f$ extends smoothly over 0 in $\bigl\{(\ti x_1,\ldots,\ti x_m)\in\R^m:$ $\md{(\ti x_1,\ldots,\ti x_m)}<F(R)\bigr\}$, with $\ti f(0,\ldots,0)=0$. Then $\bar L$ near $\iy_0$ is the graph of $\d\ti f$ in $T^*\cS_0$, and so is a smooth Lagrangian near $\iy_0$, which intersects $T^*_{\iy_0}\cS_0$ transversely at $\iy_0$. The same argument works near $\iy_{\bs\phi}$. Hence $\bar L$ is a smooth, compact Lagrangian in $(M,\om)$, intersecting $T^*_{\iy_0}\cS_0,T^*_{\iy_{\bs\phi}}\cS_{\bs\phi}$ transversely at~$\iy_0,\iy_{\bs\phi}$.

\begin{rem} The function $F(r)=1/\log(1+r^2)$ used in the coordinate changes \eq{la2eq10}--\eq{la2eq11} was chosen to ensure that, after changing from \eq{la2eq13} to \eq{la2eq14}, the r.h.s.\ of \eq{la2eq14} converges to zero as $\ti r\ra 0$ for all $k_1,\ldots,k_m\ge 0$. This requires that $F(r)\ra 0$ sufficiently slowly as $r\ra\iy$. For example, if we had chosen $F(r)=1/r$ then the r.h.s.\ of \eq{la2eq14} would have been $O\bigl(\ti r^{-\rho-k_1-\cdots-k_m}\bigr)$, so that we only know $\ti f$ is $C^k$ near $0$ for $0\le k<-\rho$, rather than $C^\iy$, and then $\bar L$ might be only $C^{k-1}$ near $\iy_0$, rather than smooth.

Note that if $L$ has rate $0\le\rho<2$ then it may not extend to a compact Lagrangian $\bar L$ in~$(M,\om)$.
\label{la2rem3}
\end{rem}

Suppose also that $L$ is exact and connected. Then by Definition
\ref{la2def6}, $L$ admits a unique potential $f_L$ with $\d
f_L=\la\vert_L$, such that $f_L\ra 0$ as $r\ra\iy$ in the end of $L$
asymptotic to $\Pi_0$, and $f_L\ra A(L)$ as $r\ra\iy$ in the end of
$L$ asymptotic to $\Pi_{\bs\phi}$. Since $\ti\la=\la+\d h$ for $h$
as in \eq{la2eq12} we have $\ti\la\vert_L=f_L+h\vert_L$. Now
\eq{la2eq8} and \eq{la2eq12} imply that $h\vert_L=O(r^\rho)$, so
$h\vert_L\ra 0$ as $r\ra\iy$ in $L$. It follows that $\bar L$ is
exact in $(M,\om),\ti\la$, with potential $f_{\bar L}$ with $\d
f_{\bar L}=\ti\la\vert_{\bar L}$ given by
\e
f_{\bar L}(p)=
\begin{cases} f_L(p)+h(p), & p\in L, \\ 0, & p=\iy_0, \\
A(L), & p=\iy_{\bs\phi}. \end{cases}
\label{la2eq15}
\e
In the same way, if $L$ is graded in $\C^m$ with grading $\th_L$,
then $\bar L$ is graded in $M$ with grading $\th_{\bar L}$, where
\begin{align*}
\th_{\bar L}(\iy_0)&=\lim_{\begin{subarray}{l}\text{$r\ra\iy$ in end
of $L$} \\ \text{asymptotic to $\Pi_0$}\end{subarray}}\th_L\,\,\,\,\,
\text{in $\pi\Z$,} \\
\th_{\bar L}(\iy_{\bs\phi})&=
\lim_{\begin{subarray}{l}\text{$r\ra\iy$ in end of $L$} \\
\text{asymptotic to $\Pi_{\bs\phi}$}\end{subarray}}
\th_L\,\,\,\,\,\text{in $\phi_1+\cdots+\phi_m+\pi\Z$.}
\end{align*}

The symplectic Calabi--Yau Liouville manifold $(M,\om),\ti\la$ defined above is well-known, and can be constructed in several different ways. Our construction is a version of the `plumbing' $T^*\cS^m\# T^*\cS^m$ of $T^*\cS^m$ with $T^*\cS^m$ as in Abouzaid and Smith \cite[Def.~2.1]{AbSm}, written to ensure that $\C^m\subset M$ and AC Lagrangians $L$ in $\C^m$ with cone $C=\Pi_0\cup\Pi_{\bs\phi}$ extend to compact Lagrangians $\bar L$ in $(M,\om)$. As in \cite[\S 1.2]{AbSm}, we may also describe $(M,\om)$ as the `$m$-dimensional $A_2$-Milnor fibre'
\begin{equation*}
A_2^m=\bigl\{(z_1,\ldots,z_m,t)\in\C^{m+1}:z_1^2+\cdots+
z_m^2+t^3=1\bigr\},
\end{equation*}
a symplectic submanifold of $(\C^{m+1},\om)$. The important point for us is that Abouzaid and Smith \cite{AbSm} give a classification of objects in the derived Fukaya category $D^b\sF(M)$, as we will explain in~\S\ref{la26}.

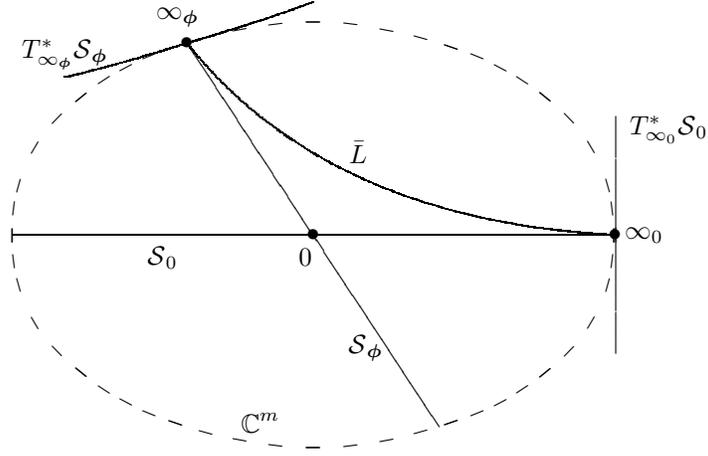
\begin{figure}[htb]
\centerline{$\splinetolerance{.8pt}
\begin{xy}
0;<1mm,0mm>:
,(-40,0);(40,0)**\crv{~**\dir{--}(-40,10)&(-30,20)&(-20,25)&(0,30)&(20,25)&(30,20)&(40,10)}
?(0.35)*{\bu}
?(.35)="a"
,(-18,29)*{\iy_{\bs\phi}}
,(-40,0);(40,0)**\crv{~**\dir{--}(-40,-10)&(-30,-20)&(-20,-25)&(0,-30)&(20,-25)&(30,-20)&(40,-10)}
?(.65)="b"
,(-40,0);(40,0)**\crv{~**\dir{-}}
,(40.15,0)*{\bu}
,(44,0)*{\iy_0}
,(0,0)*{\bu}
,(-1,-3)*{0}
,"a";"b"**\crv{~**\dir{-}}
,(40.3,0);(40.3,15)**\crv{}
,(40.3,0);(40.3,-15)**\crv{}
,"a";(0,31)**\crv{(-5,29)}
,"a";(-33,21)**\crv{(-28,22)}
,"a";(40,0)**\crv{(2,2)}
,(-20,-3)*{\cS_0}
,(47.3,14)*{T^*_{\iy_0}\cS_0}
,(-33,24)*{T^*_{\iy_{\bs\phi}}\cS_{\bs\phi}}
,(7,-15)*{\cS_{\bs\phi}}
,(-7,-25)*{\C^m}
,(6,11)*{\bar L}
\end{xy}$}
\caption{The Liouville manifold $M=T^*\cS^m\# T^*\cS^m$}
\label{la2fig1}
\end{figure}

Figure \ref{la2fig1} is a diagram of our construction. The inside of the dotted ellipse is $\C^m$, and outside are the added Lagrangians at infinity $T^*_{\iy_0}\cS_0,T^*_{\iy_{\bs\phi}}\cS_{\bs\phi}$. We also sketch five Lagrangians in $M$: $\cS_0,\cS_{\bs\phi}$, which are compact spheres $\cS^m$, and $T^*_{\iy_0}\cS_0,T^*_{\iy_{\bs\phi}}\cS_{\bs\phi}$, which are noncompact $\R^m$'s, and the compactification $\bar L$ of an AC Lagrangian $L$ in $\C^m$ with cone $C=\Pi_0\cup\Pi_{\bs\phi}$ and rate $\rho<0$, and three special points $0,\iy_0,\iy_{\bs\phi}$, where various pairs of the five Lagrangians intersect transversely. This concludes Example~\ref{la2ex4}.
\label{la2ex4}
\end{ex}

\subsection{Lagrangian Floer cohomology and Fukaya categories}
\label{la25}

We now explain a little about the version of Lagrangian Floer cohomology and Fukaya categories that we will use, due to Seidel \cite{Seid2}. Our methods should work in other versions as well, since we only use general properties, rather than the details of the machinery. Some references on different versions are Floer \cite{Floe}, Fukaya \cite{Fuka2}, and Fukaya, Oh, Ohta and Ono~\cite{FOOO}.

Let $(M,\om),\la$ be a symplectic Calabi--Yau Liouville manifold of finite type. The choice of Liouville form $\la$ only matters up to $\la\mapsto\la+\d f$ for $f:M\ra\R$ a compactly-supported smooth function. For simplicity, fix an almost complex structure $J$ on $M$ compatible with $\om$, which should satisfy a certain convexity condition at infinity in $M$ (a compatibility condition with $\la$) which prevents families of $J$-holomorphic curves with boundaries in compact Lagrangians in $M$ from escaping to infinity. Almost complex structures satisfying this condition exist on any Liouville manifold. We take $J$ to be generic under this assumption.

As $M$ is symplectic Calabi--Yau, as in \S\ref{la22} we can then choose $\Om\in C^\iy(K_M)$ satisfying \eq{la2eq4}, so that {\it graded\/} Lagrangians make sense in $M$. Note however that as in Remark \ref{la2rem1}, we can define graded Lagrangians using covering spaces without choosing $J,\Om$, so it will make sense later to say that the Lagrangian Floer cohomology and Fukaya categories of graded Lagrangians we discuss are independent of the choices of~$J,\Om$.

For simplicity we will work with Lagrangian Floer cohomology and Fukaya categories over the coefficient ring $\La=\Z_2$. Consider compact, exact, graded Lagrangians $L$ in $M$ as in \S\ref{la21}--\S\ref{la22}, with potential $f_L:L\ra\R$ with $\d f_L=\la\vert_L$ and phase function $\th_L:L\ra\R$ with $\Om\vert_L=e^{i\th_L}\d V_L$. For two such Lagrangians $L,L'$,
as in \cite{Seid2} one defines a complex of free $\Z_2$-modules
$\bigl(CF^*(L,L'),\d\bigr)$ called the {\it Floer complex}, whose
cohomology $HF^*(L,L')$ as a graded $\Z_2$-module is the {\it
Lagrangian Floer cohomology\/} of $L,L'$. Here are some features of
the theory:
\begin{itemize}
\setlength{\parsep}{0pt}
\setlength{\itemsep}{0pt}
\item[(a)] The differential $\d$ in $\bigl(CF^*(L,L'),\d\bigr)$
depends on the choice of almost complex structure $J$. But $HF^*(L,L')$ is
independent of $J$ up to canonical isomorphism, and depends only
on $(M,\om),L$ and $L'$.
\item[(b)] If $L''$ is another Lagrangian Hamiltonian isotopic
to $L'$ then there is a canonical isomorphism $HF^*(L,L')\cong
HF^*(L,L'')$, although the complexes $\bigl(CF^*(L,L'),\d\bigr)$
and $\bigl(CF^*(L,L''),\d\bigr)$ may be very different.

Thus $HF^*(L,L')$ is an invariant of (compact, exact, graded)
Lagrangians up to Hamiltonian isotopy, which is interesting for
symplectic geometers.

\item[(c)] It is easiest to define $\bigl(CF^*(L,L'),\d\bigr)$
if $L,L'$ intersect transversely in $M$. By (b), if $L,L'$ are
not transverse, we can take a Hamiltonian perturbation $L''$ of
$L'$ such that $L,L''$ are transverse, and then compute
$HF^*(L,L')$ using~$\bigl(CF^*(L,L''),\d\bigr)$.
\item[(d)] If $L,L'$ intersect transversely, then $CF^*(L,L')$
is the free $\Z_2$-module with basis the points $p\in L\cap L'$,
graded by degree $\mu_{L,L'}(p)$ from Definition \ref{la2def4}.
Also $\d:CF^k(L,L')\ra CF^{k+1}(L,L')$ is of the form
\e
\d p=\ts\sum_{q\in L\cap L':\mu_{L,L'}(q)=k+1}N_{p,q}\cdot q,
\label{la2eq16}
\e
for $p\in L\cap L'$ with $\mu_{L,L'}(p)=k$, where (if $J$ is
suitably generic) $N_{p,q}\in\Z_2$ is the number modulo 2 of isomorphism classes of $J$-holomorphic discs $\Si$ in $M$ with boundary in $L\cup L'$ and corners at $p,q$, of the form shown in Figure \ref{la2fig2}, where the
arrows give the orientation of $\pd\Si$.
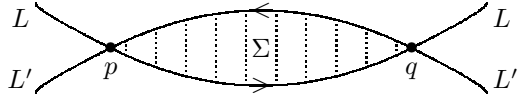
\begin{figure}[htb]
\centerline{$\splinetolerance{.8pt}
\begin{xy}
0;<1mm,0mm>:
,(-20,0);(20,0)**\crv{(0,10)}
?(.95)="a"
?(.85)="b"
?(.75)="c"
?(.65)="d"
?(.55)="e"
?(.45)="f"
?(.35)="g"
?(.25)="h"
?(.15)="i"
?(.05)="j"
?(.5)="y"
,(-20,0);(-30,-6)**\crv{(-30,-5)}
,(20,0);(30,-6)**\crv{(30,-5)}
,(-20,0);(20,0)**\crv{(0,-10)}
?(.95)="k"
?(.85)="l"
?(.75)="m"
?(.65)="n"
?(.55)="o"
?(.45)="p"
?(.35)="q"
?(.25)="r"
?(.15)="s"
?(.05)="t"
?(.5)="z"
,(-20,0);(-30,6)**\crv{(-30,5)}
,(20,0);(30,6)**\crv{(30,5)}
,"a";"k"**@{.}
,"b";"l"**@{.}
,"c";"m"**@{.}
,"d";"n"**@{.}
,"e";"o"**@{.}
,"f";"p"**@{.}
,"g";"q"**@{.}
,"h";"r"**@{.}
,"i";"s"**@{.}
,"j";"t"**@{.}
,"y"*{<}
,"z"*{>}
,(-20,0)*{\bu}
,(-20,-3)*{p}
,(20,0)*{\bu}
,(20,-3)*{q}
,(0,0)*{\Si}
,(-32,4)*{L}
,(-32,-4)*{L'}
,(32,4)*{L}
,(32.5,-4)*{L'}
\end{xy}$}
\caption{Holomorphic disc $\Si$ with boundary in $L\cup L'$}
\label{la2fig2}
\end{figure}

That is, we count $J$-holomorphic maps $u:\R\t[0,1]\ra M$ with $u(s,0)\in L$, $u(s,1)\in L'$ and $\lim_{s\ra -\iy}u(s,t)=p$, $\lim_{s\ra +\iy}u(s,t)=q$ for $s\in\R$ and $t\in[0,1]$, modulo the $\R$-action where $c\in\R$ maps $u(s,t)\mapsto u(s+c,t)$, and after dividing by the $\R$-action, such curves have no automorphisms.

For $p,q\in L\cap L'$, write $\oM_{p,q}$ for the moduli space of
$J$-holomorphic $\Si$ as in Figure \ref{la2fig2}. Modulo some
complicated smoothness issues, for generic $J$ we expect
$\oM_{p,q}$ to be a compact manifold with corners, of dimension
$\mu_{L,L'}(q)-\mu_{L,L'}(p)-1$, with boundary given by
\begin{equation*}
\pd\oM_{p,q}\cong\coprod_{r\in L\cap L':
\mu_{L,L'}(p)<\mu_{L,L'}(r)<\mu_{L,L'}(q)\!\!\!\!\!\!\!\!\!\!\!\!
\!\!\!\!\!\!\!\!\!\!\!\!\!\!\!\!\!\!\!\!\!\!\!\!} \oM_{p,r}\t\oM_{r,q}.
\end{equation*}
So if $\mu_{L,L'}(p)=k$, $\mu_{L,L'}(q)=k+1$ as in \eq{la2eq16}
then $\oM_{p,q}$ is a compact 0-manifold, i.e.\ a finite set,
and $N_{p,q}$ makes sense.

\item[(e)] There is a canonical isomorphism $HF^*(L,L)\cong
H^*(L;\Z_2)$, with $H^*(L;\Z_2)$ the cohomology of $L$ with
coefficients in $\Z_2$.

By Poincar\'e duality we thus have $HF^*(L,L)\cong
H_{m-*}(L;\Z_2)$, where $m=\dim L$. The cochain complex
$\bigl(CF^*(L,L),\d\bigr)$ is isomorphic to the chain complex of
a homology theory computing $H_{m-*}(L;\Z_2)$. In Seidel
\cite{Seid2} this homology theory is Morse homology, for some
choice of Morse function $f:L\ra\R$. In Fukaya, Oh, Ohta and Ono
\cite{FOOO}, it is the singular homology of $L$, defined using
smooth maps $\si:\De^n\ra L$, for $\De^n$ the $n$-simplex.
\end{itemize}

\begin{rem}{\bf(i) Liouville manifolds versus Liouville domains.}
We take $(M,\om),\la$ to be a Liouville manifold of finite type,
which as in \S\ref{la21} means that $(M,\om),\la$ is isomorphic to
the completion of a Liouville domain $(D,\om'),\la'$. Working with
compact Lagrangians in either Liouville manifolds of finite type, or
Liouville domains, is essentially equivalent.

Given compact Lagrangians $L_1,\ldots,L_k$ in $M$ and an almost
complex structure $J$ on $M$ convex at infinity, one can choose a
Liouville domain $D\subset M$ such that $M$ is the completion of
$D$, and $L_1,\ldots,L_k$ and all $J$-holomorphic curves $\Si$ in
$M$ with boundary in $L_1\cup\cdots\cup L_k$ lie in the interior of
$D$. Then working in $D$ or $M$ is equivalent for the purposes of
studying~$L_1,\ldots,L_k$.

Seidel \cite{Seid2} explains Lagrangian Floer homology and Fukaya
categories primarily in Liouville domains, rather than Liouville
manifolds of finite type. But he shows \cite[Cor.~10.6]{Seid2} that
two Liouville domains whose completions are isomorphic as Liouville
manifolds have equivalent derived Fukaya categories.
\smallskip

\noindent{\bf(ii)} As $(M,\om)$ is symplectic Calabi--Yau and $L,L'$
are graded, $\bigl(CF^*(L,L'),\d\bigr)$ and $HF^*(L,L')$ are graded
over $\Z$. If we drop the symplectic Calabi--Yau assumption and take
$L,L'$ to be oriented rather than graded, then
$\bigl(CF^*(L,L'),\d\bigr)$ and $HF^*(L,L')$ would only be graded
over $\Z_2$ rather than~$\Z$.
\smallskip

\noindent{\bf(iii)} Since we work over the coefficient ring $\Z_2$,
we do not need to worry about counting $J$-holomorphic curves with
signs, or orientations of moduli spaces.

To work over a coefficient ring $\La$ with $\mathop{\rm char}\La\ne
2$, we must define orientations on the moduli spaces $\oM_{p,q}$
of $J$-holomorphic discs $\Si$ as in Figure \ref{la2fig2}, in order
to count them with signs to define $N_{p,q}\in\Z$ in \eq{la2eq16}. To
define the orientations, we must suppose $L,L'$ have `relative spin
structures'.
\smallskip

\noindent{\bf(iv)} We consider only {\it exact\/} symplectic
manifolds and {\it exact\/} Lagrangians. If we drop the exactness
assumptions, then the theory becomes a great deal more complicated,
as in Fukaya, Oh, Ohta and Ono \cite{FOOO}:
\begin{itemize}
\setlength{\parsep}{0pt}
\setlength{\itemsep}{0pt}
\item[$\bu$] The moduli spaces $\oM_{p,q}$ are not
manifolds with corners even for generic $J$, but singular spaces
({\it Kuranishi spaces\/} or {\it polyfolds\/}). `Counting'
compact moduli spaces $\oM_{p,q}$ to define $N_{p,q}$ in
\eq{la2eq16}, if this is possible, gives answers in $\Q$ not
$\Z$, as (prestable) curves $\Si$ with finite automorphism
groups $G$ must be counted with weight $1/\md{G}$, so $\La$ must
be a $\Q$-algebra.
\item[$\bu$] The $\oM_{p,q}$ are compact only if we bound
the areas of curves $\Si$. To deal with this we take $\La$
to be a `Novikov ring' of formal power series in a variable
$q$, and `count' curves $\Si$ weighted by $q^{{\rm
area}(\Si)}$.
\item[$\bu$] Because of `bubbling' of holomorphic discs
with boundary in $L$ or $L'$, the na\"\i ve definition of
$\d$ on $CF^*(L,L')$ may not satisfy $\d^2=0$. One
compensates for this by choosing `bounding cochains' for
$L,L'$ (which may not exist, in which case we say that `$HF^*$ is obstructed') and using them to modify the definition of $\d$. The
use of bounding cochains means that the isomorphism
$HF^*(L,L)\ab\cong H^*(L,\La)$ in (e) may no longer hold.
\end{itemize}
\label{la2rem4}
\end{rem}

Lagrangian Floer cohomology is only the beginning of a more general theory of {\it Fukaya categories}, which still have no fully detailed exposition in the literature in the general case. We will use the theory for exact symplectic manifolds and exact Lagrangians developed by Seidel \cite{Seid2}, another reference is Fukaya~\cite{Fuka1}.

As above we take $(M,\om),\la$ to be a symplectic Calabi--Yau
Liouville manifold of finite type, we consider only compact, exact,
graded Lagrangians $L$ in $M$, and we work over the coefficient ring
$\La=\Z_2$. The idea is to define the {\it Fukaya category\/}
$\sF(M)$ of $(M,\om),\la$, an $A_\iy$-{\it category\/} whose objects
are compact, exact, graded Lagrangians $L$ in $M$, such that the
morphisms $\Hom(L_0,L_1)$ in $\sF(M)$ are the graded $\Z_2$-vector
spaces $CF^*(L_0,L_1)$ from above, and the $A_\iy$-operations
\e
\begin{split}
\mu^k:CF^{a_k}(L_{k-1},L_k)\t CF^{a_{k-1}}(L_{k-2},L_{k-1})\t \cdots
\t CF^{a_1}(L_0,L_1)&\\
\longra CF^{a_1+\cdots+a_k+2-k}(L_0,&L_k)
\end{split}
\label{la2eq17}
\e
for all $k\ge 1$, $a_1,\ldots,a_k\in\Z$ and Lagrangians
$L_0,\ldots,L_k$, with $\mu^1:CF^{a_1}(L_0,L_1)\ab\ra
CF^{a_1+1}(L_0,L_1)$ the differential $\d$ in
$\bigl(CF^*(L_0,L_1),\d\bigr)$. The coefficients in the
$\Z_2$-multilinear map $\mu^k$ in \eq{la2eq17} are obtained by
`counting' $J$-holomorphic $(k\!+\!1)$-gons in $M$ with boundary in
$L_0\cup L_1\cup\cdots\cup L_k$.

By a category theory construction similar to taking the derived
category of an abelian category, one then defines the {\it derived
Fukaya category\/} $D^b\sF(M)$, a triangulated category. Objects of
$D^b\sF(M)$ are `twisted complexes' of objects in $\sF(M)$, so
Lagrangians $L$ in $\sF(M)$ are also objects in $D^b\sF(M)$. The
translation functor $[1]$ in the triangulated category $D^b\sF(M)$
acts on Lagrangians $L$ by reversing the orientation of $L$ and
changing the grading $\th_L$ to $\th_L+\pi$, so that
\e
(L,\th_L)[k]\cong (L,\th_L+k\pi).
\label{la2eq18}
\e
The (graded) morphisms of Lagrangians $L,L'$ in $D^b\sF(M)$ are
$\Hom^*(L,L')=HF^*(L,L')$. Thus, if $L,L',L''$ are Lagrangians then
composition of morphisms in $D^b\sF(M)$ gives a $\Z_2$-bilinear map
$HF^j(L',L'')\t HF^i(L,L')\ra HF^{i+j}(L,L'')$ with the obvious
associativity properties.

Kontsevich's {\it Homological Mirror Symmetry Conjecture\/}
\cite{Kont}, motivated by String Theory, says (very roughly) that if
$M,\check M$ are `mirror' Calabi--Yau $m$-folds then there should be
an equivalence of triangulated categories
\begin{equation*}
D^b\sF(M)\simeq D^b\mathop{\rm coh}(\check M,\check J),
\end{equation*}
where $D^b\mathop{\rm coh}(\check M,\check J)$ is the bounded derived category of coherent sheaves on $(\check M,\check J)$. This has driven much research in the area.

The next two theorems will be useful tools for proving our main
results. For the first, of course part (b) implies part (a). We
separate the two cases, as in \S\ref{la4} we will see that it is
natural to use (a) to prove uniqueness of special Lagrangians, and
(b) uniqueness of LMCF expanders. Also (a) is easier to prove.

We suppose $J$ is generic in Theorems \ref{la2thm1} and \ref{la2thm2} as this is used in the definition of $HF^*(L,L')$, but one can remove this assumption by taking limits of almost complex structures and using Gromov compactness.

\begin{thm} Let\/ $(M,\om),\la$ be a symplectic Calabi--Yau
Liouville manifold of dimension $2m,$ so that we can form the
derived Fukaya category $D^b\sF(M)$ of compact, exact, graded
Lagrangians $L$ in $(M,\om)$ with coefficients in $\Z_2,$ as in
Seidel\/ {\rm\cite{Seid2}}. Suppose $L,L'$ are compact, oriented,
graded, transversely intersecting Lagrangians in $M$ which are
isomorphic as objects of\/ $D^b\sF(M)$. Then:
\begin{itemize}
\setlength{\parsep}{0pt}
\setlength{\itemsep}{0pt}
\item[{\bf(a)}] There exists $p\in L\cap L'$ with\/
$\mu_{L,L'}(p)=0$.
\item[{\bf(b)}] Let\/ $J$ be a generic almost complex structure
on $M$ compatible with\/ $\om$ and convex at infinity. Then
there exist\/ $p,q\in L\cap L'$ with\/ $\mu_{L,L'}(p)=0$ and\/
$\mu_{L,L'}(q)=m$ and a $J$-holomorphic disc $\Si$ in $M$ with
boundary in $L\cup L'$ and corners at\/ $p,q,$ of the form shown
in Figure\/ {\rm\ref{la2fig2}}.
\end{itemize}
\label{la2thm1}
\end{thm}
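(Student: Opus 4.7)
For part (a), the isomorphism $L\cong L'$ in $D^b\sF(M)$ is witnessed by a nonzero cohomology class $[\phi]\in HF^0(L,L')=\Hom^0_{D^b\sF(M)}(L,L')$, with nonzero chain-level representative $\phi\in CF^0(L,L')$. By~\S\ref{la25}(d), $CF^0(L,L')$ is the free $\Z_2$-module on $\{p\in L\cap L':\mu_{L,L'}(p)=0\}$, so this set is nonempty. For the existence of $q\in L\cap L'$ with $\mu_{L,L'}(q)=m$ required in (b), the same argument applies in top degree: composition with $[\phi]$ produces a graded isomorphism $\mu^2(\phi,-):HF^*(L,L)\xrightarrow{\cong}HF^*(L,L')$, and by~\S\ref{la25}(e) we have $HF^*(L,L)\cong H^*(L;\Z_2)$, whose degree-$m$ part is nontrivial as $L$ is a compact oriented $m$-manifold, so $HF^m(L,L')\ne0$ and therefore $CF^m(L,L')\ne0$.

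The substantive content of (b) is the production of the specific $J$-holomorphic bigon. My plan is to analyse the chain-level product $\mu^2(\phi,c)\in CF^m(L,L')$ with $c\in CF^m(L,L)$ a cocycle representing the top class $[L]^{\mathrm{top}}$. Using Seidel's Morse-cochain model for $CF^*(L,L)$, take $c=x_{\max}$, the unique index-$m$ critical point of a generic Morse function $f:L\to\R$. Then $\mu^2(\phi,x_{\max})$ represents $\mu^2(\phi,-)([L]^{\mathrm{top}})$, the image of a nonzero class under an isomorphism, and is therefore nonzero in $HF^m(L,L')$; in particular the chain $\mu^2(\phi,x_{\max})$ is a nonzero element of $CF^m(L,L')$. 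In the pearly realisation of $\mu^2$, the coefficient of $q\in L\cap L'$ with $\mu_{L,L'}(q)=m$ in $\mu^2(\phi,x_{\max})$ counts, modulo $2$, configurations consisting of a $J$-holomorphic strip $u:\R\t[0,1]\to M$ with $u(\cdot,0)\subset L$, $u(\cdot,1)\subset L'$, boundary corners at some $p\in L\cap L'$ of degree $0$ in the support of $\phi$ and at $q$, together with a marked point $\zeta$ on the $L$-boundary satisfying $u(\zeta)=x_{\max}$. A nonzero coefficient exhibits such a configuration, and forgetting $\zeta$ yields the required disc $\Si$ of Figure~\ref{la2fig2}.

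The principal obstacle is the careful dimension count and transversality for the pearly product. The strip moduli from $p$ of degree $0$ to $q$ of degree $m$ has virtual dimension $m-1$; a marked boundary point on the $L$-arc (before the translation symmetry is quotiented out) adds $1$, and the codimension-$m$ constraint $u(\zeta)=x_{\max}$ cuts the total to dimension zero. Transversality for generic $J$ compatible with $\om$ and generic Morse function $f$ isolates the resulting configurations, while Gromov compactness in the finite-type Liouville manifold $(M,\om),\ti\la$—enabled by convexity of $J$ at infinity, together with exactness of $L,L'$ and the symplectic Calabi--Yau condition that rule out disc and sphere bubbling—ensures the count is finite and well-defined. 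One must further check that strip breaking at intermediate intersection points does not destroy nonvanishing of $[\mu^2(\phi,x_{\max})]$; this is automatic, since such breakings correspond to expressing the cocycle as $\mu^1$ of another chain, which does not affect its cohomology class.
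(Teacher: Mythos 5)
Your proof is correct and its core mechanism is the same as the paper's: in both cases the $J$-holomorphic bigon with corners at $p,q\in L\cap L'$ of degrees $0$ and $m$ is extracted as a degenerate triangle contributing to a nonzero $\mu^2$ in which exactly one of the three boundary slots is the Morse model for $CF^*(L,L)$, and the Morse marked point is then forgotten. The difference lies in which $\mu^2$ map is used to detect the curve. The paper uses $\mu^2\colon CF^0(L',L)\times CF^0(L,L')\to CF^0(L,L)$, identified under $L\cong L'$ with the cup product on $H^0(L;\Z_2)$, whose nonvanishing is the statement $1_L\cup 1_L=1_L\ne 0$; the Morse datum sits at the output. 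You instead use the right action $\mu^2(\phi,-)\colon CF^m(L,L)\to CF^m(L,L')$ evaluated on the top-degree Morse cochain $x_{\max}$, and nonvanishing follows because post-composition with the isomorphism $\phi$ is a graded isomorphism and $H^m(L;\Z_2)\ne0$; the Morse datum sits at an input. These are dual and equally valid choices: the paper's Morse output is the descending manifold of a local minimum (a point), yours is the ascending manifold of a local maximum (a point), and in both cases the constraint is codimension $m$ so the dimension count gives a zero-dimensional moduli space. The paper's route is marginally more economical (no need to invoke Poincar\'e duality for the top class, and no need to verify that composition with $\phi$ is an isomorphism, only that the composition bilinear form is nonzero). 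One small remark: your closing worry about strip breaking destroying the nonvanishing is misplaced as stated --- the coefficient of $q$ in $\mu^2(\phi,x_{\max})$ is a fixed finite count defining a chain, and once that chain is known to represent a nonzero cohomology class it is automatically nonzero as a chain, which is all you use.
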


\begin{proof} For (a), as $L\cong L'$ in $D^b\sF(M)$ we have
\begin{equation*}
HF^k(L,L')\cong HF^k(L,L)\cong H^k(L;\Z_2)\quad\text{for all $k\in\Z$.}
\end{equation*}
But $H^0(L;\Z_2)$ contains the identity class $1_L$ which is nonzero
(all manifolds in this paper are assumed nonempty), so
$HF^0(L,L')\ne 0$, and thus $CF^0(L,L')\ne 0$. From above
$CF^0(L,L')$ is the $\Z_2$-vector space with basis those $p\in L\cap
L'$ with $\mu_{L,L'}(p)=0$. Part (a) follows.

For (b) we use a more complicated argument. Composition of morphisms
in the derived Fukaya category $D^b\sF(M)$ induces a map
\e
\ci : HF^0(L',L)\t HF^0(L,L')\longra HF^0(L,L),
\label{la2eq19}
\e
which as $L\cong L'$ in $D^b\sF(M)$ may be identified with the cup
product
\e
\cup:H^0(L;\Z_2)\t H^0(L;\Z_2)\longra H^0(L;\Z_2).
\label{la2eq20}
\e
Since $1_L\cup 1_L=1_L$, we see that \eq{la2eq20} is nonzero, so
that \eq{la2eq19} is nonzero. But \eq{la2eq19} is defined using the
$A_\iy$ operation
\e
\mu^2: CF^0(L',L)\t CF^0(L,L')\longra CF^0(L,L).
\label{la2eq21}
\e

\begin{figure}[htb]
\centerline{$\splinetolerance{.8pt}
\begin{xy}
0;<1mm,0mm>:
,(-20,0);(20,0)**\crv{(0,10)}
?(.95)="a"
?(.85)="b"
?(.75)="c"
?(.65)="d"
?(.55)="e"
?(.45)="f"
?(.35)="g"
?(.25)="h"
?(.15)="i"
?(.05)="j"
?(.5)="y"
,(-20,0);(-30,-6)**\crv{(-30,-5)}
,(20,0);(30,-6)**\crv{(30,-5)}
,(-20,0);(20,0)**\crv{(0,-10)}
?(.95)="k"
?(.85)="l"
?(.75)="m"
?(.65)="n"
?(.55)="o"
?(.45)="p"
?(.35)="q"
?(.25)="r"
?(.15)="s"
?(.05)="t"
?(.5)="z"
,(-20,0);(-30,6)**\crv{(-30,5)}
,(20,0);(30,6)**\crv{(30,5)}
,"a";"k"**@{.}
,"b";"l"**@{.}
,"c";"m"**@{.}
,"d";"n"**@{.}
,"e";"o"**@{.}
,"f";"p"**@{.}
,"g";"q"**@{.}
,"h";"r"**@{.}
,"i";"s"**@{.}
,"j";"t"**@{.}
,"y"*{<}
,"z"*{\bu}
,(-20,0)*{\bu}
,(-20,-3)*{p}
,(-20,-8)*{\mu_{L,L'}(p)=0}
,(20,0)*{\bu}
,(20,-3)*{q}
,(20,-8)*{\mu_{L',L}(q)=0}
,(0,-8)*{r}
,(0,0)*{\Si}
,(-32,4)*{L}
,(-32,-4)*{L'}
,(32,4)*{L}
,(32.5,-4)*{L'}
\end{xy}$}
\caption{Holomorphic triangles $\Si$ used to define $\mu^2$ in \eq{la2eq21}}
\label{la2fig3}
\end{figure}

Now $\mu^2$ in \eq{la2eq21} is defined by `counting' $J$-holomorphic
triangles $\Si$ of the form shown in Figure \ref{la2fig3}, where the
points $p,q,r$ are related to $CF^0(L,L')$, $CF^0(L',L)$ and
$CF^0(L,L)$ respectively. For the purposes of this argument, the
definition of $CF^0(L,L)$, and how this `counting' is done, are
irrelevant: all we need to know is that $\ci$ in \eq{la2eq19} is
nonzero, so $\mu^2$ in \eq{la2eq21} is nonzero, and there exists at
least one $J$-holomorphic triangle $\Si$ as in Figure \ref{la2fig3},
with $\mu_{L,L'}(p)=\mu_{L',L}(q)=0$, which implies that
$\mu_{L,L'}(q)=m$. Forgetting $r$ gives a disc $\Si$ as in Figure \ref{la2fig2}. Part (b) follows.
\end{proof}

\begin{thm} Let\/ $(M,\om),\la$ be a symplectic Calabi--Yau
Liouville manifold of dimension $2m,$ so that we can form the
derived Fukaya category $D^b\sF(M)$ of compact, exact, graded
Lagrangians $L$ in $(M,\om)$ with coefficients in $\Z_2,$ as in
Seidel\/ {\rm\cite{Seid2}}. Suppose $L,L',L''$ are compact,
oriented, graded, pairwise transversely intersecting Lagrangians in
$M$ which fit into a distinguished triangle
\e
\xymatrix@C=37pt{ L \ar[r]^\al & L' \ar[r]^\be & L'' \ar[r]^{\ga} &
L[1] }
\label{la2eq22}
\e
in $D^b\sF(M),$ with\/ $\al,\be,\ga\ne 0$. Let\/ $J$ be a generic
almost complex structure on $M$ compatible with\/ $\om$ and convex
at infinity. Then there exist points $p\in L\cap L',$ $q\in L'\cap
L'',$ $r\in L''\cap L$ with\/ $\mu_{L,L'}(p)=0,$
$\mu_{L',L''}(q)=0,$ and\/ $\mu_{L'',L}(r)=1,$ and a $J$-holomorphic
disc $\Si$ in $M$ with boundary in $L\cup L'\cup L''$ and corners
at\/ $p,q,r,$ of the form shown in Figure\/~{\rm\ref{la2fig4}}.
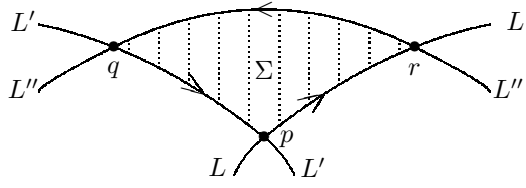
\begin{figure}[htb]
\centerline{$\splinetolerance{.8pt}
\begin{xy}
0;<1mm,0mm>:
,(-20,0);(20,0)**\crv{(0,10)}
?(.95)="a"
?(.85)="b"
?(.75)="c"
?(.65)="d"
?(.55)="e"
?(.45)="f"
?(.35)="g"
?(.25)="h"
?(.15)="i"
?(.05)="j"
?(.5)="y"
,(-20,0);(-30,-6)**\crv{(-30,-5)}
,(20,0);(30,-6)**\crv{(30,-5)}
,(20,0);(0,-12)**\crv{(10,-4)}
?(.1)="k"
?(.3)="l"
?(.5)="m"
?(.7)="n"
?(.9)="o"
?(.6)="x"
,(-20,0);(0,-12)**\crv{(-10,-4)}
?(.9)="p"
?(.7)="q"
?(.5)="r"
?(.3)="s"
?(.1)="t"
?(.6)="z"
,(0,-12);(4,-17)**\crv{(2.5,-14)}
,(0,-12);(-4,-17)**\crv{(-2.5,-14)}
,(20,0);(30,3)**\crv{(25,2)}
,(-20,0);(-30,3)**\crv{(-25,2)}
,"x";(6.5,-8.6)**\crv{(6.5,-8.6)}
,"x";(4.5,-7)**\crv{}
,"z";(-10,-3.5)**\crv{(-10,-3.5)}
,"z";(-11,-5.5)**\crv{}
,"a";"k"**@{.}
,"b";"l"**@{.}
,"c";"m"**@{.}
,"d";"n"**@{.}
,"e";"o"**@{.}
,"f";"p"**@{.}
,"g";"q"**@{.}
,"h";"r"**@{.}
,"i";"s"**@{.}
,"j";"t"**@{.}
,"y"*{<}
,(0,-12)*{\bu}
,(-20,0)*{\bu}
,(-20,-3)*{q}
,(20,0)*{\bu}
,(20,-3)*{r}
,(3,-12.3)*{p}
,(0,-3)*{\Si}
,(-32,3.5)*{L'}
,(-32,-5.5)*{L''}
,(33.4,3.5)*{L}
,(32.5,-5.5)*{L''}
,(6.5,-16)*{L'}
,(-6.2,-16)*{L}
\end{xy}$}
\caption{Holomorphic triangle $\Si$ with boundary in
$L\cup L'\cup L''$}
\label{la2fig4}
\end{figure}

\label{la2thm2}
\end{thm}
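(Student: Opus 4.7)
The strategy follows the pattern of the proof of Theorem~\ref{la2thm1}(b), but with the $\mu^2$-cup product replaced by a higher $A_\infty$-operation---specifically, the triple Massey product (Toda bracket) associated with the distinguished triangle.

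Since $\al,\be,\ga$ are nonzero in cohomology, choose chain-level cocycle representatives which, after a small Hamiltonian perturbation for transversality, are intersection points $p\in L\cap L'$, $q\in L'\cap L''$, $r\in L''\cap L$ of degrees $\mu_{L,L'}(p)=0$, $\mu_{L',L''}(q)=0$, $\mu_{L'',L}(r)=1$ (using $|\ga|=0$ in $HF^0(L'',L[1])=HF^1(L'',L)$). These are precisely the corners demanded by Figure~\ref{la2fig4}. The relations $\be\circ\al=0=\ga\circ\be$ in cohomology, forced by the distinguished triangle, give chain-level null-homotopies $\sigma_1,\sigma_2$ with $\mu^1\sigma_1=\mu^2(q,p)$ and $\mu^1\sigma_2=\mu^2(r,q)$. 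The cocycle
\begin{equation*}
\mathrm{Toda} := \mu^3(r,q,p)+\mu^2(\sigma_2,p)+\mu^2(r,\sigma_1)\in CF^0(L,L)
\end{equation*}
then represents the Toda bracket $\langle\ga,\be,\al\rangle\in HF^0(L,L)$.

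The key input is the following general fact about triangulated $A_\infty$-categories: in a distinguished triangle with $\al,\be,\ga\ne 0$, the Toda bracket $\langle\ga,\be,\al\rangle$ contains the identity class $[1_L]\in HF^0(L,L)\cong H^0(L;\Z_2)$ (equivalently, the isomorphism $L''\simeq\mathrm{Cone}(\al)$ in $D^b\sF(M)$ is witnessed by this nontrivial Toda relation). Consequently $\mathrm{Toda}$ is cohomologous to $1_L\ne 0$, hence nonzero at the chain level. Each of its three constituents counts $J$-holomorphic polygons: $\mu^3(r,q,p)$ counts 4-gons with three corners at $p,q,r$ and a fourth in a perturbed model of $L\cap L$; the $\mu^2$-terms count 3-gons whose corners involve the intersection points supporting $\sigma_1,\sigma_2$. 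The nonvanishing of $\mathrm{Toda}$ forces at least one of these contributing polygons to exist.

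The final step---isolating a $J$-holomorphic 3-gon with corners exactly at $p,q,r$ as in Figure~\ref{la2fig4}---is the main obstacle, since the relevant 3-gon moduli space has virtual dimension $m-1\ge 2$ and existence is not immediate from a standard $\mu^2$-count. I would approach this by constructing the null-homotopies $\sigma_1,\sigma_2$ carefully so that the 3-gons contributing to $\mu^2(\sigma_2,p)$ and $\mu^2(r,\sigma_1)$ have their corners at $p,q,r$, combined with a Gromov compactness / degeneration argument relating the $\mu^3$-count of 4-gons to 3-gon moduli (e.g.\ by shrinking a neck that splits off the fourth corner). Alternatively, when the cone $\mathrm{Cone}(\al)$ is realized concretely by Lagrangian surgery of $L\cup L'$ at the intersection point representing $\al$, the desired 3-gon appears as an explicit small holomorphic triangle in the surgery region; the main task is then to reduce the general case to this surgery picture via an isomorphism in $D^b\sF(M)$. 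Making this extraction precise---converting the chain-level Toda-bracket identity into a geometric statement producing the exact 3-gon of Figure~\ref{la2fig4}---is the technical heart of the proof.
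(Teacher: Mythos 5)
Your Toda-bracket approach is close in spirit to the paper's argument, but you have correctly identified a genuine gap and the paper resolves it by a different, cleaner route. The problem with the class $\langle\ga,\be,\al\rangle$ is exactly what you flag: nonvanishing of the sum $\mu^3(r,q,p)+\mu^2(\si_2,p)+\mu^2(r,\si_1)$ does not force the $\mu^3$ term alone to be nonzero, and only the $\mu^3$ term gives a disc with corners precisely at $p,q,r$; the $\mu^2$ terms count discs with a corner at some generator supporting $\si_1$ or $\si_2$ rather than at $q$ or $r$. Your suggested fixes (neck-stretching degenerations, surgery models) would need to be carried out in detail and are not obviously shorter than what the paper does.

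The paper's trick is to work inside Seidel's twisted complex formalism and to choose the chain-level morphisms so that the spurious $\mu^2$ correction terms vanish \emph{identically}, not merely up to homotopy. Concretely, since \eq{la2eq22} is distinguished, $L'\cong\mathop{\rm Cone}(L''[-1]\,{\buildrel\ga\over\longra}\,L)$ in $D^b\sF(M)$, so there are morphisms $\phi,\psi$ with $\phi\ci\psi=\id_{L'}$. At the chain level the cone is the twisted complex $(L''\op L,\ti\ga)$, and $\phi,\psi$ are represented by $\ti\phi=(0,\ti\al)$ and $\ti\psi=(\ti\be,0)$, which have a zero entry in one summand. Seidel's formula for composition in twisted complexes then gives
\begin{equation*}
\mu^2_{\rm Tw}(\ti\phi,\ti\psi)=\mu^2(0,\ti\be)+\mu^2(\ti\al,0)+\mu^3(\ti\al,\ti\ga,\ti\be)=\mu^3(\ti\al,\ti\ga,\ti\be),
\end{equation*}
because $\mu^2$ vanishes when one argument is $0$. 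Hence $\mu^3(\ti\al,\ti\ga,\ti\be)$ on its own is a cocycle representing $\id_{L'}\ne 0$ in $CF^0(L',L')$, and therefore the $\mu^3$ structure map in \eq{la2eq23} is nonzero, which produces a holomorphic rectangle with corners $p,q,r,s$ as in Figure \ref{la2fig5}; forgetting the fourth corner $s$ (associated to the $CF^0(L',L')$ output, modelled Morse-theoretically in Seidel's setup) yields the triangle of Figure \ref{la2fig4}. In short: rather than trying to extract the $\mu^3$ contribution from a Toda-bracket cocycle that mixes $\mu^2$ and $\mu^3$ terms, the paper arranges from the outset that only the $\mu^3$ term appears. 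That is the missing idea in your proposal.
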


\begin{proof} As \eq{la2eq22} is distinguished, $L'$ is
isomorphic to $\mathop{\rm Cone}(L''[-1]\,{\buildrel\ga\over
\longra}\, L)$ in $D^b\sF(M)$, giving morphisms in $D^b\sF(M)$
\begin{equation*}
\phi:\mathop{\rm Cone}(L''[-1]\,{\buildrel\ga\over \longra}\,
L)\longra L',\quad
\psi:L'\longra \mathop{\rm Cone}(L''[-1]\,{\buildrel\ga\over
\longra}\, L),
\end{equation*}
with $\phi\ci\psi=\id_{L'}$. Let $\ti\al\in CF^0(L,L')$, $\ti\be\in
CF^0(L',L'')$ and $\ti\ga\in CF^1(L'',L)$ be cocycles representing
$\al,\be,\ga$ in \eq{la2eq22}.

Using the notation of Seidel \cite[\S I]{Seid2}, $\mathop{\rm
Cone}(L''[-1]\,{\buildrel\ga\over \longra}\, L)$ is represented by
the twisted complex $(L''\op L, \ti\ga)$, and $\phi,\psi$ are
represented by
\begin{equation*}
\ti\phi=(0,\ti\al):(L''\op L,
\ti\ga)\longra L',\quad
\ti\psi=(\ti\be,0):L'\longra (L''\op L,
\ti\ga).
\end{equation*}
Thus using Seidel \cite[eq.~(3.20)]{Seid2}, $\phi\ci\psi=\id_{L'}$
is represented by
\begin{equation*}
\mu^2_{\rm Tw}(\ti\phi,\ti\psi)
=\mu^2(0,\ti\be)+\mu^2(\ti\al,0)+\mu^3(\ti\al,\ti\ga,\ti\be)=
\mu^3(\ti\al,\ti\ga,\ti\be)\in CF^0(L',L').
\end{equation*}
Since $\id_{L'}\ne 0$, it follows that
\e
\mu^3:CF^0(L,L')\t CF^1(L'',L) \t CF^0(L',L'')\longra CF^0(L',L')
\label{la2eq23}
\e
is nonzero. But \eq{la2eq23} is defined by `counting' holomorphic
rectangles of the form shown in Figure \ref{la2fig5}, where the
points $p,q,r,s$ are related to $CF^0(L,L'),\ab CF^0(L',L''),
CF^1(L'',L),CF^0(L',L')$ respectively.
\begin{figure}[htb]
\centerline{$\splinetolerance{.8pt}
\begin{xy}
0;<1mm,0mm>:
,(-20,0);(20,0)**\crv{(0,10)}
?(.95)="a"
?(.85)="b"
?(.75)="c"
?(.65)="d"
?(.55)="e"
?(.45)="f"
?(.35)="g"
?(.25)="h"
?(.15)="i"
?(.05)="j"
?(.5)="y"
,(-20,0);(-30,-6)**\crv{(-30,-5)}
,(20,0);(30,-6)**\crv{(30,-5)}
,(20,0);(0,-12)**\crv{(10,-4)}
?(.1)="k"
?(.3)="l"
?(.5)="m"
?(.7)="n"
?(.9)="o"
?(.6)="x"
,(-20,0);(0,-12)**\crv{(-10,-4)}
?(.9)="p"
?(.7)="q"
?(.5)="r"
?(.3)="s"
?(.1)="t"
?(.55)="z"
,(0,-12);(4,-17)**\crv{(2.5,-14)}
,(0,-12);(-4,-17)**\crv{(-2.5,-14)}
,(20,0);(30,3)**\crv{(25,2)}
,(-20,0);(-30,3)**\crv{(-25,2)}
,"x";(6.5,-8.6)**\crv{(6.5,-8.6)}
,"x";(4.5,-7)**\crv{}
,"a";"k"**@{.}
,"b";"l"**@{.}
,"c";"m"**@{.}
,"d";"n"**@{.}
,"e";"o"**@{.}
,"f";"p"**@{.}
,"g";"q"**@{.}
,"h";"r"**@{.}
,"i";"s"**@{.}
,"j";"t"**@{.}
,"y"*{<}
,(0,-12)*{\bu}
,(-20,0)*{\bu}
,"z"*{\bu}
,(-20,-3)*{q}
,(20,0)*{\bu}
,(20,-3)*{r}
,(-3,-12.3)*{p}
,(-10.4,-7.5)*{s}
,(0,-3)*{\Si}
,(-32,3.5)*{L'}
,(-32,-5.5)*{L''}
,(33.4,3.5)*{L}
,(32.5,-5.5)*{L''}
,(6.5,-16)*{L'}
,(-6.2,-16)*{L}
,(-19,7)*{\mu_{L',L''}(q)=0}
,(19,7)*{\mu_{L'',L}(r)=1}
,(15,-11.5)*{\mu_{L,L'}(p)=0}
\end{xy}$}
\caption{Holomorphic rectangles $\Si$ used to define $\mu^3$
in \eq{la2eq23}}
\label{la2fig5}
\end{figure}

As in the proof of Theorem \ref{la2thm1}(b), we do not care how
$CF^0(L',L')$ is defined, nor how such curves $\Si$ are `counted'.
All that matters is that as \eq{la2eq23} is nonzero, there exists at
least one $J$-holomorphic rectangle $\Si$ as in Figure
\ref{la2fig5}. Forgetting $s$ gives a triangle $\Si$ as in Figure \ref{la2fig4}. This completes the proof.
\end{proof}

To apply Theorems \ref{la2thm1} and \ref{la2thm2}, we need usable
criteria for when two Lagrangians $L,L'$ in $M$ are isomorphic as
objects of $D^b\sF(M)$, or when three Lagrangians form a
distinguished triangle in $D^b\sF(M)$. An elementary one is that
$L,L'$ are isomorphic in $D^b\sF(M)$ if they are Hamiltonian
isotopic in~$(M,\om)$.

Now the state of the art in the field allows one to give a good
description of $D^b\sF(M)$ as a triangulated category for some
simple examples of noncompact, exact symplectic manifolds $(M,\om)$,
and in particular, to give criteria for when two Lagrangians $L,L'$
are isomorphic as objects in $D^b\sF(M)$, or three Lagrangians
$L,L',L''$ from a distinguished triangle. For example, Fukaya,
Seidel and Smith \cite{FSS} show that if $Z$ is a compact,
simply-connected spin manifold and $L\subset T^*Z$ is a compact,
exact, spin, Maslov zero Lagrangian, then $L$ is isomorphic in
$D^b\sF(T^*Z)$ to the zero section $Z$ in $T^*Z$. Also, in
\S\ref{la26} we will discuss Abouzaid and Smith's description
\cite{AbSm} of $D^b\sF(M)$ when $M$ is the plumbing $T^*Y\# T^*Z$ of
two cotangent bundles~$T^*Y,T^*Z$.

One interesting feature of this area, as in \cite{FSS,AbSm}, is that
even if one is interested only in compact Lagrangians $L$ in $M$, it
may be useful to consider noncompact Lagrangians as well. The
derived Fukaya category $D^b\sF(M)$ of compact Lagrangians embeds as
a full subcategory of a larger derived Fukaya category
$D^b\sF(M)_{\rm nc}$ of noncompact Lagrangians defined using
`wrapped Floer cohomology', and one can use $D^b\sF(M)_{\rm nc}$ as
a tool to understand $D^b\sF(M)$.

\subsection{The Fukaya category of $T^*\cS^m\# T^*\cS^m$}
\label{la26}

Now let $(M,\om),\la$ be the symplectic Calabi--Yau Liouville
$2m$-manifold defined in Example \ref{la2ex4} in \S\ref{la24}
starting from transverse Lagrangian planes $\Pi_0,\Pi_{\bs\phi}$ in
$\C^m$, known in the literature as the plumbing $T^*\cS^m\#
T^*\cS^m$, or as the $m$-dimensional Milnor fibre $A_2^m$. Abouzaid
and Smith \cite[Th.~1.3]{AbSm} give a good description of the
derived Fukaya category $D^b\sF(M)$, and classify compact, exact,
graded Lagrangians in $M$ up to isomorphism in~$D^b\sF(M)$.

To state our next result, which we deduce from \cite{AbSm}, recall
that $(M,\om)$ contains two exact, graded Lagrangian spheres
$\cS_0,\cS_{\bs\phi}$, intersecting transversely in one point $0$
with $\mu_{\cS_0,\cS_{\bs\phi}}(0)=0$ and $\mu_{\cS_{\bs\phi},\cS_0}(0)=m$. Therefore by \S\ref{la25}(d), the Floer complex $\bigl(CF^*(\cS_0,\cS_{\bs\phi}),\d\bigr)$ is $\Z_2$ in degree 0 and zero in other degrees, and $\bigl(CF^*(\cS_{\bs\phi},\cS_0),\d\bigr)$ is $\Z_2$ in degree $m$ and zero in other degrees, giving
\e
\begin{split}
HF^k(\cS_0,\cS_{\bs\phi})&=\begin{cases} \{0,\al\}\cong\Z_2, & k=0,
\\ 0, & k\ne 0,\end{cases} \\
HF^k(\cS_{\bs\phi},\cS_0)&=\begin{cases} \{0,\be\}\cong\Z_2, & k=m,
\\ 0, & k\ne m.\end{cases}
\end{split}
\label{la2eq24}
\e
Also $M$ contains closed, noncompact Lagrangians
$T^*_{\iy_0}\cS_0,T^*_{\iy_{\bs\phi}}\cS_{\bs\phi}$ diffeomorphic to
$\R^m$, which intersect $\cS_0,\cS_{\bs\phi}$ transversely
in~$\iy_0,\iy_{\bs\phi}$.

\begin{thm} Let\/ $(M,\om),\la$ be as in Example\/
{\rm\ref{la2ex4},} with compact, exact, graded Lagrangians
$\cS_0,\cS_{\bs\phi}$ and noncompact Lagrangians
$T^*_{\iy_0}\cS_0,T^*_{\iy_{\bs\phi}}\cS_{\bs\phi}$. Suppose $\bar L$ is
a compact, exact, graded Lagrangian in $M$ intersecting each of\/
$T^*_{\iy_0}\cS_0,\ab T^*_{\iy_{\bs\phi}}\cS_{\bs\phi}$ transversely in
one point. Then $H^*(\bar L;\Z_2)\cong H^*(\cS^m;\Z_2),$ and either:
\begin{itemize}
\setlength{\parsep}{0pt}
\setlength{\itemsep}{0pt}
\item[{\bf(a)}] $\bar L$ fits into a distinguished triangle in
$D^b\sF(M)$
\e
\xymatrix@C=31pt{ \cS_{\bs\phi}[n-1] \ar[r] & \bar L \ar[r] &
\cS_0[n] \ar[rr]^{\al[n]} && \cS_{\bs\phi}[n] }
\label{la2eq25}
\e
for some $n\in\Z,$ where $\al$ is as in {\rm\eq{la2eq24},} or
\item[{\bf(b)}] $\bar L$ fits into a distinguished triangle in
$D^b\sF(M)$
\e
\xymatrix@C=27pt{ \cS_0[n] \ar[r] & \bar L \ar[r] &
\cS_{\bs\phi}[n\!-\!m\!+\!1] \ar[rr]^(0.55){\be[n-m+1]} &&
\cS_0[n\!+\!1] }
\label{la2eq26}
\e
for some $n\in\Z,$ where $\be$ is as in \eq{la2eq24}.
\end{itemize}
Thus, the isomorphism class of\/ $\bar L$ in $D^b\sF(M)$ is
determined by the choice of {\bf(a)} or {\bf(b)} and\/ $n\in\Z$.
Changing $n$ corresponds to changing the grading of\/~$\bar L$.
\label{la2thm3}
\end{thm}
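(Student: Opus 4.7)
The plan is to invoke the classification of compact exact graded Lagrangians in $D^b\sF(M)$ due to Abouzaid and Smith \cite[Th.~1.3]{AbSm}, and then cut down the possibilities using the geometric hypothesis on $\bar L$. The starting point is that $\cS_0$ and $\cS_{\bs\phi}$ split-generate the compact derived Fukaya category of $M$: their endomorphism $A_\iy$-algebra has as generators the units, the fundamental classes in top degree, and the two morphisms $\al\in HF^0(\cS_0,\cS_{\bs\phi})$ and $\be\in HF^m(\cS_{\bs\phi},\cS_0)$ of \eq{la2eq24}. Consequently, $\bar L$ is isomorphic in $D^b\sF(M)$ to a twisted complex whose summands are graded shifts of $\cS_0$ and $\cS_{\bs\phi}$.

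Next, I would use the noncompact Lagrangians $T^*_{\iy_0}\cS_0$ and $T^*_{\iy_{\bs\phi}}\cS_{\bs\phi}$ as Floer-theoretic multiplicity counters. The cotangent fiber $T^*_{\iy_0}\cS_0$ meets $\cS_0$ transversely at the single point $\iy_0$ and is disjoint from $\cS_{\bs\phi}$, so that with a suitable (partially wrapped, or compactly supported) Hamiltonian perturbation one has total ranks $\dim HF^*(T^*_{\iy_0}\cS_0,\cS_0)=1$ and $\dim HF^*(T^*_{\iy_0}\cS_0,\cS_{\bs\phi})=0$; the symmetric statement holds for $T^*_{\iy_{\bs\phi}}\cS_{\bs\phi}$. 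Applying these exact functors to the twisted complex representing $\bar L$, the hypothesis that $\bar L$ meets each cotangent fiber transversely in exactly one point forces $\dim HF^*(T^*_{\iy_0}\cS_0,\bar L)=\dim HF^*(T^*_{\iy_{\bs\phi}}\cS_{\bs\phi},\bar L)=1$. Since the differential of the twisted complex cannot connect two $\cS_0$ summands or two $\cS_{\bs\phi}$ summands in the right degrees, these ranks equal the number of $\cS_0$ and $\cS_{\bs\phi}$ factors respectively, so the twisted complex consists of a single $\cS_0$ and a single $\cS_{\bs\phi}$.

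A twisted complex with one summand $\cS_0[n_1]$ and one summand $\cS_{\bs\phi}[n_2]$ is simply the cone on a morphism in $HF^*(\cS_0[n_1],\cS_{\bs\phi}[n_2])$ or $HF^*(\cS_{\bs\phi}[n_2],\cS_0[n_1])$, which by \eq{la2eq24} is a multiple of a shift of $\al$ or $\be$. If the connecting morphism is a shift of $\al$ we land in case {\bf(a)} with triangle \eq{la2eq25}, while a shift of $\be$ gives case {\bf(b)} with triangle \eq{la2eq26}. The morphism must be nonzero: otherwise $\bar L\cong\cS_0[n_1]\op\cS_{\bs\phi}[n_2]$ in $D^b\sF(M)$, but then either of the pairings above would have rank at least $1$ in two distinct degrees and still be computable as the direct sum, giving total rank inconsistent with a single transverse intersection with each cotangent fiber. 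Finally, $H^*(\bar L;\Z_2)\cong H^*(\cS^m;\Z_2)$ follows from the self-Floer long exact sequence associated with either distinguished triangle together with $H^*(\cS_0;\Z_2)\cong H^*(\cS_{\bs\phi};\Z_2)\cong H^*(\cS^m;\Z_2)$, once one checks that the connecting map induced by $\al$ or $\be$ is the identity $\Z_2\to\Z_2$ in the appropriate degree.

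The principal obstacle is the rigorous handling of the noncompact test Lagrangians $T^*_{\iy_0}\cS_0,T^*_{\iy_{\bs\phi}}\cS_{\bs\phi}$, since the Floer cohomology machinery summarized in \S\ref{la25} is set up only for compact exact Lagrangians. One must either invoke the partially wrapped framework used in \cite{AbSm} (within which the cotangent fibers and spheres form a Koszul-dual pair, so that the fiber pairings literally read off multiplicities in the twisted-complex expansion), or arrange compactly supported Hamiltonian perturbations of $T^*_{\iy_0}\cS_0,T^*_{\iy_{\bs\phi}}\cS_{\bs\phi}$ inside a Liouville domain containing $\bar L$ that preserve the single transverse intersections, and verify that the resulting Floer cohomology with $\bar L$ is well-defined and independent of the perturbation. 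This `multiplicity detection' statement is implicit in the Abouzaid--Smith description of $D^b\sF(M)$, and would be cited rather than reproved.
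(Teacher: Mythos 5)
Your overall plan — expand $\bar L$ as a twisted complex on $\cS_0,\cS_{\bs\phi}$, read off multiplicities from Floer pairings with the cotangent fibers, and then identify the connecting morphism via \eq{la2eq24} — is the same as the paper's. However, there are two genuine gaps in the way you try to carry it out.

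First, the claim that ``the differential of the twisted complex cannot connect two $\cS_0$ summands or two $\cS_{\bs\phi}$ summands in the right degrees'' is false. From \S\ref{la25}(e) one has $HF^{1}(\cS_0[a],\cS_0[b])\cong H^{1+b-a}(\cS^m;\Z_2)$, which is nonzero for $b-a=-1$ or $b-a=m-1$, so differentials between $\cS_0$ summands are certainly allowed. Consequently, passing from $\dim HF^*(T^*_{\iy_0}\cS_0,\bar L)=1$ to ``exactly one $\cS_0$ summand'' requires more than naive rank counting — one needs to know that the twisted complex produced by the Abouzaid--Smith yoga is canonical, with $\cS_0$ appearing precisely $\dim HF^*(\bar L,T^*_{\iy_0}\cS_0)$ times (and in the matching degrees). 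This is exactly what the paper cites from \cite[\S 2.1]{AbSm}; it is a structural statement about the Koszul dual description, not a count you can redo from split-generation alone.

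Second, your exclusion of the split case is wrong. If $\bar L\cong\cS_0[n_1]\oplus\cS_{\bs\phi}[n_2]$ in $D^b\sF(M)$, then because $T^*_{\iy_0}\cS_0$ meets $\cS_0$ once and is disjoint from $\cS_{\bs\phi}$, you get $\dim HF^*(T^*_{\iy_0}\cS_0,\bar L)=1+0=1$, and symmetrically for $T^*_{\iy_{\bs\phi}}\cS_{\bs\phi}$. The split object exhibits \emph{precisely} the same rank-one pairing with each cotangent fiber as a genuine cone does, so the hypothesis of a single transverse intersection with each fiber cannot distinguish them. The paper instead excludes the direct sum by invoking \cite[Th.~1.3 \& Cor.~1.4]{AbSm}, which say that the Floer cohomology of any compact exact Lagrangian in $M$ is that of a sphere, hence such Lagrangians are indecomposable in $D^b\sF(M)$. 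This is an essential input, and it is also the reason the paper states $H^*(\bar L;\Z_2)\cong H^*(\cS^m;\Z_2)$ as a citation at the outset, rather than deriving it afterwards from the triangle as you propose; without that citation, you have no way to rule out $\bar L$ being decomposable.
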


\begin{proof} Abouzaid and Smith \cite{AbSm} study the derived
Fukaya category $D^b\sF(M)$ of compact Lagrangians in $M$ by
embedding it as a full subcategory of a larger derived Fukaya
category $D^b\sF(M)_{\rm nc}$ of not-necessarily-compact Lagrangians
in $M$ called the `wrapped Fukaya category', using technology of
Abouzaid and Seidel \cite{AbSe}. In our case the cotangent fibres
$T^*_{\iy_0}\cS_0, \ab T^*_{\iy_{\bs\phi}}\cS_{\bs\phi}$ are objects of
$D^b\sF(M)_{\rm nc}$, and by \cite[Th.~1.2]{AbSe}, $D^b\sF(M)_{\rm
nc}$ is generated by $T^*_{\iy_0}\cS_0, \ab
T^*_{\iy_{\bs\phi}}\cS_{\bs\phi}$. In \cite[Cor.~1.4]{AbSm} they prove
that $H^*(\bar L;\Z_2)\cong H^*(\cS^m;\Z_2)$.

Now $\bar L$ intersects $T^*_{\iy_0}\cS_0, \ab
T^*_{\iy_{\bs\phi}}\cS_{\bs\phi}$ transversely in points $p,q,$ say.
Write $n=\mu_{\bar L,T^*_{\iy_0}\cS_0}(p)\in\Z$ and $n'=\mu_{\bar
L,T^*_{\iy_{\bs\phi}}\cS_{\bs\phi}}(q)\in\Z$. Then in $D^b\sF(M)_{\rm
nc}$ we have
\begin{equation*}
HF^k(\bar L,T^*_{\iy_0}\cS_0)\cong\begin{cases} \Z_2, & k=n, \\ 0, &
k\ne n, \end{cases}\;\> HF^k(\bar
L,T^*_{\iy_{\bs\phi}}\cS_{\bs\phi})\cong\begin{cases} \Z_2, & k=n',
\\ 0, & k\ne n'. \end{cases}
\end{equation*}
As in \cite[\S 2.1]{AbSm}, $\bar L$ is isomorphic in
$D^b\sF(M)\subset D^b\sF(M)_{\rm nc}$ to a `twisted complex' built
upon the object
\begin{equation*}
HF^*(\bar L,T^*_{\iy_0}\cS_0)\ot\cS_0\op
HF^*(\bar L,T^*_{\iy_{\bs\phi}}\cS_{\bs\phi})\ot \cS_{\bs\phi}
\cong \cS_0[n]\op \cS_{\bs\phi}[n'].
\end{equation*}
The allowed morphisms in this twisted complex lie in one or both of
\begin{itemize}
\setlength{\parsep}{0pt}
\setlength{\itemsep}{0pt}
\item[(a$)'$] $HF^1(\cS_0[n],\cS_{\bs\phi}[n'])\cong
HF^{1+n'-n}(\cS_0,\cS_{\bs\phi})$, or
\item[(b$)'$] $HF^1(\cS_{\bs\phi}[n'],\cS_0[n])\cong
HF^{1+n-n'}(\cS_{\bs\phi},\cS_0)$.
\end{itemize}

By \eq{la2eq24}, the group in case (a$)'$ is only nonzero if
$n'=n-1$, and the group in (b$)'$ is only nonzero if $n'=n+1-m$.
Note that both cannot be nonzero, as $m\ge 3$. Thus we have three
possibilities:
\begin{itemize}
\setlength{\parsep}{0pt}
\setlength{\itemsep}{0pt}
\item[(a)] $n'=n-1$, and the only nonzero morphism in the twisted
complex is $\al[n]\in HF^1(\cS_0[n],\cS_{\bs\phi}[n-1])$;
\item[(b)] $n'=n+1-m$, and the only nonzero morphism in the twisted
complex is $\be[n-m+1]\in HF^1(\cS_{\bs\phi}[n+1-m],\cS_0[n])$;
or
\item[(c)] $n,n'$ are arbitrary, and all morphisms in the twisted
complex are zero.
\end{itemize}
Cases (a)--(b) imply that $\bar L$ fits into the distinguished
triangles \eq{la2eq25}--\eq{la2eq26}, respectively. Case (c) gives
$\bar L\cong \cS_0[n]\op\cS_{\bs\phi}[n']$ in $D^b\sF(M)$, but this
does not occur as it contradicts \cite[Th.~1.3 \& Cor.~1.4]{AbSm}, which imply that the Floer cohomology of exact Lagrangians in $M$ is the cohomology of a sphere, so they are indecomposable as objects of $D^b\sF(M)$. The theorem follows.
\end{proof}

From this, using Example \ref{la2ex4}, we can deduce a partial
classification of exact, Maslov zero, AC Lagrangians in $\C^m$ with
cone~$\Pi_0\cup\Pi_{\bs\phi}$:

\begin{cor} Suppose\/ $L$ is a closed, exact, Maslov zero,
Asymptotically Conical Lagrangian in $\C^m$ for $m\ge 3$ with rate
$\rho<0$ and cone $C=\Pi_0\cup\Pi_{\bs\phi}$. Then $H^*(L;\Z_2)\cong
H^*(\cS^{m-1}\t\R;\Z_2),$ so $L$ is connected. Let\/ $\th_L:L\ra\R$
be the unique choice of grading for $L$ such that\/ $\th_L\ra 0$ as
$r\ra\iy$ in the end of\/ $L$ asymptotic to $\Pi_0$. Then either:
\begin{itemize}
\setlength{\parsep}{0pt}
\setlength{\itemsep}{0pt}
\item[{\bf(a)}] $\th_L\ra \phi_1+\cdots+\phi_m-\pi$ as
$r\ra\iy$ in the end of\/ $L$ asymptotic to $\Pi_{\bs\phi},$ and
the compactification $\bar L=L\cup\{\iy_0,\iy_{\bs\phi}\}$ from
Example\/ {\rm\ref{la2ex4}} satisfies Theorem\/
{\rm\ref{la2thm3}(a)} with\/ $n=0,$ or
\item[{\bf(b)}] $\th_L\ra \phi_1+\cdots+\phi_m-(m-1)\pi$ as
$r\ra\iy$ in the end of\/ $L$ asymptotic to $\Pi_{\bs\phi},$ and
the compactification $\bar L=L\cup\{\iy_0,\iy_{\bs\phi}\}$ from
Example\/ {\rm\ref{la2ex4}} satisfies Theorem\/
{\rm\ref{la2thm3}(b)} with\/~$n=0$.
\end{itemize}
\label{la2cor}
\end{cor}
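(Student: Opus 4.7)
My plan is to apply Theorem~\ref{la2thm3} to the compactification $\bar L = L \cup \{\iy_0, \iy_{\bs\phi}\}$ of $L$ inside $M = T^*\cS^m \# T^*\cS^m$ constructed in Example~\ref{la2ex4}, and read off the topology of $L$ and the asymptotic values of $\th_L$ from its conclusions.

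First, since $L$ is closed, exact, Maslov zero and AC with rate $\rho<0$ and cone $\Pi_0 \cup \Pi_{\bs\phi}$, Example~\ref{la2ex4} shows $\bar L$ is a smooth, compact, exact, graded Lagrangian in $(M,\om)$ meeting each of $T^*_{\iy_0}\cS_0$ and $T^*_{\iy_{\bs\phi}}\cS_{\bs\phi}$ transversely at a single point. Theorem~\ref{la2thm3} then yields $H^*(\bar L;\Z_2) \cong H^*(\cS^m;\Z_2)$ and places $\bar L$ in one of the distinguished triangles \eq{la2eq25} or \eq{la2eq26} for some $n \in \Z$. To obtain $H^*(L;\Z_2)$ I would apply Mayer--Vietoris to the cover of $\bar L$ by $L$ and the open set $U_0 \cup U_{\bs\phi}$, where $U_0, U_{\bs\phi}$ are small disjoint open disk neighbourhoods of $\iy_0, \iy_{\bs\phi}$ in $\bar L$, with $L \cap (U_0 \cup U_{\bs\phi}) \simeq \cS^{m-1} \amalg \cS^{m-1}$; the hypothesis $m \ge 3$ then forces $H^k(L;\Z_2) \cong \Z_2$ exactly for $k \in \{0, m-1\}$, matching $H^*(\cS^{m-1}\t\R;\Z_2)$. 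In particular $L$ is connected, so by Definition~\ref{la2def3} its grading is unique up to $\th_L \mapsto \th_L + 2\pi n$ (with a further $\pi$-shift corresponding to reversing orientation), and is therefore uniquely normalised by the condition $\th_L \to 0$ at the $\Pi_0$-end.

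Next I would identify the integer $n$ by computing the degree $\mu_{\bar L, T^*_{\iy_0}\cS_0}(\iy_0)$ via Definition~\ref{la2def4}. By Example~\ref{la2ex4}, near $\iy_0$ the Lagrangian $\bar L$ is the graph of the differential of a smooth function $\ti f$ on $\cS_0$ vanishing to all orders at $\iy_0$, so $T_{\iy_0}\bar L = T_{\iy_0}\cS_0$; and the almost complex structure $\ti J$ of Example~\ref{la2ex4} satisfies $\ti J(T_{\iy_0}\cS_0) = T_{\iy_0}(T^*_{\iy_0}\cS_0)$. Thus in a normal form as in Definition~\ref{la2def4} the angles are $\phi'_1 = \cdots = \phi'_m = \pi/2$; together with $\th_{\bar L}(\iy_0) = 0$ (from the normalisation) and $\th_{T^*_{\iy_0}\cS_0}(\iy_0) = m\pi/2$ (from Example~\ref{la2ex4}), formula \eq{la2eq6} gives $\mu_{\bar L, T^*_{\iy_0}\cS_0}(\iy_0) = 0$. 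Since the proof of Theorem~\ref{la2thm3} defined $n$ to be precisely this degree, we conclude $n = 0$.

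The identical calculation at $\iy_{\bs\phi}$, using $\th_{T^*_{\iy_{\bs\phi}}\cS_{\bs\phi}}(\iy_{\bs\phi}) = \phi_1 + \cdots + \phi_m + m\pi/2$, gives $\mu_{\bar L, T^*_{\iy_{\bs\phi}}\cS_{\bs\phi}}(\iy_{\bs\phi}) = (\th_{\bar L}(\iy_{\bs\phi}) - \phi_1 - \cdots - \phi_m)/\pi$, which in the notation of the proof of Theorem~\ref{la2thm3} is $n'$. In case {\bf(a)} of Theorem~\ref{la2thm3} we have $n' = n - 1 = -1$, giving $\th_{\bar L}(\iy_{\bs\phi}) = \phi_1 + \cdots + \phi_m - \pi$; in case {\bf(b)}, $n' = n + 1 - m = 1 - m$, giving $\th_{\bar L}(\iy_{\bs\phi}) = \phi_1 + \cdots + \phi_m - (m-1)\pi$. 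Recognising $\th_{\bar L}(\iy_{\bs\phi})$ as the limit of $\th_L$ on the $\Pi_{\bs\phi}$-end will complete the proof. I do not foresee a substantial obstacle: once Theorem~\ref{la2thm3} is in hand the arithmetic is forced, and the only real care required is the bookkeeping needed to match the grading on $L$ with that on $\bar L$ via Example~\ref{la2ex4}, and to correctly identify $n$ and $n'$ inside the proof of Theorem~\ref{la2thm3}.
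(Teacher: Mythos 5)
Your proof is correct and follows essentially the same route as the paper's: apply Theorem~\ref{la2thm3} to $\bar L$, deduce the cohomology and connectedness of $L$, and then use the normalisation $\th_L\to 0$ at the $\Pi_0$-end to pin down $n=0$ and read off $n'$. You spell out two steps the paper leaves implicit — the Mayer--Vietoris computation of $H^*(L;\Z_2)$, and the index calculation $\mu_{\bar L,T^*_{\iy_0}\cS_0}(\iy_0)=\th_{\bar L}(\iy_0)/\pi$ via Definition~\ref{la2def4} using that $\ti f$ vanishes to second order at $\iy_0$ and $\ti J(T_{\iy_0}\cS_0)=T_{\iy_0}(T^*_{\iy_0}\cS_0)$ — but this is filling in detail rather than a different argument.
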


\begin{proof} By Example \ref{la2ex4}, $\bar
L=L\cup\{\iy_0,\iy_{\bs\phi}\}$ is a compact, exact, Maslov zero
Lagrangian in $M$, so we can apply Theorem \ref{la2thm3}. This gives
$H^*(\bar L;\Z_2)\cong H^*(\cS^m;\Z_2)$, so $H^*(L;\Z_2)\cong
H^*(\cS^{m-1}\t\R;\Z_2)$. Thus $L$ is connected, so the grading
$\th_L$ is unique up to addition of $\pi\Z$. If $\th_{\bar L}$ is
the corresponding grading of $\bar L$, then $\th_L\ra\th_{\bar
L}(\iy_0)$ as $r\ra\iy$ in the end of $L$ asymptotic to $\Pi_0$, and
$\th_L\ra\th_{\bar L}(\iy_{\bs\phi})$ as $r\ra\iy$ in the end
asymptotic to~$\Pi_{\bs\phi}$.

Now in the notation of the proof of Theorem \ref{la2thm3}, we have
$\iy_0=p$, $\iy_{\bs\phi}=q$, $\th_{\bar L}(\iy_0)=n\pi$ and
$\th_{\bar L}(\iy_{\bs\phi})=\phi_1+\cdots+\phi_m+n'\pi$. Thus
$\th_L\ra 0$ as $r\ra\iy$ in the end of $L$ asymptotic to $\Pi_0$
forces $n=0$. Then in Theorem \ref{la2thm3}(a) we have $n'=-1$, and
in Theorem \ref{la2thm3}(b) we have $n'=1-m$. The corollary follows.
\end{proof}

\section{Special Lagrangians and LMCF expanders}
\label{la3}

Next we discuss Calabi--Yau manifolds, special Lagrangian
submanifolds, Lagrangian mean curvature flow, and Lagrangian MCF
expanders. Parts of \S\ref{la32} and \S\ref{la35} are new. Some references are Harvey and Lawson \cite{HaLa} and Joyce \cite{Joyc7} on special Lagrangian geometry, Neves \cite{Neve} on Lagrangian mean curvature flow and LMCF expanders, and Lawlor \cite{Lawl} and Joyce, Lee and Tsui \cite{JLT} for the families of examples we study in \S\ref{la33} and~\S\ref{la36}.

\subsection{Calabi--Yau $m$-folds and special Lagrangian $m$-folds}
\label{la31}

\begin{dfn} A {\it Calabi--Yau $m$-fold\/} is a quadruple
$(M,J,g,\Om)$ such that $(M,J)$ is an $m$-dimensional complex
manifold, $g$ is a K\"ahler metric on $(M,J)$ with K\"ahler form
$\om$, and $\Om$ is a holomorphic $(m,0)$-form on $(M,J)$ satisfying
\e
\om^m/m!=(-1)^{m(m-1)/2}(i/2)^m\Om\w\bar\Om.
\label{la3eq1}
\e
Then $g$ is Ricci-flat and its holonomy group is a subgroup of
$\mathop{\rm SU}(m)$. In this paper we do not require $M$
to be compact, nor do we require $g$ to have holonomy $\mathop{\rm
SU}(m)$, although many authors make these restrictions.

Note that $(M,\om)$ is a {\it symplectic Calabi--Yau manifold\/} in
the sense of \S\ref{la22}, and \eq{la2eq4}, \eq{la3eq1} coincide, so
that $J,\Om$ are as in Definition \ref{la2def3}. However, for Calabi--Yau
manifolds $J$ must be integrable and $\Om$ holomorphic, which were
not required for symplectic Calabi--Yau manifolds in~\S\ref{la22}.
\label{la3def1}
\end{dfn}

Let $\C^m$ have complex coordinates $(z_1,\dots,z_m)$ and complex
structure $J$, and define a metric $g$, a real 2-form $\om$ and a
complex $m$-form $\Om$ on $\C^m$ by
\begin{align*}
g=\ms{\d z_1}+\cdots+\ms{\d z_m},\quad
\om&=\ts\frac{i}{2}(\d z_1\w\d\bar z_1+\cdots+\d z_m\w\d\bar z_m),\\
\text{and}\quad\Om&=\d z_1\w\cdots\w\d z_m.
\end{align*}
Then $(\C^m,J,g,\Om)$ is the simplest example of a Calabi--Yau
$m$-fold. We define {\it calibrations} and {\it calibrated
submanifolds}, following Harvey and Lawson~\cite{HaLa}.

\begin{dfn} Let $(M,g)$ be a Riemannian manifold. An {\it oriented
tangent $k$-plane} $V$ on $M$ is a vector subspace $V$ of some
tangent space $T_xM$ to $M$ with $\dim V=k$, equipped with an
orientation. If $V$ is an oriented tangent $k$-plane on $M$ then
$g\vert_V$ is a Euclidean metric on $V$, so combining $g\vert_V$
with the orientation on $V$ gives a natural {\it volume form}
$\vol_V$ on $V$, which is a $k$-form on~$V$.

Now let $\vp$ be a closed $k$-form on $M$. We say that $\vp$ is a
{\it calibration} on $M$ if for every oriented $k$-plane $V$ on $M$
we have $\vp\vert_V\le \vol_V$. Here $\vp\vert_V=\al\cdot\vol_V$ for
some $\al\in\R$, and $\vp\vert_V\le\vol_V$ if $\al\le 1$. Let $N$ be
an oriented submanifold of $M$ with dimension $k$. Then each tangent
space $T_xN$ for $x\in N$ is an oriented tangent $k$-plane. We say
that $N$ is a {\it calibrated submanifold\/} if
$\vp\vert_{T_xN}=\vol_{T_xN}$ for all~$x\in N$.
\label{la3def2}
\end{dfn}

It is easy to show that calibrated submanifolds are automatically
{\it minimal submanifolds} \cite[Th.~II.4.2]{HaLa}. Here is the
definition of special Lagrangian submanifolds, taken from~\cite[\S III]{HaLa}.

\begin{dfn} Let $(M,J,g,\Om)$ be a Calabi--Yau $m$-fold. Then
$\Re\Om$ and $\Im\Om$ are real $m$-forms on $M$. As in \cite[\S
III]{HaLa}, the normalization \eq{la3eq1} implies that $\Re\Om$ is a
calibration on $(M,g)$. We call an oriented real $m$-dimensional
submanifold $L$ in $M$ a {\it special Lagrangian submanifold\/} of
$M$, or {\it SL\/ $m$-fold\/} for short, if $L$ is calibrated with
respect to $\Re\Om$, in the sense of Definition~\ref{la3def2}.

More generally, we call $L$ {\it special Lagrangian with angle\/}
$e^{i\phi}\in\U(1)$ if $L$ is calibrated with respect to
$\Re(e^{-i\phi}\Om)$. If we do not specify an angle, we
mean~$e^{i\phi}=1$.

\label{la3def3}
\end{dfn}

Harvey and Lawson \cite[Cor.~III.1.11]{HaLa} give the following
alternative characterization of special Lagrangian submanifolds:

\begin{prop} Let\/ $L$ be a real\/ $m$-dimensional submanifold of a
Calabi--Yau $m$-fold\/ $(M,J,g,\Om)$. Then $L$ admits an orientation
making it into an SL\/ $m$-fold in $M$ if and only if\/
$\om\vert_L\equiv 0$ and\/~$\Im\Om\vert_L\equiv 0$.
\label{la3prop1}
\end{prop}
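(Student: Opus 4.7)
The proposition is pointwise in nature: calibration is a condition on the tangent planes $T_xL\subset T_xM$, and so is $\om\vert_L=\Im\Om\vert_L=0$. So the plan is to reduce to a linear-algebra statement about real $m$-planes in $\C^m$, via a unitary identification $T_xM\cong\C^m$ carrying $g,\om,\Om$ at $x$ to their standard models.

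The key linear-algebra fact, due to Harvey and Lawson \cite[Lem.~III.1.9]{HaLa}, is that for any oriented real $m$-plane $V\subset\C^m$ with induced volume form $\vol_V$, one has
\e
\Om\vert_V=\al\cdot\vol_V\quad\text{for some}\quad \al\in\C\;\text{with}\;\md{\al}\le 1,
\label{hlbound}
\e
and $\md{\al}=1$ if and only if $\om\vert_V=0$, i.e.\ $V$ is Lagrangian. The standard way to verify this is to pick an orthonormal basis $(v_1,\dots,v_m)$ of $V$ and write $v_j=\sum_k A_{jk}e_k$ for the standard basis $e_k$ of $\C^m$, so that $\Om\vert_V=\det(A)\cdot\vol_V$. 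Then $AA^*$ is Hermitian with trace $m$ whose eigenvalues determine $\md{\det A}\le 1$, with equality exactly when $A\in\U(m)$, equivalently $V$ is Lagrangian. I would recall this argument briefly and then use \eq{hlbound} as a black box.

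Granted \eq{hlbound}, the proof is immediate. If $L$ is oriented so that $\Re\Om\vert_{T_xL}=\vol_{T_xL}$ for all $x\in L$, then writing $\Om\vert_{T_xL}=\al(x)\cdot\vol_{T_xL}$ we have $\Re\al(x)=1$, which together with $\md{\al(x)}\le 1$ forces $\al(x)=1$. Hence $\md{\al(x)}=1$, so $\om\vert_{T_xL}=0$, and $\Im\al(x)=0$, so $\Im\Om\vert_{T_xL}=0$, at every $x$. Conversely, if $\om\vert_L\equiv 0$ and $\Im\Om\vert_L\equiv 0$, then \eq{hlbound} gives $\md{\al(x)}=1$ and $\Im\al(x)=0$, so $\al(x)\in\{\pm 1\}$ at each $x\in L$. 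By continuity the sign of $\al$ is locally constant; choose the orientation of $L$ on each connected component so that $\al\equiv +1$ there. With this orientation $\Re\Om\vert_{T_xL}=\vol_{T_xL}$ for every $x\in L$, so $L$ is calibrated by~$\Re\Om$.

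The main conceptual point, and the only step requiring real work, is the bound \eq{hlbound} and its equality case; everything else is a direct unpacking of the definitions. No global analysis is needed since the whole statement is a pointwise criterion on tangent planes.
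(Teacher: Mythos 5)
Your proof is correct and is essentially the standard Harvey--Lawson argument; note that the paper itself gives no proof of Proposition \ref{la3prop1}, but simply cites it from Harvey and Lawson \cite[Cor.~III.1.11]{HaLa}, whose proof is the one you sketch (the bound $\md{\Om\vert_V}\le\vol_V$ with equality iff $V$ is Lagrangian, followed by the pointwise phase argument). One small remark: in deducing $\md{\det A}\le 1$ from the trace of $AA^*$ being $m$, you should make explicit that this is AM--GM applied to the nonnegative eigenvalues $\mu_1,\ldots,\mu_m$ of $AA^*$, which gives $\md{\det A}^2=\prod\mu_i\le\bigl(\tfrac1m\sum\mu_i\bigr)^m=1$, with equality iff all $\mu_i=1$, i.e.\ $A\in\U(m)$, i.e.\ $V$ Lagrangian.
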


Thus SL $m$-folds are Lagrangian submanifolds in the symplectic
manifold $(M,\om)$ satisfying the extra condition that
$\Im\Om\vert_L\equiv 0$, which is how they get their name. Every SL
$m$-fold $L$ is Maslov zero and has a natural grading $\th_L=0$.
Conversely, a graded Lagrangian $L$ with $\th_L=0$ is special
Lagrangian.

The deformation theory of special Lagrangian submanifolds was studied by
McLean \cite[\S 3]{McLe}:

\begin{thm} Let\/ $(M,J,g,\Om)$ be a Calabi--Yau $m$-fold, and\/ $N$
a compact SL\/ $m$-fold in $M$. Then the moduli space
$\cM_{\sst N}$ of special Lagrangian deformations of\/ $N$ is a smooth
manifold of dimension $b^1(N),$ the first Betti number of\/~$N$.
\label{la3thm1}
\end{thm}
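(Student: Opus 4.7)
The plan is to follow McLean's approach, realizing nearby special Lagrangian deformations of $N$ as zeros of a nonlinear elliptic operator on 1-forms over $N$, and then invoking the implicit function theorem together with Hodge theory.

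First, by the Weinstein Lagrangian neighborhood theorem there is a symplectomorphism from a tubular neighborhood of $N$ in $(M,\om)$ to a neighborhood of the zero section in $T^*N$ (with its standard symplectic form), identifying $N$ with the zero section. Under this identification, Lagrangian submanifolds $C^1$-close to $N$ correspond bijectively to graphs $\Ga_\al\subset T^*N$ of \emph{closed\/} 1-forms $\al\in\Om^1(N)$ of small $C^1$-norm, so deformations of $N$ as a Lagrangian are parametrized locally by closed 1-forms near~$0$.

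Next I would impose the special Lagrangian condition. By Proposition \ref{la3prop1}, $\Ga_\al$ is SL iff $\om\vert_{\Ga_\al}=0$ (automatic for $\al$ closed, under Weinstein) and $\Im\Om\vert_{\Ga_\al}=0$. Write $\Im\Om\vert_{\Ga_\al}=F(\al)\,\vol_N$ for some $F(\al)\in C^\iy(N)$, so the SL condition becomes the scalar equation $F(\al)=0$. Since $N$ is SL we have $F(0)=0$, and the linearization at $\al=0$ is computed via $\cL_{V_\al}\Im\Om=\d(\iota_{V_\al}\Im\Om)$ (using $\d\Im\Om=0$), combined with the identity $\iota_{V_\al}\Im\Om\vert_N=*\al$, which uses the normalization \eq{la3eq1} and the fact that $\Om\vert_N=\vol_N$ on the SL submanifold $N$. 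This gives $\d F\vert_0(\al)=\pm\d^*\al$, so we may expand $F(\al)=\pm\d^*\al+Q(\al,\nabla\al)$ with $Q$ vanishing to second order at $0$.

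The final step is the implicit function theorem applied to
\[
\Phi:\ker\d\subset\Om^1(N)\longra C^\iy(N)_0,\qquad \Phi(\al)=F(\al),
\]
on suitable H\"older or Sobolev completions, where $C^\iy(N)_0$ denotes the mean-zero smooth functions. That $\Phi$ takes values in $C^\iy(N)_0$ follows from the cohomological identity $\int_N F(\al)\,\vol_N=\int_{\Ga_\al}\Im\Om=0$, which uses that $\Im\Om$ is closed and that $\Ga_\al$ is homologous to $N$ inside the Weinstein neighborhood. The linearization $\d\Phi\vert_0=\pm\d^*$ restricted to closed 1-forms is Fredholm: by Hodge theory any closed $\al$ decomposes as $\al=\al_H+\d f$ with $\al_H$ harmonic and $f$ determined up to a constant, whence $\d^*\al=\De f$. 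Thus $\ker\d\Phi\vert_0$ equals the space of harmonic 1-forms, of dimension $b^1(N)$, and $\d\Phi\vert_0$ surjects onto $C^\iy(N)_0$ because $\De:C^\iy(N)/\R\ra C^\iy(N)_0$ is an isomorphism. The implicit function theorem then yields a smooth local moduli space of dimension~$b^1(N)$.

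The main technical obstacle is the cokernel computation, that is, verifying rigorously that $\Phi$ lands in the mean-zero functions so that surjectivity of $\d\Phi\vert_0$ is genuine and no obstruction appears; this rests on the cohomological vanishing $[\Im\Om\vert_{\Ga_\al}]=0$ in $H^m(\Ga_\al;\R)$, which in turn uses closedness of $\Im\Om$ and the homological triviality of the deformation in the Weinstein neighborhood. A secondary issue is elliptic regularity, needed to conclude that Banach-space solutions are smooth and that the resulting smooth manifold structure on $\cM_{\sst N}$ is independent of the function-space completion chosen.
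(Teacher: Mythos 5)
Your proposal is essentially McLean's original proof, which the paper cites from \cite{McLe} but does not reproduce --- Theorem \ref{la3thm1} is stated as background material. The argument via the Weinstein Lagrangian neighbourhood, the linearization $\al\mapsto\pm\d^*\al$ on closed 1-forms, the Hodge-theoretic identification of the kernel with harmonic 1-forms of dimension $b^1(N)$, and the cohomological vanishing $\int_{\Ga_\al}\Im\Om=0$ (from closedness of $\Im\Om$ and homological triviality of the isotopy) guaranteeing surjectivity onto mean-zero functions are all correct and constitute the standard proof of this result.
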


\subsection{Special Lagrangian submanifolds in $\C^m$}
\label{la32}

The simplest example of a special Lagrangian submanifold of $\C^m$ is
\begin{equation*}
\R^m=\bigl\{(x_1,\ldots,x_m):x_j\in\R\bigr\}.
\end{equation*}
Any rotation of $\R^m$ by a matrix in $\mathop{\rm SU}(m)$ is also a
special Lagrangian plane in $\C^m$. An explicit class of SL
$m$-folds in $\C^m$ are the {\it special Lagrangian graphs},
\cite[\S III.2]{HaLa}. Let $U\subseteq\R^m$ be open and $f:U\ra\R$ be smooth, and define
\e
\begin{split}
\Ga_{\d f}=\bigl\{\bigl(x_1+i{\ts\frac{\pd f}{\pd x_1}}(x_1,\ldots,x_m),
\ldots,x_m+i{\ts\frac{\pd f}{\pd x_m}}(x_1,\ldots,x_m)\bigr):&\\
(x_1,\ldots,x_m)\in U\bigr\}&.
\end{split}
\label{la3eq2}
\e
Then $\Ga_{\d f}$ is a smooth, Lagrangian submanifold of
$(\C^m,\om)$. Locally, but not globally, every Lagrangian
submanifold in $\C^m$ arises from this construction. By Proposition
\ref{la3prop1}, $\Ga_{\d f}$ is special Lagrangian if
$\Im\Om\vert_{\Ga_{\d f}}=0$. This is equivalent to
\e
\ts\Im\det_{\sst\mathbb C}\bigl(I+i\Hess f\bigr)=0
\quad\text{on $U$,}
\label{la3eq3}
\e
where $\Hess f=\bigl(\frac{\pd^2f}{\pd x_i\pd x_j}\bigr)_{i,j=1}^m$
is the Hessian of $f$. Equation \eq{la3eq3} is a nonlinear
second-order elliptic partial differential equation upon
$f:\R^m\ra\R$, of Monge--Amp\`ere type. Its linearization at $f=0$
is $-\De f=0$, where $\De=-\frac{\pd^2}{\pd x_1^2}-\cdots-\frac{\pd^2}{\pd x_m^2}$ is the Laplacian on~$\R^m$.

Asymptotically Conical special Lagrangians, in the sense of
\S\ref{la23}, are an important class of SL $m$-folds in $\C^m$. In
\cite[\S 7]{Joyc2} the second author studied regularity of AC SL
$m$-folds near infinity, and showed \cite[Th.s 7.7 \& 7.11]{Joyc2}
that we may improve the asymptotic rate $\rho$.

\begin{thm} Let\/ $L$ be an AC SL $m$-fold in $\C^m,$ asymptotic
at rate $\rho<2$ to a special Lagrangian cone $C$ in $\C^m$. Define ${\scr
D}_{\sst\Si}$ to be the set of\/ $\al\in\R$ such that $\al(\al+m-2)$
is an eigenvalue of the Laplacian $\De_{\sst\Si}$ on $\Si$. Then
${\scr D}_{\sst\Si}$ is a discrete subset of\/ $\R$ with\/ ${\scr
D}_{\sst\Si}\cap(2-m,0)=\es$. If\/ $\rho,\rho'$ lie in the same
connected component of\/ $\R\sm{\scr D}_{\sst\Si}$ then $L$ is also
AC with rate~$\rho'$.
\label{la3thm2}
\end{thm}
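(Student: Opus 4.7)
The argument splits into three steps, corresponding to the three assertions. First, the discreteness of $\scr D_{\sst\Si}$ and the emptiness of $\scr D_{\sst\Si}\cap(2-m,0)$ are pure spectral theory. Since $\Si$ is a compact Riemannian manifold, $\De_{\sst\Si}$ is self-adjoint and non-negative, with discrete spectrum $0=\mu_0<\mu_1\le\mu_2\le\cdots\ra\iy$. Thus $\scr D_{\sst\Si}$ is the preimage of $\{\mu_i\}$ under the quadratic $\al\mapsto\al(\al+m-2)$; each fibre has size at most two, so $\scr D_{\sst\Si}$ is countable without accumulation points, hence discrete. For $\al\in(2-m,0)$ one has $\al<0$ and $\al+m-2>0$, so $\al(\al+m-2)<0$, which cannot be an eigenvalue of the non-negative operator $\De_{\sst\Si}$.

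Second, I would set up the regularity problem on the cone. For $r\gg 0$, each end of $L$ asymptotic to a component $\Pi=\R_+\cdot\Si_j$ of $C\sm\{0\}$ may be written as the graph over $\Pi\cap\{r>T\}$ of an exact $1$-form $\d f$, for some smooth $f:\Si_j\t(T,\iy)\ra\R$ with $|\nabla^k f|=O(r^{\rho-k})$ for all $k\ge 0$. Transcribing \eqref{la3eq3} into cone coordinates $(r,\si)$ produces a nonlinear elliptic PDE of the form $\De_C f=N(f,\nabla f,\Hess f)$, in which $\De_C=-\pd_r^2-\frac{m-1}{r}\pd_r+\frac{1}{r^2}\De_{\sst\Si}$ is the cone Laplacian and $N$ vanishes to higher order in its arguments, so $N$ has strictly better decay rate than $f$ whenever $\rho<2$. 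Separating variables gives $\De_C(r^\al\psi)=r^{\al-2}\bigl(\De_{\sst\Si}\psi-\al(\al+m-2)\psi\bigr)$; hence $\scr D_{\sst\Si}$ is precisely the set of indicial roots of $\De_C$ acting on functions on the cone.

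Third, the improvement of the asymptotic rate is a standard consequence of Lockhart--McOwen weighted elliptic theory on asymptotically conical manifolds (equivalently Melrose's $b$-calculus). For every $\de\in\R\sm\scr D_{\sst\Si}$ and $k\ge 0$, the map $\De_C:r^\de C^{k+2,\be}\ra r^{\de-2}C^{k,\be}$ on the cone end is Fredholm, and its index is locally constant in $\de$ on $\R\sm\scr D_{\sst\Si}$: between two weights in the same connected component no homogeneous mode $r^\al\psi$ is crossed, so solutions decaying at the larger weight automatically decay at the smaller one. Applied to $\De_C f=N$: if $\rho'<\rho$ lie in the same component of $\R\sm\scr D_{\sst\Si}$, I would bootstrap by feeding the current decay of $f$ into $N$ to obtain a better inhomogeneous term, inverting $\De_C$ in the appropriate weighted space to improve the decay of $f$, and iterating until the rate reaches $\rho'$; the iteration terminates in finitely many steps precisely because the interval $[\rho',\rho]$ contains no indicial roots. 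The full $C^k$ estimates for all $k$ in \eqref{la2eq8} then follow by interior Schauder estimates on the rescaled annuli $\Si_j\t[\ha r,2r]$. The reverse case $\rho<\rho'$ is immediate from the definition. The main obstacle is the nonlinear bookkeeping in weighted spaces---choosing Banach spaces in which $N$ is bounded with the required decay gain and on which $\De_C$ is Fredholm so that the iteration closes---which is precisely the analysis carried out in \cite[\S 7]{Joyc2}, on which this argument would be modelled.
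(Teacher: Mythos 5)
Your proposal is a correct sketch of the argument. Note, though, that the paper does not reprove Theorem~\ref{la3thm2}: it simply cites Theorems~7.7 and~7.11 of \cite{Joyc2}, which is precisely the reference you yourself name at the end of your proposal. So what you have written is a summary of the cited proof rather than an alternative to anything in this paper, and as such it follows the same route: your first step (spectral theory on $\Si$), your second step (writing the end of $L$ as a Lagrangian graph $\Ga_{\d f}$ over the cone and identifying $\scr D_{\sst\Si}$ with the indicial roots of $\De_C$), and your third step (Lockhart--McOwen Fredholm theory on weighted H\"older spaces together with a bootstrap on the nonlinearity) are exactly the three ingredients in Joyce's argument.

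Two small points are worth sharpening. First, the phrase ``the iteration terminates in finitely many steps precisely because the interval $[\rho',\rho]$ contains no indicial roots'' conflates two things: termination in finitely many steps comes from the fact that the cubic nonlinearity $N$ (recall from \eqref{la3eq9} that only odd powers of $\Hess f$ occur, so $N=O(r^{3\rho-6})$ once $\nabla^2f=O(r^{\rho-2})$, and $3\rho-6<\rho-2$ when $\rho<2$) improves the decay by a definite amount $2(\rho-2)<0$ at each step; the absence of indicial roots in $[\rho',\rho]$ is what guarantees that no homogeneous mode $r^\al\psi$ with $\al\in(\rho',\rho)$ obstructs the improvement, which is a separate mechanism from termination. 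Second, the weighted elliptic inversion of $\De_C$ on the cone end is not simply Fredholm on each weight; one needs the asymptotic expansion $f=\sum_i c_ir^{\al_i}\psi_i+\ti f$ with $\al_i$ running over the indicial roots in $(\rho',\rho)$ and $\ti f\in r^{\rho'}C^{k+2,\be}$, and the hypothesis $\scr D_{\sst\Si}\cap[\rho',\rho]=\es$ kills all the $c_i$. These are exactly the ``nonlinear bookkeeping'' issues you flagged, and are treated in full in \cite[\S 7]{Joyc2}, so your proposal is sound as a sketch.
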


McLean's Theorem, Theorem \ref{la3thm1}, was generalized to AC SL
$m$-folds by Marshall \cite{Mars} and Pacini \cite{Paci}. Here is a
special case of their results:

\begin{thm} Let\/ $L$ be an Asymptotically Conical SL\/ $m$-fold in
$\C^m$ for $m\ge 3$ with cone $C$ and rate $\rho\in (2-m,0),$ and
write $\cM_{\sst L}^\rho$ for the moduli space of deformations of $L$
as an AC SL\/ $m$-fold in $\C^m$ with cone $C$ and rate $\rho$. Then
$\cM_{\sst L}^\rho$ is a smooth manifold of dimension~$b^1_{\rm
cs}(L)=b^{m-1}(L)$.
\label{la3thm3}
\end{thm}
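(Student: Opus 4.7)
The plan is to adapt McLean's deformation-theoretic argument (Theorem \ref{la3thm1}) to the Asymptotically Conical setting, following the strategy of the references by Marshall and Pacini. The key idea is that nearby Lagrangian deformations of $L$ in $\C^m$ are parametrized by closed 1-forms on $L$ via the Weinstein tubular neighborhood theorem, and the special Lagrangian condition $\Im\Om|_L=0$ becomes a nonlinear elliptic PDE on such a 1-form. Because $L$ is noncompact, the analysis must be carried out in weighted Banach spaces $C^{k,\al}_\rho(L)$ consisting of tensors decaying like $r^\rho$ together with their derivatives on the conical end $\Si\t(T,\iy)$.

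First I would set up a smooth slice for the nearby AC Lagrangians: via a Weinstein-type neighborhood $U\subset T^*L$ of the zero section, identified symplectically with a tubular neighborhood of $L$ in $\C^m$, a 1-form $\al\in C^\iy(T^*L)$ with small $C^1$-norm defines a Lagrangian $L_\al\subset\C^m$ iff $\d\al=0$. Writing $\al=\d f$ (allowed in suitable weighted spaces since $\rho<0$ and by the $L^2$-Hodge theory on AC manifolds), the condition that $L_\al$ be special Lagrangian with cone $C$ and rate $\rho$ becomes a nonlinear equation
\e
F(f)\;=\;*\bigl(\Im\Om|_{L_{\d f}}\bigr)\;=\;0,
\label{la3eqplan}
\e
for $f\in C^{k+2,\al}_{\rho+2}(L)$, with $F(f)\in C^{k,\al}_\rho(L)$. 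A direct computation using \eq{la3eq3} shows that the linearization at $f=0$ is the Laplace--Beltrami operator $\De_L$ on $L$, plus lower-order terms decaying at infinity, so to leading order $\d F|_0=\De_L$.

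The next step is Fredholm theory. By the Lockhart--McOwen theory of elliptic operators on AC manifolds, $\De_L:C^{k+2,\al}_{\rho+2}\ra C^{k,\al}_\rho$ is Fredholm precisely when $\rho\notin\scr D_\sSi$, where $\scr D_\sSi$ is the discrete set of critical rates from Theorem \ref{la3thm2}. Since Theorem \ref{la3thm2} gives $\scr D_\sSi\cap(2-m,0)=\es$, for any $\rho\in(2-m,0)$ the linearization is Fredholm. Weighted Hodge theory on AC manifolds then identifies the kernel of $\De_L$ acting on functions $f$ with $f\ra 0$ at infinity (i.e.\ rate $\rho<0$) with the compactly-supported cohomology $H^0_{\rm cs}(L;\R)$; but since we only care about $\d f$, the relevant kernel for Lagrangian deformations is the space of $L^2$-harmonic exact 1-forms, which via standard Lockhart--McOwen duality and Poincar\'e--Lefschetz duality is isomorphic to $H^1_{\rm cs}(L;\R)\cong H^{m-1}(L;\R)$. (More precisely, one shows that the closed decaying 1-forms modulo exact ones with the required decay recover $H^1_{\rm cs}(L;\R)$ in this weight range.)

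The hardest point, and the one I would focus on, is verifying that the map $F$ in \eq{la3eqplan} is genuinely Fredholm and, crucially, that its linearization is surjective so that the implicit function theorem applies and $\cM_{\sst L}^\rho$ is cut out transversely. Surjectivity on the nose is not automatic, but the obstruction lies in the cokernel of $\De_L$ in weight $\rho\in(2-m,0)$, which by the Lockhart--McOwen duality pairing with weight $2-m-\rho\in(-m,0)$ vanishes because no polynomial growth harmonic functions of intermediate growth exist on the cone in this range (this is precisely the statement $\scr D_\sSi\cap(2-m,0)=\es$ used twice). Once this vanishing is established, the implicit function theorem in Banach spaces gives that $\cM_{\sst L}^\rho$ is a smooth manifold near $L$ of dimension equal to $\dim\ker\De_L=b^1_{\rm cs}(L)=b^{m-1}(L)$, completing the proof.
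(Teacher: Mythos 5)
The paper does not prove this result; it states it as a special case of Marshall \cite{Mars} and Pacini \cite{Paci}, so there is no in-paper proof to compare against. Your overall strategy — Weinstein Lagrangian neighbourhood, the SL condition as a nonlinear elliptic PDE in weighted H\"older spaces, Lockhart--McOwen Fredholm theory keyed to $\scr D_{\sSi}\cap(2-m,0)=\es$, and an implicit function theorem with identification of the kernel and vanishing of the cokernel — is indeed the approach of those references. However, there is a genuine gap in how you parametrize the deformations and compute the kernel, and as written your argument would produce the wrong dimension.

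The issue is that you parametrize nearby Lagrangians by \emph{exact} 1-forms $\d f$ with $f\in C^{k+2,\al}_{\rho+2}(L)$, and take the kernel of $\De_L$ in that space. First, the weight is off: if $L$ is AC at rate $\rho$, the 1-form $\al$ decays at rate $\rho-1$ and its potential at rate $\rho$, so the natural setup is $f\in C^{k+2,\al}_{\rho}$ with $F(f)\in C^{k,\al}_{\rho-2}$, not $\rho+2\mapsto\rho$. More seriously, with a weight that forces $f$ to decay to zero (which happens for a range of $m,\rho$ allowed by the hypotheses, e.g.\ $m\ge 5$, $\rho<-2$ so that $\rho+2<0$; and in the correct normalization, for all $\rho\in(2-m,0)$), the kernel of $\De_L$ is \emph{trivial}: a harmonic function decaying to zero on every end of a complete AC manifold is identically zero by the maximum principle. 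This would give $\dim\cM^\rho_L=0$, not $b^1_{\rm cs}(L)$. The $b^1_{\rm cs}$-many tangent directions come precisely from deformations whose potentials are asymptotic to \emph{different constants} on the different ends of $L$ (the image of $H^0(\Si)\to H^1_{\rm cs}(L)$ in the long exact sequence) together with genuinely cohomological directions when $H^1(L)\ne 0$; your parenthetical remark about ``closed decaying 1-forms modulo exact ones with the required decay'' points at this, but your choice of function space does not implement it. The correct setup works either directly with closed (not necessarily exact) 1-forms $\al\in C^{k+1,\al}_{\rho-1}$ and the operator $\d\oplus\d^*$, or with potentials in a space that explicitly adjoins the asymptotically-locally-constant functions; the identification of the resulting kernel with $H^1_{\rm cs}(L;\R)\cong H^{m-1}(L;\R)$ is the technical heart of Marshall's and Pacini's proofs and needs more care than ``$L^2$-harmonic exact 1-forms'', which in fact computes $\mathop{\rm Im}(H^1_{\rm cs}(L)\to H^1(L))$, a strictly smaller group in general (for Lawlor necks it is $0$, not $1$).
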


Note that if $L$ is an AC SL $m$-fold in $\C^m$ with cone $C$ and
rate $\rho\in(2-m,0)$, then so is $tL$ for all $t>0$, so that
$\{tL:t>0\}\subseteq\cM_{\sst L}^\rho$. If $b^{m-1}(L)=1$ then
$\dim\cM_{\sst L}^\rho=1$, so $\cM_{\sst L}^\rho=\{tL:t>0\},$ and the
only continuous deformations of $L$ as an AC SL $m$-fold are the
dilations $tL$ for $t>0$.

In a new result, we now study AC SL $m$-folds in $\C^m$ with cone~$C=\R^m$.

\begin{thm} Suppose $L$ is an SL\/ $m$-fold in $\C^m$ for $m\ge 3,$ which has an Asymptotically Conical end asymptotic to the cone $C=\R^m$ in $\C^m$ with rate $\rho<0$. (We allow $L$ also to have other AC ends asymptotic to other cones $C'$ in $\C^m;$ we are concerned only with\/ $L$ near infinity in\/~$C=\R^m$.) 

Then an open subset of\/ $L$ including the end asymptotic to $C$ may be written uniquely in the form $\Ga_{\d f}$ in {\rm\eq{la3eq2},} where $U=\bigl\{(x_1,\ldots,x_m)\in\R^m:x_1^2+\cdots+x_m^2>R^2\bigr\}$ for $R\gg 0,$ and\/ $f:U\ra\R$ is smooth, satisfies\/ {\rm\eq{la3eq3}} and\/ $\nabla^k f=O(r^{2-m-k})$ for all\/ $k=0,1,\ldots,$ and for all\/ $(x_1,\ldots,x_m)\in U$ we have
\e
f(x_1,\ldots,x_m)=r^{2-m}\cdot F(x_1/r^2,\ldots,x_m/r^2),
\label{la3eq4}
\e
where $F$ is a unique \begin{bfseries}real analytic\end{bfseries} function
\e
F:V=\bigl\{(y_1,\ldots,y_m)\in\R^m:y_1^2+\cdots+y_m^2<R^{-2}\bigr\}\longra\R.
\label{la3eq5}
\e
In particular, $F$ is well-defined and real analytic at\/ $0\in V,$ which corresponds to the limit to infinity in the cone $C=\R^m$ in $\C^m$.
\label{la3thm4}
\end{thm}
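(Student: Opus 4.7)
The plan is to put the end of $L$ in graph form, improve its decay rate to the critical value $2-m$ using Theorem \ref{la3thm2} plus a Fredholm--asymptotic-expansion step across the indicial root, identify the desired function $F$ of \eqref{la3eq4} as the Kelvin transform of $f$, and deduce its real analyticity at $y=0$ from elliptic regularity applied to the pulled-back SL equation. In detail, after an $\SU(m)$ rotation aligning $C$ with $\R^m\subset\C^m$, the bound \eqref{la2eq8} parametrises the end of $L$ over $U=\{|x|>R\}\subset\R^m$ as $(x_j+i\xi_j(x))_{j=1}^m$ with $\nabla^k\xi=O(r^{\rho-1-k})$. The Lagrangian condition forces $\d\bigl(\sum_j\xi_j\,\d x^j\bigr)=0$; since $H^1(U)=0$ for $m\ge 3$, we get $\xi_j=\pd f/\pd x_j$ for a smooth $f:U\to\R$ unique up to an additive constant, which we fix by $f\to 0$ at infinity. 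The SL condition on $\Ga_{\d f}$ is then \eqref{la3eq3}, a quasilinear elliptic PDE with linearisation $-\De f=0$.

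Since $\De_{\cS^{m-1}}$ has eigenvalues $j(j+m-2)$ for $j\ge 0$, we have $\scr D_{\sst\Si}=\Z_{\ge 0}\cup\{2-m-k:k\ge 0\}$. Theorem \ref{la3thm2} improves the decay freely within the component $(2-m,0)$ but cannot by itself reach the indicial root $2-m$. To cross this root I would use the Fredholm theory for $\De$ on AC ends from \cite[\S 7]{Joyc2}: the only decaying harmonic on $\R^m\sm\{0\}$ of rate exactly $2-m$ is a constant multiple of $r^{2-m}$, so after isolating its coefficient one obtains $f=cr^{2-m}+O(r^{\rho'})$ for some $\rho'<2-m$, and in particular $\nabla^kf=O(r^{2-m-k})$ for all $k\ge 0$.

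Next set $y_j=x_j/r^2$, so $|y|=1/r$, and define $F$ on $V\sm\{0\}$ by
\[
F(y)=r^{m-2}f(x)=|y|^{2-m}f(y/|y|^2),
\]
the Kelvin transform of $f$. The sharp decay $f=O(r^{2-m})$ is precisely what makes $F$ bounded near $0$, and the refined asymptotic $f=cr^{2-m}+O(r^{\rho'})$ gives $F(y)=c+O(|y|^{2-m-\rho'})$, so $F$ extends continuously across the origin. Pulling back \eqref{la3eq3} under the inversion $x=y/|y|^2$ combined with the Kelvin rescaling by $|y|^{2-m}$ produces a second-order quasilinear equation for $F$ on $V\sm\{0\}$. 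Although the inversion has Jacobian blowing up like $|y|^{-2m}$ and second coordinate derivatives like $|y|^{-3}$, the exponent $2-m$ in the Kelvin weight is tuned exactly so that the linearisation $\De_xf=0$ becomes $\De_yF=0$, a regular elliptic equation across $y=0$. In the nonlinear case the same cancellation occurs because \eqref{la3eq3} is built from the conformally covariant determinant $\det_\C(I+i\Hess f)$; the resulting equation for $F$ therefore has real analytic coefficients and is uniformly elliptic on all of $V$, with leading symbol $-\De_y$ at the origin.

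Finally I would invoke regularity: a bounded continuous solution of a uniformly elliptic quasilinear equation with real analytic coefficients is real analytic, by first bootstrapping to $C^\iy$ via Schauder estimates and then upgrading to real analyticity by Morrey's theorem on analytic elliptic systems, as set up in Appendix \ref{laA}. Hence $F$ extends uniquely to a real analytic function on $V$, and the identity $f(x)=r^{2-m}F(x/r^2)$ is \eqref{la3eq4}. The hard part will be the computation in the previous paragraph: verifying that the pulled-back SL equation for $F$ is nonsingular and uniformly elliptic at $y=0$. Both the $|y|^{-2m}$ Jacobian of the inversion and the $|y|^{2-m}$ Kelvin rescaling contribute singular factors there, and their mutual cancellation relies on the conformally covariant determinantal structure of \eqref{la3eq3}; the other three steps (graph form, Fredholm improvement of the decay rate, and final elliptic regularity bootstrap) are standard once this cancellation is in hand.
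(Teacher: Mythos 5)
Your overall plan (graph form, decay improvement, Kelvin transform, elliptic regularity) matches the paper's in outline, but there are two substantive points of divergence, one of which is a genuine gap.

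\textbf{Smoothness of $F$ at the origin.} The paper does not cross the indicial root $2-m$. Instead it uses Theorem \ref{la3thm2} only to get $\nabla^k f = O(r^{\rho'-k})$ for every $\rho'\in(2-m,0)$, which translates into $\nabla^k F = O(s^{-\al})$ for every $\al\in(0,m-2)$; this places $F$ in $L^p_k$ near $0$ for all $p$ and $k$, and the Sobolev Embedding Theorem then gives a unique smooth extension of $F$ over $0$. The sharp decay $\nabla^k f=O(r^{2-m-k})$ is then read off \emph{a posteriori} from \eqref{la3eq4} with $F$ smooth at $0$. Your route — establish the refined expansion $f=cr^{2-m}+O(r^{\rho'})$ first via Fredholm theory, then deduce continuity of $F$ at $0$ — can be made to work, but it requires more input (the asymptotic expansion machinery of \cite[\S 6--7]{Joyc2}) than the paper's $L^p_k$/Sobolev trick, and you still need to upgrade the resulting continuous $F$ to smooth before elliptic regularity applies. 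The paper's device is both shorter and sharper here.

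\textbf{The real gap: nonsingularity of the pulled-back equation.} You correctly identify as the hard part the claim that the transformed SL equation for $F$ is a nonsingular real analytic elliptic equation on all of $V$, including at $y=0$, and then assert this follows because \eqref{la3eq3} is ``built from the conformally covariant determinant.'' This is not a proof, and the claim of conformal covariance is not correct as stated: the Kelvin transform intertwines the Euclidean Laplacian on $\R^m$ with itself (up to the correct conformal weight), but $\Im\det_{\sst\mathbb C}(I+i\Hess f)$ is a specific Monge--Amp\`ere type nonlinearity and has no a priori conformal covariance. In the paper this is established by an explicit computation: equation \eqref{la3eq7} expresses $\Hess f$ in terms of $F$ and its derivatives under the inversion, \eqref{la3eq8} verifies that the linear part transforms exactly as $\sum_i\pd^2 F/\pd y_i^2$, and \eqref{la3eq9} organises the odd-degree nonlinear terms into expressions $P_{2k+1}$ whose total powers of $s$ are nonnegative except in the single case $m=3,\,k=1$, where a hand calculation shows an overall factor of $s^2$ appears and cancels the apparent $s^{-2}$. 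That delicate check is the crux of the proof, and it does not follow from any covariance principle; without it, your proposal is incomplete at precisely the step you flag.
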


\begin{proof} Because $L$ is Lagrangian and converges in $C^1$ to $C=\R^m$ near infinity in $C$, we may write an open set in $L$ as a graph $\Ga_\la$ for $\la$ a closed 1-form on $U$, with $U$ as in the theorem for $R>0$ sufficiently large. As $H^1(U;\R)=0$, noting that $m\ge 3$, we may write $\la=\d f$ for $f:U\ra\R$ smooth. Then $L$ special Lagrangian implies that $f$ satisfies \eq{la3eq3}. Since $L$ has rate $\rho<0$, Theorem \ref{la3thm2} shows that $L$ is asymptotic to $C$ with any rate $\rho'\in(2-m,0)$. Thus $\nabla^kf=O(r^{\rho'-k})$ as $r\ra\iy$ for all $\rho'\in(2-m,0)$ and~$k=0,1,\ldots.$

Define $F:V\sm\{0\}\ra\R$ as in \eq{la3eq5} for $(y_1,\ldots,y_m)\ne 0$ by
\e
F(y_1,\ldots,y_m)=s^{2-m}\cdot f(y_1/s^2,\ldots,y_m/s^2),
\label{la3eq6}
\e
where $s=(y_1^2+\cdots+y_m^2)^{1/2}$. Then \eq{la3eq4} and \eq{la3eq6} are equivalent, and $F:V\sm\{0\}\ra\R$ is smooth. The condition $\nabla^k f=O(r^{\rho'-k})$ for $\rho'\in(2-m,0)$ and $k=0,1,\ldots$ imply that $\nabla^kF=O(s^{-\al})$ for all $\al\in(0,m-2)$. Therefore $F$ lies in $L^p_k$ near 0 in $V$ for all $p>1$ and $k=0,1,\ldots.$ Using the Sobolev Embedding Theorem, we see that $F$ has a unique smooth extension over $0\in V$. 

An elementary calculation shows that
\ea
&\ts(\frac{\pd^2 f}{\pd x_i\pd x_j})(y_1/s^2,\ldots,y_m/s^2)=
s^{m+2}\Bigl[\ts\frac{\pd^2F}{\pd y_i\pd y_j}(\bs y)
\nonumber\\
&-2s^{-2}\ts\sum\limits_{k=1}^m\Bigl(y_iy_k\frac{\pd^2F}{\pd y_j\pd y_k}(\bs y)+y_jy_k\frac{\pd^2F}{\pd y_i\pd y_k}(\bs y)\Bigr)
+4s^{-4}\sum\limits_{k,l=1}^my_iy_jy_ky_l\frac{\pd^2F}{\pd y_k\pd y_l}(\bs y)
\nonumber\\
&\ts-ms^{-2}\Bigl(y_i\frac{\pd F}{\pd y_j}(\bs y)\!+\!y_j\frac{\pd F}{\pd y_i}(\bs y)\Bigr)\!+\!4ms^{-4}\sum\limits_{k=1}^my_iy_jy_k\frac{\pd F}{\pd y_k}(\bs y)\!-\!2s^{-2}\de_{ij}\sum\limits_{k=1}^my_k\frac{\pd F}{\pd y_k}(\bs y)
\nonumber\\
&+m(m-2)s^{-4}y_iy_jF(\bs y)-\de_{ij}(m-2)s^{-2}F(\bs y)\Bigr],
\label{la3eq7}
\ea
where $\bs y=(y_1,\ldots,y_m)$. Consider the sum of \eq{la3eq7} with $i=j$ over $i=1,\ldots,m$. Since $\sum_{i=1}^my_i^2=s^2$ and $\sum_{i=1}^m\de_{ii}=m$, we see that the sum of terms in each of the second, and third, and fourth lines cancel out. Hence
\e
\ts\sum\limits_{i=1}^m\frac{\pd^2 f}{\pd x_i^2}(y_1/s^2,\ldots,y_m/s^2)=
s^{m+2}\sum\limits_{i=1}^m\frac{\pd^2F}{\pd y_i^2}(\bs y).
\label{la3eq8}
\e

Combining \eq{la3eq3}, \eq{la3eq7} and \eq{la3eq8}, we see that
\ea
&\ts s^{-m-2}\cdot\Im\det_{\sst\mathbb C}\bigl(I+i\Hess f\bigr)(y_1/s^2,\ldots,y_m/s^2)
\nonumber\\
&\quad=\ts\sum\limits_{i=1}^m\frac{\pd^2F}{\pd y_i^2}(\bs y)+\sum\limits_{k=1}^{[(m-1)/2]}P_{2k+1}\bigl(\bs y,F(\bs y),\frac{\pd F}{\pd y_i}(\bs y),\frac{\pd^2 F}{\pd y_i\pd y_j}(\bs y)\bigr),
\label{la3eq9}\\
&\text{where $P_{2k+1}=s^{2k(m+2)}\cdot\bigl($homogeneous polynomial of degree $2k+1$ in}
\nonumber\\
&\ts s^{-4}y_iy_jy_ky_l\frac{\pd^2F}{\pd y_k\pd y_l}(\bs y),\>\;
s^{-4}y_iy_jy_k\frac{\pd F}{\pd y_k}(\bs y),\;\>s^{-4}y_iy_jF(\bs y),\;\>i,j,k,l=1,\ldots,m\bigr),
\nonumber
\ea
noting that only odd powers of $\Hess f$ occur in the expansion of \eq{la3eq3}, and all ten terms on the r.h.s.\ of \eq{la3eq7} can be written in terms of the three terms on the bottom line of \eq{la3eq9} using~$s^2=y_1^2+\cdots+y_m^2$.

We now claim that $P_{2k+1}$ in \eq{la3eq9} is a {\it real analytic\/} function of its arguments $\bs y,F(\bs y),\frac{\pd F}{\pd y_i}(\bs y),\frac{\pd^2 F}{\pd y_i\pd y_j}(\bs y)$, {\it including at\/} $\bs y=0$, for each $k=1,\ldots,[(m-1)/2]$. There are two issues to consider. Firstly, $s=(y_1^2+\cdots+y_m^2)^{1/2}$ is not a real analytic function of $\bs y=(y_1,\ldots,y_m)$ at 0, although $s^2$ is. Fortunately, only even powers of $s$ appear in $P_{2k+1}$, so this does not arise.

Secondly, negative powers of $s$ are not real analytic (or well-defined) at $\bs y=0$, so the powers $s^{-4}$ in the bottom line of \eq{la3eq9} look like a problem. The total power of $s$ in $P_{2k+1}$ is $s^{2k(m+2)}\cdot s^{-4(2k+1)}=s^{2k(m-2)-4}$. Since $m\ge 3$ and $k\ge 1$, this total power is nonnegative except in the single case $m=3$, $k=1$, when it gives $s^{-2}$. When $m=3$ we can compute $P_3$ explicitly by hand, and we find that the terms in $y_iy_j,y_iy_jy_k,y_iy_jy_ky_l$ combine to give an overall factor of $y_1^2+\cdots+y_m^2=s^2$ which cancels the $s^{-2}$. Thus, $P_{2k+1}$ is indeed real analytic for all $m\ge 3$ and~$k\ge 1$.

By \eq{la3eq3} and \eq{la3eq9}, $F$ satisfies the real analytic nonlinear elliptic equation
\e
\ts\sum\limits_{i=1}^m\frac{\pd^2F}{\pd y_i^2}(\bs y)+\sum\limits_{k=1}^{[(m-1)/2]}P_{2k+1}\bigl(\bs y,F(\bs y),\frac{\pd F}{\pd y_i}(\bs y),\frac{\pd^2 F}{\pd y_i\pd y_j}(\bs y)\bigr)=0.
\label{la3eq10}
\e
Initially this holds for $\bs y\in V\sm\{0\}\cong U$, but extends to $\bs y\in V$ by continuity. Standard results in the theory of elliptic partial differential equations now show that $F$ is real analytic on all of $V$, as the coefficients of \eq{la3eq10} are real analytic functions on all of~$V$. This completes the proof.
\end{proof}

\subsection{Lawlor necks: a family of explicit AC SL $m$-folds}
\label{la33}

The following family of special Lagrangians in $\C^m$ was first
found by Lawlor \cite{Lawl}, made more explicit by Harvey
\cite[p.~139--140]{Harv}, and discussed from a different point of
view by the second author in \cite[\S 5.4(b)]{Joyc1}. They are often
called {\it Lawlor necks}. Our treatment is based on that of Harvey.

\begin{ex} Let $m>2$ and $a_1,\ldots,a_m>0$, and define
polynomials $p,P$ by
\e
p(x)=(1+a_1x^2)\cdots(1+a_mx^2)-1
\quad\text{and}\quad P(x)=\frac{p(x)}{x^2}.
\label{la3eq11}
\e
Define real numbers $\phi_1,\ldots,\phi_m$ and $A$ by
\e
\phi_k=a_k\int_{-\iy}^\iy\frac{\d x}{(1+a_kx^2)\sqrt{P(x)}}
\quad\text{and}\quad A=\int_{-\iy}^\iy\frac{\d x}{2\sqrt{P(x)}}\,.
\label{la3eq12}
\e
Clearly $\phi_k,A>0$. But writing $\phi_1+\cdots+\phi_m$ as one integral
gives
\begin{equation*}
\phi_1+\cdots+\phi_m=\int_0^\iy\frac{p'(x)\d x}{(p(x)+1)\sqrt{p(x)}}
=2\int_0^\iy\frac{\d w}{w^2+1}=\pi,
\end{equation*}
making the substitution $w=\sqrt{p(x)}$. So $\phi_k\in(0,\pi)$ and
$\phi_1+\cdots+\phi_m=\pi$. This yields a 1-1 correspondence between
$m$-tuples $(a_1,\ldots,a_m)$ with $a_k>0$, and $(m\!+\!1)$-tuples
$(\phi_1,\ldots,\phi_m,A)$ with $\phi_k\in (0,\pi)$,
$\phi_1+\cdots+\phi_m=\pi$ and~$A>0$.

For $k=1,\ldots,m$, define a function $z_k:\R\ra\C$ by
\begin{equation*}
z_k(y)={\rm e}^{i\psi_k(y)}\sqrt{a_k^{-1}+y^2}, \quad\text{where}\quad
\psi_k(y)=a_k\int_{-\iy}^y\frac{\d x}{(1+a_kx^2)\sqrt{P(x)}}\,.
\end{equation*}
Now write ${\bs\phi}=(\phi_1,\ldots,\phi_m)$, and define a
submanifold $L_{{\bs\phi},A}$ in $\C^m$ by
\e
L_{{\bs\phi},A}=\bigl\{(z_1(y)x_1,\ldots,z_m(y)x_m): y\in\R,\;
x_k\in\R,\; x_1^2+\cdots+x_m^2=1\bigr\}.
\label{la3eq13}
\e

Then $L_{{\bs\phi},A}$ is closed, embedded, and diffeomorphic to
${\cal S}^{m-1}\t\R$, and Harvey \cite[Th.~7.78]{Harv} shows that
$L_{{\bs\phi},A}$ is special Lagrangian. Also $L_{{\bs\phi},A}$ is
Asymptotically Conical, with rate $\rho=2-m$ and cone $C$ the union
$\Pi_0\cup\Pi_{\bs\phi}$ of two special Lagrangian $m$-planes
$\Pi_0,\Pi_{\bs\phi}$ in $\C^m$ given by
\e
\Pi_0=\bigl\{(x_1,\ldots,x_m):x_j\in\R\bigr\},\;\>
\Pi_{\bs\phi}=\bigl\{({\rm e}^{i\phi_1}x_1,\ldots, {\rm
e}^{i\phi_m}x_m):x_j\in\R\bigr\}.
\label{la3eq14}
\e

Apply Theorem \ref{la3thm3} with $L=L_{{\bs\phi},A}$ and
$\rho\in(2-m,0)$. As $L\cong{\cal S}^{m-1}\t\R$ we have $b^1_{\rm
cs}(L)=1$, so Theorem \ref{la3thm3} shows that $\dim\cM_{\sst
L}^\rho=1$. This is consistent with the fact that when $\bs\phi$ is
fixed, $L_{{\bs\phi},A}$ depends on one real parameter $A>0$. Here
$\bs\phi$ is fixed in $\cM_{\sst L}^\rho$ as the cone
$C=\Pi_0\cup\Pi_{\bs\phi}$ of $L$ depends on $\bs\phi$, and all
$\hat L\in\cM_{\sst L}^\rho$ have the same cone $C$, by definition.

Now $L_{{\bs\phi},A}$ is an exact, connected Lagrangian asymptotic
with rate $\rho<0$ to $C=\Pi_0\cup\Pi_{\bs\phi}$, so Definition
\ref{la2def6} defines an invariant $A(L_{{\bs\phi},A})\in\R$, which
we will compute. Let $\la$ be the Liouville form on $\C^m$ from
\eq{la2eq2}. Using the coordinates
$\bigl((x_1,\ldots,x_m),y\bigr)\in\cS^{m-1}\t\R$ from \eq{la3eq13},
an easy calculation shows that
\begin{equation*}
\la\vert_{L_{{\bs\phi},A}}=\frac{1}{2\sqrt{P(y)}}\,\d y.
\end{equation*}
Thus in Definition \ref{la2def6}, we may take the potential
$f:L_{{\bs\phi},A}\ra\R$ to be
\begin{equation*}
f\bigl(z_1(y)x_1,\ldots,z_m(y)x_m\bigl)=\int_{-\iy}^y\frac{\d x}{2\sqrt{P(x)}}\,.
\end{equation*}

Now $y\ra -\iy$ is the limit $r\ra\iy$ in the end of
$L_{{\bs\phi},A}$ asymptotic to $\Pi_0$, and $y\ra \iy$ the limit
$r\ra\iy$ in the end of $L_{{\bs\phi},A}$ asymptotic to
$\Pi_{\bs\phi}$. So in Definition \ref{la2def6} we have $c_0=0$ and
$c_{\bs\phi}= \int_{-\iy}^\iy\d x/2\sqrt{P(x)}=A$, and $A(L_{{\bs\phi},A})=A$, which is why we chose this value for $A$ in \eq{la3eq12}. Note that $A(L_{{\bs\phi},A})>0$. We will see later that $A(L_{{\bs\phi},A})$ is the area of a certain $J$-holomorphic curve in~$\C^m$.

Now suppose $\phi_1,\ldots,\phi_m\in(0,\pi)$ with
$\phi_1+\cdots+\phi_m=(m-1)\pi$. Then
$(\pi-\phi_1)+\cdots+(\pi-\phi_m)=\pi$. Write
$\bs\phi=(\phi_1,\ldots,\phi_m)$ and
$\bs\pi-\bs\phi=(\pi-\phi_1,\ldots,\pi-\phi_m)$. For each $A>0$,
define
\e
\ti L_{{\bs\phi},-A}=\begin{pmatrix} e^{i\phi_1} & 0 & \cdots & 0
\\ 0 & e^{i\phi_2} & \cdots & 0 \\ \vdots & \vdots & \ddots & \vdots \\
0 & 0 & \cdots & e^{i\phi_m} \end{pmatrix}L_{{\bs\pi-\bs\phi},A}.
\label{la3eq15}
\e
The matrix $M$ in \eq{la3eq15} has $M\cdot \Pi_0=\Pi_{\bs\phi}$ and
$M\cdot \Pi_{\bs\pi-\bs\phi}=\Pi_0$. Since $L_{{\bs\pi-\bs\phi},A}$
is asymptotic to $\Pi_0\cup\Pi_{\bs\pi-\bs\phi}$, we see that $\ti
L_{{\bs\phi},-A}=M\cdot L_{{\bs\pi-\bs\phi},A}$ is asymptotic to
$\Pi_0\cup\Pi_{\bs\phi}$. Also, as the r\^oles of the ends
$\Pi_0,\Pi_{\bs\phi}$ are exchanged, we have
\begin{equation*}
A(\ti L_{{\bs\phi},-A})=-A(L_{{\bs\pi-\bs\phi},A})=-A<0,
\end{equation*}
which is why we chose the notation $\ti L_{{\bs\phi},-A}$.

Observe that $L_{{\bs\phi},A}$ for $\phi_1,\ldots,\phi_m\in(0,\pi)$
with $\phi_1+\cdots+\phi_m=\pi$ is exact and graded with grading
$\th_{L_{{\bs\phi},A}}=0$, and so satisfies Corollary
\ref{la2cor}(a). Similarly, $\ti L_{{\bs\phi},-A}$ for
$\phi_1,\ldots,\phi_m\in(0,\pi)$ with
$\phi_1+\cdots+\phi_m=(m-1)\pi$ is exact and graded with grading
$\th_{\ti L_{{\bs\phi},-A}}=0$, and so satisfies
Corollary~\ref{la2cor}(b).
\label{la3ex1}
\end{ex}

\subsection{Lagrangian mean curvature flow, LMCF expanders}
\label{la34}

Let $(M,g)$ be a Riemannian manifold, and $N$ a compact manifold
with $\dim N<\dim M$, and consider embeddings $\io:N\hookra M$, so
that $\io(N)$ is a submanifold of $M$. {\it Mean curvature flow\/}
({\it MCF\/}) is the study of smooth 1-parameter families $\io_t$,
$t\in[0,\ep)$ of such $\io_t:N\hookra M$ satisfying $\frac{\d\io_t}{\d t}=H_{\io_t}$, where $H_{\io_t}\in C^\iy(\io_t^*(TM))$ is the mean curvature of $\io_t:N\hookra M$. MCF is the negative gradient flow of the volume functional for compact submanifolds $\io(N)$ of $M$. It has a unique short-time solution starting from any compact submanifold~$\io:N\hookra M$.

Now let $(M,J,g,\Om)$ be a Calabi--Yau $m$-fold, and $\io:L\hookra
M$ a compact Lagrangian submanifold. Then the mean curvature of $L$
is $H=J\nabla\Th_L$, where $\Th_L:L\ra\U(1)$ is the angle function from Definition \ref{la2def3}. Thus $H$ is an infinitesimal deformation of $L$ as a Lagrangian. Smoczyk \cite{Smoc1} shows that MCF starting from $L$
preserves the Lagrangian condition, yielding a 1-parameter family of
Lagrangians $\io_t:L\hookra M$ for $t\in[0,\ep)$, which are all in
the same Hamiltonian isotopy class if $L$ is Maslov zero. This is
{\it Lagrangian mean curvature flow\/} ({\it LMCF\/}). Special
Lagrangians are stationary points of LMCF.

It is an important problem to understand the singularities which
arise in Lagrangian mean curvature flow; for a survey, see Neves
\cite{Neve}. Singularities in LMCF are generally locally modelled on
{\it soliton solutions}, Lagrangians in $\C^m$ which move by
rescaling or translation under LMCF. We will be particularly
interested in {\it LMCF expanders:}

\begin{dfn} A Lagrangian $L$ in $\C^m$ is called an {\it LMCF
expander\/} if $H=\al F^\perp$ in $C^\iy(T\C^m\vert_L)$, where $H$
is the mean curvature of $L$ and $F^\perp$ is the orthogonal
projection of the position vector $F$ (that is, the inclusion
$F:L\hookra\C^m$) to the normal bundle $TL^\perp\subset
T\C^m\vert_L$, and $\al>0$ is constant.

This implies that (after reparametrizing by diffeomorphisms of $L$)
the family of Lagrangians $L_t:=\sqrt{2\al t}\,L$ for $t\in(0,\iy)$
satisfy Lagrangian mean curvature flow. That is, LMCF expands $L$ by
dilations.

Similarly, we call $L$ an {\it LMCF shrinker\/} if $H=\al F^\perp$
for $\al<0$, and then $L_t:=\sqrt{2\al t}\,L$ for $t\in(-\iy,0)$
satisfy LMCF, so LMCF shrinks $L$ by dilations.
\label{la3def4}
\end{dfn}

LMCF expanders and LMCF shrinkers are close relatives of SL
$m$-folds in $\C^m$. Taking the limit $\al\ra 0$ in the equation
$H=\al F^\perp$ gives $H=0$. A Lagrangian $L$ with $H=0$ has angle
function $\Th_L$ constant in $\U(1)$; if $\Th_L=1$ then $L$ is
special Lagrangian, in general $L$ is special Lagrangian with some
angle $e^{i\th}$ in $\U(1)$. Thus, SL $m$-folds in $\C^m$ are
solutions of the limit as $\al\ra 0$ of the LMCF expander
equation~$H=\al F^\perp$.

Let $L$ be an LMCF expander in $\C^m$, satisfying $H=\al F^\perp$
for $\al>0$. As in \S\ref{la22} we could suppose $L$ is graded, with
phase function $\th_L:L\ra\R$ with $\Th_L=e^{i\th_L}$. Or as in
\S\ref{la23} we could suppose $L$ is exact, with potential
$f_L:L\ra\R$ with $\d f_L=\la\vert_L$. For LMCF expanders, these are
connected: we have
\begin{align*}
\d\th_L
&=\bigr(-J\nabla\th_L\cdot\om\vert_L\bigr)\vert_{T^*L}
=-\bigr(H\cdot\om\vert_L\bigr)\vert_{T^*L}
=-\bigr(\al F^\perp\cdot\om\vert_L\bigr)\vert_{T^*L}\\
&=-\al\bigr(F\cdot\om\vert_L\bigr)\vert_{T^*L}
=-\al\bigr(\ha\la\vert_L\bigr)\vert_{T^*L}
=-\ha\al\cdot \d f_L,
\end{align*}
in 1-forms on $L$, using $(\d\th_L)_d=-(J_b^c
g^{ab}(\d\th_L)_a)\om_{cd}$ in index notation for tensors as
$g^{ab}J_b^c\om_{cd}=-\de^a_d$ in the first step, $H=J\nabla\th_L$
in the second step, $H=\al F^\perp$ in the third, $L$ Lagrangian and
$(F-F^\perp)\in TL$ in the fourth, $F\cdot\om=\ha\la$ for $\la$ as
in \eq{la2eq2} in the fifth, and $\d f_L=\la\vert_L$ in the sixth.

This implies that an LMCF expander $L$ is Maslov zero (that is, $L$
admits a grading $\th_L$) if and only if it is exact, and if $L$ is
graded with grading $\th_L$, then we may take the potential $f_L$ to
be
\e
f_L=-2\th_L/\al.
\label{la3eq16}
\e

Theorem \ref{la3thm2} described the regularity of AC SL $m$-folds
near $\iy$. Similarly, Lotay and Neves \cite[Th.~3.1]{LoNe} study
the regularity of AC LMCF expanders $L$ near $\iy$. They restrict to
$L$ with cone $C=\Pi_0\cup\Pi_{\bs\phi}$, but their proof (for the
part of the result we quote) is valid for general cones $C$:

\begin{thm}[Lotay and Neves \cite{LoNe}] Let\/ $L$ be an AC LMCF
expander in $\C^m$ satisfying $H=\al F^\perp$ for $\al>0,$
asymptotic at rate $\rho<2$ to a Lagrangian cone $C$ in $\C^m$. Then in
Definition\/ {\rm\ref{la2def5}} we may choose $K,T,\vp$ with
\e
\bmd{\nabla^k(\vp-\iota)}=O\bigl(e^{-\al r^2/2}\bigr)
\quad\text{as $r\ra\iy,$ for all\/ $k=0,1,2,\ldots.$}
\label{la3eq17}
\e

\label{la3thm5}
\end{thm}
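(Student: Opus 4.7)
The plan is to write $L$ near infinity as a Lagrangian graph over the cone $C$, translate the expander equation $H=\alpha F^\perp$ into a scalar nonlinear elliptic PDE for the graph function $f$, and exploit the Gaussian-drift structure of this PDE to upgrade polynomial decay into Gaussian decay.

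First, on each end of $L$ (asymptotic to some Lagrangian plane $\Pi\subset C$) I would write $L$ as a Lagrangian graph $\Gamma_{df}=\{(x,\nabla f(x)):x\in U\}$ over $U=\{x\in\Pi:|x|>R\}$ for some $R\gg 0$. The hypothesis $|\nabla^k(\vp-\iota)|=O(r^{\rho-1-k})$ for $\rho<2$ translates directly into polynomial bounds on $\nabla^j f$, using that the Lagrangian neighbourhood of $\Pi$ in $\C^m$ is symplectomorphic to a neighbourhood of the zero section in $T^*\Pi$.

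Next, I would translate $H=\alpha F^\perp$ into a scalar PDE for $f$. Combining $H=J\nabla\theta_L$ with the fact that $\nabla f_L$ (intrinsic gradient) is proportional to $JF^\perp$, since $\la|_L=\frac12\iota_F\om|_L$, the expander equation is equivalent to $\theta_L+c\,f_L=\mathrm{const}$ on each component of $L$, for a constant $c$ proportional to $\alpha$. For a Lagrangian graph $\Gamma_{df}$ one has $\theta_L=\arg\det_{\C}(I+i\Hess f)$ and $f_L=\tfrac12 x\cdot\nabla f-f$ (up to constant), so the expander equation becomes a fully nonlinear elliptic PDE for $f$. Its linearization about $f\equiv 0$ is a drift-Laplace operator of the form
\[
\Delta f+c_1\,x\cdot\nabla f-c_2\,f=c_0,
\]
with $c_1,c_2$ proportional to $\alpha$. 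The fundamental Gaussian scale of this operator is precisely $e^{-\alpha r^2/2}$, reflecting the fact that LMCF expanders are critical points of the Huisken-type weighted area functional $\int e^{\alpha r^2/2}\,dV$ naturally associated to the self-similar flow $L_t=\sqrt{2\alpha t}\,L$.

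Then I would bootstrap the polynomial decay rate to the Gaussian rate $e^{-\alpha r^2/2}$ by either (i) a barrier argument: the function $w=A\,r^N e^{-\alpha r^2/2}$ is, for suitable $A,N$, a supersolution of the linearised expander PDE outside a sufficiently large ball, and polynomial decay of $f$ combined with the maximum principle then forces $|f|\le w$; or equivalently (ii) a weighted-energy / Huisken monotonicity argument showing that $\int_L e^{\alpha r^2/2}\,dV$ is finite, which puts $f$ in a Gaussian-weighted $L^2$ space, from which pointwise decay follows by a weighted Moser / De Giorgi iteration adapted to the drift. Finally, standard interior Schauder estimates applied to the PDE on balls of fixed radius around points with $|x|\to\infty$ transfer the exponential bound on $f$ to all derivatives $\nabla^k f$, and hence to $\vp-\iota$ and its derivatives of all orders, establishing \eqref{la3eq17}. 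The main obstacle is the passage from polynomial to Gaussian decay: the drift $c_1 x\cdot\nabla$ grows linearly at infinity, so the linearised operator is not a compact perturbation of $\Delta$ and classical Fredholm/weighted-Sobolev theory on AC ends does not apply directly; one must use the self-similar, Huisken-style framework to single out the Gaussian weight $e^{\alpha r^2/2}$ as the critical rate, and control the fully nonlinear terms involving $\Hess f$ so that the barrier or energy estimate remains robust.
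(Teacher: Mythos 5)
Your proposal matches the approach of Lotay and Neves, which this paper sketches immediately after the theorem statement: write each end of $L$ near infinity as a Lagrangian graph $\Gamma_{\d\psi}$ over the cone, cast $H=\al F^\perp$ as a drift-type scalar elliptic PDE for $\psi$ (whose linearization is the Ornstein--Uhlenbeck operator appearing in \eqref{la3eq20}), obtain the Gaussian decay $\md{\nabla\psi}=O(e^{-\al r^2/2})$ by the barrier/maximum-principle route (your option (i)), and then propagate the bound to all higher derivatives via interior elliptic estimates on balls of radius $O(1)$ at infinity. The paper's only refinement of \cite{LoNe} is precisely your last step: Lotay--Neves derived the $k>0$ cases via H\"older interpolation, giving $e^{-br^2}$ with a $k$-dependent, suboptimal $b$, whereas replacing interpolation with standard interior Schauder estimates (as you propose) gives the sharp $b=\al/2$ for all $k$.
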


Actually, Lotay and Neves only state the weaker result that $\bmd{\nabla^k(\vp-\iota)}=O\bigl(e^{-br^2}\bigr)$ for some $b>0$ depending on $k$, but their proof can be improved to give $b=\al/2$ as follows. They write $L$ near infinity in $C$ as a graph $\Ga_{\d\psi}$ for $\psi:C\sm\,\ov{\!B}_R\ra\R$ a smooth function. They consider only the case $\al=1$, and their proof by a barrier method shows that $\md{\nabla\psi}\le D e^{-r^2/2}$ on $C\sm\,\ov{\!B}_{R'}$ for some $R'>R$ and $D>0$, which gives \eq{la3eq17} when $k=0$ and $\al=1$. The case $k=0$ and $\al>0$ follows by rescaling $L$ by a homothety in~$\C^n$.

Lotay and Neves then deduce their version of \eq{la3eq17} for $k>0$ using a bound $\nm{\psi}_{C^{k+2}}=O(1)$ and interpolation inequalities in H\"older spaces. Calculation shows that their method yields $\bmd{\nabla^k(\vp-\iota)}=O\bigl(e^{-br^2}\bigr)$ for $b\le\al/4(k+2)$, which is suboptimal, but gives exponential decay, as Lotay and Neves wanted. If we instead bound the higher derivatives of $\psi$ using standard interior estimates for elliptic equations on balls of radius $O(1)$, we easily prove \eq{la3eq17} when~$k>0$.

Thus, for AC LMCF expanders $L$ the convergence to the cone $C$ is
exponential rather than polynomial. Note that \eq{la3eq17} implies
\eq{la2eq8} for all $\rho<2$, so $L$ is Asymptotically Conical with
rate any $\rho<2$ (essentially it has rate $\rho=-\iy$). 

\subsection{LMCF expanders as Lagrangian graphs}
\label{la35}

A straightforward calculation proves:

\begin{prop} Let\/ $U\subseteq\R^m$ be open and connected and\/
$f:U\ra\R$ be smooth, and define a Lagrangian $\Ga_{\d f}$ in $\C^m$
by
\e
\begin{split}
\Ga_{\d f}=\bigl\{\bigl(x_1+i{\ts\frac{\pd f}{\pd x_1}}(x_1,\ldots,x_m),
\ldots,x_m+i{\ts\frac{\pd f}{\pd x_m}}(x_1,\ldots,x_m)\bigr):&\\
(x_1,\ldots,x_m)\in U&\bigr\}.
\end{split}
\label{la3eq18}
\e
Then $\Ga_{\d f}$ satisfies the equation $H=\al F^\perp$ for
$\al\in\R$ if and only if
\e
\arg\det_{\sst\mathbb C}\bigl(I+i\Hess
f\bigr)=\al\bigl(2f-\ts\sum_{j=1}^mx_j \frac{\pd f}{\pd x_j}\bigr)+c
\label{la3eq19}
\e
for some $c\in\R$.
\label{la3prop2}
\end{prop}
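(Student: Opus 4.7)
The plan is to unpack both sides of the equation $H=\al F^\perp$ as explicit conditions on $f$. The key inputs are already available: for a Maslov-zero Lagrangian, the discussion preceding \eq{la3eq16} shows that $H=\al F^\perp$ is equivalent to $\d\th_L$ being the appropriate constant multiple of $\d f_L$. So the proof reduces to computing the phase function $\th_L$ and a potential $f_L$ of the graph $\Ga_{\d f}$ as explicit expressions in $f$, and then reading off the PDE.

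\medskip\noindent\textbf{Step 1 (phase function).} Parametrize $\Ga_{\d f}$ by $(x_1,\ldots,x_m)\in U\mapsto\bigl(x_1+i\pd_1 f,\ldots,x_m+i\pd_m f\bigr)$. Since $\d z_j=\d x_j+i\sum_k(\pd_j\pd_k f)\,\d x_k$, pulling back $\Om=\d z_1\w\cdots\w\d z_m$ gives
\[
\Om\vert_{\Ga_{\d f}}=\det\nolimits_{\mathbb C}\bigl(I+i\Hess f\bigr)\,\d x_1\w\cdots\w\d x_m.
\]
The induced volume form of $(\Ga_{\d f},g\vert_{\Ga_{\d f}})$ is $\bmd{\det_{\mathbb C}(I+i\Hess f)}\,\d x_1\w\cdots\w\d x_m$, so a continuous choice of phase function (which exists because $U$ is connected and $\det_{\mathbb C}(I+i\Hess f)$ is nowhere zero) is $\th_L=\arg\det_{\mathbb C}(I+i\Hess f)$.

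\medskip\noindent\textbf{Step 2 (potential).} Using $\la=\ha\sum_j(x_j\,\d y_j-y_j\,\d x_j)$ and $y_j=\pd_j f$ on $\Ga_{\d f}$,
\[
\la\vert_{\Ga_{\d f}}=\ts\ha\sum_{j,k}x_j\pd_j\pd_k f\,\d x_k-\ha\sum_j\pd_j f\,\d x_j.
\]
Rewriting via the identity $\d\bigl(\sum_j x_j\pd_j f\bigr)=\sum_j\pd_j f\,\d x_j+\sum_{j,k}x_j\pd_j\pd_k f\,\d x_k$ gives
\[
\la\vert_{\Ga_{\d f}}=\d\!\left[\ts\ha\sum_j x_j\pd_j f-f\right]\!,
\]
so one may take $f_L=\ha\sum_j x_j\pd_j f-f$ as a potential for $\la\vert_{\Ga_{\d f}}$.

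\medskip\noindent\textbf{Step 3 (matching).} The expander equation $H=\al F^\perp$ is equivalent, for a connected Maslov-zero Lagrangian, to $\d\th_L$ being a fixed constant multiple of $\al\,\d f_L$ (this is the computation that produces \eq{la3eq16}). Substituting the expressions from Steps 1 and 2 and integrating on the connected open set $U$ yields exactly \eq{la3eq19}, the constant $c$ absorbing the constant of integration. Conversely, if \eq{la3eq19} holds then differentiating gives the proportionality $\d\th_L\propto\al\,\d f_L$ on $\Ga_{\d f}$, and unwinding the identification $H=J\nabla\th_L$ together with $i_F\om=2\la$ forces $H=\al F^\perp$.

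\medskip

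No step should present real difficulty: the calculation is purely algebraic and local. The only points requiring care are (a)~checking that the pullback of $\la$ collapses to an exact form, which is what makes $f_L$ explicitly computable from $f$, and (b)~bookkeeping of the numerical constant relating $\d\th_L$ and $\al\,\d f_L$, so that the coefficient of $(2f-\sum_j x_j\pd_j f)$ comes out as in \eq{la3eq19}.
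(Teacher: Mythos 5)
Your plan is the natural one and, with correct bookkeeping, it works; the paper itself gives no proof (just ``a straightforward calculation''). Step~1 is right: the induced volume form on $\Ga_{\d f}$ is $\bmd{\det_{\C}(I+i\Hess f)}\,\d x_1\w\cdots\w\d x_m$, so $\th_L=\arg\det_{\C}(I+i\Hess f)$. Step~2 is right: pulling back $\la=\ha\sum_j(x_j\,\d y_j-y_j\,\d x_j)$ and using $y_j=\pd_j f$ gives $\la\vert_{\Ga_{\d f}}=\d\bigl[\ha\sum_j x_j\pd_j f-f\bigr]$.

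The one place you must be careful is precisely the constant in Step~3, and here I would caution you against lifting it from the paper's display preceding \eq{la3eq16}. That display asserts $F\cdot\om=\ha\la$, but since the Liouville vector field is $v_\la=\ha F$ and $\iota_{v_\la}\om=\la$, in fact $\iota_F\om=2\la$. The correct relation is therefore $\d\th_L=-\al\,(\iota_F\om)\vert_{T^*L}=-2\al\,\d f_L$, and substituting Steps~1--2 gives
\[
\d\bigl[\arg\det\nolimits_{\C}(I+i\Hess f)\bigr]=-2\al\,\d\bigl[\ts\ha\sum_j x_j\pd_j f-f\bigr]=\al\,\d\bigl[2f-\ts\sum_j x_j\pd_j f\bigr],
\]
which integrates on connected $U$ to exactly \eq{la3eq19}. (Had you used the paper's stated constant $\d\th_L=-\ha\al\,\d f_L$, you would land on $\arg\det_{\C}(I+i\Hess f)=\al(\ha f-\tfrac14\sum_j x_j\pd_j f)+c$, off by a factor of~$4$. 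The same factor error propagates to \eq{la3eq16} and \eq{la3eq47}; one can check directly on the Joyce--Lee--Tsui expanders that $A(L_{\bs\phi}^\al)=\int_{-\iy}^\iy\d s/(2\sqrt{P(s)})=(\pi-\phi_1-\cdots-\phi_m)/(2\al)$, not $2(\pi-\phi_1-\cdots-\phi_m)/\al$.) So: your outline is sound, but carry out the constant check from first principles via $\iota_F\om=2\la$ rather than by citation.
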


If $\al\ne 0$ then replacing $f$ by $f+c/(2\al)$, we may take $c=0$
in \eq{la3eq19}. Then linearizing \eq{la3eq19} at $f=0$ gives
\e
\ts\sum_{j=1}^m\frac{\pd^2f}{\pd x_j^2}+\al\bigl(\sum_{j=1}^mx_j \frac{\pd f}{\pd x_j}-2f\bigr)=0.
\label{la3eq20}
\e
In a new result, we study the asymptotic behaviour of solutions of equations \eq{la3eq19} and \eq{la3eq20}. Essentially the same proof would also give an asymptotic description of LMCF expanders asymptotic to other (special) Lagrangian cones $C$ in $\C^m$ with $\Si=C\cap\cS^{2m-1}$ compact.

\begin{thm}{\bf(a)} Set\/ $U=\R^m\sm\,\ov{\!B}_R$ for some $R>0,$ and suppose $f:U\ra\R$ is smooth and satisfies \eq{la3eq20} and\/ $\nm{\d f}=O(e^{-\al r^2/2})$. Then:
\begin{itemize}
\setlength{\parsep}{0pt}
\setlength{\itemsep}{0pt}
\item[{\bf(i)}] We may write 
\e
f(x_1,\ldots,x_m)=r^{-1-m}e^{-\al r^2/2}h\bigl((x_1/r,\ldots,x_m/r),r^{-2}\bigr),
\label{la3eq21}
\e
where $h\bigl((y_1,\ldots,y_m),t\bigr):\cS^{m-1}\t(0,R^{-2})\ra\R$ is real analytic and 
\e
4t^2\frac{\pd^2h}{\pd t^2}+2\bigl[\al+(m+6)t\bigr]\frac{\pd h}{\pd t}+3(m+1)h-\De_{\cS^{m-1}}h=0.
\label{la3eq22}
\e
Here $\cS^{m-1}=\bigl\{(y_1,\ldots,y_m)\in\R^m:y_1^2+\cdots+y_m^2=1\bigr\}$ is the unit sphere in $\R^m,$ and\/ $\De_{\cS^{m-1}}$ the Laplacian on $\cS^{m-1},$ with the sign convention that\/ $\De_{\R^m}=-\frac{\pd^2}{\pd x_1^2}-\cdots-\frac{\pd^2}{\pd x_m^2},$ so that\/ $\De_{\cS^{m-1}}$ has nonnegative eigenvalues.
\item[{\bf(ii)}] There are unique polynomials $p_k:\R^m\ra\R$ for $k=0,1,2,\ldots$ such that\/ $p_k$ is harmonic on $\R^m$ (i.e. $\De_{\R^m}p_k=0$) and homogeneous of degree $k,$ and 
\e
h\bigl((y_1,\ldots,y_m),t\bigr)=\sum_{k=0}^\iy p_k\vert_{\cS^{m-1}}\bigl(y_1,\ldots,y_m)\cdot A_k(t),
\label{la3eq23}
\e
where the sum converges absolutely in $C^l$ for all\/ $l\ge 0$ on any compact subset of\/ $\cS^{m-1}\t(0,R^{-2}),$ and\/ $A_k:[0,\iy)\ra\R$ is the unique smooth solution of the second order o.d.e.\
\end{itemize}
\e
4t^2\frac{\d^2}{\d t^2}A_k(t)\!+\!2\bigl[\al\!+\!(m\!+\!6)t\bigr]\frac{\d}{\d t}A_k(t)\!+\!\bigl[3(m\!+\!1)\!-\!k(m\!+\!k\!-\!2)\bigr]A_k(t)\!=\!0
\label{la3eq24}
\e
\begin{itemize}
\setlength{\parsep}{0pt}
\setlength{\itemsep}{0pt}
\item[]with\/ $A_k(0)=1$ (note in particular that\/ $A_k$ is smooth at\/ $t=0$). If\/ $k(m+k-2)>3(m+1),$ which holds if\/ $k$ is sufficiently large, then $A_k$ maps $[0,\iy)\ra[1,\iy)$ and is strictly increasing.
\item[{\bf(iii)}] The sum \eq{la3eq23} converges to $\bar h:\cS^{m-1}\t[0,R^{-2})\ra\R$ satisfying {\rm\eq{la3eq22},} where $\bar h$ is smooth on $\cS^{m-1}\t[0,R^{-2}),$ but generally not real analytic when\/ $t=0$. Also $\bar h^\iy:\cS^{m-1}\ra\R$ given by $\bar h^\iy(y_1,\ldots,y_m)=\bar h\bigl((y_1,\ldots,y_m),0\bigr)$ is real analytic, and determines $\bar h,h$ and hence $f$ uniquely.
\end{itemize}

\noindent{\bf(b)} Let\/ $L$ be an AC LMCF expander in $\C^m$ satisfying $H=\al F^\perp$ for $\al>0,$ with one end asymptotic at rate $\rho<2$ to the Lagrangian cone $C=\R^m$ in $\C^m$. Then for $R\gg 0$ we have a unique real analytic $f:U=\R^m\sm\,\ov{\!B}_R\ra\R$ satisfying \eq{la3eq19} with\/ $c=0$ such that\/ $\Ga_{\d f}\subset L,$ for $\Ga_{\d f}$ as in \eq{la3eq18}. Define $h:\cS^{m-1}\t(0,R^{-2})\ra\R$ by \eq{la3eq21}. Then as in\/ {\bf(a)(iii)\rm,} $h$ extends to a smooth\/ $\bar h:\cS^{m-1}\t[0,R^{-2})\ra\R,$ and\/ $\bar h^\iy(y_1,\ldots,y_m)=\bar h\bigl((y_1,\ldots,y_m),0\bigr)$ is real analytic.
\label{la3thm6}
\end{thm}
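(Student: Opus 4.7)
My plan is to prove part (a) first and then deduce (b) by a perturbation argument.

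For (a)(i), I would substitute the ansatz \eqref{la3eq21} into the linearization \eqref{la3eq20} using the change of coordinates $(y,t)=(x/r,r^{-2})$ on $\cS^{m-1}\t(0,R^{-2})$; a direct (if somewhat tedious) computation yields \eqref{la3eq22}. Real analyticity of $h$ on $\cS^{m-1}\t(0,R^{-2})$ then follows from standard analytic elliptic regularity, since \eqref{la3eq22} is uniformly elliptic on compact subsets there with real analytic coefficients.

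For (a)(ii), I would separate variables using the decomposition of $L^2(\cS^{m-1})$ into eigenspaces of $\De_{\cS^{m-1}}$ spanned by restrictions of harmonic homogeneous polynomials $p_k$ of degree $k$, with eigenvalue $k(k+m-2)$. Substituting $h=p_k|_{\cS^{m-1}}A(t)$ into \eqref{la3eq22} gives precisely the ODE \eqref{la3eq24}. The main obstacle of the theorem is the analysis of \eqref{la3eq24} near $t=0$, which is an \emph{irregular} singular point: the coefficient of $A_k''$ vanishes to order $2$, while that of $A_k'$ equals the nonvanishing constant $2\alpha$. With $A_k(0)=1$, successive differentiation of the ODE at $t=0$ determines all Taylor coefficients via the recursion $2\alpha(j+1)a_{j+1}=-[4j(j-1)+2(m+6)j+3(m+1)-k(k+m-2)]a_j$, but the resulting formal power series is divergent, of Gevrey class $1$. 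To construct an honest smooth solution I would pass to $s=1/t$, where \eqref{la3eq24} becomes $4s^2A_{ss}-2[\alpha s^2+(m+2)s]A_s+[3(m+1)-k(k+m-2)]A=0$ with leading behaviour $A_{ss}-(\alpha/2)A_s\approx 0$ at $s=\iy$. A fixed-point or shooting argument then produces a one-dimensional family of bounded solutions tending to constants as $s\to\iy$, which extend smoothly to $t=0$ and are uniquely specified by $A_k(0)=1$, uniqueness coming from a WKB-type comparison ruling out a second bounded solution. Strict monotonicity of $A_k$ for $k(k+m-2)>3(m+1)$ follows by a maximum-principle argument using the sign of the zeroth-order coefficient together with the normalisation $A_k(0)=1$.

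For (a)(iii), I would combine uniform-in-$k$ bounds on $A_k$ from (ii) with the rapid decay in $k$ of the spherical harmonic coefficients of $h(\cdot,t)$ at each fixed $t>0$ (a consequence of the analyticity of $h$ in $y$) to conclude that \eqref{la3eq23} converges in $C^\ell(\cS^{m-1}\t[0,R^{-2}))$ for every $\ell$, yielding the smooth extension $\bar h$. For real analyticity of $\bar h^\iy=\sum p_k|_{\cS^{m-1}}$: since \eqref{la3eq20} has real analytic coefficients, $f$ extends to a holomorphic function on a tubular complex neighbourhood of $\R^m\sm\,\ov{\!B}_R$ in $\C^m$, and Cauchy estimates in this complexification give exponential-in-$k$ decay of the $L^2$ norms of the $p_k|_{\cS^{m-1}}$, hence real analyticity of $\bar h^\iy$ on $\cS^{m-1}$. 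Uniqueness of $\bar h$ (and hence of $h,f$) from $\bar h^\iy$ is immediate, since each $A_k$ is uniquely determined by $A_k(0)$.

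Finally, for (b), Theorem \ref{la3thm5} gives $|\nabla^k(\vp-\iota)|=O(e^{-\alpha r^2/2})$ for all $k$, so the end of $L$ asymptotic to $\R^m$ is a graph $\Ga_{\d f}$ over $\R^m\sm\,\ov{\!B}_R$ for some real analytic $f$ with $|\nabla^k f|=O(e^{-\alpha r^2/2})$ satisfying \eqref{la3eq19} with $c=0$. The nonlinear equation \eqref{la3eq19} differs from \eqref{la3eq20} by terms polynomial in $f,\pd_j f,\pd_i\pd_j f$ of total degree at least $3$, arising from the higher Taylor terms of $\arg\det_{\mathbb C}(I+i\Hess f)$. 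After the substitution \eqref{la3eq21} and passage to $(y,t)$ coordinates, these nonlinear terms become real analytic perturbations of \eqref{la3eq22} vanishing to order $\ge 3$ in $(h,\pd h,\pd^2 h)$, and the arguments of (a) transfer with routine modifications: analyticity of $h$ on $\cS^{m-1}\t(0,R^{-2})$ comes from nonlinear analytic elliptic regularity, each mode satisfies a nonlinear perturbation of \eqref{la3eq24} whose leading linear part is unchanged, and the same shooting-and-Borel-resummation construction yields a smooth extension to $t=0$; real analyticity of $\bar h^\iy$ follows from complexification of \eqref{la3eq19} exactly as in (a)(iii).
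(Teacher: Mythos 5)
Your outline of the separation of variables, the identification of the ODE \eqref{la3eq24}, the divergent Taylor recursion, and the monotonicity argument for $A_k$ all track the paper closely. But there are two substantive gaps.

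\textbf{Gap 1: ruling out the singular solution.} You construct $A_k$ as the unique bounded solution of \eqref{la3eq24} with $A_k(0)=1$, but you never explain why the actual $k$-mode of $h$, namely $A_k^i(t)=\int_{\cS^{m-1}}h(\bs y,t)p_k^i(\bs y)\,\d V$, must be a constant multiple of $A_k(t)$. On $(0,R^{-2})$ each $A_k^i$ is a priori a general solution $\lambda_k^iA_k(t)+\mu_k^iB_k(t)$ of \eqref{la3eq24}, where $B_k(t)\approx Ct^de^{\al t^{-1}/2}$ as $t\ra 0$. Expansion \eqref{la3eq23} is exactly the assertion that every $\mu_k^i=0$, and this is precisely where the hypothesis $\nm{\d f}=O(e^{-\al r^2/2})$ is used: it translates, via \eqref{la3eq21}, into an $L^2(\cS^{m-1})$ bound $O(t^{-m/2})$ on $\d_{\cS^{m-1}}h(\cdot,t)$ as $t\ra 0$, which is incompatible with any nonzero $B_k$ term. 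Without this step (a)(ii) is not established, and (a)(iii) then assumes the expansion rather than proving it. Relatedly, in (iii) the exponential decay in $k$ you obtain by complexifying $f$ controls $p_k|_{\cS^{m-1}}A_k(t)$ for $t>0$, not $p_k|_{\cS^{m-1}}A_k(0)$ directly; you need the monotonicity $1\le A_k(0)\le A_k(t)$ from (ii) to pass the decay to $t=0$.

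\textbf{Gap 2: part (b) is not a routine modification.} You assert that after substitution ``each mode satisfies a nonlinear perturbation of \eqref{la3eq24} whose leading linear part is unchanged.'' This is not correct: the nonlinear terms in \eqref{la3eq19} couple all the spherical-harmonic modes, so projecting onto the $p_k^i$ direction yields an \emph{inhomogeneous} linear ODE with a right-hand side $E_k^i(t)$ depending on the full solution, not a nonlinear ODE in a single mode. The paper handles this via a variation-of-parameters formula and explicit estimates $\md{E_k^i(t)}\le Dt^{-(m+1)/2}e^{-\al t^{-1}}$ (derived from Theorem \ref{la3thm5}), which show that the correction $\mu_k^i(t)$ to $\la_k^iA_k(t)$ is $O(t^{(3-m)/2}e^{-\al t^{-1}})$, so converges and vanishes at $t=0$; the linear part then falls under (a). The integration constant $c_k^i$ appearing in this process must also be shown to vanish, which again uses the decay rate of $h$. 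None of this is ``routine,'' and your Borel-resummation/shooting description of the single-mode ODE does not transfer to this genuinely coupled inhomogeneous problem.
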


\begin{proof} Part (a)(i) is a straightforward calculation: for $f,h$ related by \eq{la3eq21}, equation \eq{la3eq20} for $f$ is equivalent to \eq{la3eq22} for $h$. Note that $r^{-1-m}e^{-\al r^2/2}$ in \eq{la3eq21} is chosen to make \eq{la3eq22} simple. In particular, if we instead used a factor $r^ae^{\be r^2}$, then \eq{la3eq22} would have extra terms in $t^{-1}h$ and $t^{-2}h$ which vanish exactly when $a=-m-1$ and $\be=-\ha\al$. That $f$ and $h$ are real analytic follows from the fact that they satisfy elliptic p.d.e.s \eq{la3eq20} and \eq{la3eq22} with real analytic coefficients. Note that the same argument does not prove the extension $\bar h$ of $h$ in (iii) is real analytic, since \eq{la3eq22} is not elliptic when~$t=0$. 

For (ii), by general facts in analysis we may expand $h$ in terms of an orthonormal basis of $L^2(\cS^{m-1})$ consisting of eigenvectors of $\De_{\cS^{m-1}}$. It is known that all eigenvectors of $\De_{\cS^{m-1}}$ are of the form $p\vert_{\cS^{m-1}}$, where $p:\R^m\ra\R$ is a harmonic polynomial homogeneous of degree $k=0,1,2,\ldots,$ and the corresponding eigenvalue of $\De_{\cS^{m-1}}$ is $\la_p=k(m+k-2)$. Choose such $p_k^1,\ldots,p_k^{d_k}$ such that $p_k^1\vert_{\cS^{m-1}},\ldots,p_k^{d_k}\vert_{\cS^{m-1}}$ is an $L^2$-orthonormal basis for the $k(m+k-2)$-eigenspace of $\De_{\cS^{m-1}}$ in $L^2(\cS^{m-1})$. 

Define $A_k^i:(0,R^{-2})\ra\R$ for $k=0,1,\ldots$ and $i=1,\ldots,d_k$ by 
\begin{equation*}
A_k^i(t)=\int_{\bs y\in\cS^{m-1}}h(\bs y,t)\,p_k^i(\bs y)\,\d V_{\cS^{m-1}},
\end{equation*}
where $\bs y=(y_1,\ldots,y_m)$. Since $h:\cS^{m-1}\t(0,R^{-2})\ra\R$ is real analytic we have
\e
h(\bs y,t\bigr)=\sum_{k=0}^\iy\sum_{i=1}^{d_k}p_k^i(\bs y)A_k^i(t),
\label{la3eq25}
\e
where the sum converges absolutely in $C^l$ for $l\ge 0$ on compact subsets of $\cS^{m-1}\t(0,R^{-2})$. As $\De_{\cS^{m-1}}(p_k^i\vert_{\cS^{m-1}})=k(m+k-2)p_k^i\vert_{\cS^{m-1}}$, substituting \eq{la3eq25} into \eq{la3eq22} and noting that the sums in \eq{la3eq25} commute with the differential operators in \eq{la3eq22} shows that each $A_k^i(t)$ satisfies the o.d.e.\ \eq{la3eq24} on~$(0,R^{-2})$.

Consider the second order linear o.d.e.\ \eq{la3eq24}. For $t\in(0,\iy)$, where the coefficient $4t^2$ of $\frac{\d^2}{\d t^2}A_k(t)$ is nonzero, this o.d.e.\ is nonsingular, so the solutions form a 2-dimensional vector space, with basis $A_k(t),B_k(t)$, say. It turns out that up to scale one solution, say $A_k(t)$, extends smoothly to $t=0$, and we determine $A_k$ uniquely by putting $A_k(0)=1$. The second solution $B_k(t)$ on $(0,\iy)$ has $\md{B_k(t)}\ra \iy$ as $t\ra 0$, where to leading order~$B_k(t)\approx Ct^{d}e^{\al t^{-1}/2}$.

We can determine the Taylor series of $A_k$ at $t=0$ from \eq{la3eq24} explicitly: putting $A_k(t)\sim\sum_{l=0}^\iy c_lt^l$, we have $c_0=1$ as $A_k(0)=1$, and taking coefficients in $t^l$ in \eq{la3eq24} yields
\begin{equation*}
2\al(l+1)c_{l+1}=-lc_l\bigl[4l+5m+11-k(m+k-2)\bigr],
\end{equation*}
giving inductive formulae for $c_1,c_2,\ldots.$ If $\frac{1}{4}[k(m+k-2)-5m-11]$ is a nonnegative integer then $c_l=0$ for $l>\frac{1}{4}[k(m+k-2)-5m-11]$, and $A_k(t)$ is a polynomial. Otherwise the $c_l$ grow factorially in $l$, so $\sum_{l=0}^\iy c_lt^l$ does not converge for any $t>0$, and $A_k$ is not real analytic at $t=0$.

Combining $\nm{\d f}=O(e^{-\al r^2/2})$ with \eq{la3eq21} implies that
\begin{equation*}
\bnm{\d_{\cS^{m-1}}h(\bs y,t)\vert_{\cS^{m-1}\t\{t\}}}{}_{g_{\cS^{m-1}}}=O(t^{-m/2}),
\end{equation*}
where $\d_{\cS^{m-1}}$ is the derivative in the $\cS^{m-1}$ directions, and the norm is taken w.r.t.\ the metric $g_{\cS^{m-1}}$ on $\cS^{m-1}$. Hence
\e
\Bigl(\int_{\bs y\in\cS^{m-1}}\bnm{\d_{\cS^{m-1}}h(\bs y,t)}{}_{g_{\cS^{m-1}}}^2\d V_{\cS^{m-1}}\Bigr)^{1/2}=O(t^{-m/2}).
\label{la3eq26}
\e
Since $A_k^i(t)$ satisfies \eq{la3eq24} on $(0,R^{-2})$ we have $A_k^i(t)=\la_k^iA_k(t)+\mu_k^iB_k(t)$ for some $\la_k^i,\mu_k^i\in\R$. If $\mu_k^i\ne 0$ for $k>0$ then $B_k(t)\approx Ct^{d}e^{\al t^{-1}/2}$ implies that the l.h.s.\ of \eq{la3eq26} must be at least $O(t^{d}e^{\al t^{-1}/2})$ as $t\ra 0$, contradicting \eq{la3eq26}. Hence $\mu_k^i=0$ for $k>0$, and a similar argument with $\frac{\d h}{\d t}$ shows that $\mu_0^i=0$, giving $A_k^i(t)=\la_k^iA_k(t)$ for all $k,i$. Define $p_k=\sum_{i=1}^{d_k}\la_k^ip_k^i$. Then $p_k$ is harmonic on $\R^m$ and homogeneous of degree $k$, and \eq{la3eq25} implies \eq{la3eq23}.

Suppose $k(m+k-2)>3(m+1)$, so that the coefficient of $A_k$ in \eq{la3eq24} is negative. Setting $t=0$ and $A_k(0)=1$ in \eq{la3eq24} shows that $\frac{\d}{\d t}A_k(0)>0$. Thus $A_k(t),\frac{\d}{\d t}A_k(t)$ are both positive at $t=0$ and for small $t>0$ in $[0,\iy)$. Suppose for a contradiction that $t=T<\iy$ is the least $t>0$ with $\frac{\d}{\d t}A_k(T)=0$. Then $A_k(T)>0$ as $A_k(0)=1$ and $A_k$ is strictly increasing on $[0,T]$, so \eq{la3eq24} gives $\frac{\d^2}{\d t^2}A_k(T)>0$, and $A_k$ has a local minimum at $t=T$, contradicting $A_k$ strictly increasing on $[0,T]$. Thus no such $T$ exists, and $\frac{\d}{\d t}A_k(t)>0$ for all $t\in[0,\iy)$, so $A_k$ is strictly increasing on $[0,\iy)$. This completes the proof of~(ii). 

Later we will need the fact that if $k(m+k-2)>3(m+1)$ and $t\ge 0$ then
\e
0\le \ts A_k(t)^{-1}\frac{\d}{\d t}A_k(t)\le \bigl[k(m+k-2)-3(m+1)\bigr]/2\al.
\label{la3eq27}
\e
To see this, note that the left hand inequality holds as $A_k$ is increasing on $[0,\iy)$ with $A_k(0)=1$, and if the right hand inequality fails at $t\ge 0$ then \eq{la3eq24} implies that $\frac{\d^2}{\d t^2}A_k(t)<0$, so $A_k(t)^{-1}\frac{\d}{\d t}A_k(t)$ is decreasing at such $t$, but equality holds at $t=0$ in the right hand inequality.

For (iii), note that as for $k$ sufficiently large $A_k$ maps $[0,\iy)\ra[1,\iy)$ and is strictly increasing, if \eq{la3eq23} converges absolutely at $(\bs y,t)$, then it also converges absolutely at $(\bs y,t')$ for any $t'$ with $0\le t'\le t$, including $t=0$. Define $\bar h:\cS^{m-1}\t[0,R^{-2})\ra\R$ to be the limit of \eq{la3eq23}. Extending this argument to derivatives and using (ii) shows that the sum \eq{la3eq23} converges absolutely to $\bar h$ in $C^l$ for all $l\ge 0$ on any compact subset of $\cS^{m-1}\t[0,R^{-2})$. Thus $\bar h$ is smooth on $\cS^{m-1}\t[0,R^{-2})$. On $\cS^{m-1}\t(0,R^{-2})$ we have $\bar h=h$, which satisfies \eq{la3eq22}, so by continuity $\bar h$ satisfies \eq{la3eq22} on $\cS^{m-1}\t[0,R^{-2})$. Since the $A_k(t)$ are generally not real analytic at $t=0$, $\bar h$ is generally not real analytic at~$t=0$.

As $h(-,1):\cS^{m-1}\ra\R$ mapping $(y_1,\ldots,y_m)\mapsto h\bigl((y_1,\ldots,y_m),1\bigr)$ is real analytic, it extends to a holomorphic function $h(-,1)_\C:U\ra\C$ on an open neighbourhood $U$ of $\cS^{m-1}$ in its complexification $\cS^{m-1}_\C=\bigl\{(y_1,\ldots,y_m)\in\C^m:y_1^2+\cdots+y_m^2=1\bigr\}$. Making $U$ smaller if necessary, $h(-,1)_\C$ is given by the sum \eq{la2eq23} for $(y_1,\ldots,y_m)\in U$ and $t=1$, where the sum converges absolutely with all derivatives on compact subsets of $U$. Part (ii) implies that $1\le A_k(0)\le A_k(1)$ for $k\gg 0$, so absolute convergence with all derivatives of the sum \eq{la2eq23} on compact subsets of $U$ when $t=1$, implies 
absolute convergence with all derivatives of \eq{la2eq23} on compact subsets of $U$ when $t=0$. Therefore $\bar h^\iy$ extends to a holomorphic function on the open neighbourhood $U$ of $\cS^{m-1}$ in $\cS^{m-1}_\C$, and $\bar h^\iy:\cS^{m-1}\ra\R$ is real analytic.

Regard $\bar h^\iy$ as an element of $L^2(\cS^{m-1})$, and consider the complete orthogonal decomposition of $L^2(\cS^{m-1})$ into eigenspaces of the self-adjoint operator $\De_{\cS^{m-1}}$. Then $p_k\vert_{\cS^{m-1}}$ lies in the eigenspace of $\De_{\cS^{m-1}}$ with eigenvalue $k(m+k-2)$, as $p_k$ is homogeneous of degree $k$ and harmonic on $\R^m$. So \eq{la3eq23} and $A_k(0)=1$ imply that $p_k\vert_{\cS^{m-1}}$ is the component of $\bar h^\iy\in L^2(\cS^{m-1})$ in the $k(m+k-2)$-eigenspace of $\De_{\cS^{m-1}}$. Hence $\bar h^\iy$ determines the polynomials $p_k$ for $k=0,1,\ldots,$ and thus by \eq{la3eq23} and \eq{la3eq21} determines $\bar h,h$ and $f$. This proves~(iii).

For part (b), let $L$ be as in the theorem. Since $L$ has an end AC at rate $\rho<2$ to $C=\R^m$, as in the proof of Theorem \ref{la3thm4} for $R\gg 0$ there exists $f:U=\R^m\sm\,\ov{\!B}_R\ra\R$ such that $\Ga_{\d f}\subset L$. Theorem \ref{la3thm5} gives $\nm{\nabla^k\d f}=O(e^{-\al r^2/2})$ for $k=0,1,\ldots.$ Adding a constant to $f$ so that $\lim_{r\ra\iy}f=0$, the case $k=0$ implies that $\md{f}=O(e^{-\al r^2/2})$. Since $L\supset\Ga_{\d f}$ satisfies $H=\al F^\perp$, Proposition \ref{la3prop2} shows that $f$ satisfies \eq{la3eq19} for some $c\in\R$, and from $\md{f},\nm{\d f},\nm{\nabla\d f}=O(e^{-\al r^2/2})$ as $r\ra\iy$ we see that $c=0$. Also $f$ is real analytic as it satisfies a nonlinear elliptic p.d.e.\ \eq{la3eq19} with real analytic coefficients.

Now $f$ satisfies the nonlinear p.d.e.\ \eq{la3eq19} rather than the linear p.d.e.\ \eq{la3eq20}. Since $\arg\det_{\sst\mathbb C}\bigl(I+i\Hess f\bigr)$ is an odd nonlinear function of $\nabla^2f$ with linear term $\sum_{j=1}^m\frac{\pd^2f}{\pd x_j^2}$, we may rewrite \eq{la3eq19} in the form
\e
\ts\sum_{j=1}^m\frac{\pd^2f}{\pd x_j^2}+\al\bigl(\sum_{j=1}^mx_j \frac{\pd f}{\pd x_j}-2f\bigr)=Q\bigl[\bigl(\frac{\pd^2f}{\pd x_i\pd x_j}\bigr){}_{i,j=1}^m\bigr],
\label{la3eq28}
\e
where $Q$ is a real analytic nonlinear function of its arguments, and in the Taylor series of $Q$ at 0, all terms are of odd degree at least 3. As $\nm{\nabla\d f}=O(e^{-\al r^2/2})$ we see that $\bmd{Q\bigl[\bigl(\frac{\pd^2f}{\pd x_i\pd x_j}\bigr){}_{i,j=1}^m\bigr]}=O(e^{-3\al r^2/2})$ as $r\ra\iy$. Defining $h$ by \eq{la3eq21}, so that $h=O(t^{-(m+1)/2})$ for small $t$ as $f=O(e^{-\al r^2/2})$, as for \eq{la3eq22}, equation \eq{la3eq28} and this estimate on $Q$ yield, as $t\ra 0$
\e
\begin{split}
4t^2\frac{\pd^2h}{\pd t^2}&+2\bigl[\al+(m+6)t\bigr]\frac{\pd h}{\pd t}+3(m+1)h-\De_{\cS^{m-1}}h\\
&=O\bigl(t^{-(m+1)/2}e^{-\al t^{-1}}\bigr).
\label{la3eq29}
\end{split}
\e

In a similar way to \eq{la3eq25}, with the $p_k^i(\bs y)$ as above, write
\e
h(\bs y,t\bigr)=\sum_{k=0}^\iy\sum_{i=1}^{d_k}p_k^i(\bs y)\la_k^i(t)A_k(t).
\label{la3eq30}
\e
Taking the $L^2$-inner product of \eq{la3eq29} with $p_k^i(\bs y)$ and dividing by $A_k(t)\ge 1$ shows that for $t\in (0,\ha R^{-2})$ and all $i,k$ we have
\e
\begin{split}
&4t^2\frac{\d^2}{\d t^2}\la_k^i(t)+2\Bigl[\al+(m+6)t+4t^2A_k(t)^{-1}\frac{\d}{\d t}A_k(t)\Bigr]\frac{\d}{\d t}\la_k^i(t)=E_k^i(t),\\
&\text{where the error term $E_k^i(t)$ satisfies}\quad \bmd{E_k^i(t)}\le Dt^{-(m+1)/2}e^{-\al t^{-1}},
\end{split}
\label{la3eq31}
\e
for some $D>0$ independent of $i,k$. We can integrate \eq{la3eq31} explicitly: setting
\e
g_k(t)=\exp\Bigl[\,\int^t_{R^{-2}/4}\ha s^{-2}\bigl(\al+(m+6)s+4s^2A_k(s)^{-1}\ts\frac{\d}{\d s}A_k(s)\bigr)\d s\Bigr],
\label{la3eq32}
\e
where the term $A_k(s)^{-1}\ts\frac{\d}{\d s}A_k(s)$ is bounded in \eq{la3eq27}, we can rewrite \eq{la3eq31} as
\begin{equation*}
\ts\frac{\d}{\d t}\bigl[g_k(t)\frac{\d}{\d t}\la_k^i(t)\bigr]=\frac{1}{4}t^{-2}g_k(t)E_k^i(t),
\end{equation*}
so that integrating indefinitely twice gives
\e
\la_k^i(t)=\int^t\Bigl[g_k(u)^{-1}\int^u\ts\frac{1}{4}v^{-2}g_k(v)E_k^i(v)\,\d v\Bigr]\,\d u.
\label{la3eq33}
\e

Now combining equations \eq{la3eq27} and \eq{la3eq32} we find that
\e
g_k(t)=C_k t^{(m+6)/2}e^{-\al t^{-1}/2}\bigl(1+O(k^2t)\bigr),
\label{la3eq34}
\e
for some $C_k>0$. Thus
\begin{equation*}
\ts\frac{1}{4}v^{-2}g_k(v)E_k^i(v)=O(v^{1/2}e^{-3\al v^{-1}/2})\cdot \bigl[1+O(k^2v)\bigr],
\end{equation*}
and integrating gives
\e
\int^u\ts\frac{1}{4}v^{-2}g_k(v)E_k^i(v)\d v=c_k^i+O(u^{5/2}e^{-3\al u^{-1}/2})\cdot \bigl[1+O(k^2u)\bigr],
\label{la3eq35}
\e
for some $c_k^i\in\R$. If $c_k^i\ne 0$ then $\la_k^i(t)\approx c_k^iC_k^{-1} t^{-(m+2)/2}e^{\al t^{-1}/2}$, contradicting $h=O(t^{-(m+1)/2})$, so $c_k^i=0$. Hence equations \eq{la3eq33}--\eq{la3eq35} yield $\la_k^i(t)=\La_k^i+\mu_k^i(t)$ for some $\La_k^i\in\R$, and
\e
\mu_k^i(t)=\int_0^t\Bigl[g_k(u)^{-1}\int_0^u\ts\frac{1}{4}v^{-2}g_k(v)E_k^i(v)\,\d v\Bigr]\,\d u
\label{la3eq36}
\e
is well-defined and depends only on $k,E_k^i(t)$, with 
\e
\mu_k^i(t)=O(t^{(3-m)/2}e^{-\al t^{-1}})\bigl[1+O(k^2t)\bigr].
\label{la3eq37}
\e

By definition, $E_k^i(t)A_k(t)$ is the component of \eq{la3eq29} in the direction of the $L^2$-orthonormal basis element $p^i_k(\bs y)$ in $L^2(\cS^{m-1})$. So squaring and summing over $i,k$, and integrating the squared r.h.s.\ of \eq{la3eq29} over $\cS^{m-1}$, gives 
\e
\ts\sum\limits_{k=0}^\iy\sum\limits_{i=1}^{d_k}\bigl(E_k^i(t)A_k(t)\bigr)^2\le C_0t^{-(m+1)}e^{-2\al t^{-1}}
\label{la3eq38}
\e
for some $C_0>0$ and all $t\in(0,\ha R^{-2})$. More generally, applying $\De_{\cS^{m-1}}^{n/2}$ to \eq{la3eq29} and estimating using $\nm{\nabla^n\d f}=O(e^{-\al r^2/2})$ yields
\e
\ts\sum\limits_{k=0}^\iy\sum\limits_{i=1}^{d_k}k^n\bigl(E_k^i(t)A_k(t)\bigr)^2\le C_nt^{-(m+1)-2n}e^{-2\al t^{-1}}
\label{la3eq39}
\e
for some $C_n>0$ and all $t\in(0,\ha R^{-2})$.

Let us rewrite \eq{la3eq30} as
\e
h(\bs y,t\bigr)=\sum_{k=0}^\iy\sum_{i=1}^{d_k}p_k^i(\bs y)\La_k^iA_k(t)+\sum_{k=0}^\iy\sum_{i=1}^{d_k}p_k^i(\bs y)\mu_k^i(t)A_k(t).
\label{la3eq40}
\e
From equations \eq{la3eq36}--\eq{la3eq39} and estimates on higher derivatives we find that the second sum in \eq{la3eq40} converges absolutely in $C^l$ for all $l\ge 0$ on any compact subset of $\cS^{m-1}\t[0,R^{-2}),$ and the result is $O(t^de^{-\al t^{-1}})$, and so is zero at $t=0$. Since \eq{la3eq30} converges absolutely in $C^l$ for all $l\ge 0$ on any compact subset of $\cS^{m-1}\t(0,R^{-2})$, it follows that the first sum in \eq{la3eq40} converges absolutely in $C^l$ for all $l\ge 0$ on any compact subset of $\cS^{m-1}\t(0,R^{-2})$. As $h=O(t^{-(m+1)/2})$ and the second sum is $O(t^de^{-\al t^{-1}})$, the first sum in \eq{la3eq40} is $O(t^{-(m+1)/2})$.

Now the first sum in \eq{la3eq40} is a linear combination of solutions to \eq{la3eq22}, so as the sum converges locally in $C^2$, it satisfies \eq{la3eq22} on $\cS^{m-1}\t(0,R^{-2})$. Thus we may apply part (a) to the first sum in \eq{la3eq40}. This tells us that the first sum converges to a smooth solution on \eq{la3eq22} on $\cS^{m-1}\t[0,R^{-2})$, which is real analytic on $\cS^{m-1}\t(0,R^{-2})$ and $\cS^{m-1}\t\{0\}$. Since the second sum is zero on $\cS^{m-1}\t\{0\}$, part (b) follows. This completes the proof of Theorem~\ref{la3thm6}.
\end{proof}

\begin{rem} Theorem \ref{la3thm4} implies that if $L$ is an AC SL $m$-fold asymptotic with rate $\rho<0$ to $C=\R^m$ in $\C^m$, then $L$ may be written as a graph $\Ga_{\d f}$ near infinity in $C$, where as in \eq{la3eq25} $f$ has an asymptotic expansion
\e
f(x_1,\ldots,x_m)=\sum_{k=0}^\iy\sum_{i=1}^{d_k}p_k^i\vert_{\cS^{m-1}}(x_1/r,\ldots,x_m/r)A_k^i(r)
\label{la3eq41}
\e
in terms of the eigenfunctions $p_k^i\vert_{\cS^{m-1}}$ of $\De_{\cS^{m-1}}$ on the link $\cS^{m-1}$ of the cone $C=\R^m$, with~$A_k^i(r)=O(r^{2-m-k})$. 

In particular, as the eigenvalue $k(m+k-2)$ of the eigenfunction $p_k^i\vert_{\cS^{m-1}}$ increases, the functions $A_k^i(r)=O(r^{2-m-k})$ get smaller as $r\ra\iy$, so that the asymptotic behaviour of $L$ is dominated by eigenfunctions of $\De_{\cS^{m-1}}$ with small eigenvalues. This is what one would expect from the usual theory of elliptic operators on manifolds with conical ends, as in Lockhart \cite{Lock}, for instance.

Surprisingly, Theorem \ref{la3thm6} shows that {\it AC LMCF expanders do not behave this way}. Instead, in the corresponding expansion \eq{la3eq41} we have $A_k^i(r)\sim C^i_kr^{-1-m}e^{-\al r^2/2}$ to leading order for all $k,i$. That is, {\it the contributions from different eigenfunctions of\/ $\De_{\cS^{m-1}}$ are all roughly of the same size}, and {\it small eigenvalues of\/ $\De_{\cS^{m-1}}$ do not dominate the asymptotic behaviour}.
\label{la3rem}
\end{rem}

\subsection{An explicit family of LMCF expanders}
\label{la36}

Joyce, Lee and Tsui \cite{JLT} construct many solitons for LMCF by
an ansatz using `evolving quadrics', generalizing the method of
Joyce \cite{Joyc1} for special Lagrangians. The next example
describes a family of LMCF expanders constructed in \cite[Th.s C \&
D]{JLT}, which generalize the `Lawlor necks' of
Example~\ref{la3ex1}.

\begin{ex} Let $m>2$, $\al\ge 0$ and $a_1,\ldots,a_m>0$, and define
a smooth function $P:\R\ra\R$ by $P(0)=\al+a_1+\cdots+a_m$ and
\e
P(x)=\ts\frac{1}{x^2}\bigl(e^{\al x^2}\prod_{k=1}^m(1+a_kx^2)-1\bigl),\quad x\ne 0.
\label{la3eq42}
\e
Define real numbers $\phi_1,\ldots,\phi_m$ by
\e
\phi_k=a_k\int_{-\iy}^\iy\frac{\d x}{(1+a_kx^2)\sqrt{P(x)}}\,,
\label{la3eq43}
\e
For $k=1,\ldots,m$ define a function $z_k:\R\ra\C$ by
\e
z_k(y)={\rm e}^{i\psi_k(y)}\sqrt{a_k^{-1}+y^2}, \;\>\text{where}\;\>
\psi_k(y)=a_k\int_{-\iy}^y\frac{\d x}{(1+a_kx^2)\sqrt{P(x)}}\,.
\label{la3eq44}
\e

Now write ${\bs\phi}=(\phi_1,\ldots,\phi_m)$, and define a
submanifold $L_{\bs\phi}^\al$ in $\C^m$ by
\begin{equation*}
L_{\bs\phi}^\al=\bigl\{(z_1(y)x_1,\ldots,z_m(y)x_m): y\in\R,\;
x_k\in\R,\; x_1^2+\cdots+x_m^2=1\bigr\}.
\end{equation*}
Then $L_{\bs\phi}^\al$ is a closed, embedded Lagrangian
diffeomorphic to $\cS^{m-1}\t\R$ and satisfying $H=\al F^\perp$. If
$\al>0$ it is an LMCF expander, and one can show it fits into the framework of \S\ref{la35}, with $\bar h^\iy$ a quadratic form on $\cS^{m-1}$. If $\al=0$ it is a Lawlor neck $L_{{\bs\phi},A}$ from Example \ref{la3ex1}. It is graded, with Lagrangian angle
\e
\th_{L_{\bs\phi}^\al}\bigl((z_1(y)x_1,\ldots,z_m(y)x_m)\bigr)
=\ts\sum_{k=1}^m\psi_k(y)+\arg\bigl(-y-iP(y)^{-1/2}\bigr).
\label{la3eq45}
\e
Note that the only difference between the constructions of
$L_{{\bs\phi},A}$ in Example \ref{la3ex1} and $L_{\bs\phi}^\al$
above is the term $e^{\al x^2}$ in \eq{la3eq42}, which does not
appear in \eq{la3eq11}. If $\al=0$ then $e^{\al x^2}=1$, and the two
constructions agree.

As in \cite[Th.~D]{JLT}, $L_{\bs\phi}^\al$ is Asymptotically
Conical, with cone $C$ the union $\Pi_0\cup\Pi_{\bs\phi}$ of two
Lagrangian $m$-planes $\Pi_0,\Pi_{\bs\phi}$ in $\C^m$ given by
\begin{equation*}
\Pi_0=\bigl\{(x_1,\ldots,x_m):x_j\in\R\bigr\},\;\>
\Pi_{\bs\phi}=\bigl\{({\rm e}^{i\phi_1}x_1,\ldots, {\rm
e}^{i\phi_m}x_m):x_j\in\R\bigr\}.
\end{equation*}
But in contrast to Example \ref{la3ex1}, for $\al>0$ we do not have $\phi_1+\cdots+\phi_m=\pi$, so $\Pi_{\bs\phi}$ and $C$ are not special Lagrangian. Note that we can apply Theorem \ref{la3thm6} in \S\ref{la35} to describe $L_{\bs\phi}^\al$ near infinity in $\Pi_0=\R^m\subset\C^m$, and also (after applying a $\U(m)$ rotation taking $\Pi_{\bs\phi}$ to $\R^m$) near infinity in~$\Pi_{\bs\phi}$.

In \cite[Th.~D]{JLT} Joyce, Lee and Tsui prove that for fixed $\al>0$, the map
$\Phi^\al:(a_1,\ldots,a_m) \mapsto(\phi_1,\ldots,\phi_m)$ gives a
diffeomorphism
\begin{equation*}
\Phi^\al:(0,\iy)^m\longra\bigl\{(\phi_1,\ldots,\phi_m)\in(0,\pi)^m:
0<\phi_1+\cdots+\phi_m<\pi\bigr\}.
\end{equation*}
That is, for all $\al>0$ and ${\bs\phi}=(\phi_1,\ldots,\phi_m)$ with
$0<\phi_1,\ldots,\phi_m<\pi$ and $0<\phi_1+\cdots+\phi_m<\pi$, the
above construction gives a unique LMCF expander
$L_{\bs\phi}^\al$ asymptotic to~$\Pi_0\cup\Pi_{\bs\phi}$.

From \eq{la3eq43}--\eq{la3eq44} we see that $\lim_{y\ra
-\iy}\psi_k(y)=0$ and $\lim_{y\ra\iy}\psi_k(y)=\phi_k$. Also
$\lim_{y\ra-\iy}\arg\bigl(-y-iP(y)^{-1/2}\bigr)=0$ and
$\lim_{y\ra+\iy}\arg\bigl(-y-iP(y)^{-1/2}\bigr)=-\pi$. Hence
\eq{la3eq45} implies that
\e
\begin{split}
\lim_{y\ra -\iy}\th_{L_{\bs\phi}^\al}\bigl((z_1(y)x_1,
\ldots,z_m(y)x_m)\bigr)&=0,\\
\lim_{y\ra \iy}\th_{L_{\bs\phi}^\al}\bigl((z_1(y)x_1,
\ldots,z_m(y)x_m)\bigr)&=\phi_1+\cdots+\phi_m-\pi.
\end{split}
\label{la3eq46}
\e
Thus $\th_{L_{\bs\phi}^\al}\ra 0$ on the end of $L_{\bs\phi}^\al$
asymptotic to $\Pi_0$, and $\th_{L_{\bs\phi}^\al}\ra\phi_1+\cdots+
\phi_m-\pi$ on the end of $L_{\bs\phi}^\al$ asymptotic
to~$\Pi_{\bs\phi}$.

Now $L_{\bs\phi}^\al$ is an AC Lagrangian in $\C^m$ with rate
$\rho<0$ and cone $C=\Pi_0\cup\Pi_{\bs\phi}$, and it is connected
and exact, with potential $f_{L_{\bs\phi}^\al}=
-2\th_{L_{\bs\phi}^\al}/\al$ by \eq{la3eq16}. Also $y\ra -\iy$ is
the limit $r\ra\iy$ in the end of $L_{\bs\phi}^\al$ asymptotic to
$\Pi_0$, and $y\ra \iy$ the limit $r\ra\iy$ in the end of
$L_{\bs\phi}^\al$ asymptotic to $\Pi_{\bs\phi}$. So by \eq{la3eq46}
in Definition \ref{la2def6} we have $c_0=0$ and $c_{\bs\phi}=
2(\pi-\phi_1-\cdots-\phi_m)/\al$, and
\e
A(L_{\bs\phi}^\al)=2(\pi-\phi_1-\cdots-\phi_m)/\al>0.
\label{la3eq47}
\e
We will see later that $A(L_{\bs\phi}^\al)$ is the area of a certain
$J$-holomorphic curve in~$\C^m$.

Now suppose $\phi_1,\ldots,\phi_m\in(0,\pi)$ with
$(m-1)\pi<\phi_1+\cdots+\phi_m<m\pi$. Then $(\pi-\phi_1),\ldots,
(\pi-\phi_m)\in(0,\pi)$ with $0<(\pi-\phi_1)+\cdots+
(\pi-\phi_m)<\pi$, so as above we have an LMCF expander
$L_{\bs\pi-\bs\phi}^\al$ for each $\al>0$, where
$\bs\pi-\bs\phi=(\pi-\phi_1,\ldots,\pi-\phi_m)$. As in \eq{la3eq15},
define
\begin{equation*}
\ti L_{\bs\phi}^\al=\begin{pmatrix} e^{i\phi_1} & 0 & \cdots & 0
\\ 0 & e^{i\phi_2} & \cdots & 0 \\ \vdots & \vdots & \ddots & \vdots \\
0 & 0 & \cdots & e^{i\phi_m} \end{pmatrix}L_{\bs\pi-\bs\phi}^\al.
\end{equation*}
As for $\ti L_{{\bs\phi},-A}$ in Example \ref{la3ex1}, $\ti
L_{\bs\phi}^\al$ is an LMCF expander satisfying $H=\al F^\perp$ and
asymptotic to $\Pi_0\cup\Pi_{\bs\phi}$, with
\begin{equation*}
A(\ti L_{{\bs\phi},-A})=2((m-1)\pi-\phi_1-\cdots-\phi_m)/\al<0.
\end{equation*}
There is a unique grading $\th_{\ti L_{\bs\phi}^\al}$ on $\ti
L_{\bs\phi}^\al$ with $\th_{\ti L_{\bs\phi}^\al}\ra 0$ on the end of
$\ti L_{\bs\phi}^\al$ asymptotic to $\Pi_0$, and $\th_{\ti
L_{\bs\phi}^\al}\ra\phi_1+\cdots+\phi_m-(m-1)\pi$ on the end
asymptotic to~$\Pi_{\bs\phi}$.

Observe that $L_{\bs\phi}^\al$ is exact and graded with
$\th_{L_{\bs\phi}^\al}\ra 0$ on the end of $L_{\bs\phi}^\al$
asymptotic to $\Pi_0$ and $\th_{L_{\bs\phi}^\al}\ra\phi_1+\cdots+
\phi_m-\pi$ on the end asymptotic to $\Pi_{\bs\phi}$, and so
satisfies Corollary \ref{la2cor}(a). Similarly, $\ti
L_{\bs\phi}^\al$ is exact and graded with $\th_{\ti
L_{\bs\phi}^\al}\ra 0$ on the end of $\ti L_{\bs\phi}^\al$
asymptotic to $\Pi_0$ and $\th_{\ti L_{\bs\phi}^\al}\ra
\phi_1+\cdots+\phi_m-(m-1)\pi$ on the end asymptotic
to~$\Pi_{\bs\phi}$, and so satisfies Corollary~\ref{la2cor}(b).
\label{la3ex2}
\end{ex}

\section{Uniqueness theorems}
\label{la4}

The following two theorems are our main results, which imply
Theorems \ref{la1thm1} and \ref{la1thm2}. We will prove them in
\S\ref{la46}--\S\ref{la47}, after some preparation
in~\S\ref{la41}--\S\ref{la45}.

\begin{thm} Let\/ $m\ge 3$ and\/ $\phi_1,\ldots,\phi_m\in(0,\pi)$
with\/ $\phi_1+\cdots+\phi_m=\pi,$ and define special Lagrangian
$m$-planes $\Pi_0,\Pi_{\bs\phi}$ in $\C^m$ by
\e
\Pi_0=\bigl\{(x_1,\ldots,x_m):x_j\in\R\bigr\},\;\>
\Pi_{\bs\phi}=\bigl\{({\rm e}^{i\phi_1}x_1,\ldots, {\rm
e}^{i\phi_m}x_m):x_j\in\R\bigr\}.
\label{la4eq1}
\e
Suppose $L$ is a closed, embedded, exact, Asymptotically Conical
special Lagrangian in $\C^m$ asymptotic with rate $\rho<0$ to
$C=\Pi_0\cup\Pi_{\bs\phi}$. Then $L$ is the `Lawlor neck'
$L_{\bs\phi,A}$ from Example\/ {\rm\ref{la3ex1},} for some
unique~$A>0$.

If instead\/ $L$ has rate $\rho<2$ rather than $\rho<0,$ then $L$ is
a translation $L_{\bs\phi,A}+\bs c$ of\/ $L_{\bs\phi,A},$ for some
unique $A>0$ and\/ $\bs c=(c_1,\ldots,c_m)\in\C^m$.

If instead\/ $\phi_1+\cdots+\phi_m=(m-1)\pi,$ then $L=\ti
L_{\bs\phi,A}$ from Example\/ {\rm\ref{la3ex1}} for some $A<0$ if\/
$\rho<0,$ and\/ $L=\ti L_{\bs\phi,A}+\bs c$ for $\rho<2$. If\/
$\phi_1+\cdots+\phi_m=k\pi$ for $1<k<m-1,$ there exist no closed,
embedded, exact SL $m$-folds in $\C^m$ asymptotic to
$C=\Pi_0\cup\Pi_{\bs\phi}$ with rate $\rho<2$.

If instead\/ $L$ is immersed rather than embedded, the only extra
possibilities are $L=\Pi_0\cup\Pi_{\bs\phi}$ for $\rho<0,$ and\/
$L=\Pi_0\cup\Pi_{\bs\phi}+\bs c$ for $\rho<2$.
\label{la4thm1}
\end{thm}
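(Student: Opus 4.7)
The proof follows the six-step strategy (A)--(F) outlined in the introduction. Invoking Theorem \ref{la3thm2} and the observation that the only nontrivial elements of $\cD_\sSi$ in $[0,2)$ are $0$ and $1$ (corresponding to shifts of the potential $f_L$ and to translations in $\C^m$ respectively), any AC SL with rate $\rho<2$ reduces, after a translation, to an AC SL with rate $\rho<0$; hence I focus on that case. Let $\scr C$ be the class of closed, embedded, exact AC SL $m$-folds in $\C^m$ with cone $\Pi_0\cup\Pi_{\bs\phi}$ and rate $\rho<0$, take $I(L)=A(L)\in\R$ from Definition \ref{la2def6} as the invariant, and take $\scr D=\{L_{\bs\phi,A}:A>0\}$ as the explicit family. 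Example \ref{la2ex4} extends each $L\in\scr C$ to a compact, exact, graded Lagrangian $\bar L$ in the plumbing $M=T^*\cS^m\#T^*\cS^m$, with $\th_L\equiv 0$ by the SL condition. Corollary \ref{la2cor} then forces $\phi_1+\cdots+\phi_m\in\{\pi,(m-1)\pi\}$, immediately proving the fourth assertion of the theorem; the case $\phi_1+\cdots+\phi_m=(m-1)\pi$ reduces to $\phi_1+\cdots+\phi_m=\pi$ via the rotation in \eq{la3eq15}, so I restrict attention to $\phi_1+\cdots+\phi_m=\pi$, in which $\bar L$ satisfies Corollary \ref{la2cor}(a) with $n=0$.

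Suppose $L_1,L_2\in\scr C$ satisfy $A(L_1)=A(L_2)$; I claim $L_1=L_2$. By Theorem \ref{la2thm3}, $\bar L_1\cong\bar L_2$ in $D^b\sF(M)$. Assuming for contradiction that $L_1\ne L_2$, I perturb $\bar L_2$ by a $C^2$-small Hamiltonian isotopy, supported away from $T^*_{\iy_0}\cS_0\cup T^*_{\iy_{\bs\phi}}\cS_{\bs\phi}$, to a Lagrangian $\bar L''$ transverse to $\bar L_1$ and still isomorphic to $\bar L_1$ in $D^b\sF(M)$. Theorem \ref{la2thm1}(b) then supplies $p,q\in\bar L_1\cap\bar L''$ with $\mu_{\bar L_1,\bar L''}(p)=0$, $\mu_{\bar L_1,\bar L''}(q)=m$, and a $J$-holomorphic disc $\Si$ as in Figure \ref{la2fig1}; the contradiction will come from the strict bound $0<\mu<m$ at every intersection point. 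A parallel comparison of any $L\in\scr C$ with a small Lawlor neck $L_{\bs\phi,A_0}$ using Theorem \ref{la2thm1}(b), together with the area computation of Example \ref{la2ex2} yielding $\area(\Si)=A(L)-A_0$ from the asymptotic values of the potentials, shows that $A(L)>A_0$ for every $A_0>0$ and hence $A(L)>0$ for all $L\in\scr C$. Thus $A(\scr C)\subseteq(0,\iy)=A(\scr D)$, and the injectivity just argued gives $L=L_{\bs\phi,A(L)}$ for every $L\in\scr C$, completing the embedded case.

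The main obstacle is the verification of step (D): that every $p\in\bar L_1\cap\bar L''$ satisfies $0<\mu_{\bar L_1,\bar L''}(p)<m$ strictly, contradicting the $\mu=0,m$ corners produced above. For the unperturbed pair $\bar L_1,\bar L_2$, every transverse intersection point $p$ in the SL interior has $\th_{\bar L_1}(p)=\th_{\bar L_2}(p)=0$; combined with $\phi_j'\in(0,\pi)$ and \eq{la2eq7}, this forces $\mu\in\{1,\ldots,m-1\}$. I propagate this bound to $\bar L_1\cap\bar L''$ by choosing the Hamiltonian perturbation small enough in $C^2$ that at each intersection point the perturbed Lagrangian angles $\phi_j'$ stay strictly inside $(0,\pi)$ and $|\th_{\bar L''}|<\pi/(m+1)$ uniformly, so that the strict inequalities \eq{la2eq7} continue to exclude $\mu=0,m$. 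Intersections near $\iy_0,\iy_{\bs\phi}$ are arranged not to contribute: since $\bar L_1,\bar L_2$ meet the cotangent fibres $T^*_{\iy_0}\cS_0,T^*_{\iy_{\bs\phi}}\cS_{\bs\phi}$ transversely in single points away from $\bar L_1\cap\bar L_2$, a perturbation supported away from those fibres preserves the local picture there and creates no new intersections of extremal degree. Finally, the immersed case allows $L$ to degenerate to $L=\Pi_0\cup\Pi_{\bs\phi}$ (formally the $A(L)=0$ limit of the family), in which Theorem \ref{la2thm1}(b) produces only the trivial disc of zero area, forcing $L$ to equal the reducible cone.
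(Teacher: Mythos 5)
Your overall architecture matches the paper's: reduce rate $\rho<2$ to $\rho<0$ via a translation (Proposition \ref{la4prop4}), work in the plumbing $M=T^*\cS^m\#T^*\cS^m$, take $A(L)$ as the distinguishing invariant, invoke Corollary \ref{la2cor} to restrict $\phi_1+\cdots+\phi_m$, and use Theorem \ref{la2thm1}(b) together with the index bounds of Proposition \ref{la4prop1} to derive a contradiction. But there are two significant gaps.

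First, your argument for step (F) is incorrect as written. You claim that comparing $L$ to a Lawlor neck $L_{\bs\phi,A_0}$ and using the area formula of Example \ref{la2ex2} ``shows that $A(L)>A_0$ for every $A_0>0$ and hence $A(L)>0$.'' That conclusion is impossible (it would force $A(L)=\infty$). What the area computation actually gives is $\area(\Si)=\pm\bigl(A(L)-A_0\bigr)$ with a sign that depends on which of $\iy_0,\iy_{\bs\phi}$ is the degree-$0$ corner, and positivity of area then only yields $A(L)\ne A_0$ --- there is no information about the sign of $A(L)$. The paper's Proposition \ref{la4prop7} instead compares $L$ with the singular cone $\Pi_0\cup\Pi_{\bs\phi}$, whose compactification $\cS_0\cup\cS_{\bs\phi}$ sits in the distinguished triangle of Theorem \ref{la2thm3}(a). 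This requires Theorem \ref{la2thm2} (the holomorphic \emph{triangle} statement), not Theorem \ref{la2thm1}(b), because $\bar L'$ is not isomorphic to $\cS_0\oplus\cS_{\bs\phi}[-1]$ in $D^b\sF(M)$ but only a nontrivial extension. With the triangle the corners at $\iy_0,\iy_{\bs\phi},0$ are pinned down by the Proposition \ref{la4prop5}(b)--(c) constraints, and the identity $\area(\Si)=A(L')$ with the \emph{correct sign} follows. Your proposal has no analogue of this, so (F) is unproved.

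Second, your treatment of step (D) understates the difficulty in two ways. One: the compactifications $\bar L_1,\bar L_2$ automatically both pass through $\iy_0$ and $\iy_{\bs\phi}$ and are \emph{not} transverse there, since both are exponentially close to the cone. Any transversalizing perturbation creates intersections at or near $\iy_0,\iy_{\bs\phi}$ whose index is forced to be $0$ or $m$ (they are local extrema of the potential difference) --- you cannot choose the perturbation to avoid this, contrary to your claim that it ``creates no new intersections of extremal degree.'' In the paper's Proposition \ref{la4prop5}(c), those two intersections deliberately carry $\mu\in\{0,m\}$; the contradiction then comes from the fact that $f_{\bar L}$ and $f_{\bar L''}$ agree at $\iy_0,\iy_{\bs\phi}$ (using $A(L)=A(L_{\bs\phi,A})$), so the disc supplied by Theorem \ref{la2thm1}(b), if it has both corners at infinity, has area zero. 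Two: in $\C^m$, the SL $m$-folds $L_1,L_2$ can have tangential intersections $T_pL_1=T_pL_2$, and controlling the Maslov index after perturbation at such points is the genuinely hard part --- the paper devotes Lemmas \ref{la4lem1}--\ref{la4lem2} and the real analytic ``Taylor's Theorem'' of Theorem \ref{laAthm1} to it. A bare appeal to ``$C^2$-small perturbation with angles staying in $(0,\pi)$'' does not address these tangencies, because at a tangential intersection the angles of the unperturbed pair degenerate to $0$ or $\pi$, and a small generic perturbation could in principle push them to either side. The real-analytic rigidity of SL $m$-folds is what rules this out.
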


\begin{thm} Let\/ $m\ge 3$ and\/ $\phi_1,\ldots,\phi_m\in(0,\pi)$
with\/ $0<\phi_1+\cdots+\phi_m<\pi,$ and define Lagrangian
$m$-planes $\Pi_0,\Pi_{\bs\phi}$ in $\C^m$ by \eq{la4eq1}. Suppose
$L$ is a closed, embedded, exact, Asymptotically Conical Lagrangian
MCF expander in $\C^m$ asymptotic with rate $\rho<2$ to
$C=\Pi_0\cup\Pi_{\bs\phi},$ satisfying the expander equation $H=\al
F^\perp$ for $\al>0$. Then $L$ is the LMCF expander
$L_{\bs\phi}^\al$ found by Joyce, Lee and Tsui\/ {\rm\cite[Th.s C \& D]{JLT},} described in Example\/ {\rm\ref{la3ex2}}.

If instead\/ $(m-1)\pi<\phi_1+\cdots+\phi_m<m\pi,$ then $L$ is $\ti
L_{\bs\phi}^\al$ from Example\/ {\rm\ref{la3ex2}}. There exist no
closed, embedded, exact, AC LMCF expanders asymptotic to
$C=\Pi_0\cup\Pi_{\bs\phi}$ when $\pi\le\phi_1+\cdots+\phi_m\le
(m-1)\pi$.

If instead\/ $L$ is immersed rather than embedded, the only extra
possibility is~$L=\Pi_0\cup\Pi_{\bs\phi}$.
\label{la4thm2}
\end{thm}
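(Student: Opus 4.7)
The plan follows the six-step template (A)--(F) of \S\ref{la1}, with invariant $I(L)=A(L)\in\R$ from Definition \ref{la2def6}. Every AC LMCF expander with $H=\al F^\perp$, $\al>0$, is Maslov zero and carries the canonical graded structure $\th_L=-\al f_L/2$ of \eqref{la3eq16}. By Theorem \ref{la3thm5} the convergence to $C=\Pi_0\cup\Pi_{\bs\phi}$ is exponential, so Example \ref{la2ex4} extends $L$ to a compact, exact, graded Lagrangian $\bar L$ in the symplectic Calabi--Yau Liouville manifold $(M,\om)=T^*\cS^m\# T^*\cS^m$, intersecting $T^*_{\iy_0}\cS_0,T^*_{\iy_{\bs\phi}}\cS_{\bs\phi}$ transversely in one point each. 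Corollary \ref{la2cor} then partitions the possibilities into grading case (a) or (b), and substituting $\th=-\al f/2$ into the limiting values of $\th_L$ at the two ends computes
\[
A(L)=\begin{cases} 2(\pi-\phi_1-\cdots-\phi_m)/\al, & \text{case (a)},\\ 2((m-1)\pi-\phi_1-\cdots-\phi_m)/\al, & \text{case (b)},\end{cases}
\]
so that $A(L)$, and hence the triple $(\bs\phi,\al,\text{grading case})$, determines the full data of the putative expander.

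Theorem \ref{la2thm3}, with $n=0$ fixed by the normalization $\th_L\to 0$ at the $\Pi_0$ end, then implies that all $\bar L$ within a single grading case are mutually isomorphic in $D^b\sF(M)$. Thus in case (a) with $0<\phi_1+\cdots+\phi_m<\pi$, $\bar L$ is isomorphic to the compactification $\overline{L_{\bs\phi}^\al}$ of the Joyce--Lee--Tsui expander of Example \ref{la3ex2}; likewise in case (b) with $(m-1)\pi<\phi_1+\cdots+\phi_m<m\pi$ to $\overline{\ti L_{\bs\phi}^\al}$. To prove uniqueness, suppose $L$ differs from the model $L'$ in its grading case. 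Choose a small, generic Hamiltonian perturbation $\bar L''$ of $\bar L'$ which is $C^1$-close to $\bar L'$, transverse to $\bar L$, and does not create spurious intersections at $\iy_0,\iy_{\bs\phi}$. Theorem \ref{la2thm1}(b) produces a $J$-holomorphic disc $\Si$ in $M$ with boundary in $\bar L\cup\bar L''$ and corners $p,q$ of degrees $0$ and $m$ respectively, which may be placed in $\C^m$ for sufficiently small perturbation. Substituting $\th=-\al f/2$ for both Lagrangians into the strict inequality \eqref{la2eq7} and using the area formula of Example \ref{la2ex2} gives
\[
\mu_{\bar L,\bar L''}(q)-\mu_{\bar L,\bar L''}(p)<m-\tfrac{\al}{2\pi}\area(\Si)<m
\]
since $\area(\Si)>0$. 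This contradicts $m-0=m$ and forces $L=L'$; this is property (iii) of the introduction, realized in the LMCF expander setting.

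For step (F), a second application of Theorem \ref{la2thm1}(b) or the triangle version Theorem \ref{la2thm2}, between $\bar L$ and one of the sphere Lagrangians $\cS_0,\cS_{\bs\phi}$ appearing in the distinguished triangles \eqref{la2eq25} or \eqref{la2eq26}, produces a $J$-holomorphic curve whose area equals, up to orientation, the invariant $A(L)$ itself. Positivity of area thus constrains the sign of $A(L)$, which combined with the explicit formulas above rules out the middle range $\pi\le\phi_1+\cdots+\phi_m\le(m-1)\pi$: in that range every allowed sign of $A(L)$ conflicts with the positivity of area of the relevant representing disc or triangle. The immersed possibility $L=\Pi_0\cup\Pi_{\bs\phi}$ is precisely the $A(L)=0$ degeneration, which sits outside the Lagrangian Floer framework and is therefore listed separately.

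The main obstacle will be step (D): engineering the Hamiltonian perturbation $\bar L''$ so that the strict inequality \eqref{la2eq7} genuinely yields property (iii) at the corners of $\Si$. The perturbation destroys the LMCF expander property of the model $L'$, so the identity $\th=-\al f/2$ is preserved only up to errors controlled by the $C^1$ distance to the actual expander, and these errors must remain strictly smaller than $\tfrac{\al}{2\pi}\area(\Si)$. Localizing the corners $p,q$ away from $\iy_0,\iy_{\bs\phi}$ and controlling how the perturbation affects grading data asymptotically relies on the real analytic asymptotic structure of AC expanders supplied by Theorem \ref{la3thm6}, together with the Morse-theoretic deformation inputs of Appendix \ref{laA}.
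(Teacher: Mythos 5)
Your outline follows the same skeleton as the paper's proof, but two of the load-bearing claims are wrong or unproven in ways that prevent the argument from closing.

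First, the claim that the corners $p,q$ of the $J$-holomorphic disc supplied by Theorem~\ref{la2thm1}(b) ``may be placed in $\C^m$ for sufficiently small perturbation'' is false. After a small Hamiltonian perturbation $\bar L''$ of $\bar L'$, the intersections $\iy_0,\iy_{\bs\phi}$ remain in $\bar L\cap\bar L''$ (these cannot be pushed into $\C^m$ without moving $\bar L''$ off the cone $C$ at infinity), and nothing forces the disc to avoid them. Indeed in the special Lagrangian analogue (Theorem~\ref{la4thm1}) the corners must lie at $\iy_0,\iy_{\bs\phi}$, since in $\C^m$ the indices are strictly between $0$ and $m$. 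In the expander case the paper handles both possibilities: at $\iy_0,\iy_{\bs\phi}$ one uses $f_{\bar L}(\iy_0)=f_{\bar L''}(\iy_0)=0$ and $f_{\bar L}(\iy_{\bs\phi})=f_{\bar L''}(\iy_{\bs\phi})=A(L)=A(L_{\bs\phi}^\al)$ (equal because Proposition~\ref{la4prop8}(a) forces $A=2(\pi-\sum\phi_i)/\al$ for every expander with this cone); in $\C^m$ one uses the quantitative index bound \eq{la4eq37}. The disc's positive area then beats the sum of the contributions, giving the contradiction.

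Second, the inequality you write, $\mu_{\bar L,\bar L''}(q)-\mu_{\bar L,\bar L''}(p)<m-\frac{\al}{2\pi}\area(\Si)$, is Proposition~\ref{la4prop2}(b), but that proposition requires \emph{both} Lagrangians to satisfy $H=\al F^\perp$. The perturbed $\bar L''$ does not, and the identity $\th=-\al f/2$ fails for it. You identify this as ``the main obstacle'' but do not overcome it; yet this is exactly what Proposition~\ref{la4prop6} proves, and its proof is the technical heart of the argument. It is not merely a matter of controlling errors by the $C^1$-distance: Lemma~\ref{la4lem6} needs the real analytic asymptotic description of Theorem~\ref{la3thm6}, Lemmas~\ref{la4lem7}--\ref{la4lem9} control the indices in the three regions (near infinity, near tangential intersections, away from $L\cap L'$), and the Morse-theoretic device (Theorem~\ref{laAthm2}) is needed near $\iy_0,\iy_{\bs\phi}$ because the asymptotic model here is more complicated than in the special Lagrangian case (all spherical harmonics contribute at the same order, see Remark~\ref{la3rem}). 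Without establishing the analogue of \eq{la4eq37}, the contradiction does not follow.

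Third, the immersed case is not an ``$A(L)=0$ degeneration'': $\Pi_0\cup\Pi_{\bs\phi}$ is disconnected and $A$ is not defined for it, and in any case the symplectic framework says nothing about immersed $L$. The paper disposes of this case by Proposition~\ref{la4prop3}, which invokes the Lotay--Neves argument using Huisken's monotonicity formula. This is a genuinely separate, analytic input that your sketch omits.

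Your step (F) sketch (positivity of the area of a holomorphic triangle against $\cS_0,\cS_{\bs\phi}$ forcing the sign of $A(L)$) matches Proposition~\ref{la4prop8} and is correct in spirit.
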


\subsection{Outline of the method of proof}
\label{la41}

We begin with two elementary propositions on intersection properties
of SL $m$-folds and graded LMCF expanders. Readers are advised to compare
Proposition \ref{la4prop1} with Theorem \ref{la2thm1}(a), and
Proposition \ref{la4prop2} with Theorem~\ref{la2thm1}(b).

\begin{prop} Let\/ $(M,J,g,\Om)$ be a Calabi--Yau $m$-fold, and\/
$L,L'$ be transversely intersecting SL\/ $m$-folds in $M$. Then
$0<\mu_{L,L'}(p)<m$ for all\/ $p\in L\cap L'$.

In particular, there are no $p\in L\cap L'$ with\/ $\mu_{L,L'}(p)=0$
or $m$.
\label{la4prop1}
\end{prop}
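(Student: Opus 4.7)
The proposition should follow almost immediately from the definition of the degree together with the fact that special Lagrangians carry a canonical grading. My plan is first to recall that, by Definition \ref{la3def3} and Proposition \ref{la3prop1}, an SL $m$-fold $L$ in a Calabi--Yau $m$-fold satisfies $\Om\vert_L=\d V_L$, so its angle function $\Th_L:L\ra\U(1)$ is constantly equal to $1$. Thus $L$ is automatically Maslov zero and the natural phase function, as noted just after Proposition \ref{la3prop1}, is $\th_L\equiv 0$ on $L$. The same reasoning gives $\th_{L'}\equiv 0$ on $L'$.

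With both gradings identically zero, I would then apply the universal inequality \eqref{la2eq7} of Definition \ref{la2def4}, which holds for any transverse intersection point of two graded Lagrangians in a symplectic Calabi--Yau manifold. That inequality gives
\begin{equation*}
(\th_L(p)-\th_{L'}(p))/\pi<\mu_{L,L'}(p)<(\th_L(p)-\th_{L'}(p))/\pi+m,
\end{equation*}
which reduces to $0<\mu_{L,L'}(p)<m$ at every $p\in L\cap L'$. Since $\mu_{L,L'}(p)\in\Z$, this immediately rules out the values $0$ and $m$, giving the final clause of the proposition.

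There is no real obstacle here; the content of the statement is that the strict inequalities in \eqref{la2eq7} are sharp precisely because the angles $\phi_1,\ldots,\phi_m$ at $p$ lie in the open interval $(0,\pi)$ (which is built into the transverse intersection analysis in Definition \ref{la2def4}) and because the SL condition pins down both phase functions to zero. One subtlety worth flagging is that a grading is only well-defined up to $2\pi\Z$, so strictly speaking the statement $\th_L\equiv 0$ should be interpreted as the canonical choice of grading for an SL $m$-fold; any other grading differs by a constant multiple of $2\pi$, which shifts $\mu_{L,L'}(p)$ by a multiple of $2m$ and thus does not affect the conclusion, provided we use the canonical gradings on both sides (as is implicit throughout \S\ref{la3}--\S\ref{la4}).
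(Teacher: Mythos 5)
Your proposal is correct and takes essentially the same route as the paper's own proof, which simply cites equation \eqref{la2eq7} together with $\th_L=\th_{L'}=0$ for special Lagrangians. Your extra remark about gradings being unique only up to $2\pi\Z$ and the shift by multiples of $2m$ is a reasonable clarification, though the paper treats the canonical grading $\th_L\equiv 0$ as understood.
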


\begin{proof} This follows from equation \eq{la2eq7} and $\th_L=\th_{L'}=0$.
\end{proof}

\begin{prop} Let\/ $L,L'$ be transversely intersecting, graded LMCF
expanders in $\C^m,$ both satisfying the expander equation $H=\al
F^\perp$ with the same value of\/ $\al>0$. Then
\begin{itemize}
\setlength{\parsep}{0pt}
\setlength{\itemsep}{0pt}
\item[{\bf(a)}] As in {\rm\S\ref{la34}} $L,L'$ are exact, with potentials $f_L=-2\th_L/\al$ and\/ $f_{L'}=-2\th_{L'}/\al$. Then for any $p\in L\cap L'$ we have
\e
\frac{\al}{2\pi}(f_{L'}(p)-f_L(p))<\mu_{L,L'}(p)<
\frac{\al}{2\pi}(f_{L'}(p)-f_L(p))+m.
\label{la4eq2}
\e
\item[{\bf(b)}] Suppose $J$ is any almost complex structure on $\C^m$ compatible with\/ $\om,$ and\/ $\Si$ is a $J$-holomorphic disc in $\C^m$ with boundary in $L\cup L'$ and corners at\/ $p,q\in L\cap L',$ of the form shown in Figure\/ {\rm\ref{la4fig1}}.
\begin{figure}[htb]
\centerline{$\splinetolerance{.8pt}
\begin{xy}
0;<1mm,0mm>:
,(-20,0);(20,0)**\crv{(0,10)}
?(.95)="a"
?(.85)="b"
?(.75)="c"
?(.65)="d"
?(.55)="e"
?(.45)="f"
?(.35)="g"
?(.25)="h"
?(.15)="i"
?(.05)="j"
?(.5)="y"
,(-20,0);(-30,-6)**\crv{(-30,-5)}
,(20,0);(30,-6)**\crv{(30,-5)}
,(-20,0);(20,0)**\crv{(0,-10)}
?(.95)="k"
?(.85)="l"
?(.75)="m"
?(.65)="n"
?(.55)="o"
?(.45)="p"
?(.35)="q"
?(.25)="r"
?(.15)="s"
?(.05)="t"
?(.5)="z"
,(-20,0);(-30,6)**\crv{(-30,5)}
,(20,0);(30,6)**\crv{(30,5)}
,"a";"k"**@{.}
,"b";"l"**@{.}
,"c";"m"**@{.}
,"d";"n"**@{.}
,"e";"o"**@{.}
,"f";"p"**@{.}
,"g";"q"**@{.}
,"h";"r"**@{.}
,"i";"s"**@{.}
,"j";"t"**@{.}
,"y"*{<}
,"z"*{>}
,(-20,0)*{\bu}
,(-20,-3)*{p}
,(20,0)*{\bu}
,(20,-3)*{q}
,(0,0)*{\Si}
,(-32,4)*{L}
,(-32,-4)*{L'}
,(32,4)*{L}
,(32.5,-4)*{L'}
\end{xy}$}
\caption{Holomorphic disc $\Si$ with boundary in $L\cup L'$}
\label{la4fig1}
\end{figure}
Then $\mu_{L,L'}(q)-\mu_{L,L'}(p)<m$. In particular, we cannot have
$\mu_{L,L'}(p)=0$ and\/ $\mu_{L,L'}(q)=m$.
\end{itemize}

\label{la4prop2}
\end{prop}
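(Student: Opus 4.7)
For part (a), the plan is to simply combine the general bound \eqref{la2eq7} from the definition of the degree with the special relation $f_L=-2\th_L/\al$ for LMCF expanders noted in \S\ref{la34}. Rewriting, we have
\[
\frac{\al}{2\pi}\bigl(f_{L'}(p)-f_L(p)\bigr)=\frac{\th_L(p)-\th_{L'}(p)}{\pi},
\]
so \eqref{la4eq2} is precisely \eqref{la2eq7} after this substitution. The only mild care needed is to check that the same $\al$ is used for both potentials, which is given by hypothesis.

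For part (b), the plan is to compute the area of $\Si$ in two ways. On the one hand, since $J$ is compatible with $\om$, the disc $\Si$ is a nonconstant $J$-holomorphic curve (its boundary has corners at two distinct points $p,q$, so it is not a point), and hence $\area(\Si)>0$. On the other hand, exactly as in Example \ref{la2ex2}, equation \eqref{la2eq3}, Stokes' theorem applied to $\om=\d\ti\la$ and the fact that $\la\vert_L=\d f_L$, $\la\vert_{L'}=\d f_{L'}$ gives
\[
\area(\Si)=f_L(q)-f_L(p)+f_{L'}(p)-f_{L'}(q).
\]
Therefore $\bigl(f_{L'}(p)-f_L(p)\bigr)-\bigl(f_{L'}(q)-f_L(q)\bigr)=\area(\Si)>0$.

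Now I would apply part (a) twice, using the upper bound at $q$ and the lower bound at $p$:
\[
\mu_{L,L'}(q)-\mu_{L,L'}(p)<\Bigl[\tfrac{\al}{2\pi}(f_{L'}(q)-f_L(q))+m\Bigr]-\tfrac{\al}{2\pi}\bigl(f_{L'}(p)-f_L(p)\bigr)=m-\tfrac{\al}{2\pi}\area(\Si)<m,
\]
which is exactly the required strict inequality. In particular the combination $\mu_{L,L'}(p)=0$, $\mu_{L,L'}(q)=m$ is ruled out.

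The only real obstacle is conceptual rather than technical: one must observe that the expander equation $H=\al F^\perp$ forces the potential and the phase function to be proportional (so that degrees and areas can be compared), which is what makes LMCF expanders behave differently from generic graded Lagrangians. Once that link is in hand, both (a) and (b) reduce to bookkeeping with \eqref{la2eq7} and the Stokes computation \eqref{la2eq3}.
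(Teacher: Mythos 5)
Your proof is correct and follows essentially the same route as the paper: part (a) is the substitution $f_L=-2\th_L/\al$ into the general degree bound \eqref{la2eq7}, and part (b) applies (a) at $p$ and $q$ together with the Stokes computation of $\area(\Si)$ from Example \ref{la2ex2}. (A minor slip: you write $\om=\d\ti\la$, but in $\C^m$ the relevant Liouville form is $\la$ from \eqref{la2eq2}; the argument is unaffected since any two Liouville primitives on $\C^m$ give the same boundary integral here.)
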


\begin{proof} Part (a) follows by substituting $f_L=-2\th_L/\al$ and\/ $f_{L'}=-2\th_{L'}/\al$ into equation \eq{la2eq7}. For (b), by equation \eq{la4eq2} for $p,q$ we have
\begin{align*}
&\mu_{L,L'}(q)-\mu_{L,L'}(p)<\frac{\al}{2\pi}(f_{L'}(q)-f_L(q))+m
-\frac{\al}{2\pi}(f_{L'}(p)-f_L(p))\\
&=m-\frac{\al}{2\pi}\bigl(f_L(q)-f_L(p)+f_{L'}(p)-f_{L'}(q)\bigr)
=m-\frac{\al}{2\pi}\mathop{\rm area}(\Si)<m,
\end{align*}
as $\area(\Si)>0$.
\end{proof}

We now sketch a general method for proving uniqueness of SL $m$-folds or Lagrangian MCF expanders. For SL $m$-folds, our method is based on Thomas and Yau \cite[Th.~4.3]{ThYa}, who prove that (under some extra assumptions we will not give) a Hamiltonian deformation class of compact Lagrangians $L$ in a compact Calabi--Yau $m$-fold $(M,J,g,\Om)$ can contain at most one special Lagrangian.

Let $\scr C$ be some class of SL $m$-folds $L$ in a Calabi--Yau manifold $(M,J,g,\Om)$ that we want to classify (in our case $M=\C^m$ and $\scr C$ is the class of all exact AC SL $m$-folds in $\C^m$ with cone $C=\Pi_0\cup\Pi_{\bs\phi}$, for fixed $\bs\phi$) or some class of LMCF expanders $L$ in $\C^m$ that we want to classify (in our case $\scr C$ is the class of all exact AC LMCF expanders $L$ in $\C^m$ with cone $C=\Pi_0\cup\Pi_{\bs\phi}$).

Suppose we know some class of examples $\scr D\subseteq\scr C$ (in our case $\scr D$ is the Lawlor necks \cite{Lawl} or Joyce--Lee--Tsui LMCF expanders \cite{JLT}) and we want to prove that these are the only examples, that is, $\scr C=\scr D$. Then we follow these steps:
\begin{itemize}
\setlength{\parsep}{0pt}
\setlength{\itemsep}{0pt}
\item[(A)] Choose some class of cohomological or analytic
invariants $I(L)$ of objects $L$ in $\scr C$ which distinguish
objects in $\scr D$, that is, if $L,L'\in\scr D$ with
$I(L)=I(L')$ then $L=L'$. These invariants may be based on the
relative cohomology classes $[\om]\in H^2(M;L,\R)$ or
$[\Im\Om]\in H^m(M;L,\R)$. For noncompact SL $m$-folds, $I(L)$
may describe the leading order asymptotic behaviour of $L$ at
infinity. In our case we take $I(L)=A(L)$ from
Definition~\ref{la2def6}.
\item[(B)] We need to be in a situation in which we can apply
Lagrangian Floer cohomology and Fukaya categories, in particular
Theorem \ref{la2thm1}. Ideally, we would like to be working with
compact, exact, graded Lagrangians in a symplectic Calabi--Yau
Liouville manifold.

If this does not apply already, look for some partial
compactification $\bM$ of $M$ or $\C^m$, such that $\bM$
is a symplectic Calabi--Yau Liouville manifold, and objects $L$
in $\scr C$ extend naturally to compact, exact, graded
Lagrangians $\bar L$ in $\bM$. (In our case $\bM=T^*\cS^m\# T^*\cS^m$ from~\S\ref{la24}.)
\item[(C)] Suppose for a contradiction that $L,L'\in\scr C$
with $I(L)=I(L')$, but $L\ne L'$. We have compactifications
$\bar L,\bar L'$ in $\bM$. By using results on the derived
Fukaya category $D^b\sF(\bM)$, prove that $\bar L,\bar L'$
are isomorphic in~$D^b\sF(\bM)$.
\item[(D)] If $L,L'$ intersect transversely in $M$, then
Proposition \ref{la4prop1} or \ref{la4prop2} applies to $L,L'$.
Note however that $\bar L,\bar L'$ may not intersect
transversely at points in $\bM\sm M$, and even if they do,
the conclusions of Proposition \ref{la4prop1} or \ref{la4prop2}
may not hold there. Let $\bar L''$ be any small Hamiltonian perturbation of $\bar L'$ which is sufficiently $C^1$-close to $\bar L'$, intersects $\bar L$ transversely, and satisfies some extra asymptotic conditions near $\bM\sm M$. We prove that $\bar L$ and $\bar L''$ satisfy the conclusions of Proposition \ref{la4prop1} or \ref{la4prop2} in~$M$.

We also use the extra conditions to control  
$\mu_{\bar L,\bar L''}(p)$ or $f_{\bar L}(p)-f_{\bar L''}(p)$ for $p\in (\bar L\cap\bar L'')\sm M$. This may use the
assumption~$I(L)=I(L')$.

\item[(E)] By (B),(C), we have $\bar L,\bar L''$ in $\bM$
which are transversely intersecting, isomorphic in $D^b\sF(\bM)$, and satisfy the conclusions of Proposition \ref{la4prop1}
or \ref{la4prop2}. Combining Theorem \ref{la2thm1}(a) or (b) and
Proposition \ref{la4prop1} or \ref{la4prop2}, together with a special argument for the case when $p$ or $q$ lies in $(\bar L\cap\bar L'')\sm M$, now gives a contradiction. Hence $I(L)=I(L')$ implies $L=L'$ in~$\scr C$.
\item[(F)] Prove that there do not exist $L\in\scr C$ with
$I(L)\notin I(\scr D)$. (In our case we will show that $L$ in
$\scr C$ implies that $A(L)>0$ in case Corollary
\ref{la2cor}(a), and $A(L)<0$ in case Corollary
\ref{la2cor}(b)). Then (C)--(E) imply that~$\scr C=\scr D$.
\end{itemize}

The rest of this section follows this method to prove Theorems
\ref{la4thm1} and~\ref{la4thm2}.

\subsection{The immersed case, and rates $\rho\in[0,2)$}
\label{la42}

The next two propositions will be used to prove the immersed cases
in Theorems \ref{la4thm1} and \ref{la4thm2}, and to bridge the gap
between rates $\rho<0$ and $\rho<2$ in Theorem~\ref{la4thm1}.

\begin{prop} Suppose $L$ is a closed AC SL\/ $m$-fold or LMCF
expander in $\C^m$ which is asymptotic at rate $\rho<0$ to
$C=\Pi_0\cup\Pi_{\bs\phi},$ and that\/ $L$ is immersed not embedded.
Then $L=\Pi_0\cup\Pi_{\bs\phi}$.
\label{la4prop3}
\end{prop}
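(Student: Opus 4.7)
The proof distinguishes two cases according to whether the abstract manifold $\tilde L$ underlying the immersion $\iota:\tilde L\to\C^m$ is connected. Since the link $\Si=C\cap\cS^{2m-1}$ of $C=\Pi_0\cup\Pi_{\bs\phi}$ has two connected components, the Asymptotically Conical condition at rate $\rho<0$ forces $\tilde L$ to have exactly two ends, each asymptotic to one of the two planes. Hence either $\tilde L$ is connected with both ends, or $\tilde L=\tilde L_0\amalg\tilde L_{\bs\phi}$ where each $\tilde L_i$ is connected with a single end asymptotic to $\Pi_i$. I address these two cases separately.

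\textbf{Disconnected case.} The essential ingredient is a single-plane uniqueness lemma: a closed, connected, AC SL $m$-fold (respectively LMCF expander with $\al>0$) asymptotic at rate $\rho<0$ to a single Lagrangian plane $\Pi\subset\C^m$ must equal $\Pi$. For the SL case I would combine Theorem~\ref{la3thm4} (writing the end as $\Ga_{\d f}$ with $f(x)=r^{2-m}F(x/r^2)$ for $F$ real analytic at the origin) with the dilation invariance of the SL equation: after improving the rate to $(2-m,0)$ via Theorem~\ref{la3thm2}, the family $\{tL\}_{t>0}$ embeds in the moduli space $\cM_L^\rho$, whose dimension is $b^{m-1}(L)$ by Theorem~\ref{la3thm3}. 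Establishing $b^{m-1}(L)=0$ by a topological argument on the one-ended manifold $L$ then forces $tL=L$ for all $t>0$, hence $L$ to be a smooth Lagrangian cone and so $L=\Pi$. For LMCF expanders I would use Theorem~\ref{la3thm6} in place of Theorem~\ref{la3thm4}, reducing the asymptotic data to a real analytic $\bar h^\iy$ on $\cS^{m-1}$, and argue analogously that $\bar h^\iy\equiv 0$. Given this lemma, the disconnected case yields $L=\Pi_0\cup\Pi_{\bs\phi}$ immediately.

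\textbf{Connected case.} Here I aim for a contradiction. Since the two ends of $L$ are $C^1$-small graphs over the transverse planes $\Pi_0,\Pi_{\bs\phi}$, they are disjoint outside a compact set, so every self-intersection of $L$ lies in a bounded region. A tangential self-intersection is excluded by Aronszajn unique continuation applied to the real analytic elliptic SL (respectively expander) system, which would force the two branches to coincide in a neighbourhood and, by connectedness of $\tilde L$, globally. Hence any self-intersection is transverse. Passing to the compactification $\bar L\subset\bar M=T^*\cS^m\#T^*\cS^m$ of Example~\ref{la2ex4}, Corollary~\ref{la2cor} and Theorem~\ref{la2thm3} place $\bar L$ in a distinguished triangle in $D^b\sF(\bar M)$. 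Taking a small generic Hamiltonian perturbation $\bar L''$ of $\bar L$ which is embedded, transverse to $\bar L$, and satisfies the asymptotic conditions of step~(D) of \S\ref{la41}, we have $\bar L\cong\bar L''$ in $D^b\sF(\bar M)$, so Theorem~\ref{la2thm1}(b) produces $p,q\in\bar L\cap\bar L''$ with $\mu_{\bar L,\bar L''}(p)=0$ and $\mu_{\bar L,\bar L''}(q)=m$. But Proposition~\ref{la4prop1} (for SL) or Proposition~\ref{la4prop2} (for expanders) precludes such intersection points inside $M=\C^m$, while the asymptotic control from~(D) precludes them at the added points $\iy_0,\iy_{\bs\phi}$; this is the desired contradiction.

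\textbf{Main obstacle.} The principal difficulty is the single-plane uniqueness lemma used in the disconnected case: Theorems~\ref{la3thm4} and~\ref{la3thm6} only describe $L$ near infinity, so the topological input $b^{m-1}(L)=0$ in the SL case and the corresponding rigidity for expanders require genuinely separate arguments beyond the asymptotic analysis of \S\ref{la3}. An alternative route is to compactify each $\tilde L_i$ inside a single cotangent bundle $T^*\cS^m$, appeal to Fukaya--Seidel--Smith to deduce that the compactification is isomorphic to $\cS_i$ in $D^b\sF(T^*\cS^m)$, and then promote this Floer-theoretic equivalence to an equality $\tilde L_i=\Pi_i$ by a calibration/expander rigidity argument parallel to the one used in the connected case above.
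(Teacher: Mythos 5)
The paper's proof of this proposition is a one-line citation: Lotay and Neves \cite[Lem.~2.4]{LoNe} handle the LMCF expander case via Huisken's monotonicity formula, and the same argument (using the monotonicity of the density ratio for minimal submanifolds) works for SL $m$-folds. In outline: at any self-intersection point the density is at least $2$, the density at infinity equals $2$ since the cone $\Pi_0\cup\Pi_{\bs\phi}$ has multiplicity two, and monotonicity then forces the density to be identically $2$, so $L$ is a cone and hence equals $\Pi_0\cup\Pi_{\bs\phi}$. This handles the connected and disconnected immersed cases uniformly, with no case division.

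Your proposal takes a genuinely different and much heavier route, and it has two gaps that I do not believe you can close with the tools you invoke.

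In the disconnected case, the key claim $b^{m-1}(L_i)=0$ for a connected one-ended AC SL $m$-fold $L_i$ asymptotic to a plane is asserted, not proved. There is no a priori topological restriction of this kind from the asymptotics alone: the dimension formula of Theorem~\ref{la3thm3} is stated for rates $\rho\in(2-m,0)$ and nowhere gives you $b^{m-1}=0$. Without that input, $\dim\cM_L^\rho>0$ is consistent with a nontrivial dilation orbit and the argument stalls. The monotonicity-formula proof bypasses this entirely: a connected AC minimal submanifold asymptotic to a single plane at rate $\rho<0$ has density $1$ at infinity, hence density identically $1$, hence is a plane, with no Betti-number computation.

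In the connected case, your appeal to Corollary~\ref{la2cor}, Theorem~\ref{la2thm3} and Theorem~\ref{la2thm1} is not available, because these results are stated (and the Abouzaid--Smith classification they rest on is proved) for \emph{embedded} exact Lagrangians in $T^*\cS^m\#T^*\cS^m$. Here $L$ is immersed with transverse self-intersections, and no small Hamiltonian perturbation makes an immersed Lagrangian embedded --- the self-intersections persist. So the statement ``taking a small generic Hamiltonian perturbation $\bar L''$ of $\bar L$ which is embedded'' is false as written, and the Fukaya-theoretic contradiction you want never gets off the ground. One could in principle try to argue with an immersed Floer theory, but that is a substantial separate development that the paper does not undertake and your sketch does not supply. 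In short, the approach is interesting but the two load-bearing steps are exactly the ones that need proof, and the paper's monotonicity-formula argument sidesteps both.
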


\begin{proof} Lotay and Neves \cite[Lem.~2.4]{LoNe} prove the LMCF
expander case of this using Huisken's monotonicity formula.
Essentially the same proof also works in the special Lagrangian
case.
\end{proof}

The proof of the next result is taken from Imagi~\cite[Th.~6]{Imag}:

\begin{prop} Suppose $L$ is a closed AC SL\/ $m$-fold in $\C^m$
which is asymptotic at rate $\rho<2$ to $C=\Pi_0\cup\Pi_{\bs\phi}$.
Then there exists a unique $\bs c=(c_1,\ldots,c_m)$ in $\C^m$ such
that\/ $L'=L-\bs c$ is asymptotic to $C$ at rate\/~$\rho'<0$.
\label{la4prop4}
\end{prop}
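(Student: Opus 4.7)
My plan is to identify the unique translation $\bs c \in \C^m$ that eliminates the leading rate-$1$ asymptotic behaviour of $L$ on both ends simultaneously, using the transverse intersection of $\Pi_0$ and $\Pi_{\bs\phi}$ in an essential way. First I would write $L$ near infinity on each end as a Lagrangian graph: over $\Pi_0=\R^m$ as $\Ga_{\d f_0}$ in the coordinates of \eq{la3eq2}, and over $\Pi_{\bs\phi}$ (in suitably adapted coordinates) as $\Ga_{\d f_{\bs\phi}}$. The potentials $f_0, f_{\bs\phi}$ satisfy the SL equation \eq{la3eq3}, whose linearization at zero is the Laplace equation. The link of $C=\Pi_0\cup\Pi_{\bs\phi}$ is $\cS^{m-1}\sqcup\cS^{m-1}$, so by Theorem \ref{la3thm2} the only exceptional rates in $(2-m,2)$ are $0$ and $1$. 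A Lockhart--McOwen-type asymptotic expansion for the nonlinear SL equation on each end then yields $f_0(x) = c_0 + \bs b_0 \cdot x + f_0^{\rm low}(x)$ with $\bs b_0 \in \R^m$ uniquely determined, $c_0 \in \R$ constant, and $f_0^{\rm low}=O(r^{2-m})$; similarly $f_{\bs\phi}(y) = c_{\bs\phi} + \bs b_{\bs\phi}\cdot y + f_{\bs\phi}^{\rm low}(y)$.

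Next I would compute how $L \mapsto L - \bs c$ acts on the potentials. A Taylor expansion shows that replacing $L$ by $L-\bs c$ modifies the graph function on the $\Pi_0$ end, to leading order, by $f_0(x) \mapsto f_0(x+\Re\bs c) - (\Im\bs c)\cdot x$, which shifts the linear coefficient from $\bs b_0$ to $\bs b_0 - \Im\bs c$. The analogous calculation on the $\Pi_{\bs\phi}$ end, using that $\Pi_{\bs\phi}$-adapted coordinates convert $\bs c$ to $e^{-i\bs\phi}\bs c$ componentwise, yields new leading coefficient $\bs b_{\bs\phi} - \Im(e^{-i\bs\phi}\bs c)$. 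Hence $\bs c$ cancels both rate-$1$ terms simultaneously iff
\e
\Im c_j = (\bs b_0)_j \quad\text{and}\quad \Im(e^{-i\phi_j} c_j) = (\bs b_{\bs\phi})_j, \quad j=1,\ldots,m.
\label{la4eqplan1}
\e
Writing $c_j = \al_j + i\be_j$, the $j$-th pair of equations has $2\t 2$ coefficient matrix with determinant $\sin\phi_j\ne 0$, because $\phi_j \in (0,\pi)$. This invertibility is exactly the transversality $\Pi_0\cap\Pi_{\bs\phi}=\{0\}$, and it yields a unique $\bs c\in\C^m$.

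After subtracting this $\bs c$, both potentials have their rate-$1$ contributions removed. The only remaining exceptional rate in $(2-m,1)\cap\scr D_\Si$ is $0$, corresponding to the additive constants $c_0, c_{\bs\phi}$, which are trivial for the Lagrangian graphs $\Ga_{\d f_0},\Ga_{\d f_{\bs\phi}}$. Hence $|\d f_0'|, |\d f_{\bs\phi}'| = O(r^{1-m})$, giving $L-\bs c$ rate $2-m<0$ as an AC Lagrangian. For uniqueness, suppose $\bs c_1,\bs c_2$ both yield rate $<0$: applying the expansion above to $L-\bs c_1$ gives vanishing linear coefficients there, and then the translation calculation of paragraph two, applied to the pair $L-\bs c_1$ and $L-\bs c_2$, forces the homogeneous version of \eq{la4eqplan1} on $\bs c_2-\bs c_1$, so $\bs c_1=\bs c_2$ by the same invertibility.

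The main obstacle will be the rigorous justification of the Lockhart--McOwen-style asymptotic expansion for the nonlinear SL equation on each AC end, especially the step that crosses the exceptional rate $0$ by absorbing the additive constants in the potentials. The necessary analytic framework is essentially the one already used in \cite{Joyc2} to establish Theorem \ref{la3thm2}; the substantive new ingredients are the explicit extraction of the linear leading coefficients $\bs b_0, \bs b_{\bs\phi}$ on the two ends, and the invertibility calculation that ties them together through the transversality of $\Pi_0$ and $\Pi_{\bs\phi}$.
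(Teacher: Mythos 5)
Your proposal is correct, and its skeleton — use the asymptotic expansion theory to reduce to the exceptional rates $1$ and $0$ in $(2-m,2)$, identify the rate-$1$ asymptotics with translations of $\C^m$, and observe that the rate-$0$ asymptotics are invisible to the Lagrangian — matches the paper's. The difference is in how the key step is handled: the paper disposes of the existence and uniqueness of $\bs c$ by citing [Imag, Th.\ 6] and then disposes of the rate-$0$ crossing by invoking Joyce's cohomological obstruction $Y(L')\in H^1(\Sigma)$ with $H^1(S^{m-1}\sqcup S^{m-1})=0$, whereas you make both steps explicit. For the rate-$1$ crossing you extract the leading linear coefficients $\bs b_0,\bs b_{\bs\phi}$ on the two ends, compute the effect of translation on graph potentials ($f_0\mapsto f_0(\,\cdot\,+\Re\bs c)-\Im\bs c\cdot x$ on the $\Pi_0$ end, with $\bs c$ replaced by $e^{-i\bs\phi}\bs c$ on the other), and solve the resulting $2\times2$ blocks whose determinants $\sin\phi_j\neq 0$ encode the transversality of $\Pi_1$ and $\Pi_2$; this is a genuine improvement in transparency, since it shows exactly where transversality enters. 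For the rate-$0$ crossing your observation that additive constants in the potential do not change $\Gamma_{\d f}$ is the concrete content behind the paper's vanishing of $Y(L')$ on ends with $H^1(\cS^{m-1})=0$, $m\geq 3$; in particular the simple connectivity of the ends is what lets you define single-valued potentials $f_0,f_{\bs\phi}$ on each end in the first place, so it is doing work in your argument even if you never name it. The one place to be careful, which you flag yourself, is the rigorous Lockhart--McOwen expansion for the nonlinear SL equation: you need the asymptotic expansion starting from an arbitrary $\rho<2$, and you need that the lower-order remainder $f_0^{\rm low}(x+\Re\bs c)$ does not regenerate a rate-$1$ term after translation; both are standard in the framework of \cite{Joyc2,Joyc6} but should be stated.
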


\begin{proof} In the notation of Joyce \cite[\S 6]{Joyc6}, the SL cone $C$ has link $\Si=S^{m-1}\amalg S^{m-1}$. The set of `critical rates' is $\cD_\Si=\{0,1,2,\ldots\}\amalg \{2-m,1-m,-m,\ldots\}$.
Joyce \cite[Th.~6.6]{Joyc6} proves that if $\rho,\ti\rho<2$ lie in the same connected component of $\R\sm\cD_\Si$ and $L$ is AC with rate $\rho$, then $L$ is also AC with rate $\ti\rho$. Hence $L$ is AC with rate $\ti\rho$ for every $\ti\rho\in(1,2)$. 

The obstruction to crossing rate $1$ comes from homogeneous harmonic functions of degree 1 on $\Pi_0$ or $\Pi_{\bs\phi}$. Homogeneous harmonic functions on Euclidean spaces are polynomials. All homogeneous harmonic functions of degree 1 on a Lagrangian plane in $\C^m$ are restrictions of moment maps of translations on~$\C^m$.

From the proof of \cite[Th.~6]{Imag}, there is a unique $\bs c\in\C^m$ such that $L'=L-\bs c$ is AC with rate $\rho'\in(0,1)$.
The obstruction to crossing rate 0 comes from a cohomology class $Y(L')\in H^1(\Si)$ in the notation of Joyce\cite[Def.~6.2]{Joyc6}, but $H^1(\Si)=0$ as $\Si=S^{m-1}\amalg S^{m-1}$ for $m>2$, so $Y(L')=0$. Thus $L'=L-\bs c$ is AC with any rate $\rho'\in(2-m,0)$, completing the proof.
\end{proof}

\subsection{Perturbing $\bar L,\bar L'$ to transverse: special Lagrangian case}
\label{la43}

Next, in a rather long proof, we show that we can perturb compactifications $\bar L,\bar L'$ of AC SL $m$-folds $L,L'$ to intersect transversely, whilst preserving the property that the intersections of $L$ and $L'$ away from infinity have indices strictly between 0 and $m$. We do introduce intersection points of index 0 and $m$ at infinity, and we need an additional argument with a holomorphic disc (guaranteed by Theorem \ref{la2thm1}(b)) to get a contradiction. Readers happy to believe Proposition \ref{la4prop5} can skip forward to \S\ref{la44} or~\S\ref{la45}.

The perturbation $\bar L''$ we choose in the proposition is not special or difficult to define --- away from $\iy_0,\iy_{\bs\phi}$, any generic, sufficiently small Hamiltonian perturbation will do. But the proof that it has the properties (a)--(c) we need is subtle, and uses the fact that $L,L'$ are {\it real analytic}, including the description in Theorem \ref{la3thm4} of $L,L'$ near infinity in terms of a real analytic function, and the ``Taylor's Theorem'' for real analytic functions in Theorem~\ref{laAthm1}.

One could also use Morse-theoretic ideas to construct the perturbation $\bar L''$, as in \S\ref{laA2} and the proof of \cite[Th.~4.3]{ThYa}, although there would be some difficult issues to deal with. We chose the proof below as it shows that any small generic perturbation will do, rather than a very carefully chosen perturbation, and it showcases some new methods involving real analytic functions.

\begin{prop} Suppose $L,L'$ are distinct, closed, exact AC SL\/
$m$-folds in $\C^m$ for $m\ge 3$ asymptotic at rate $\rho<0$ to the cone
$C=\Pi_0\cup\Pi_{\bs\phi}$. Then {\rm\S\ref{la24}} defines a
symplectic Calabi--Yau Liouville manifold $(M,\om),\la$ such that\/
$L,L'$ extend naturally to compact, exact, graded Lagrangians $\bar
L=L\cup\{\iy_0,\iy_{\bs\phi}\}$ and\/ $\bar
L'=L'\cup\{\iy_0,\iy_{\bs\phi}\}$ in $M,$ with potentials $f_{\bar
L},f_{\bar L'}$ which by \eq{la2eq15} satisfy
\begin{equation*}    
f_{\bar L}(\iy_0)=f_{\bar L'}(\iy_0)=0,\quad
f_{\bar L}(\iy_{\bs\phi})=A(L),\quad f_{\bar L'}(\iy_{\bs\phi})=A(L').
\end{equation*}

Then we may choose a small Hamiltonian perturbation $\bar L''=L''\cup\{\iy_0,\iy_{\bs\phi}\}$ of\/ $\bar L'$ in\/ $(M,\om),$ with potential\/ $f_{\bar L''},$ satisfying the conditions:
\begin{itemize}
\setlength{\parsep}{0pt}
\setlength{\itemsep}{0pt}
\item[{\bf(a)}] $\bar L$ and\/ $\bar L''$ intersect
transversely in $M$.
\item[{\bf(b)}] Each\/ $p\in \bar L\cap \bar L''\cap\C^m$ has
$0<\mu_{\bar L,\bar L''}(p)<m$.
\item[{\bf(c)}] We also have $\iy_0,\iy_{\bs\phi}\in\bar
L\cap \bar L'',$ and\/ $\mu_{\bar L,\bar L''}(\iy_0),\mu_{\bar L,\bar L''}(\iy_{\bs\phi})\in\{0,m\}$. The potential\/ $f_{\bar L''}$ of\/ $\bar L''$ satisfies
\e
f_{\bar L''}(\iy_0)=0\quad\text{and\/}\quad f_{\bar
L''}(\iy_{\bs\phi})=A(L').
\label{la4eq3}
\e
\end{itemize}
The analogues of\/ {\bf(a)--(c)} also hold when $L=\Pi_0\cup\Pi_{\bs\phi},$ so that\/ $\bar L=\cS_0\cup\cS_\phi,$ with potential\/ $f_{\bar L}=0,$ although in this case\/ $L$ is immersed rather than embedded and\/ $A(L)$ is not defined.
\label{la4prop5}
\end{prop}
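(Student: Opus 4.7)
The plan is to build $\bar L''=\phi_H^1(\bar L')$ as the time-1 Hamiltonian flow of a compactly supported function $H=H_{\mathrm{cpt}}+H_0+H_{\bs\phi}$, where $H_0,H_{\bs\phi}$ are supported in small open neighbourhoods of $\iy_0,\iy_{\bs\phi}$ and $H_{\mathrm{cpt}}$ is supported in a compact subset of $\C^m$. First I invoke Example \ref{la2ex4} together with Theorem \ref{la3thm4} to write $\bar L$ and $\bar L'$ near $\iy_0$ as Lagrangian graphs $\Ga_{\d\ti f},\Ga_{\d\ti f'}\subset T^*\cS_0$ of differentials of real analytic functions $\ti f,\ti f'$ on a neighbourhood of $\iy_0$ in $\cS_0$, with $\ti f(\iy_0)=\ti f'(\iy_0)=0$, and similarly at $\iy_{\bs\phi}$. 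Transversality of $\bar L$ and $\bar L''$ at $\iy_0$ then reduces to invertibility of $\Hess(\ti f-\ti f'')(\iy_0)$, where $\ti f''$ is obtained from $\ti f'$ by the local action of $H_0$.

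For property (c), I choose $H_0$ vanishing to order one at $\iy_0$ with $\Hess(H_0)(\iy_0)$ selected so that $\Hess(\ti f-\ti f'')(\iy_0)$ is \emph{definite} (the sign being free). This ensures $\iy_0\in\bar L''$, that $\bar L\cap\bar L''$ is transverse at $\iy_0$, and that in the simultaneous diagonalization of Definition \ref{la2def4} all the angles $\phi_1,\ldots,\phi_m$ at $\iy_0$ lie close to the same endpoint of $(0,\pi)$, forcing $\mu_{\bar L,\bar L''}(\iy_0)\in\{0,m\}$. An analogous construction near $\iy_{\bs\phi}$ yields $H_{\bs\phi}$. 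Because $H_0,H_{\bs\phi}$ vanish at $\iy_0,\iy_{\bs\phi}$ and $H_{\mathrm{cpt}}$ is supported in $\C^m$, the values of the potential $f_{\bar L''}$ at $\iy_0,\iy_{\bs\phi}$ agree with those of $f_{\bar L'}$ by \eq{la2eq15}, yielding \eq{la4eq3}. For property (a) I take $H_{\mathrm{cpt}}$ a generic small $C^\iy$ function so that $\bar L\cap\bar L''\cap\C^m$ is transverse, available by standard Sard--Thom transversality.

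Property (b) is the main obstacle. At each $p\in\bar L\cap\bar L''\cap\C^m$, Definition \ref{la2def4} gives
\[\mu_{\bar L,\bar L''}(p)=(\phi_1+\cdots+\phi_m+\th_{\bar L}(p)-\th_{\bar L''}(p))/\pi.\]
Since $L,L'$ are special Lagrangian with angle $0$, in the region of $\C^m$ where $\ti J=J$ and $\ti\Om=\Om$ we have $\th_L=\th_{L'}=0$, and for $C^1$-small $H$ the phase $\th_{\bar L''}$ is $C^0$-close to $0$ there. At intersections arising from transverse intersections of $L\cap L'$, Proposition \ref{la4prop1} combined with the integrality of $\mu_{\bar L,\bar L''}$ and the strict bounds $\phi_j\in(0,\pi)$ forces $\mu\in\{1,\ldots,m-1\}$, and small perturbations preserve this by continuity of tangent planes. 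The delicate case is intersections arising from a nontransverse intersection locus $Z\subset L\cap L'$, which is a real analytic subvariety by the real analyticity of $L,L'$; I handle this by a local Morse-theoretic choice of $H_{\mathrm{cpt}}$ near $Z$ (in the spirit of Appendix \ref{laA}) whose quadratic part in normal directions is \emph{indefinite}, so that the resulting transverse intersections have Morse indices, and hence Lagrangian indices $\mu_{\bar L,\bar L''}$, strictly between $0$ and $m$. The immersed case $L=\Pi_0\cup\Pi_{\bs\phi}$, where $\bar L=\cS_0\cup\cS_{\bs\phi}$, is treated by the same construction applied only to $\bar L'$, using Proposition \ref{la4prop1} for each of the planes $\Pi_0,\Pi_{\bs\phi}$ meeting $L'$ transversely in $\C^m$.
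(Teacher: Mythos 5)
Your construction differs structurally from the paper's: the paper perturbs $L'$ to $L^\dag=\Psi(\Ga_{\ep\d H})$ with $H\approx r^{2-m}$ at infinity on $L'$ (this is \emph{not} compactly supported in $\C^m$, only in $M$), whereas you split the Hamiltonian into a piece compactly supported in $\C^m$ plus pieces near $\iy_0,\iy_{\bs\phi}$. That difference matters. With your $H_{\rm cpt}$ supported in $\,\ov{\!B}_R$ and $H_0,H_{\bs\phi}$ supported in ``small'' neighbourhoods of $\iy_0,\iy_{\bs\phi}$, there is a priori an intermediate annulus of $\C^m$ where $\bar L''=\bar L'$; but the set $L\cap L'$ need not be contained in any ball and could contain nontransverse points in that annulus, since both $L,L'$ are graphs $\Ga_{\d f},\Ga_{\d f'}$ over the cone near infinity and $\d(f-f')$ can have zeros at arbitrary radii when the leading coefficients coincide. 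The paper's choice $H\approx r^{2-m}$ with generic $\ep$ (so $\ep\ne F_0(0,\dots,0)$, equation \eq{la4eq19}) is exactly what kills off the intersections near infinity and makes $L\cap L^\dag$ bounded. You would need to justify why your split-support $H$ achieves the same, and at present it doesn't.

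The more serious gap is in your treatment of property {\bf(b)} near the nontransverse locus $Z=\{p\in L\cap L':T_pL=T_pL'\}$. You claim that choosing $H_{\rm cpt}$ with indefinite Hessian in the normal directions to $Z$ forces the resulting transverse intersections to have $\mu$ strictly between $0$ and $m$. This is not justified and is in general false: after perturbing, $L\cap L''$ near a point $x\in Z$ is the critical set of $\ep H_{\rm cpt}-G$, where $G$ is the height of $L$ over $L'$ and vanishes with $\nabla G,\nabla^2G$ at $x$. The higher-order jets of $G$ can dominate at the critical points that are pushed a small distance away from $x$, and there the Hessian of $\ep H_{\rm cpt}-G$ may become definite even though $\Hess H_{\rm cpt}(x)$ is indefinite. (For example if, in local coordinates, $G$ looks like a positive quartic $\sum_j w_j^4$ to leading order at $x$ and $\ep H_{\rm cpt}$ has one positive and $m-1$ negative squares, then among the critical points of $\ep H_{\rm cpt}-G$ produced near $x$ one finds Hessians that are negative definite, i.e.\ $\mu=m$.) The paper's proof of this step (Lemma \ref{la4lem2}, case (C)) is precisely designed to exclude this: it uses the special Lagrangian equation \eq{la4eq6}--\eq{la4eq7} to bound the trace of the Hessian $M_{ab}=\ep\nabla^2H-\nabla^2G$, together with the real-analytic ``Taylor's theorem'' (Theorem \ref{laAthm1}, equation \eq{la4eq28}) to bound $\nabla G$ in terms of $\nabla^2 G$, and concludes $\md{\la_1+\cdots+\la_m}<\frac{1}{\sqrt m}(\la_1^2+\cdots+\la_m^2)^{1/2}$, which forces a sign change among the eigenvalues. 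None of that machinery is present in your argument, and a bare ``indefinite Hessian for $H_{\rm cpt}$'' does not replace it. The part of your argument covering \emph{transverse} intersections of $L\cap L'$ via Proposition \ref{la4prop1} and integrality of $\mu$ is essentially Lemma \ref{la4lem3} of the paper and is fine, and your treatment of $\iy_0,\iy_{\bs\phi}$ is in the right spirit (compare Lemma \ref{la4lem5}), though there too you should justify that no extra intersection points are created in the interpolation region inside the support of $H_0$.
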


\begin{proof} As the proof is rather long, extending to the end of \S\ref{la43} and including Lemmas \ref{la4lem1}--\ref{la4lem5}, we begin with a summary of the key steps. In the first part of the proof we construct a real analytic Lagrangian neighbourhood $\Psi:T^*L'\supset W\ra\C^m$ for $L'$, so that exact Lagrangians $\ti L'$ in $\C^m$ which are $C^1$-close to $L'$ may be written as $\Psi(\Ga_{\d f})$ for $f:L'\ra\R$ smooth. 

We cannot write $L=\Psi(\Ga_{\d f})$ globally, as $L$ may not be $C^1$-close to $L'$. However, $L$ is $C^1$-close to $L'$ locally both near infinity in $\C^m$, and near any $p\in L\cap L'$ with $T_pL=T_pL'$. So we choose $V\subseteq L'$ open and $G:V\ra\R$ real analytic such that $\Psi(\Ga_{\d f})\subseteq L$ is open, and $\Psi(\Ga_{\d f})$ contains the infinite ends of $L$, and all $p\in L\cap L'$ with $T_pL=T_pL'$. Using the real analytic property of $G$, Lemma \ref{la4lem1} proves some estimates which roughly say that $G$ cannot be too close to $\ep r^{2-m}$ to second-order at $q\in V$ with $\ep>0$ small and~$r(q)\gg 0$. 

We then choose a Hamiltonian perturbation $L^\dag$ of $L'$ of the form $L^\dag=\Psi(\Ga_{\ep\d H})$, where $H:L'\ra\R$ is smooth with $H=r^{2-m}+O(r^{1-m})$ near infinity in $L'$ and $\ep>0$ is small. This $L^\dag$ is an Asymptotically Conical Lagrangian in $\C^m$ with cone $C=\Pi_0\cup\Pi_{\bs\phi}$ and rate $\rho<0$, so $\bar L^\dag=L^\dag\cup\{\iy_0,\iy_{\bs\phi}\}$ is a compact, smooth Lagrangian in $(M,\om)$, a Hamiltonian perturbation of~$\bar L'$.

The form $H\approx r^{1-m}$ near infinity was chosen as $\ep\,\d(r^{1-m})$ is an infinitesimal deformation of $L'$ as an AC special Lagrangian, to leading order, and dominates all other such infinitesimal deformations near infinity.

The important property of $L^\dag$, proved in Lemmas \ref{la4lem2}--\ref{la4lem4}, is that it intersects $L$ transversely in finitely many points $p_1,\ldots,p_k$ with $0<\mu_{L,L^\dag}(p_i)<m$. If $L^\dag$ were special Lagrangian then $0<\mu_{L,L^\dag}(p_i)<m$ by Proposition \ref{la4prop1}. Thus, we start with AC SL $m$-folds $L,L'$ which may not be transverse, and deform $L'$ to $L^\dag$ which is Lagrangian but not special, and intersects $L$ transversely, with the same intersection properties as if $L,L^\dag$ were transverse SL $m$-folds. Lemma \ref{la4lem1} is used to show that $L\cap L^\dag$ is bounded in $\C^m$, and so finite.

The proof that $0<\mu_{L,L^\dag}(p_i)<m$ for $p_i\in L\cap L^\dag$ is divided into four cases:
\begin{itemize}
\setlength{\parsep}{0pt}
\setlength{\itemsep}{0pt}
\item[(i)] $r(p_i)\gg 0$ (actually, we show no such $p_i$ exist);
\item[(ii)] $p_i$ close to $p\in L\cap L'$ with $T_pL=T_pL'$;
\item[(iii)] $p_i$ close to $p\in L\cap L'$ with $T_pL\ne T_pL'$; and
\item[(iv)] $p_i$ not close to $L\cap L'$ (actually, we show no such $p_i$ exist).
\end{itemize}
Lemmas \ref{la4lem2}, \ref{la4lem3} and \ref{la4lem4} handle cases (i)--(ii) and (iii) and (iv), respectively. 

Now $L^\dag$ is not quite what we want, for although $\bar L^\dag$ is a Hamiltonian perturbation of $\bar L'$ and $\bar L,\bar L^\dag$ intersect in finitely many points $p_1,\ldots,p_k,\iy_0,\iy_{\bs\phi}$, the intersections of $\bar L,\bar L^\dag$ at $\iy_0,\iy_{\bs\phi}$ may not be transverse. So in Lemma \ref{la4lem5} we show we can make a small Hamiltonian perturbation $\bar L''$ of $\bar L^\dag$  which changes $\bar L^\dag$ only near $\iy_0,\iy_{\bs\phi}$, such that $\bar L\cap\bar L''=\{p_1,\ldots,p_k,\iy_0,\iy_{\bs\phi}\}$, but now $\bar L,\bar L''$ are transverse at $\iy_0,\iy_{\bs\phi}$, with $\mu_{\bar L,\bar L''}(\iy_0),\mu_{\bar L,\bar L''}(\iy_{\bs\phi})\in\{0,m\}$. This concludes our outline of the proof of Proposition \ref{la4prop5}, we now begin the proof itself.
\smallskip

Since $L'$ is an embedded AC Lagrangian in $\C^m$, by the Lagrangian Neighbourhood Theorem we may choose an open tubular neighbourhood $W$ of the zero section $z(L')$ in $T^*L'$, with projection $\pi:W\ra L'$, where $z:L'\ra T^*L'$ is the zero section and $T^*L'$ is regarded as a symplectic manifold with its canonical symplectic form, and a symplectomorphism $\Psi:W\ra\Psi(W)$ to an open neighbourhood $\Psi(W)$ of $L'$ in $\C^m$, such that~$\Psi\ci z=\id_{L'}:L'\ra\Psi(W)\supset L'$. 

We require that $W$ should `grow linearly at infinity', in that
\e
\bigl\{\text{$(p,\al):p\in L'$, $\al\in T_p^*L'$, $\nm{\al}_{g'}<c\, r(p)$}\bigr\}\subseteq W
\label{la4eq4}
\e
for some small $c>0$, where $\nm{\al}_{g'}$ is computed using the natural Riemannian metric $g'=g_{\sst\C^m}\vert_{L'}$ on $L'$, and $r:\C^m\ra[0,\iy)$ is the radius function $r(p)=\md{p}$. Given $W$, we fix $\Psi$ by requiring that
\e
\Psi(W\cap T_p^*L')\subset p+iT_pL'\subset\C^m\quad\text{for all $p\in L'$.}
\label{la4eq5}
\e
This determines the projection $\pi:W\ra L'$. Identifying the tangent spaces of $W\cap T_p^*L'$ and $\C^m$ with $T_p^*L',\C^m$, the derivative $\d\Psi\vert_q:T_p^*L'\ra \C^m\supset p+iT_pL'$ of $\Psi$ at $q\in W\cap T_p^*L'$ is the composition
\begin{equation*}
\smash{\xymatrix@C=40pt{ T_p^*L' \ar[r]^{\d\pi\vert_q^*} & T_q^*\C^m \ar@{=}[r] & (\C^m)_{\sst\R}^* \ar[r]^{\om^{-1}} & \C^m, }}
\end{equation*}
where $(\C^m)_{\sst\R}^*$ is the dual of $\C^m$ as a real vector space. So \eq{la4eq5} determines the derivative of $\Psi\vert_{W\cap T_p^*L'}:W\cap T_p^*L'\ra p+iT_pL'$ for each $p\in L'$, and this, $\Psi(p,0)=p$, and $W\cap T_p^*L'$ connected, imply that \eq{la4eq5} determines $\Psi$ uniquely. Since $L'$ is real analytic, so is $\Psi$, as it is canonically constructed from~$L'$.

As $\Psi$ is a symplectomorphism, $\Psi^{-1}(L)$ is Lagrangian in $W\subset T^*L'$. We choose an open subset $V\subset L'$ and a real analytic function $G:V\ra\R$ satisfying:
\begin{itemize}
\setlength{\parsep}{0pt}
\setlength{\itemsep}{0pt}
\item[(i)] $\Ga_{\d G}=\bigl\{(p,\d G\vert_p):p\in V\bigr\}=\Psi^{-1}(L)\cap \pi^{-1}(V)\subset W\subset T^*L'$.
\item[(ii)] For some $R\gg 0$ we have $L'\sm\,\ov{\!B}_R\subseteq V$, and $G(p)\ra 0$ as $r(p)\ra\iy$ for $p$ in $L'$.
\item[(iii)] If $p\!\in\! L\!\cap\! L'$ with $T_pL\!=\!T_pL'$ then $p\!\in\! V$, and~$G(p)\!=\!\d G(p)\!=\!\nabla\d G(p)\!=\!0$.
\end{itemize}
Here the point is to write $\Psi^{-1}(L)$ as the graph of an exact 1-form $\d G$ over a subset $V$ in $L'$, where $G$ is locally unique up to addition of a constant. Note that if $G$ exists it is real analytic, as $L,L'$ and $\Psi$ are. 

If $G$ is identically zero on any open set in $V$ then $L,L'$ agree in an open set, so $L=L'$ as they are closed, connected, real analytic submanifolds of $\C^m$, contradicting $L,L'$ distinct. So $G\ne 0$ on every connected component of~$V$.

Writing $\Psi^{-1}(L)$ as $\Ga_{\d G}$ may not be possible globally, as $\Psi^{-1}(L)$ may not be transverse to the fibres of $\pi:W\ra L'$. However, it is possible near infinity in $L'$ since $L,L'$ are both asymptotic to $C=\Pi_0\cap\Pi_{\bs\phi}$, so $\Psi^{-1}(L)$ is asymptotic to $z(L')$ near infinity in $L'$. Thus we can choose $V$ to satisfy (ii), and we fix the additive constant by requiring $G(p)\ra 0$ as $r(p)\ra\iy$. Also, perhaps after making $W$ smaller, it is possible to write $\Psi^{-1}(L)$ as $\Ga_{\d G}$ near the closed set of $p\in L\cap L'$ with $T_pL=T_pL'$, as there $\Psi^{-1}(L)$ is transverse to the fibres of $\pi$, so we can choose $V$ to satisfy (iii), and we fix the additive constant by requiring $G(p)=0$ at such~$p$.

Since $L,L'$ are special Lagrangian, a similar argument to the proof of Theorem \ref{la3thm4} shows that $G$ satisfies a nonlinear elliptic p.d.e.\ similar to \eq{la3eq10}, which we may write in tensor notation in the form
\e
g^{\prime ab}\nabla_a\nabla_bG(\bs y)+P\bigl(\bs y,\nabla_a G(\bs y),\nabla_a\nabla_bG(\bs y)\bigr)=0,
\label{la4eq6}
\e
where $\nabla$ is the Levi-Civita connection of the metric $g'=g\vert_{L'}$ on $L'$, and $P$ is a nonlinear function of its arguments such that
\e
P(\bs y,\al,\be)=O\bigl((1+\md{\bs y})^{1-m}\ms{\al}+\ms{\be}\bigr)
\label{la4eq7}
\e
for small $\al,\be$. Here the factor $(1+\md{\bs y})^{1-m}$ is because asymptotically at infinity, \eq{la4eq6} depends only on $\nabla_a\nabla_b G$ and not on $\nabla_aG$, as in~\eq{la3eq3}.

Making $R\gg 0$ larger if necessary, we have a natural decomposition 
\begin{equation*}
\bigl\{p\in L':r(p)>R\bigr\}=L'_0\amalg L'_{\bs\phi},
\end{equation*}
where $L'_0,L'_{\bs\phi}$ are the ends of $L'$ asymptotic to $\Pi_0,\Pi_{\bs\phi}$, so that
\e
L'_0\approx \Pi_0\sm\,\ov{\!B}_R,\qquad L'_{\bs\phi}\approx \Pi_{\bs\phi}\sm\,\ov{\!B}_R,
\label{la4eq8}
\e
for $\,\ov{\!B}_R$ the closed ball of radius $R$ about 0 in $\C^m$. 

Use $(x_1,\ldots,x_m):=(\Re z_1\vert_{L'},\ldots,\Re z_m\vert_{L'})$ as coordinates on $L_0'$, so that $G\vert_{L_0'}=G\vert_{L_0'}(x_1,\ldots,x_m)$, and write $r=(x_1^2+\cdots+x_m^2)^{1/2}$. Now Theorem \ref{la3thm4} writes $L,L'$ near infinity in $\Pi_0$ as $\Ga_{\d f},\Ga_{\d f'}$, for 
\begin{equation*}
f(\bs x)=r^{2-m}\cdot F(x_1/r^2,\ldots,x_m/r^2),\;\>
f'(\bs x)=r^{2-m}\cdot F'(x_1/r^2,\ldots,x_m/r^2),
\end{equation*}
and $F(y_1,\ldots,y_m),F'(y_1,\ldots,y_m)$ are real analytic functions defined near 0 in $\R^m$. Under the identification \eq{la4eq8} we may take $G\vert_{L'_0}(\bs x)$ to be $f(\bs x)-f'(\bs x)$. Thus we may write
\e
G\vert_{L_0'}(x_1,\ldots,x_m)=r^{2-m}\cdot F_0(x_1/r^2,\ldots,x_m/r^2),
\label{la4eq9}
\e
where $F_0:\bigl\{(y_1,\ldots,y_m)\in\R^m:y_1^2+\cdots+y_m^2<R^{-2}\bigr\}\ra\R$ is real analytic. The analogue holds for $G\vert_{L_{\bs\phi}'}$, in terms of $F_{\bs\phi}:\bigl\{\bs y\in\Pi_{\bs\phi}:\md{\bs y}<R^{-1}\bigr\}\ra\R$.

The next lemma, which will be used in the proof of Lemma \ref{la4lem2}, depends only on the form \eq{la4eq9} for $G\vert_{L_0'},G\vert_{L_{\bs\phi}'}$ with $F_0,F_{\bs\phi}$ real analytic. The following explanation may help to understand the proof of Lemma \ref{la4lem1}. Suppose $q(t)\in V$ and $\ep(t)>0$ are smooth functions of $t\in(0,\de)$ and satisfy
\e
\begin{split}
\ep(t)^{-1}(\nabla_aG)(q(t))&=(\nabla_ar^{2-m})(q(t)),\\
\ep(t)^{-1}(\nabla_a\nabla_bG)(q(t))&=(\nabla_a\nabla_br^{2-m})(q(t)),
\end{split}
\label{la4eq10}
\e
for all $t\in(0,\de)$. Then by computing $\frac{\d^2}{\d t^2}\bigl[\ep(t)^{-1}G(q(t))\bigr]$ in two different ways using \eq{la4eq10}, we find that $\frac{\d}{\d t}\ep(t)=0$, so that if $\ep(t)=At^k+O(t^{k+1})$ for $A>0$ then $k=0$, which is how we get the contradiction in the proof below. The proof works by showing that if \eq{la4eq10} only holds approximately, as in \eq{la4eq11}, then $\frac{\d}{\d t}\ep(t)$ is small compared to $\ep(t)$ when $r(q(t))\gg 0$, so $\ep(t)$ cannot reach 0, and solutions $q,\ep$ of \eq{la4eq11} with $r(q)\gg 0$ must satisfy~$\ep\ge\ep_\iy>0$.

\begin{lem} Let\/ $C_1,C_2>0$ be given. Then making $R\gg 0$ larger if necessary, there exists\/ $\ep_\iy>0$ such that there are no $q\in L_0'\cup L_{\bs\phi}'$ and\/ $0\!<\!\ep\!<\!\ep_\iy$ satisfying
\e
\begin{split}
\bnm{\ep^{-1}\nabla_aG(q)-\nabla_ar^{2-m}(q)}_{g'}&<C_1r(q)^{-m}
\quad\text{and\/}\\
\bnm{\ep^{-1}\nabla_a\nabla_bG(q)-\nabla_a\nabla_br^{2-m}(q)}_{g'}&<C_2r(q)^{-1-m}.
\end{split}
\label{la4eq11}
\e

\label{la4lem1}
\end{lem}

\begin{proof} Use $(x_1,\ldots,x_m)=(\Re z_1\vert_{L'},\ldots,\Re z_m\vert_{L'})$ as coordinates on $L_0'$ and set $(y_1,\ldots,y_m)=(x_1/r^2,\ldots,x_m/r^2)$, so that $(x_1,\ldots,x_m)=(y_1/s^2,\ldots,y_m/s^2)$ with $s=(y_1^2+\cdots+y_m^2)^{1/2}=r^{-1}$. Define
\e
\begin{split}
T=\bigl\{&(y_1,\ldots,y_m,\ep)\in\R^{m+1}:0<y_1^2+\cdots+y_m^2<R^{-2},\;\> \ep>0,\\ 
&\nm{\ep^{-1}\nabla_aG(y_1/s^2,\ldots,y_m/s^2)-\nabla_as^{m-2}}_{g'}<C_1s^m,\\
&\nm{\ep^{-1}\nabla_a\nabla_bG(y_1/s^2,\ldots,y_m/s^2)-\nabla_a\nabla_bs^{m-2}}_{g'}<C_2s^{m+1}\bigr\}.
\end{split}
\label{la4eq12}
\e
That is, $T$ is just the set of points $(q,\ep)$ satisfying \eq{la4eq11} for $q\in L_0'$, but written in terms of $(y_1,\ldots,y_m)$ and $s$ rather than $(x_1,\ldots,x_m)$ and $r$. From \eq{la4eq9} and the fact that the conformal inversion $s^4g'$ extends to a real analytic metric over $(y_1,\ldots,y_m)=0$, which follows from the formula for $g'$ on $L_0'$
\begin{equation*}
g'_{ij}(x_1,\ldots,x_m)=\de_{ij}+\sum_{k=1}^m\frac{\pd^2G}{\pd x_i\pd x_k}(x_1,\ldots,x_m)\frac{\pd^2G}{\pd x_j\pd x_k}(x_1,\ldots,x_m)
\end{equation*}
and the expression \eq{la4eq9} with $F_0$ real analytic, we see that $T$ in \eq{la4eq12} is defined by finitely many real analytic inequalities in $(y_1,\ldots,y_m,\ep)$, which in particular are real analytic at~$y_1=\cdots=y_m=0$.

Suppose for a contradiction that $(0,\ldots,0,0)$ lies in the closure $\bar T$ of $T$ in $\R^{m+1}$. Since $T$ is defined by finitely many real analytic inequalities, $\bar T$ has no pathological behaviour at its boundary $\bar T\sm T$ (Example \ref{laAex} below illustrates the kind of pathological behaviour we are worried about), and we can find a real analytic curve $\bigl(y_1(t),\ldots,y_m(t),\ep(t)\bigr):(-\de,\de)\ra\bar T$ for $\de>0$ such that $\bigl(y_1(0),\ldots,\ab y_m(0),\ab \ep(0)\bigr)=(0,\ldots,0)$ and $\bigl(y_1(t),\ldots,y_m(t),\ep(t)\bigr)\in T$ for $t>0$. Write $s(t)=(y_1(t)^2+\cdots+y_m(t)^2)^{1/2}$, so that $s(0)=0$ and $s(t)>0$ for $t>0$, and for $t\in(0,\de)$ write $r(t)=s(t)^{-1}$ and $x_i(t)=y_i(t)/s(t)^2$ for~$i=1,\ldots,m$.

Now both $\ep(t)$ and $s(t)^2$ are real analytic functions $(-\de,\de)\ra[0,\iy)$ with $\ep(0)=s(0)^2=0$ and $\ep(t),s(t)^2>0$ for $t>0$, and also $s(t)^2\ge 0$ for $t<0$. Therefore $\ep(t)=At^k+O(t^{k+1})$, $s(t)^2=Bt^{2l}+O(t^{2l+1})$ for real $A,B>0$ and integers $k,l>0$. By making $\de>0$ smaller and replacing $t$ by $t'=t\cdot(s(t)^2/t^{2l})^{1/2l}$, we can suppose that $s(t)^2=t^{2l}$, so that $s(t)=t^l$ for $t\ge 0$.

From \eq{la4eq9} we see that $G(x_1(t),\ldots,x_m(t))=t^{l(m-2)}F_0\bigl(y_1(t),\ldots,y_m(t)\bigr)$ is a real analytic function of $t\in[0,\de)$, including at $t=0$. Differentiating gives
\e
\begin{split}
\ts\frac{\d}{\d t}&\bigl[\ep(t)^{-1}G(x_1(t),\ldots,x_m(t))\bigr]\\
&=\ts-\ep(t)^{-2}\frac{\d\ep}{\d t}(t)\cdot G(x_1(t),\ldots,x_m(t))
+\frac{\d}{\d t}\bigl(r(t)^{2-m}\bigr)\\
&\ts\qquad+\sum_{i=1}^m\bigl[\ep(t)^{-1}\frac{\pd G}{\pd x_i}(x_1(t),\ldots,x_m(t))-\frac{\pd}{\pd x_i}(r^{2-m})(t)\bigr]\cdot\frac{\d x_i}{\d t}(t)\\
&\ts=-\ep(t)^{-1}\frac{\d\ep}{\d t}(t)\cdot \bigl[\ep(t)^{-1} G(x_1(t),\ldots,x_m(t))\bigr]+l(m-2)t^{l(m-2)-1}\\
&\qquad+O(t^{lm})\cdot O(t^{-l-1})
\end{split}
\label{la4eq13}
\e
for $t>0$, where in the first step we add and subtract $\frac{\d}{\d t}(r(t)^{2-m})$, and in the second we use $\ep(t)^{-1}\frac{\pd G}{\pd x_i}(x_1(t),\ldots,x_m(t))\!-\!\frac{\pd}{\pd x_i}(r^{2-m})(t)\ab=O(s(t)^m)=O(t^{lm})$ by \eq{la4eq12}, and~$\frac{\d x_i}{\d t}(t)=O\bigl(\frac{\d r}{\d t}(t)\bigr)=O(t^{-l-1})$.

Differentiating \eq{la4eq13} again, substituting \eq{la4eq13} for the derivative of the term $\bigl[\ep(t)^{-1} G(x_1(t),\ldots,x_m(t))\bigr]$, using $\ep(t)^{-1}\frac{\d\ep}{\d t}(t)=kt^{-1}+O(1)$, and noting that as the remainder term $O(t^{lm-l-1})$ is smooth we have $\frac{\d}{\d t}O(t^{lm-l-1})=O(t^{lm-l-2})$, we see that for $t>0$ we have
\ea
&\ts\frac{\d^2}{\d t^2}\bigl[\ep(t)^{-1}G(x_1(t),\ldots,x_m(t))\bigr]=
\nonumber\\
&\ts-\!\ep(t)^{-2}\frac{\d^2\ep}{\d t^2}(t)\cdot G(x_1(t),\ldots,x_m(t))
\!+\!2\ep(t)^{-3}\bigl(\frac{\d\ep}{\d t}(t)\bigr)^2\cdot G(x_1(t),\ldots,x_m(t))
\nonumber\\
&+l(m-2)(l(m-2)-k-1)t^{l(m-2)-2}+O(t^{lm-l-2}).
\label{la4eq14}
\ea

Doing the computation a different way yields, for $t>0$
\ea
&\ts\frac{\d^2}{\d t^2}\bigl[\ep(t)^{-1}G(x_1(t),\ldots,x_m(t))\bigr]=-\ep(t)^{-2}\frac{\d^2\ep}{\d t^2}(t)\cdot G(x_1(t),\ldots,x_m(t))\nonumber\\
&\ts-\!2\ep(t)^{-1}\frac{\d\ep}{\d t}(t)\cdot \frac{\d}{\d t}\bigl[\ep(t)^{-1} G(x_1(t),\ldots,x_m(t))\bigr]\!+\!\ep(t)^{-1}\frac{\d^2}{\d t^2}\bigl[G(x_1(t),\ldots,x_m(t))\bigr]
\nonumber\\
&=\ts-\ep(t)^{-2}\frac{\d^2\ep}{\d t^2}(t)\cdot G(x_1(t),\ldots,x_m(t))\nonumber\\
&\ts-2\ep(t)^{-1}\frac{\d\ep}{\d t}(t)\cdot \frac{\d}{\d t}\bigl[\ep(t)^{-1} G(x_1(t),\ldots,x_m(t))\bigr]+\frac{\d^2}{\d t^2}\bigl(r(t)^{2-m}\bigr)\nonumber\\
&\ts+\sum_{i,j=1}^m\bigl[\ep(t)^{-1}\frac{\pd^2 G}{\pd x_i\pd x_j}(x_1(t),\ldots,x_m(t))-\frac{\pd^2}{\pd x_i\pd x_j}(r^{2-m})(t)\bigr]\cdot\frac{\d x_i}{\d t}(t)\frac{\d x_j}{\d t}(t)
\nonumber\\
&\ts+\sum_{i=1}^m\bigl[\ep(t)^{-1}\frac{\pd G}{\pd x_i}(x_1(t),\ldots,x_m(t))-\frac{\pd}{\pd x_i}(r^{2-m})(t)\bigr]\cdot\frac{\d^2 x_i}{\d t^2}(t)
\nonumber\\
&\ts=-\ep(t)^{-2}\frac{\d^2\ep}{\d t^2}(t)\cdot G(x_1(t),\ldots,x_m(t))
\!+\!2\ep(t)^{-3}\bigl(\frac{\d\ep}{\d t}(t)\bigr)^2\cdot G(x_1(t),\ldots,x_m(t))
\nonumber\\
&+l(m-2)(l(m-2)-2k-1)t^{l(m-2)-2}
\nonumber\\
&+O(t^{-1})\cdot O(t^{lm-l-1})+O(t^{l(m+1)})\cdot O(t^{-2l-2})+O(t^{lm})\cdot O(t^{-l-2}),
\label{la4eq15}
\ea
where in the second step we add and subtract $\frac{\d^2}{\d t^2}(r(t)^{2-m})$, and in the third step we substitute in \eq{la4eq13} and use $\ep(t)^{-1}\frac{\d\ep}{\d t}(t)=kt^{-1}+O(1)$, the first error term comes from $\ep(t)^{-1}\frac{\d\ep}{\d t}(t)=O(t^{-1})$ and the $O(t^{lm-l-1})$ in \eq{la4eq13}, the second error term from 
$\ep(t)^{-1}\frac{\pd^2 G}{\pd x_i\pd x_j}(x_1(t),\ab\ldots,\ab x_m(t))-\frac{\pd^2}{\pd x_i\pd x_j}(r^{2-m})(t)=O(s(t)^{m+1})=O(t^{l(m+1)})$ by \eq{la4eq12} and $\frac{\d x_i}{\d t}(t),\ab \frac{\d x_j}{\d t}(t)=O\bigl(\frac{\d r}{\d t}(t)\bigr)=O(t^{-l-1})$, and the third error term from $\ep(t)^{-1}\frac{\pd G}{\pd x_i}(x_1(t),\ab\ldots,\ab x_m(t))-\frac{\pd}{\pd x_i}(r^{2-m})(t)=O(s(t)^m)=O(t^{lm})$ by \eq{la4eq12}, and $\frac{\d^2 x_i}{\d t^2}(t)=O\bigl(\frac{\d^2 r}{\d t^2}(t)\bigr)=O(t^{-l-2})$.

Once we cancel the common terms $-\ep^{-2}\frac{\d^2\ep}{\d t^2}\cdot G+2\ep^{-3}(\frac{\d\ep}{\d t})^2\cdot G$ in both \eq{la4eq14} and \eq{la4eq15}, the terms in $t^{l(m-2)-2}$ are dominant. Comparing coefficients of $t^{l(m-2)-2}$ in \eq{la4eq14}--\eq{la4eq15} gives
\begin{equation*}
l(m-2)(l(m-2)-k-1)=l(m-2)(l(m-2)-2k-1),
\end{equation*}
which forces $k=0$ as $l>0$ and $m>2$, a contradiction.

We have shown that $(0,\ldots,0)\notin\bar T$. Thus $(0,\ldots,0)$ has an open neighbourhood which does not intersect $T$. So making $R\gg 0$ larger, there exists $\ep_\iy>0$ such that if $(y_1,\ldots,y_m,\ep)\in\R^{m+1}$ with $y_1^2+\cdots+y_m^2<R^{-2}$ and $0<\ep<\ep_\iy$ then $(y_1,\ldots,y_m,\ep)\notin T$, which implies that $q=(y_1/s^2,\ldots,y_m/s^2)\in L_0'$ and $\ep$ do not satisfy \eq{la4eq11}. This  proves Lemma \ref{la4lem1} for $q\in L_0'$, and increasing $R$ and decreasing $\ep_\iy$, the same works for~$q\in L_{\bs\phi}'$. 
\end{proof}

We now make a Hamiltonian perturbation $L^\dag$ of $L'$ to achieve transversality away from $\iy_0,\iy_{\bs\phi}$. Choose a smooth function $H:L'\ra\R$ satisfying
\e
\nabla^k\bigl(H-r^{2-m}\bigr)=O(r^{1-m-k})
\label{la4eq16}
\e
as $r\ra\iy$ for all $k=0,1,\ldots,$ and
\e
\Crit(G)\cap\Crit(H)=\es,
\label{la4eq17}
\e
which holds for generic $H$ as $V\sm\Crit(G)$ is open and dense in $V$, and an additional genericity condition we will give shortly. Using \eq{la4eq4} and \eq{la4eq16}, we see that there exists $\ep_W>0$ such that if $0<\ep<\ep_W$ then~$\Ga_{\ep\d H}\subset W$. 

Let $\ep\in(0,\ep_W)$ satisfy a finite number of smallness conditions and genericity conditions we will give later in the proof, and define $L^\dag=\Psi(\Ga_{\ep\d H})$, so that $L^\dag$ is an Asymptotically Conical Lagrangian submanifold of $\C^m$ with cone $C=\Pi_0\cup\Pi_{\bs\phi}$ and rate $\rho\in(2-m,0)$, a Hamiltonian perturbation of $L'$. Therefore $\bar L^\dag=L^\dag\cup\{\iy_0,\iy_{\bs\phi}\}$ is a compact, smooth Lagrangian in $(M,\om)$, a Hamiltonian perturbation of $\bar L'$. The additional genericity condition we require on $H$ is that $L,L^\dag$ should intersect transversely in $\C^m$ for generic $\ep\in(0,\ep_W)$. This holds for $H$ generic and $\ep_W$ sufficiently small.

Now in $\pi^{-1}(V)\subset W\subset T^*L'$ we have $\Psi^{-1}(L)=\Ga_{\d G}$ and $\Psi^{-1}(L^\dag)=\Ga_{\ep\d H}$. Thus we see that
\e
L\cap L^\dag\cap\Psi(\pi^{-1}(V))\cong \bigl\{q\in V:(\ep\d H-\d G)(q)=0\bigr\},
\label{la4eq18}
\e
where $p$ on the left hand side corresponds to $q=\pi\ci\Psi^{-1}(p)$ on the right. From \eq{la4eq9} and \eq{la4eq16} we see that on $L_0'$ we have
\e
\ep\d H-\d G=(\ep-F_0(0,\ldots,0))\d r^{2-m}+O(r^{-m}),
\label{la4eq19}
\e
where $\ep-F_0(0,\ldots,0)\ne 0$ as $\ep$ is generic. Thus $\ep\d H-\d G$ has no zeroes near infinity in $L_0'$, as $m\ge 3$, and similarly for $L_{\bs\phi}'$. So \eq{la4eq18} implies $L\cap L^\dag$ is bounded in~$\C^m$. 

Since $L,L^\dag$ intersect transversely in $\C^m$ for generic $\ep$ by choice of $H$, and $L\cap L^\dag$ is bounded, the intersection is finite, say $L\cap L^\dag=\{p_1,\ldots,p_k\}$ for $p_1,\ldots,p_k\in\C^m$. Note however that $\bar L,\bar L^\dag$ do {\it not\/} intersect transversely at $\iy_0,\iy_{\bs\phi}$ in $M$. We will deal with this in Lemma \ref{la4lem5} by perturbing $\bar L^\dag$ to~$\bar L''$.

Next we prove three lemmas about $\mu_{L,L^\dag}(p_i)$ for $p_i$ in different regions of~$\C^m$.

\begin{lem} Making $R\gg 0$ larger and\/ $V\subset L'$ smaller if necessary, if\/ $p\in L\cap L^\dag\cap \Psi(\pi^{-1}(V))$ and\/ $\ep<\ep_\iy$ for some $\ep_\iy>0$ then\/~$0<\mu_{L,L^\dag}(p)<m$.
\label{la4lem2}
\end{lem}

\begin{proof} Suppose $p\in L\cap L^\dag\cap\Psi(\pi^{-1}(V))$. Then under \eq{la4eq18}, $p$ corresponds to $q=\pi\ci\Psi^{-1}(p)$ with
\e
\ep\nabla_aH(q)=\nabla_aG(q).
\label{la4eq20}
\e
Define a symmetric 2-tensor $M_{ab}$ at $q\in L'$ by
\e
M_{ab}=\ep\nabla_a\nabla_bH(q)-\nabla_a\nabla_bG(q).
\label{la4eq21}
\e
That is, $M_{ab}$ is the Hessian of $\ep H-G$ at $q$. Choosing an orthonormal basis for $T_qL'$, we can regard $M_{ab}$ as a symmetric $m\t m$ matrix. Write $\la_1,\ldots,\la_m$ for the eigenvalues of $M_{ab}$. Then the $\la_i$ are all nonzero, as $L,L^\dag$ intersect transversely at $p$. In Definition \ref{la2def4}, computation shows that $\th_L(p)=0$, $\th_{L^\dag}(p)=\sum_{i=1}^m\tan^{-1}(\ha\la_i)$, and $\phi_i=\tan^{-1}(\ha\la_i)$ if $\la_i>0$ and $\phi_i=\pi+\tan^{-1}(\ha\la_i)$ if $\la_i<0$ for $i=1,\ldots,m$, so that $\mu_{L,L^\dag}(p)=0,\ldots,m$ is the number of negative $\la_i$. We will show $M_{ab}$ has at least one positive and one negative eigenvalue. Thus there cannot be either 0 or $m$ positive eigenvalues, forcing $0<\mu_{L,L^\dag}(p)<m$, as we want to prove. 

Divide into the three cases:
\begin{itemize}
\setlength{\parsep}{0pt}
\setlength{\itemsep}{0pt}
\item[(A)] $r(q)>R$ and $\nm{\nabla_a\nabla_bG(q)}_{g'}\le 2\ep\nm{\nabla_a\nabla_bH(q)}_{g'}$; 
\item[(B)] $r(q)>R$ and $\nm{\nabla_a\nabla_bG(q)}_{g'}>2\ep\nm{\nabla_a\nabla_bH(q)}_{g'}$; or
\item[(C)] $r(q)\le R$.
\end{itemize}
First consider cases (A), (B), so $r(q)>R$. Equation \eq{la4eq16} with $k=1,2$ gives
\e
\begin{split}
\bnm{\nabla_aH(q)-\nabla_ar^{2-m}(q)}_{g'}&<C_1r(q)^{-m},\\
\bnm{\nabla_a\nabla_bH(q)-\nabla_a\nabla_br^{2-m}(q)}_{g'}&<C_2r(q)^{-1-m},
\end{split}
\label{la4eq22}
\e
for some $C_1,C_2>0$ and all $q\in L'$ with $r(q)>R$. Applying Lemma \ref{la4lem1} with constants $C_1$ and $(2m+1)C_2$ gives a constant~$\ep_\iy>0$. 

Suppose the $\ep$ used to define $L^\dag$ is chosen to satisfy $\ep<\ep_\iy$. Then \eq{la4eq20} and \eq{la4eq22} show the first equation of \eq{la4eq11} holds, and therefore the second equation with constant $(2m+1)C_2$ does not, by Lemma \ref{la4lem1}. Hence
\e
\bnm{\ep^{-1}\nabla_a\nabla_bG(q)-\nabla_a\nabla_br^{2-m}(q)}_{g'}\ge (2m+1)C_2r(q)^{-1-m}.
\label{la4eq23}
\e

Now
\ea
&(\la_1^2+\cdots+\la_m^2)^{1/2}=\bnm{M_{ab}}_{g'}=
\bnm{\ep\nabla_a\nabla_bH(q)-\nabla_a\nabla_bG(q)}_{g'}
\nonumber\\
&\ge\ep\bnm{\ep^{-1}\nabla_a\nabla_bG(q)\!-\!\nabla_a\nabla_br^{2-m}(q)}_{g'}\!-\!\ep\bnm{\nabla_a\nabla_bH(q)\!-\!\nabla_a\nabla_br^{2-m}(q)}_{g'}
\nonumber\\
&>(2m+1)C_2\ep\, r(q)^{-1-m}-C_2\ep\, r(q)^{-1-m}=2mC_2\ep\, r(q)^{-1-m},
\label{la4eq24}
\ea
by \eq{la4eq21}--\eq{la4eq23}. Also
\e
\begin{split}
&\bmd{\la_1+\cdots+\la_m}=\bmd{g^{\prime ab}M_{ab}}\\
&\le \bmd{g^{\prime ab}\nabla_a\nabla_bG(q)}\!+\!\ep\bmd{g^{\prime ab}\nabla_a\nabla_br^{2-m}}\!+\!\ep\bmd{g^{\prime ab}\bigl(\nabla_a\nabla_bH\!-\!\nabla_a\nabla_br^{2-m})}\\
&\le C_3 \bigl(r(q)^{1-m}\bnm{\nabla_aG(q)}^2_{g'}+\bnm{\nabla_a\nabla_bG(q)}^2_{g'}\bigr)\\
&\qquad +C_4\ep r(q)^{-2m}+\sqrt{m}\,C_2\ep r(q)^{-1-m}
\end{split}
\label{la4eq25}
\e
for some $C_3,C_4>0$, where $C_3(\cdots)$ estimates $\md{g^{\prime ab}\nabla_a\nabla_bG(q)}$ using \eq{la4eq6}--\eq{la4eq7}. Also $C_4\ep r(q)^{-2m}$ estimates $\ep\md{g^{\prime ab}\nabla_a\nabla_br^{2-m}}$, using the facts that $\De r^{2-m}=0$ on Euclidean space $\R^m$, so that $g_0^{ab}\nabla_a\nabla_br^{2-m}=0$ with $g_0$ the Euclidean metric, and $g^{\prime ab}-g_0^{ab}=O(r^{-m})$. And $\sqrt{m}\, C_2\ep r(q)^{-1-m}$ estimates $\ep\md{g^{\prime ab}\bigl(\nabla_a\nabla_bH\!-\!\nabla_a\nabla_br^{2-m})}$, using \eq{la4eq22} and~$\nm{g^{\prime ab}}_{g'}=\sqrt{m}$.

Now \eq{la4eq20}--\eq{la4eq22} imply that $\nm{\nabla_aG(q)}=O(\ep r(q)^{1-m})$. In case (A), \eq{la4eq22} and $\nm{\nabla_a\nabla_bG(q)}_{g'}\le 2\ep\nm{\nabla_a\nabla_bH(q)}_{g'}$ give $\nm{\nabla_a\nabla_bG(q)}=O(\ep r(q)^{-m})$. So the $C_3,C_4$ terms in \eq{la4eq25} are small compared to the last term for $r(q)$ large, and making $R\gg 0$ bigger we can suppose that $\md{\la_1+\cdots+\la_m}\le 2\sqrt{m}\,C_2\ep\, r(q)^{-1-m}$. Combining this with \eq{la4eq24} implies that
\e
\bmd{\la_1+\cdots+\la_m}< \ts\frac{1}{\sqrt{m}}(\la_1^2+\cdots+\la_m^2)^{1/2}.
\label{la4eq26}
\e
This implies the $\la_i$ cannot be all negative or all positive, so $0<\mu_{L,L^\dag}(p)<m$, proving the lemma in case~(A).

For (B), $\nm{\nabla_a\nabla_bG(q)}_{g'}>2\ep\nm{\nabla_a\nabla_bH(q)}_{g'}$ implies that $\nm{\nabla_a\nabla_bG(q)}_{g'}\ab\le\ab 2\nm{M_{ab}}_{g'}=2(\la_1^2+\cdots+\la_m^2)^{1/2}$, and
$\nm{\nabla_a\nabla_bG(q)}_{g'}=O(r(q)^{-m})$ by \eq{la4eq9}, so
$C_3\nm{\nabla_a\nabla_bG(q)}^2_{g'}$ in \eq{la4eq25} is small compared to $(\la_1^2+\cdots+\la_m^2)^{1/2}$ for large $r(q)$. Again, making $R\gg 0$ bigger we deduce \eq{la4eq26}, and so $0<\mu_{L,L^\dag}(p)<m$, and the lemma holds in case~(B).

For (C), recall that $V\subset L'$ was defined in (i)--(iii) above to contain $\bigl\{q\in L':r(q)>R\bigr\}$ for $R\gg 0$ and an open neighbourhood of each $x\in L\cap L'$ with $T_xL=T_xL'$, and then $G(x)=\nabla_a G(x)=\nabla_a\nabla_bG(x)=0$. Case (C) concerns the open neighbourhoods of $x\in L\cap L'$ with $T_xL=T_xL'$. Let $x$ be such a point. Choose real analytic coordinates $(w_1,\ldots,w_m)$ on a small open neighbourhood $V_x$ of $x$ in $V$, such that $x=(0,\ldots,0)$, and regard $G\vert_{V_x}$ as a real analytic function $G(w_1,\ldots,w_m)$. Then $G(0,\ldots,0)=\frac{\pd G}{\pd w_i}(0,\ldots,0)=\frac{\pd^2G}{\pd w_i\pd w_j}(0,\ldots,0)=0$ for all $i,j$, so making $V_x$ smaller, for some $D_1,D_2>0$ we have
\e
\bnm{\nabla_aG(\bs w)}_{g'}\le D_1u^2,\quad
\bnm{\nabla_a\nabla_bG(\bs w)}_{g'}\le D_2u,
\label{la4eq27}
\e
for all $\bs w=(w_1,\ldots,w_m)\in V_x$, where $u:=(w_1^2+\cdots+w_m^2)^{1/2}$. 

Applying Theorem \ref{laAthm1} to the real analytic function $\frac{\pd G}{\pd w_i}(\bs w)$ with $k=0$ for $i=1,\ldots,m$, and making $V_x$ smaller, we deduce that there exists $D_3>0$ such that for all $\bs w\in V_x$ we have 
\e
\bnm{\nabla_aG(\bs w)}_{g'}\le D_3u\bnm{\nabla_a\nabla_bG(\bs w)}_{g'}.
\label{la4eq28}
\e
Combining \eq{la4eq6} and \eq{la4eq7}, we see that making $V_x$ smaller, there exists $D_4>0$ such that for all $\bs w\in V_x$ we have 
\e
\bmd{g^{\prime ab}\nabla_a\nabla_bG(\bs w)}\le D_4\bigl(\bnm{\nabla_aG(\bs w)}_{g'}^2+\bnm{\nabla_a\nabla_bG(\bs w)}_{g'}^2\bigr).
\label{la4eq29}
\e

As $\nabla_a G(x)=0$, equation \eq{la4eq17} gives $\nabla_a H(x)\ne 0$, so making $V_x$ smaller, there exists $D_5>0$ such that for all $\bs w\in V_x$ we have 
\e
\bnm{\nabla_a\nabla_bH(\bs w)}_{g'}\le D_5\bnm{\nabla_aH(\bs w)}_{g'}.
\label{la4eq30}
\e

Now let $p\in L\cap L^\dag\cap\Psi(\pi^{-1}(V))$, $q=\pi\ci\Psi^{-1}(p)$, $M_{ab}$ and $\la_1\le\cdots\le\la_m$ be as in the first part of the proof, and suppose $q\in V_x$, so that $q=(w_1,\ldots,w_m)=\bs w$ in coordinates. Then
\e
\begin{split}
&\bmd{\la_1+\cdots+\la_m}=\bmd{g^{\prime ab}M_{ab}}\le
\bmd{g^{\prime ab}\nabla_a\nabla_bG(\bs w)}+
\bmd{\ep g^{\prime ab}\nabla_a\nabla_bH(\bs w)}\\
&\le D_4\bigl(\bnm{\nabla_aG(\bs w)}_{g'}^2+\bnm{\nabla_a\nabla_bG(\bs w)}_{g'}^2\bigr)+\sqrt{m}\,\bnm{\ep\nabla_a\nabla_bH(\bs w)}_{g'}\\
&\le D_4(D_1u^2\cdot D_3u+D_2u)\bnm{\nabla_a\nabla_bG(\bs w)}_{g'}+\sqrt{m}\,D_5\bnm{\ep\nabla_aH(\bs w)}_{g'}\\
&=D_4(D_1D_3u^3+D_2u)\bnm{\nabla_a\nabla_bG(\bs w)}_{g'}+\sqrt{m}\,D_5\bnm{\nabla_aG(\bs w)}_{g'}\\
&\le D_4(D_1D_3u^3+D_2u+\sqrt{m}\,D_5D_3u)\bnm{\nabla_a\nabla_bG(\bs w)}_{g'},
\end{split}
\label{la4eq31}
\e
using equations \eq{la4eq20} and \eq{la4eq27}--\eq{la4eq30} and $\nm{g^{\prime ab}}_{g'}=\sqrt{m}$. Also
\ea
&(\la_1^2+\cdots+\la_m^2)^{1/2}=\bnm{M_{ab}}_{g'}\ge \bnm{\nabla_a\nabla_bG(\bs w)}_{g'}-\bnm{\ep\nabla_a\nabla_bH(\bs w)}_{g'}
\nonumber\\
&\ge \bnm{\nabla_a\nabla_bG(\bs w)}_{g'}\!-\!D_5\bnm{\ep\nabla_aH(\bs w)}_{g'}\!=\!\bnm{\nabla_a\nabla_bG(\bs w)}_{g'}\!-\!D_5\bnm{\nabla_aG(\bs w)}_{g'}
\nonumber\\
&\ge (1-D_5D_3u)\bnm{\nabla_a\nabla_bG(\bs w)}_{g'},
\label{la4eq32}
\ea
using \eq{la4eq20}, \eq{la4eq28} and \eq{la4eq30}. Making $V_x$ smaller, so that $u=(w_1^2+\cdots+w_m^2)^{1/2}$ is small on $V_x$, we can suppose that $D_5D_3u\le\ha$ and $D_4(D_1D_3u^3+D_2u+\sqrt{m}\,D_5D_3u)<\frac{1}{2\sqrt{m}}$ on $V_x$. Then \eq{la4eq31}--\eq{la4eq32} imply \eq{la4eq26}, so as in cases (A) and (B) we have~$0<\mu_{L,L^\dag}(p)<m$. 

This shows that each $x\in L\cap L'$ with $T_xL=T_xL'$ has an open neighbourhood $V_x$ in $V$ such that $0<\mu_{L,L^\dag}(p)<m$ for all $p\in L\cap L^\dag\cap\Psi(\pi^{-1}(V))$ with $q=\pi\ci\Psi^{-1}(p)\in V_x$. Making $V$ smaller, we can suppose $V$ is the union of a family of such open neighbourhoods $V_x$, together with the region $\bigl\{q\in L':r(q)>R\bigr\}$ already dealt with in cases (A), (B). This completes case (C), and the proof of Lemma~\ref{la4lem2}.
\end{proof}

\begin{lem} Suppose $x\in L\cap L'$ with\/ $T_xL\ne T_xL'$. Then there exist\/ $\ep_x>0$ and an open neighbourhood\/ $U_x$ of\/ $x$ in $\C^m$ such that if\/ $0<\ep<\ep_x$ and\/ $p\in L\cap L^\dag\cap U_x$ then\/~$0<\mu_{L,L^\dag}(p)<m$.
\label{la4lem3}
\end{lem}

\begin{proof} Since $L,L'$ are both special Lagrangian in $\C^m$, after applying an $\SU(m)$ coordinate change to $\C^m$ we may suppose that
\e
\begin{split}
T_xL&=\bigl\{(x_1,\ldots,x_m):x_1,\ldots,x_m\in\R^m\bigr\},\\
T_xL'&=\bigl\{(e^{i\psi_1}x_1,\ldots,e^{i\psi_m}x_m):x_1,\ldots,x_m\in\R^m\bigr\},
\end{split}
\label{la4eq33}
\e
for unique $\psi_1,\ldots,\psi_m\in[0,\pi)$ with $\psi_1\le \psi_2\le\cdots\le\psi_m$. Then $\psi_1=\cdots=\psi_k=0$ and $\psi_{k+1},\ldots,\psi_m\in(0,\pi)$ for some unique $k$, where $0\le k<m$ as $T_xL\ne T_xL'$. Since $T_xL'$ is special Lagrangian we have $\psi_1+\cdots+\psi_m=a\pi$ for $a\in\Z$. As $\psi_1=\cdots=\psi_k=0$ and $\psi_{k+1},\ldots,\psi_m\in(0,\pi)$, we see that~$0<a<m-k$.

If $U_x$ is a sufficiently small open neighbourhood of $x$ in $\C^m$, then $T_pL$ is close to $T_xL$ as Lagrangian planes in $\C^m$ for any $p\in L\cap U_x$. If also $\ep$ is sufficiently small, then $L^\dag=\Psi(\Ga_{\ep\d H})$ is $C^1$-close to $L'$, so $T_pL^\dag$ is close to $T_xL'$ for any $p\in L^\dag\cap U_x$. Thus, if $U_x$ and $\ep_x>0$ are sufficiently small, then for any $0<\ep<\ep_x$ and $p\in L\cap L^\dag\cap U_x$, the pair $(T_pL,T_pL^\dag)$ is close to $(T_xL,T_xL')$ as pairs of Lagrangian planes in $\C^m$. Therefore we may choose $U_x,\ep_x$ such that after an $\SU(m)$ coordinate change we may write
\begin{align*}
T_pL&=\bigl\{(x_1,\ldots,x_m):x_1,\ldots,x_m\in\R^m\bigr\},\\
T_pL^\dag&=\bigl\{(e^{i\ti\psi_1}x_1,\ldots,e^{i\ti\psi_m}x_m):x_1,\ldots,x_m\in\R^m\bigr\},
\end{align*}
where $\md{\ti\psi_j-\psi_j}<\min\bigl(\frac{\pi}{2m},\psi_{k+1},\ldots,\psi_m,\pi-\psi_{k+1},\ldots,\pi-\psi_m\bigr)$ for $j=1,\ldots,m$, and the phase function $\th_{L^\dag}$ of $L^\dag$ satisfies~$\md{\th_{L^\dag}(p)}<\frac{\pi}{2}$.

Then we have $\ti\psi_j\in(-\frac{\pi}{2m},\frac{\pi}{2m})$ for $j=1,\ldots,k$, and $\ti\psi_j\in(0,\pi)$ for $j=k+1,\ldots,m$. Also $\ti\psi_j\ne 0$, as $L,L^\dag$ intersect transversely. Write $b$ for the number of $j=1,\ldots,k$ with $\ti\psi_j\in(-\frac{\pi}{2m},0)$, where $0\le b\le k$. Then in the definition of $\mu_{L,L^\dag}(p)$ in Definition \ref{la2def4}, the angles between $T_pL$ and $T_pL^\dag$ are $\ti\psi_j$ for $m-b$ values of $j=1,\ldots,m$, and $\ti\psi_j+\pi$ for the remaining $b$ values of $j=1,\ldots,m$. Hence \eq{la2eq6} gives 
\e
\begin{split}
\mu_{L,L^\dag}(p)&=\ts\frac{1}{\pi}(\ti\psi_1+\cdots+\ti\psi_m+b\pi+\th_L(p)-\th_{L^\dag}(p))\\
&=a+b+\ts\frac{1}{\pi}\bigl[(\ti\psi_1-\psi_1)+\cdots+(\ti\psi_m-\psi_m)-\th_{L^\dag}(p)\bigr],
\end{split}
\label{la4eq34}
\e
using $\th_L(p)=0$ as $L$ is special Lagrangian and $\psi_1+\cdots+\psi_m=a\pi$. But the inequalities $\md{\ti\psi_j-\psi_j}<\frac{\pi}{2m}$ and $\md{\th_{L^\dag}(p)}<\frac{\pi}{2}$ imply that the term $\frac{1}{\pi}[\cdots]$ in the second line of \eq{la4eq34} is of size smaller than 1. Since the remaining terms in the equation are integers, we must have $\frac{1}{\pi}[\cdots]=0$. Hence $\mu_{L,L^\dag}(p)=a+b$. But $0<a<m-k$ and $0\le b\le k$ from above, so $0<\mu_{L,L^\dag}(p)<m$ if $0<\ep<\ep_x$ and $p\in L\cap L^\dag\cap U_x$, as we have to prove.
\end{proof}

\begin{lem} Suppose $x\in\C^m\sm(L\cap L')$. Then there exist\/ $\ep_x>0$ and an open neighbourhood\/ $U_x$ of\/ $x\in\C^m$ such that if\/ $0<\ep<\ep_x$ then\/~$L\cap L^\dag\cap U_x=\es$.
\label{la4lem4}
\end{lem}

\begin{proof} Choose an open neighbourhood $U_x$ of $x$ in $\C^m\sm L'$ whose closure $\kern .15em\ov{\kern -.15em U}_x$ in $\C^m$ is compact with $\kern .15em\ov{\kern -.15em U}_x\subset\C^m\sm L'$. Then $T=\bigl\{\ep\in [0,\ep_W]:\Psi(\Ga_{\ep\d H})\cap \kern .15em\ov{\kern -.15em U}_x=\es\bigr\}$ is an open subset of $[0,\ep_W]$, since $\kern .15em\ov{\kern -.15em U}_x$ is compact and $L^\dag=\Psi(\Ga_{\ep\d H})$ is closed in $\C^m$ and depends continuously on $\ep$, and $0\in T$ as $L'\cap \kern .15em\ov{\kern -.15em U}_x=\es$ with $L'=\Psi(\Ga_{0\d H})$. Hence there exists $\ep_x\in(0,\ep_W]$ with $[0,\ep_x)\subseteq T$. These $U_x,\ep_x$ satisfy the conclusions of the lemma.
\end{proof}

Let $R,V,\ep_\iy$ be as in Lemma \ref{la4lem2}. Write $\,\ov{\!B}_{2R}$ for the closed ball of radius $2R$ about 0 in $\C^m$. Then $\,\ov{\!B}_{2R}\sm\Psi(\pi^{-1}(V))$ is closed and bounded in $\C^m$, and so compact. If $x\in\,\ov{\!B}_{2R}\sm\Psi(\pi^{-1}(V))$ then either (a) $x\in L\cap L'$ with $T_xL\ne T_xL'$ or (b) $x\in\C^m\sm (L\cap L')$, as if $x\in L\cap L'$ with $T_xL=T_xL'$ then $x\in V\subset\Psi(\pi^{-1}(V))$. Lemmas \ref{la4lem3} and \ref{la4lem4} in cases (a),(b) respectively give open neighbourhoods $U_x$ of $x$ in $\C^m$ and constants $\ep_x>0$, such that if $p\in L\cap L^\dag\cap U_x$ then $0<\mu_{L,L^\dag}(p)<m$. By compactness we can choose a finite subset $\{x_1,\ldots,x_N\}$ with $\,\ov{\!B}_{2R}\sm\Psi(\pi^{-1}(V))\subset U_{x_1}\cup\cdots\cup U_{x_N}$. Define $\ep'=\min(\ep_W,\ep_\iy,\ep_{x_1},\ldots,\ep_{x_N})$.

Suppose now that $0<\ep<\ep'$, with $\ep$ generic. As above $L,L^\dag$ intersect transversely in finitely many points $p_1,\ldots,p_k$. Let $i=1,\ldots,k$. If $p_i\in\Psi(\pi^{-1}(V))$ then Lemma \ref{la4lem2} proves that $0<\mu_{L,L^\dag}(p_i)<m$. If $p_i\notin\Psi(\pi^{-1}(V))$ then $p_i\in\,\ov{\!B}_{2R}\sm\Psi(\pi^{-1}(V))$, since $L^\dag\sm\,\ov{\!B}_{2R}\subset\Psi(\pi^{-1}(V))$, so $p_i\in U_{x_j}$ for some $j=1,\ldots,N$. If $x_j\in L\cap L'$ with $T_{x_j}L\ne T_{x_j}L'$ then Lemma \ref{la4lem3} shows that $0<\mu_{L,L^\dag}(p_i)<m$. If $x_j\in\C^m\sm (L\cap L')$ then Lemma \ref{la4lem4} gives a contradiction. Hence $0<\mu_{L,L^\dag}(p_i)<m$ for all $i=1,\ldots,k$, as in Proposition~\ref{la4prop5}(b). 

So far, we have defined a small Hamiltonian perturbation $\bar L^\dag$ of $\bar L'$ in $(M,\om)$, such that $\bar L\cap\bar L^\dag=\{p_1,\ldots,p_k,\iy_0,\iy_{\bs\phi}\}$, where the intersections at $p_1,\ldots,p_k$ are transverse, and have $0<\mu_{\bar L,\bar L^\dag}(p_i)<m$ for $i=1,\ldots,k$. However, the intersections at $\iy_0,\iy_{\bs\phi}$ are not transverse, although they are isolated, so $\bar L''=\bar L^\dag$ does not satisfy Proposition \ref{la4prop5}(a). We deal with this in the next lemma, by perturbing $\bar L^\dag$ near $\iy_0,\iy_{\bs\phi}$ to make it intersect $\bar L$ transversely.

\begin{lem} We can choose a compact Lagrangian $\bar L''$ in $(M,\om)$ which is a small Hamiltonian perturbation of\/ $\bar L^\dag,$  with\/ $\bar L''=\bar L^\dag$ except near $\iy_0,\iy_{\bs\phi},$ such that\/ $\bar L,\bar L''$ intersect transversely in $p_1,\ldots,p_k,\iy_0,\iy_{\bs\phi},$ with\/ $0<\mu_{\bar L,\bar L''}(p_i)<m$ for $i=1,\ldots,k$ and\/ $\mu_{\bar L,\bar L''}(\iy_0),\mu_{\bar L,\bar L''}(\iy_{\bs\phi})\in\{0,m\}$.
\label{la4lem5}
\end{lem}

\begin{proof} As above $L,L^\dag$ differ near $\iy_0$ by the graph of the 1-form $\d\bigl(\ep H-G\bigr)$. The symplectic manifold $(M,\om)$ and Lagrangian spheres $\cS_0=\Pi_0\cup\{\iy_0\}$, $\cS_{\bs\phi}=\Pi_{\bs\phi}\cup\{\iy_{\bs\phi}\}$ were defined in Example \ref{la2ex4} in \S\ref{la24}. Writing $r:\Pi_0\ra[0,\iy)$ for the radius function on $\Pi_0$, and $\ti r:\cS_0\sm\{0\}\ra[0,\iy)$ for the distance function from $\iy_0$ in $\cS_0$, on $\Pi_0\sm\{0\}$ we have $\ti r=F(r)=1/\log(1+r^2)$ as in Example \ref{la2ex4}, so that $r=(e^{1/\ti r}-1)^{1/2}$. Hence, by \eq{la4eq19}, we see that $\bar L,\bar L^\dag$ differ near $\iy_0$ by the graph of an exact 1-form $\d K_0$ on $\bar L$, where
\e
\begin{aligned} &K_0=(\ep-F_0(0,\ldots,0))e^{(2-m)/2\ti r}+O(e^{(1-m)/2\ti r}),\\ 
&\nabla^k(K_0-(\ep-F_0(0,\ldots,0))e^{(2-m)/2\ti r})=O(\ti r^{-2k}e^{(1-m)/2\ti r}),\end{aligned}
\label{la4eq35}
\e
with $\ep-F_0(0,\ldots,0)\ne 0$ as $\ep$ is generic. The analogue applies near~$\iy_{\bs\phi}$.

Note that $K_0$ has an isolated local minimum (if $\ep>F_0(0,\ldots,0)$) or local maximum (if $\ep<F_0(0,\ldots,0)$) at $\iy_0$, although this minimum or maximum is not Morse. Choose a smooth function $\ti K_0$ defined near $\iy_0$ on $\bar L$, such that 
\e
\ti K_0=\begin{cases} (\ep-F_0(0,\ldots,0))\ti r_0^{-2}e^{(2-m)/2\ti r_0} \ti r^2, & \ti r\le \ti r_0, \\
\text{has no critical points,} & \ti r_0\le\ti r\le 2\ti r_0, \\
K_0, & \ti r\ge 2\ti r_0,
\end{cases}
\label{la4eq36}
\e
for $\ti r_0>0$ very small, with $\ti K_0-K_0$ small in $C^1$. To do this, write $E$ for the $O(e^{(1-m)/2\ti r})$ error term on the first line of \eq{la4eq35}, and set
\begin{equation*}
\ti K_0=(\ep-F_0(0,\ldots,0))h(\ti r)+\eta(\ti r/\ti r_0)\cdot E\quad\text{in $\ti r_0\le\ti r\le 2\ti r_0$,} 
\end{equation*}
for $h:[\ti r_0,2\ti r_0]\ra\R$ smooth interpolating between $h(\ti r)=\ti r_0^{-2}e^{(2-m)/2\ti r_0}\ti r^2$ near $\ti r=\ti r_0$ and $h(\ti r)=e^{(2-m)/2\ti r}$ near $\ti r=2\ti r_0$, with $\frac{\d h}{\d\ti r}(\ti r)\ne 0$, and $\eta:[1,2]\ra[0,1]$ smooth with $\eta=0$ near 1 and $\eta=1$ near 2. Then in $\ti r_0\le\ti r\le 2\ti r_0$ we have
\begin{equation*}
\d\ti K_0=\ts(\ep-F_0(0,\ldots,0))\frac{\d h}{\d\ti r}(\ti r)\,\d\ti r+\eta(\ti r/\ti r_0)\,\d E+E\,\ti r_0^{-1}\frac{\d\eta}{\d t}(\ti r/\ti r_0)\,\d\ti r.
\end{equation*}
The first term is roughly of size $(\ep-F_0(0,\ldots,0))\ti r_0^{-2}e^{(2-m)/2\ti r_0}$, and the second and third terms are $O(\ti r_0^{-2}e^{(1-m)/2\ti r_0})$ by \eq{la4eq35}. So for small $\ti r_0$ the first term dominates, and $\ti K_0$ has no critical points in~$\ti r_0\le\ti r\le 2\ti r_0$. Choose functions $K_{\bs\phi},\ti K_{\bs\phi}$ on $\bar L$ near $\iy_{\bs\phi}$ in the same way. 

Now define $\bar L''$ to be equal to $\bar L^\dag$ away from $\iy_0,\iy_{\bs\phi}$, to be the graph of $\d\ti K_0$ over $\bar L$ near $\iy_0$, and to be the graph of $\d\ti K_{\bs\phi}$ over $\bar L$ near $\iy_{\bs\phi}$. Then $\bar L,\bar L''$ intersect transversely at $\iy_0$, and the computation after \eq{la4eq21} shows that $\mu_{\bar L,\bar L''}(\iy_0)=0$ if $\ep>F_0(0,\ldots,0)$, when $\ti K_0$ has a Morse local minimum, and $\mu_{\bar L,\bar L''}(\iy_0)=m$ if $\ep<F_0(0,\ldots,0)$, when $\ti K_0$ has a Morse local maximum, and similarly for $\iy_{\bs\phi}$. By the second line of \eq{la4eq36}, the perturbation of $\bar L^\dag$ to $\bar L''$ introduces no extra intersection points with $\bar L$. Thus $\bar L\cap\bar L''=\{p_1,\ldots,p_k,\iy_0,\iy_{\bs\phi}\}$. Also $\bar L,\bar L''$ intersect transversely at $p_1,\ldots,p_k$ with $\mu_{\bar L,\bar L''}(p_i)=\mu_{\bar L,\bar L^\dag}(p_i)$, since $\bar L''=\bar L^\dag$ near $p_i$, so $0<\mu_{\bar L,\bar L''}(p_i)<m$.
\end{proof}

Lemma \ref{la4lem5} proves Proposition \ref{la4prop5}(a),(b) and the first part of (c). For the second part of (c), since $\ti K_0(\iy_0)=K_0(\iy_0)=0$ and $\ti K_{\bs\phi}(\iy_{\bs\phi})=K_{\bs\phi}(\iy_{\bs\phi})=0$, the potential $f_{\bar L''}$ of $\bar L''$ satisfies $f_{\bar L''}(\iy_0)=f_{\bar L^\dag}(\iy_0)=0$, $f_{\bar L''}(\iy_{\bs\phi})=f_{\bar L^\dag}(\iy_{\bs\phi})=A(L')$, so that \eq{la4eq3} holds, and $f_{\bar L''}=f_{\bar L^\dag}$ away from $\iy_0,\iy_{\bs\phi}$ where $\bar L'',\bar L^\dag$ coincide. For the last part of the proposition, with $L=\Pi_0\cup\Pi_\phi$, note that we can suppose $0\notin L^\dag$, as $L^\dag$ is a generic Hamiltonian perturbation of $L'$ away from infinity in $\C^m$. So $L$ having a self-intersection point at 0 makes no difference to the intersections of $\bar L,\bar L^\dag$ or $\bar L,\bar L''$. This finally completes the proof of Proposition~\ref{la4prop5}.
\end{proof}

\subsection{Perturbing $\bar L,\bar L'$ to transverse: LMCF expander case}
\label{la44}

Here is an analogue of Proposition \ref{la4prop5} for Lagrangian MCF expanders, showing that we can perturb $\bar L,\bar L'$ to intersect transversely, and still satisfy the analogue \eq{la4eq37} of Proposition \ref{la4prop2}(a). Note that Proposition \ref{la4prop2}(b) follows directly from Proposition \ref{la4prop2}(a), so it also holds for~$\bar L,\bar L''$ below.

\begin{prop} Suppose $L,L'$ are distinct, closed, graded, AC
Lagrangian MCF expanders in $\C^m$ asymptotic at rate $\rho<0$ to
the cone $C=\Pi_0\cup\Pi_{\bs\phi},$ and both satisfying the
expander equation $H=\al F^\perp$ with the same value of\/ $\al>0$.
Then {\rm\S\ref{la24}} defines $(M,\om),\la$ such that\/ $L,L'$
extend naturally to compact, exact, graded Lagrangians $\bar
L=L\cup\{\iy_0,\iy_{\bs\phi}\}$ and\/ $\bar
L'=L'\cup\{\iy_0,\iy_{\bs\phi}\}$ in $M,$ with potentials $f_{\bar
L}=-2\th_{\bar L}/\al,$ $f_{\bar L'}=-2\th_{\bar L'}/\al$ which by \eq{la2eq15} satisfy
\begin{equation*}
f_{\bar L}(\iy_0)=f_{\bar L'}(\iy_0)=0,\quad
f_{\bar L}(\iy_{\bs\phi})=A(L),\quad f_{\bar L'}(\iy_{\bs\phi})=A(L').
\end{equation*}

Then we may choose a small Hamiltonian perturbation $\bar L''$ of\/ $\bar L'$ in\/ $(M,\om),$ with potential\/ $f_{\bar L''},$ satisfying the conditions:
\begin{itemize}
\setlength{\parsep}{0pt}
\setlength{\itemsep}{0pt}
\item[{\bf(a)}] $\bar L$ and\/ $\bar L''$ intersect
transversely in $M$.
\item[{\bf(b)}] For each\/ $p\in \bar L\cap \bar L''\cap\C^m,$
as in equation \eq{la4eq2} we have
\e
\frac{\al}{2\pi}(f_{\bar L''}(p)-f_{\bar L}(p))<\mu_{\bar L,\bar L''}(p)<
\frac{\al}{2\pi}(f_{\bar L''}(p)-f_{\bar L}(p))+m.
\label{la4eq37}
\e
\item[{\bf(c)}] We may also have one or both of\/ $\iy_0,\iy_{\bs\phi}$ in $\bar L\cap \bar L''$. We do not require \eq{la4eq37} to hold at\/ $\iy_0,\iy_{\bs\phi},$ but instead we have
\e
\begin{aligned}
\mu_{\bar L,\bar L''}(\iy_0)&\in\{0,m\}, & f_{\bar L''}(\iy_0)&=0
&&\text{if\/ $\iy_0\in \bar L\cap \bar L''$,}\\
\mu_{\bar L,\bar L''}(\iy_{\bs\phi})&\in\{0,m\}, & f_{\bar
L''}(\iy_{\bs\phi})&=A(L')
&&\text{if\/ $\iy_{\bs\phi}\in \bar L\cap \bar L''$.}
\end{aligned}
\label{la4eq38}
\e
\end{itemize}
The analogues of\/ {\bf(a)--(c)} also hold when $L=\Pi_0\cup\Pi_{\bs\phi},$ so that\/ $\bar L=\cS_0\cup\cS_\phi,$ with potential\/ $f_{\bar L}\vert_{\cS_0}=0$ and\/ $f_{\bar L}\vert_{\cS_{\bs\phi}}=A(L'),$ although in this case\/ $L$ is immersed rather than embedded and\/ $A(L)$ is not defined.
\label{la4prop6}
\end{prop}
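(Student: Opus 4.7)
The proof follows the scheme of Proposition \ref{la4prop5}, with the polynomial asymptotics of AC SL $m$-folds replaced by the exponential asymptotics of LMCF expanders (Theorem \ref{la3thm5}), Theorem \ref{la3thm4} replaced by Theorem \ref{la3thm6}(b), and the conclusion $0<\mu<m$ of Proposition \ref{la4prop1} sharpened to \eq{la4eq37} via Proposition \ref{la4prop2}(a). First choose a real analytic Lagrangian neighbourhood $\Psi:W\subset T^*L'\ra\C^m$ for $L'$ growing linearly at infinity, and write $\Psi^{-1}(L)=\Ga_{\d G}$ over an open $V\subseteq L'$ containing $L'\sm\,\ov{\!B}_R$ for $R\gg 0$ together with neighbourhoods of every $p\in L\cap L'$ with $T_pL=T_pL'$, with real analytic $G:V\ra\R$ normalized by $G\ra 0$ at infinity. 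Theorem \ref{la3thm6}(b) applied to both $L$ and $L'$ gives $G\sim r^{-1-m}e^{-\al r^2/2}\bar h_0^{G,\iy}(x/r)$ to leading order on the $\Pi_0$ end for a real analytic $\bar h_0^{G,\iy}:\cS^{m-1}\ra\R$ (and the analogue on the $\Pi_{\bs\phi}$ end); since $L\ne L'$ are closed connected real analytic Lagrangians, Theorem \ref{la3thm6}(a)(iii) combined with elliptic unique continuation forces $\bar h_0^{G,\iy}\not\equiv 0$ on each sphere.

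Choose $H:L'\ra\R$ smooth with leading order $r^{-1-m}e^{-\al r^2/2}$ on each end (so $\bar h^{H,\iy}\equiv 1$ is nowhere vanishing) and $\Crit(G)\cap\Crit(H)=\es$, and set $L^\dag=\Psi(\Ga_{\ep\d H})$ for small generic $\ep>0$. For $\ep$ outside the compact range of $\bar h_0^{G,\iy}$ on either sphere, $\ep\d H-\d G$ is nonvanishing near infinity, so $L\cap L^\dag$ is bounded, and for generic $\ep$ it is a finite transverse set $\{p_1,\ldots,p_k\}$. Direct analogues of Lemmas \ref{la4lem2}--\ref{la4lem4} show $0<\mu_{L,L^\dag}(p_i)<m$, with the Lemma \ref{la4lem1}-style resonance analysis rendered trivial because the asymptotic ratio $\ep H/G$ reduces to $\ep/\bar h_0^{G,\iy}(x/r)$ of bounded range. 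The matrix argument in fact shows that the Hessian $M_{ab}=\ep\nabla_a\nabla_bH(q)-\nabla_a\nabla_bG(q)$ at $q=\pi\ci\Psi^{-1}(p_i)$ has \emph{mixed signature}, with at least one positive and one negative eigenvalue, which forces the angles $\phi_1,\ldots,\phi_m\in(0,\pi)$ of Definition \ref{la2def4} to satisfy $\phi_1+\cdots+\phi_m\in(\pi/2,m\pi-\pi/2)$, and hence places $\mu_{L,L^\dag}(p_i)$ at distance $>\tfrac12$ from each endpoint of the \eq{la2eq7} interval, uniformly in $\ep$.

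The genuinely new step upgrades this to \eq{la4eq37}. Using $\th_L=-\al f_L/2$ and $\th_{L'}=-\al f_{L'}/2$ (the LMCF expander relation of \S\ref{la34} with normalizations $\th,f\ra 0$ on the $\Pi_0$ end), set $s(p)=\al f_{L^\dag}(p)/2+\th_{L^\dag}(p)$ for the defect of $L^\dag$ from being an expander (which vanishes identically on $L'$). Direct substitution rewrites \eq{la2eq7} for $L,L^\dag$ at $p=p_i$ as
\[
\frac{\al\bigl(f_{L^\dag}(p_i)-f_L(p_i)\bigr)}{2\pi}-\frac{s(p_i)}{\pi}<\mu_{L,L^\dag}(p_i)<\frac{\al\bigl(f_{L^\dag}(p_i)-f_L(p_i)\bigr)}{2\pi}-\frac{s(p_i)}{\pi}+m,
\]
so \eq{la4eq37} amounts to showing the shift $s(p_i)/\pi$ does not move $\mu_{L,L^\dag}(p_i)$ out of this interval. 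Standard Lagrangian-graph formulas give $f_{L^\dag}=f_{L'}+\ep H+O(\ep^2)$ together with a parallel first-order formula for $\th_{L^\dag}-\th_{L'}$ in terms of the Hessian of $H$, whence $s(p)=O(\ep)$ uniformly in $p\in L^\dag$. Combined with the uniform distance $>\tfrac12$ from the preceding paragraph, this gives \eq{la4eq37} for $\ep$ sufficiently small.

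Finally, apply the direct analogue of Lemma \ref{la4lem5}: in the coordinate $\ti r=1/\log(1+r^2)$ near $\iy_0$ (and similarly near $\iy_{\bs\phi}$), $\bar L$ and $\bar L^\dag$ differ by $\d K_0$ whose leading profile is $\ep\cdot r^{-1-m}e^{-\al r^2/2}$, an isolated local extremum at $\iy_0$ whose sign is that of $\ep-\bar h_0^{G,\iy}$ in the relevant direction. Replace $K_0$ by a Morse model proportional to $\ti r^2$ in a small $\ti r$-ball and match smoothly outside, to produce $\bar L''$ with $\bar L\cap\bar L''\cap\C^m=\{p_1,\ldots,p_k\}$ transverse and satisfying \eq{la4eq37}, with transverse intersection at each of $\iy_0,\iy_{\bs\phi}$ of index $0$ or $m$, and unchanged potential values $f_{L''}(\iy_0)=0,$ $f_{L''}(\iy_{\bs\phi})=A(L')$ (as in the SL case, one may also arrange $\bar L''$ to miss $\iy_0$ or $\iy_{\bs\phi}$ by shifting the Morse model). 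The principal obstacle is the $\ep$-uniform lower bound $>\tfrac12$ on the distance of $\mu_{L,L^\dag}(p_i)$ to the endpoints of the \eq{la2eq7} interval, since this is exactly what allows the $O(\ep)$ defect $s(p)$ to upgrade the general Lagrangian bound \eq{la2eq7} into the LMCF-expander bound \eq{la4eq37}; the remainder of the argument is a faithful adaptation of the SL proof, substantially simplified by the exponential decay.
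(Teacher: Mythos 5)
Your proposal captures the right broad architecture --- the Lagrangian neighbourhood $\Psi$, the real analytic potential $G$ with $\Psi^{-1}(L)=\Ga_{\d G}$, the choice of $H$ with leading asymptotics $r^{-1-m}e^{-\al r^2/2}$, the use of Theorem~\ref{la3thm6} in place of Theorem~\ref{la3thm4}, and the splitting into intersections near infinity, near $p\in L\cap L'$ with $T_pL=T_pL'$, and near $p\in L\cap L'$ with $T_pL\ne T_pL'$. But there is a genuine gap in the ``genuinely new step.''

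Your upgrade from \eq{la2eq7} to \eq{la4eq37} rests on a claimed uniform distance $>\tfrac12$ between $\mu_{L,L^\dag}(p_i)$ and the endpoints of the \eq{la2eq7} interval, derived from the mixed signature of the Hessian $M_{ab}=\ep\nabla_a\nabla_bH-\nabla_a\nabla_bG$. That argument is sound \emph{only} for intersection points $p_i\in\Psi(\pi^{-1}(V))$ --- those near infinity or near $x\in L\cap L'$ with $T_xL=T_xL'$ --- because only there is $L$ represented as a graph over $L'$, so there is a Hessian to discuss. At intersection points $p_i$ in a neighbourhood $U_x$ of $x\in L\cap L'$ with $T_xL\ne T_xL'$ (the case handled by the analogue of Lemma~\ref{la4lem3}, which is Lemma~\ref{la4lem8}), the angles $\phi_1,\ldots,\phi_m$ are intrinsic to the transverse pair $(T_xL,T_xL')$ and can be arbitrarily close to $(0,\ldots,0)$ or $(\pi,\ldots,\pi)$; the sum $\phi_1+\cdots+\phi_m$ therefore has no uniform lower bound $\pi/2$ nor upper bound $m\pi-\pi/2$, and your ``$O(\ep)$ defect absorbed by $\tfrac12$'' argument simply does not apply there. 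The paper proves \eq{la4eq37} at such points by an entirely different route: the expander relation $f_L=-2\th_L/\al$, $f_{L'}=-2\th_{L'}/\al$ evaluated at $x$ gives $\psi_1+\cdots+\psi_m+\tfrac{\al}{2}(f_{L'}(x)-f_L(x))=a\pi$ for integer $a$, and combined with $0<\psi_1+\cdots+\psi_m<(m-k)\pi$ this yields \eq{la4eq47}--\eq{la4eq48} directly at $x$, transferring to nearby $p$ by continuity. Your proposal never performs this step.

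Two further points worth noting. First, the paper's actual argument in Lemma~\ref{la4lem7} is substantially cleaner than your defect calculation: once one knows $0<\mu_{L,L^\dag}(p)<m$ and that $\md{\tfrac{\al}{2\pi}(f_{L^\dag}(p)-f_L(p))}<1$ (the latter achieved by making $R$ large and $V$ small, using $f_{L^\dag},f_L\to 0$ at infinity and $f_L(x)=f_{L'}(x)$ when $T_xL=T_xL'$), the inequality \eq{la4eq46} follows immediately from the integer nature of $\mu$; no defect $s(p)$ is needed. Second, your treatment of the Morse modification at infinity omits the hardest subcase. Because $\ep-F_0^\iy$ is only generically nowhere-zero on $\cS^{m-1}$, there is a third alternative (the paper's case (iii)) where $\ep-F_0^\iy$ changes sign on $\cS^{m-1}$, so $\iy_0$ is not an isolated extremum of $K_0$ and the naive ``replace by $\ti r^2$'' model fails. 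The paper handles this with a Morse-cancellation argument (Theorem~\ref{laAthm2}) to produce a Morse function near $\iy_0$ with no critical points of index $0$ or $m$, followed by a rescaling trick to make the perturbation $C^1$-small. Your description, which assumes a constant-sign profile and an isolated extremum at $\iy_0$, covers only cases (i),(ii). Also, insisting that $\ep$ lies outside the range of $F_0^\iy$ may conflict with taking $\ep$ small if $0$ lies in the interior of that range; the paper instead only needs the generic condition that $\ep-F_0^\iy=0$ and $\d_{\cS^{m-1}}F_0^\iy=0$ have no common zero.
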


\begin{proof} The proof follows that of Proposition \ref{la4prop5} in \S\ref{la43} fairly closely, so we will just explain the modifications to the proof in \S\ref{la43}.

First we follow the proof of Proposition \ref{la4prop5} up to Lemma \ref{la4lem1}, defining $z(L')\subset W\subset T^*L$, $\Psi:W\ra\C^m$, $V\subset L'$, $R>0$, $L_0',L_{\bs\phi}'\subset V$, and $G:V\ra\R$ with $\Psi(\Ga_{\d G})\subset L$. There are two differences with \S\ref{la43}: firstly, rather than \eq{la4eq6}--\eq{la4eq7}, $G$ satisfies 
\e
\begin{split}
g^{\prime ab}\nabla_a\nabla_bG(\bs y)
&+\al\bigl(g^{\prime ab}\nabla_a (\ha r^2)\nabla_bG-2G\bigr)\\
&+P\bigl(\bs y,\nabla_a G(\bs y),\nabla_a\nabla_bG(y)\bigr)=0,
\end{split}
\label{la4eq39}
\e
as in \eq{la3eq19}--\eq{la3eq20}, where $P$ is a nonlinear function of its arguments such that
\e
P(\bs y,\al,\be)=O\bigl((1+\md{\bs y})^{-m}e^{-\al \ms{\bs y}/2}\ms{\al}+\ms{\be}\bigr).
\label{la4eq40}
\e
Secondly, rather than using Theorem \ref{la3thm4} to write $G\vert_{L_0'}$ as in \eq{la4eq9}, and similarly for $G\vert_{L_{\bs\phi}'}$, we use Theorem \ref{la3thm6} to write
\e
G\vert_{L_0'}(x_1,\ldots,x_m)=r^{-1-m}e^{-\al r^2/2}F_0\bigl((x_1/r,\ldots,x_m/r),r^{-2}\bigr),
\label{la4eq41}
\e
where $F_0:\cS^{m-1}\t[0,R^{-2})\ra\R$ is smooth on $\cS^{m-1}\t[0,R^{-2})$ and real analytic on $\cS^{m-1}\t(0,R^{-2})$, and defining $F_0^\iy(\bs y)=F_0\bigl(\bs y,0)$, then $F_0^\iy:\cS^{m-1}\ra\R$ is also real analytic.

The analogue of Lemma \ref{la4lem1} is:

\begin{lem} Let\/ $C_1,C_2>0$ be given. Then making $R\gg 0$ larger if necessary, there exists\/ $\ep_\iy>0$ such that there are no $q\in L_0'\cup L_{\bs\phi}'$ and\/ $0\!<\!\ep\!<\!\ep_\iy$ satisfying
\begin{align*}
\bnm{\ep^{-1}\nabla_aG(q)-\nabla_a(r^{-1-m}e^{-\al r^2/2})(q)}_{g'}&<C_1(r^{-1-m}e^{-\al r^2/2})(q)\quad\text{and\/}\\
\bnm{\ep^{-1}\nabla_a\nabla_bG(q)\!-\!\nabla_a\nabla_b(r^{-1-m}e^{-\al r^2/2})(q)}_{g'}&<C_2(r^{-m}e^{-\al r^2/2})(q).
\end{align*}

\label{la4lem6}
\end{lem}

\begin{proof} The proof is similar to that of Lemma \ref{la4lem1}, so we just explain the differences. The analogue of equation \eq{la4eq12} is
\begin{align*}
T=\bigl\{&((y_1,\ldots,y_m),t,\ep)\in\cS^{m-1}\t(0,R^{-2})\t(0,\iy):\\
&\bnm{\ep^{-1}\nabla_aG(q)-\nabla_a(r^{-1-m}e^{-\al r^2/2})(q)}_{g'}<C_1(r^{-1-m}e^{-\al r^2/2})(q),\\
&\bnm{\ep^{-1}\nabla_a\nabla_bG(q)\!-\!\nabla_a\nabla_b(r^{-1-m}e^{-\al r^2/2})(q)}_{g'}<C_2(r^{-m}e^{-\al r^2/2})(q),\\
&q=(t^{-1/2}y_1,\ldots,t^{-1/2}y_m)\bigr\}.
\end{align*}
Rewriting this using \eq{la4eq41} yields
\ea
T&=\bigl\{(\bs y,t,\ep)\in\cS^{m-1}\t(0,R^{-2})\t(0,\iy):
\label{la4eq42}\\
&\bnm{\ep^{-1}\nabla_aF_0(\bs y,t)+\bigl(\ep^{-1}F_0(\bs y,t)-1\bigr)\bigl(\ts\frac{m+1}{2t}+\frac{\al}{2t^2}\bigr)\nabla_at}{}^2_{g'}<C_1^2,
\nonumber\\
&\bnm{\ep^{-1}\nabla_a\nabla_bF_0(\bs y,t)+
\ep^{-1}\bigl(\ts\frac{m+1}{2t}+\frac{\al}{2t^2}\bigr)\bigl(\nabla_aF_0(\bs y,t)\nabla_bt+\nabla_bF_0(\bs y,t)\nabla_at\bigr)
\nonumber\\
&+\!\bigl(\ep^{-1}F_0(\bs y,t)\!-\!1\bigr)\bigl[
\bigl(\ts\frac{m+1}{2t}\!+\!\frac{\al}{2t^2}\bigr)\nabla_a\nabla_bt
\!-\!(\frac{m+1}{2t^2}\!+\!\frac{\al}{t^3})\nabla_at\nabla_bt\bigr]}{}^2_{g'}
\!<\!C_2^2t^{-1}\bigr\}.
\nonumber
\ea
Consider $T$ as a subset of $\cS^{m-1}\t[0,R^{-2})\t[0,\iy)$. If $F_0:\cS^{m-1}\t[0,R^{-2})\ra\R$ above were real analytic, then $T$ in \eq{la4eq42} would be defined by real analytic inequalities in $\cS^{m-1}\t[0,R^{-2})\t[0,\iy)$. In fact we only know that $F_0$ is smooth, and real analytic on $\cS^{m-1}\t(0,R^{-2})$ and $\cS^{m-1}\t\{0\}$. But since  $F_0$ is determined by real analytic functions $\bar h^\iy,\bar h^{\prime\iy}:\cS^{m-1}\ra\R$ as in Theorem \ref{la3thm6}, $T$ cannot have pathological behaviour at its $t=0$ boundary.

Therefore if $(\bs y,0,0)$ lies in the closure $\bar T$ of $T$ in $\cS^{m-1}\t[0,R^{-2})\t[0,\iy)$, then we can find a real analytic curve $\bigl(\bs y(s),t(s),\ep(s)\bigr):[0,\de)\ra\bar T$ with $\bigl(\bs y(0),t(0),\ep(0)\bigr)=(\bs y,0,0)$ and $\bigl(\bs y(s),t(s),\ep(s)\bigr)\in T$ for $s\in(0,\de)$. Then by studying the leading order behaviour of the function $s\mapsto\ep(s)^{-1}F_0\bigl(\bs y(s),t(s)\bigr)$ at $s=0$, as in \eq{la4eq13}--\eq{la4eq15} we can derive a contradiction using the inequalities in \eq{la4eq42}. Hence $(\bs y,0,0)\notin\bar T$ for any $\bs y\in\cS^{m-1}$, so by compactness of $\cS^{m-1}$, making $R\gg 0$ larger, there exists $\ep_\iy>0$ with $T\cap \bigl(\cS^{m-1}\t(0,R^{-2})\t(0,\ep_\iy)\bigr)=\es$. The lemma follows.
\end{proof}

Next, in an analogue of \eq{la4eq16}, choose $H:L'\ra\R$ to be a generic smooth function satisfying
\e
\nabla^k\bigl(H-r^{-1-m}e^{-\al r^2/2}\bigr)=O(r^{k-3-m}e^{-\al r^2/2})
\label{la4eq43}
\e
as $r\ra\iy$ for all $k=0,1,\ldots.$ By genericity \eq{la4eq17} holds. As in \S\ref{la43}, there exists $\ep_W>0$ such that if $0<\ep<\ep_W$ then $\Ga_{\ep\d H}\subset W$. Let $\ep\in(0,\ep_W)$ be generic and satisfy a finite number of smallness conditions chosen later, and define $L^\dag=\Psi(\Ga_{\ep\d H})$, so that $L^\dag$ is an AC Lagrangian submanifold of $\C^m$ with cone $C=\Pi_0\cup\Pi_{\bs\phi}$ and any rate $\rho<2$. Therefore $\bar L^\dag=L^\dag\cup\{\iy_0,\iy_{\bs\phi}\}$ is a compact, smooth Lagrangian in $(M,\om)$, a Hamiltonian perturbation of~$\bar L'$. 

Equation \eq{la4eq18} still holds. An analogue of \eq{la4eq19} is
\ea
\begin{split}
&\ts\frac{\pd}{\pd r}\bigl(\ep\d H-\d G\bigr)(x_1,\ldots,x_m)\\
&=-\al r^{-m}e^{-\al r^2/2}\bigl(\ep-F_0^\iy(x_1/r,\ldots,x_m/r)\bigr)\d r
+O(r^{-m-2}e^{-\al r^2/2}),
\end{split}
\label{la4eq44}\\
\begin{split}&\d_{\cS^{m-1}}\bigl(\ep\d H-\d G\bigr)(x_1,\ldots,x_m)\\
&=-r^{-1-m}e^{-\al r^2/2}\d_{\cS^{m-1}}F_0^\iy(x_1/r,\ldots,x_m/r)
+O(r^{-m-3}e^{-\al r^2/2}),
\end{split}
\label{la4eq45}
\ea
where $\d_{\cS^{m-1}}$ in \eq{la4eq45} means the derivative in the $\cS^{m-1}$ directions in $\R^m\sm\,\ov{\!B}_R\cong (R,\iy)\t\cS^{m-1}$. Since $\ep$ is assumed generic, there are no points $\bs y\in\cS^{m-1}$ such that $\ep-F_0^\iy(\bs y)=0$ and $\d_{\cS^{m-1}}F_0^\iy(\bs y)=0$. Therefore the leading terms in \eq{la4eq44}--\eq{la4eq45} dominate the error terms $O(\cdots)$ for $r\gg 0$, and $\ep\d H-\d G$ has no zeroes near infinity in $L_0'$, and similarly for $L_{\bs\phi}'$. So \eq{la4eq18} implies that $L\cap L^\dag$ is bounded in $\C^m$. The proof in \S\ref{la43} now shows that $L,L^\dag$ intersect transversely in finitely many points $p_1,\ldots,p_k$ in~$\C^m$.

The analogues of Lemmas \ref{la4lem2}--\ref{la4lem4} are:

\begin{lem} Making $R\gg 0$ larger and\/ $V\subset L'$ smaller if necessary, if\/ $p\in L\cap L^\dag\cap \Psi(\pi^{-1}(V))$ and\/ $\ep<\ep_\iy$ for some $\ep_\iy>0$ then
\e
\frac{\al}{2\pi}(f_{L^\dag}(p)-f_L(p))<\mu_{L,L^\dag}(p)<
\frac{\al}{2\pi}(f_{L^\dag}(p)-f_L(p))+m.
\label{la4eq46}
\e

\label{la4lem7}
\end{lem}

\begin{proof} We first follow a very similar proof to Lemma \ref{la4lem2}, using \eq{la4eq43}, Lemma \ref{la4lem6}, and \eq{la4eq39}--\eq{la4eq40} in place of \eq{la4eq16}, Lemma \ref{la4lem1}, and \eq{la4eq6}--\eq{la4eq7}, to show that
making $R$ larger and $V$ smaller, for some $\ep_\iy>0$ we have $0<\mu_{L,L^\dag}(p)<m$ for all $p$ as in the lemma, provided $\ep<\ep_\iy$. 

Then we note that making $R$ larger and $V,\ep_\iy$ smaller, we can suppose $\md{f_{L^\dag}-f_L}$ is arbitrarily small on $V$. This is because $V\subset L'$ is defined to be the union of $\bigl\{p\in L':r(p)>R\bigr\}$ and an open neighbourhood $V_x$ of each $x\in L\cap L'$ with $T_xL=T_xL'$. As $f_{L^\dag},f_L =O(r^{-1-m}e^{-\al r^2/2})$, by increasing $R$ we can make $\md{f_{L^\dag}-f_L}$ arbitrarily small on $\bigl\{p\in L':r(p)>R\bigr\}$. If $x\in L\cap L'$ with $T_xL=T_xL'$ then $f_L(x)=-2\th_L(x)/\al=-2\th_{L'}(x)/\al=f_{L'}(x)$. Making the open neighbourhood $V_x$ smaller, we can suppose $\md{f_{L'}-f_L}$ is small on $V_x$, and making $\ep_\iy$ smaller, we can suppose $\md{f_{L^\dag}-f_{L'}}$ is small on $V_x$. Thus, making $R$ larger and $V,\ep_\iy$ smaller, we can suppose that $\bmd{\frac{\al}{2\pi}(f_{L^\dag}(p)-f_L(p))}<1$ for all $p$ as in the lemma. Then \eq{la4eq46} follows from this and~$0<\mu_{L,L^\dag}(p)<m$.
\end{proof}

\begin{lem} Suppose $x\in L\cap L'$ with\/ $T_xL\ne T_xL'$. Then there exist\/ $\ep_x>0$ and an open neighbourhood\/ $U_x$ of\/ $x\in\C^m$ such that if\/ $0<\ep<\ep_x$ and\/ $p\in L\cap L^\dag\cap U_x$ then\/ \eq{la4eq46} holds.
\label{la4lem8}
\end{lem}

\begin{proof} After applying a $\U(m)$ coordinate change to $\C^m$ we may take $T_xL,T_xL'$ to be given by \eq{la4eq33} for unique $0\!=\!\psi_1\!=\!\cdots\!=\!\psi_k\!<\!\psi_{k+1}\!\le\!\cdots\!\le\!\psi_m\!<\!\pi$, where $0\le k<m$ as $T_xL\ne T_xL'$. In Lemma \ref{la4lem3} we had $\psi_1+\cdots+\psi_m=a\pi$ for $a\in\Z$ with $0<a<m-k$ as $T_xL,T_xL'$ were special Lagrangian. We modify this to
\begin{equation*}
a\pi=\psi_1+\cdots+\psi_m+\th_L(x)-\th_{L'}(x)=
\psi_1+\cdots+\psi_m+\ts\frac{\al}{2}(f_{L'}(p)-f_L(p))
\end{equation*}
for $a\in\Z$, since $f_L=-2\th_L/\al$, $f_{L'}=-2\th_{L'}/\al$ as $L,L'$ are LMCF expanders. As $0<\psi_1+\cdots+\psi_m<(m-k)\pi$, this implies that
\e
\ts\frac{\al}{2\pi}(f_{L'}(x)-f_L(x))<a<m-k+\ts\frac{\al}{2\pi}(f_{L'}(x)-f_L(x)).
\label{la4eq47}
\e

The proof of Lemma \ref{la4lem3} now shows that we can choose open $x\in U_x\subset\C^m$ and $\ep_x>0$ such that if $0<\ep<\ep_x$ and $p\in L\cap L^\dag\cap U_x$ then $\mu_{L,L^\dag}(p)=a+b$ for some $b$ with $0\le b\le k$. Adding \eq{la4eq47} and $0\le b\le k$ gives
\e
\ts\frac{\al}{2\pi}(f_{L'}(x)-f_L(x))<\mu_{L,L^\dag}(p)<m+\ts\frac{\al}{2\pi}(f_{L'}(x)-f_L(x)).
\label{la4eq48}
\e
Making $U_x,\ep_x$ smaller we can suppose that $\md{f_{L'}(p)-f_{L'}(x)}$, $\md{f_L(p)-f_L(x)}$, $\md{f_{L^\dag}(p)-f_{L'}(p)}$ are arbitrarily small for all $p\in U_x$. Thus we can make the left and right hand sides of \eq{la4eq46} and \eq{la4eq48} arbitrarily close, so \eq{la4eq48} implies \eq{la4eq46} for small enough $U_x,\ep_x$, proving the lemma.
\end{proof}

\begin{lem} Suppose $x\in\C^m\sm(L\cap L')$. Then there exist\/ $\ep_x>0$ and an open neighbourhood\/ $U_x$ of\/ $x\in\C^m$ such that if\/ $0<\ep<\ep_x$ then\/~$L\cap L^\dag\cap U_x=\es$.
\label{la4lem9}
\end{lem}

\begin{proof} The proof of Lemma \ref{la4lem4} works without change.
\end{proof}

The method of \S\ref{la43}, but using Lemmas \ref{la4lem7}--\ref{la4lem9}, now shows that if $\ep>0$ is generic and sufficiently small then \eq{la4eq46} holds for all $p\in L\cap L^\dag=\{p_1,\ldots,p_k\}$.

So far, we have constructed a small Hamiltonian perturbation $\bar L^\dag$ of $\bar L'$ in $(M,\om)$, such that $\bar L\cap\bar L^\dag=\{\iy_0,\iy_{\bs\phi},p_1,\ldots,p_k\}$, where the intersections at $p_1,\ldots,p_k$ are transverse, and satisfy Proposition \ref{la4prop6}(b). However, the intersections at $\iy_0,\iy_{\bs\phi}$ are not transverse, although they are isolated, so $\bar L''=\bar L^\dag$ does not satisfy Proposition~\ref{la4prop6}(a).

As in \S\ref{la43}, we will deal with this by constructing a small Hamiltonian perturbation $\bar L''$ of $\bar L^\dag$ with $\bar L''=\bar L^\dag$ except near $\iy_0,\iy_{\bs\phi}$, such that $\bar L,\bar L''$ intersect transversely. However, the process for doing this is more complex than in \S\ref{la43}, because the asymptotic model \eq{la4eq44}--\eq{la4eq45} for $\ep\d H-\d G$ near infinity is more complex than the asymptotic model \eq{la4eq19} in \S\ref{la43}. The reason for the complexity is discussed in Remark~\ref{la3rem}.

The symplectic manifold $(M,\om)$ and Lagrangian spheres $\cS_0=\Pi_0\cup\{\iy_0\}$, $\cS_{\bs\phi}=\Pi_{\bs\phi}\cup\{\iy_{\bs\phi}\}$ were defined in Example \ref{la2ex4} in \S\ref{la24}. In the coordinates $(\ti x_1,\ldots,\ti x_m)$ on $\cS_0$ near $\iy_0$, writing $\ti r=(\ti x_1^2+\cdots+\ti x_m^2)^{1/2}=1/\log(1+r^2)$, so that $r=(e^{1/\ti r}-1)^{1/2}$, by \eq{la4eq44}--\eq{la4eq45}, we see that $\bar L,\bar L^\dag$ differ near $\iy_0$ by the graph of an exact 1-form $\d K_0$ on $\bar L$, where in an analogue of \eq{la4eq35} we have
\e\label{la4eq49}\begin{split}
K_0(\ti x_1,\ldots,\ti x_m)&=e^{-(m+1)/2\ti r-\al(e^{1/\ti r}-1)/2}\bigl[\ep-F_0^\iy\bigl(\ts\frac{\ti x_1}{\ti r},\ldots,\frac{\ti x_m}{\ti r}\bigr)\bigr] \\
&\qquad\quad +\text{higher order terms}.
\end{split}\e
The `higher order terms' and their derivatives are small enough that they will not affect the argument that follows. As above there are no $\bs y\in\cS^{m-1}$ with $\ep-F_0^\iy(\bs y)=0$ and $\d_{\cS^{m-1}}\bigl(\ep-F_0^\iy(\bs y)\bigr)=0$. In particular, this means that the maximum and minimum values of $\ep-F_0^\iy$ are nonzero.

Divide into cases
\begin{itemize}
\setlength{\parsep}{0pt}
\setlength{\itemsep}{0pt}
\item[(i)] $\ep>F_0^\iy(\bs y)$ for all $y\in\cS^{m-1}$;
\item[(ii)] $\ep<F_0^\iy(\bs y)$ for all $y\in\cS^{m-1}$; and
\item[(iii)] $\ep-F_0^\iy(\bs y)$ is positive, and negative, and zero, somewhere on $\cS^{m-1}$.
\end{itemize}
In case (i), as in \S\ref{la43} we can choose a smooth perturbation $\ti K_0$ of $K_0$, such that $\ti K_0=K_0$ except near $\iy_0$, and $\ti K_0$ is Morse with one local minimum at $\iy_0$ with $\ti K_0(\iy_0)=0$ and no other critical points near $\iy_0$, and $\ti K_0-K_0$ is $C^1$-small. Case (ii) is the same as (i), except that $\ti K_0$ has a local maximum at~$\iy_0$.

In case (iii), observe that on a small closed ball $\,\ov{\!B}_\de(\iy_0)$ about $\iy_0$ in $\cS_0$, $K_0\vert_{\,\ov{\!B}_\de(\iy_0)}$ attains its global maximum and minimum on the boundary $\pd \,\ov{\!B}_\de(\iy_0)$, and no critical points of $K_0$ lie on $\pd \,\ov{\!B}_\de(\iy_0)$. Thus by Theorem \ref{laAthm2} with $f=K_0$ and $M=\,\ov{\!B}_\de(\iy_0)$,
we may choose a perturbation $\widehat K_0$ of $K_0$, such that $\widehat K_0-K_0$ is supported on the open ball $B_\de(\iy_0)$, and $\widehat K_0$ is Morse on $B_\de(\iy_0)$ with no critical points of index 0 or $m$. By perturbing $\widehat K_0$ slightly we can also suppose that $\iy_0$ is not a critical point of~$\widehat K_0$.

Now we have a minor problem: we would like to define $\bar L''$ near $\iy_0$ to be $\Phi(\Ga_{\smash{\d\widehat K_0}})$, for $\Phi$ a Lagrangian Neighbourhood of $\bar L$ in $(M,\om)$, but this is only well defined if $\widehat K_0-K_0$ is sufficiently small in $C^1$, so that $\Ga_{\smash{\d\widehat K_0}}$ lies in the domain of $\Phi$, and $\bar L''$ stays close to $\bar L'$. However, the proof of Theorem \ref{laAthm2} gives no control over the $C^1$-norm of~$\widehat K_0-K_0$.

To define an improved version $\ti K_0$ of $\widehat K_0$ such that $\ti K_0-K_0$ is $C^1$-small, choose small $\ep\in(0,\de)$ and a modification $\ti K_0$ of $K_0$ with the following properties:
\begin{itemize}
\item[(A)] $\ti K_0(\bs{\ti x})=\frac{f(\ep)}{f(\de)}\widehat K_0(\frac{\de}{\ep}\bs{\ti x})$ on $B_\ep(\infty_0)$,
where $f(\ti r)=e^{-(m+1)/2\ti r-\al(e^{1/\ti r}-1)/2}$;
\item[(B)] $\ti K_0=\ti f(\ti r)\bigr(\ep-F_0^\infty(\bs{\ti x}/\ti r)\bigr)+\text{(error terms)}$ on $B_\de(\infty_0)\sm B_\ep(\infty_0)$, for some strictly-increasing function $\ti f(\ti r)$ interpolating between $f(\ti r)$ near $\ti r=\de$ and $\ti f(\ti r)=\frac{f(\ep)}{f(\de)}f(\frac{\de}{\ep}\ti r)$ near $\ti r=\ep$, and the `error terms' are $C^1$-small and cause no critical points of $\ti K_0$ in this region for small $\de,\ep$; and 
\item[(C)] $\ti K_0=K_0$ outside $B_\de(\infty_0)$.
\end{itemize}

This may be done as for \eq{la4eq36}, using \eq{la4eq49}. In (A) we note that $\widehat K_0(\frac{\ti r}{\ep})$ has $C^1$-norm of order $\ep^{-1}$ as $\ep\ra 0$, but $f(\ep)$ decays exponentially as $\ep\ra 0$, so that $\nm{(\ti K_0-K_0)\vert_{B_\ep(\infty_0)}}_{C^1}\ra 0$ as $\ep\ra 0$ with $\de>0$ fixed. In (B) we note that $\ti f(\ti r)$ is constructed from the exponentially decaying functions $f(\ti r)$ and $\ti f(\ti r)=\frac{f(\ep)}{f(\de)}f(\frac{\de}{\ep}\ti r)$. These imply $\ti K_0-K_0$ is $C^1$-small for small $\ep$. From (A) and $\widehat K_0$ Morse on $B_\de(\iy_0)$ with no critical points of index $0,m$ or at $\iy_0$, we see that $\ti K_0$ is too, and (B),(C) imply that $\ti K_0$ has no critical points near $\iy_0$ outside $B_\ep(\iy_0)$. This completes case~(iii).

We have written $\bar L^\dag$ near $\infty_0$ as the graph of an exact $1$-form $\d K_0$, and in each case (i)--(iii) we have constructed a Morse perturbation $\ti K_0$ of $K_0$ near $\infty_0$ with $\ti K_0-K_0$ $C^1$-small. Similarly, write $\bar L^\dag$ near $\iy_\phi$ as the graph of an exact 1-form $\d K_{\bs\phi}$, and choose a Morse perturbation $\ti K_{\bs\phi}$ of $K_\phi$ in the same way. Now as in Lemma \ref{la4lem5} define $\bar L''$ to be equal to $\bar L^\dag$ away from $\iy_0,\iy_{\bs\phi}$, to be the graph of $\d\ti K_0$ over $\bar L$ near $\iy_0$, and to be the graph of $\d\ti K_{\bs\phi}$ over $\bar L$ near~$\iy_{\bs\phi}$. 

We claim that $\bar L''$ satisfies Proposition \ref{la4prop5}(a)--(c). For $p\in\bar L\cap\bar L''$ not close to $\iy_0$ or $\iy_{\bs\phi}$, we have $\bar L''=\bar L^\dag$ near $p$, so (a),(b) hold at $p$ from above. Any $p\in\bar L\cap\bar L''$ close to $\iy_0$ lies in $\Crit(\ti K_0)$. Since $\ti K_0$ is Morse the intersection is transverse at $p$, as in (a). In case (i) the only $p$ is $p=\iy_0$, with $\mu_{\bar L,\bar L''}(\iy_0)=0$ as $\iy_0$ is a local minimum. As $\ti K_0(\iy_0)=K_0(\iy_0)=0$, the potential $f_{\bar L''}$ of $\bar L''$ satisfies $f_{\bar L''}(\iy_0)=f_{\bar L^\dag}(\iy_0)=0$, so the first line of \eq{la4eq38} holds. Case (ii) is the same except that $\mu_{\bar L,\bar L''}(\iy_0)=m$ as $\iy_0$ is a local maximum. 

In case (iii), any $p\in\bar L\cap\bar L''$ close to $\iy_0$ has $p\ne\iy_0$, so that $p\in\bar L\cap\bar L''\cap\C^m$, and $0<\mu_{\bar L,\bar L''}(p)<m$ since $\ti K_0$ is Morse near $\iy_0$ with no local maxima or minima. As in the proof of Lemma \ref{la4lem8}, we can suppose $\md{f_{\bar L''}-f_{\bar L}}$ is arbitrarily small near $\iy_0$, so in particular $\bmd{\frac{\al}{2\pi}(f_{\bar L''}(p)-f_{\bar L}(p))}<1$. This and $0<\mu_{\bar L,\bar L''}(p)<m$ imply that \eq{la4eq37} holds. Thus, any $p\in\bar L\cap\bar L''$ close to $\iy_0$ satisfies Proposition \ref{la4prop6}(b) or (c). The proof for $p\in\bar L\cap\bar L''$ close to $\iy_{\bs\phi}$ is the same, except that we use $\ti K_{\bs\phi},K_{\bs\phi}$ rather than $\ti K_0,K_0$, and in cases (i),(ii) since $\ti K_0(\iy_{\bs\phi})=K_0(\iy_{\bs\phi})=0$, the potential $f_{\bar L''}$ of $\bar L''$ satisfies $f_{\bar L''}(\iy_{\bs\phi})=f_{\bar L^\dag}(\iy_{\bs\phi})=A(L')$, so that the second line of \eq{la4eq38} holds. The last part is proved as in \S\ref{la43}. This completes the proof of Proposition~\ref{la4prop6}.
\end{proof}

\subsection{Restrictions on $A(L),$ and on cones
$C=\Pi_0\cup\Pi_{\bs\phi}$}
\label{la45}

Applying Corollary \ref{la2cor} with $L$ special Lagrangian, so that
$\th_L=0,$ constrains the cones $C=\Pi_0\cup\Pi_{\bs\phi}$ possible
for AC SL $m$-folds. Note that cases (a),(b) are exactly the cones
realized by the Lawlor necks $L_{\bs\phi,A},\ti L_{\bs\phi,A}$ in
Example~\ref{la3ex1}.

\begin{cor} Let\/ $m\ge 3,$ and\/ $L$ be an exact AC SL\/ $m$-fold
in $\C^m$ asymptotic at rate $\rho<0$ to the cone
$C=\Pi_0\cup\Pi_{\bs\phi}$ defined in \eq{la4eq1} using
$\phi_1,\ldots,\phi_m\in(0,\pi),$ with compactification $\bar L$ in
$(M,\om)$ as in {\rm\S\ref{la24}}. Then $H^*(L;\Z_2)\cong
H^*(\cS^{m-1}\t\R;\Z_2),$ so $L$ is connected, and either:
\begin{itemize}
\setlength{\parsep}{0pt}
\setlength{\itemsep}{0pt}
\item[{\bf(a)}] $\phi_1+\cdots+\phi_m=\pi,$ and\/ $L$ satisfies
Corollary\/ {\rm\ref{la2cor}(a),} and\/ $\bar L$ satisfies
Theorem\/ {\rm\ref{la2thm3}(a);} or
\item[{\bf(b)}] $\phi_1+\cdots+\phi_m=(m-1)\pi,$ and\/ $L$
satisfies Corollary\/ {\rm\ref{la2cor}(b),} and\/ $\bar L$
satisfies Theorem\/ {\rm\ref{la2thm3}(b)}.
\end{itemize}
In particular, there exist no exact AC SL\/ $m$-folds asymptotic at
rate $\rho<0$ to $C=\Pi_0\cup\Pi_{\bs\phi}$ when
$\phi_1+\cdots+\phi_m=k\pi$ for $k=2,\ldots,m-2$.
\label{la4cor}
\end{cor}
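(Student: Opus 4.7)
The plan is to apply Corollary \ref{la2cor} directly, and then use the fact that a special Lagrangian is automatically graded with the trivial grading $\th_L\equiv 0$ to pin down $\phi_1+\cdots+\phi_m$. Since $L$ is special Lagrangian, by \S\ref{la31}--\S\ref{la32} we have $\Om\vert_L=\d V_L$, so the angle function $\Th_L:L\ra\U(1)$ is identically $1$. Hence any grading $\th_L:L\ra\R$ with $\Om\vert_L=e^{i\th_L}\d V_L$ takes values in $2\pi\Z$ pointwise. In particular $L$ is Maslov zero, so the hypotheses of Corollary \ref{la2cor} apply, giving $H^*(L;\Z_2)\cong H^*(\cS^{m-1}\t\R;\Z_2)$ and, in particular, that $L$ is connected. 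Therefore $\th_L$ is a constant integer multiple of $2\pi$ on~$L$.

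Let $\th_L$ be the unique grading normalized so that $\th_L\ra 0$ as $r\ra\iy$ in the end of $L$ asymptotic to $\Pi_0$, as in Corollary \ref{la2cor}. Combined with the previous paragraph, this forces $\th_L\equiv 0$ on $L$. Now Corollary \ref{la2cor} asserts one of two alternatives:
\begin{itemize}
\setlength{\parsep}{0pt}
\setlength{\itemsep}{0pt}
\item[(a)] $\th_L\ra \phi_1+\cdots+\phi_m-\pi$ at the $\Pi_{\bs\phi}$ end, and $\bar L$ satisfies Theorem \ref{la2thm3}(a) with $n=0$; or
\item[(b)] $\th_L\ra \phi_1+\cdots+\phi_m-(m-1)\pi$ at the $\Pi_{\bs\phi}$ end, and $\bar L$ satisfies Theorem \ref{la2thm3}(b) with $n=0$.
\end{itemize}
Since $\th_L\equiv 0$, these limits must equal $0$, yielding $\phi_1+\cdots+\phi_m=\pi$ in case (a) and $\phi_1+\cdots+\phi_m=(m-1)\pi$ in case (b), which is exactly the statement of the corollary.

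Finally, for the last sentence, suppose for a contradiction that an exact AC SL $m$-fold $L$ asymptotic to $C=\Pi_0\cup\Pi_{\bs\phi}$ exists with $\phi_1+\cdots+\phi_m=k\pi$ for some $k\in\{2,\ldots,m-2\}$. Then $L$ satisfies all the hypotheses above, so must fall into one of cases (a),(b), forcing $k\pi\in\{\pi,(m-1)\pi\}$, a contradiction. This step is essentially a bookkeeping exercise rather than a genuine obstacle: all the work is already packaged in Corollary \ref{la2cor}, which in turn depends on Abouzaid and Smith's classification \cite{AbSm} of objects of $D^b\sF(T^*\cS^m\#T^*\cS^m)$. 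Once the grading of an SL $m$-fold is observed to be identically zero, the statement follows by matching the asymptotic values of $\th_L$ at the two ends.
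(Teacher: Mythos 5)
Your proof is correct and follows the same approach the paper intends (indeed the paper proves the corollary with only a one-line remark preceding it: apply Corollary \ref{la2cor} with $\th_L=0$). You fill in the details cleanly, in particular spelling out why $\Th_L\equiv 1$ forces $\th_L$ to take values in $2\pi\Z$ and why connectedness (itself an output of Corollary \ref{la2cor}) then pins $\th_L\equiv 0$ under the normalization at the $\Pi_0$ end.
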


Next we determine the sign of the invariant $A(L)$ from Definition
\ref{la2def6} for AC SL $m$-folds. Again, possibilities (a),(b)
agree with the signs of $A(L_{\bs\phi,A}),A(\ti L_{\bs\phi,A})$ for
the Lawlor necks $L_{\bs\phi,A},\ti L_{\bs\phi,A}$ in
Example~\ref{la3ex1}.

\begin{prop} Suppose $L'$ is a closed, exact AC SL\/ $m$-fold in
$\C^m$ for $m\ge 3$ asymptotic at rate $\rho<0$ to
$C=\Pi_0\cup\Pi_{\bs\phi}$. Then
\begin{itemize}
\setlength{\parsep}{0pt}
\setlength{\itemsep}{0pt}
\item[{\bf(a)}] If\/ $\phi_1+\cdots+\phi_m=\pi$ then $A(L')>0,$
and
\item[{\bf(b)}] If\/ $\phi_1+\cdots+\phi_m=(m-1)\pi$ then\/
$A(L')<0$.
\end{itemize}
\label{la4prop7}
\end{prop}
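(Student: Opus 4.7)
The plan is to extract the sign of $A(L')$ from the area of a $J$-holomorphic triangle produced by Theorem \ref{la2thm2}, applied to the distinguished triangle containing $\bar L'$ in $D^b\sF(M)$. By Corollary \ref{la4cor}, $L'$ is connected and falls into case (a) with $\phi_1+\cdots+\phi_m=\pi$ or case (b) with $\phi_1+\cdots+\phi_m=(m-1)\pi$; the two cases are parallel, so I focus on (a) and indicate the changes needed for (b) at the end.

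By Corollary \ref{la4cor}(a), $\bar L'$ satisfies Theorem \ref{la2thm3}(a) with $n=0$, so it fits into the distinguished triangle \eq{la2eq25} in $D^b\sF(M)$. First I would choose a small Hamiltonian perturbation $\bar L''$ of $\bar L'$, supported in neighbourhoods of $\iy_0,\iy_{\bs\phi}$, such that: (i) $\bar L''$ is transverse to $\cS_0$ and $\cS_{\bs\phi}$ in $M$; (ii) $\iy_0\in\bar L''\cap\cS_0$ and $\iy_{\bs\phi}\in\bar L''\cap\cS_{\bs\phi}$ with intersection indices $\mu_{\bar L'',\cS_0}(\iy_0)=0$ and $\mu_{\cS_{\bs\phi}[-1],\bar L''}(\iy_{\bs\phi})=0$ realized via Morse bump functions added to the local graph-presentations of $\bar L'$ over $\cS_0$ and $\cS_{\bs\phi}$; and (iii) the potentials satisfy $f_{\bar L''}(\iy_0)=0$ and $f_{\bar L''}(\iy_{\bs\phi})=A(L')$, as in Proposition \ref{la4prop5}(c). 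The bump-function indices are dictated by the formula after \eq{la4eq21}, exactly as in Lemma \ref{la4lem5}.

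Since $\bar L''\cong\bar L'$ in $D^b\sF(M)$, the distinguished triangle \eq{la2eq25} holds for $\bar L''$, and Theorem \ref{la2thm2} applied with $L=\cS_{\bs\phi}[-1]$, $L'=\bar L''$, $L''=\cS_0$ produces a $J$-holomorphic triangle $\Si$ with corners $p\in\cS_{\bs\phi}\cap\bar L''$, $q\in\bar L''\cap\cS_0$, $r\in\cS_0\cap\cS_{\bs\phi}$ subject to $\mu_{\cS_{\bs\phi}[-1],\bar L''}(p)=0$, $\mu_{\bar L'',\cS_0}(q)=0$, $\mu_{\cS_0,\cS_{\bs\phi}[-1]}(r)=1$. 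Since $\cS_0\cap\cS_{\bs\phi}=\{0\}$ with matching index, $r=0$. The crucial step is to force $p=\iy_{\bs\phi}$ and $q=\iy_0$: at any transverse interior point $x\in L'\cap\Pi_{\bs\phi}$, both $L'$ and $\cS_{\bs\phi}[-1]\vert_{\Pi_{\bs\phi}}$ are special Lagrangians of phase zero (the shift $[-1]$ takes $\th_{\cS_{\bs\phi}}=\pi$ down to $0$), so Proposition \ref{la4prop1} gives $0<\mu_{\cS_{\bs\phi}[-1],L'}(x)<m$; as $\bar L''$ agrees with $\bar L'$ away from $\iy_0,\iy_{\bs\phi}$, this excludes $x$ as a candidate for $p$, and an analogous argument excludes interior candidates for $q$. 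Combined with (ii), this forces $p=\iy_{\bs\phi}$ and $q=\iy_0$.

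Stokes' theorem, applied as in Example \ref{la2ex2} with the boundary orientation of Figure \ref{la2fig4} and the fact that $f_{\cS_0}\equiv 0\equiv f_{\cS_{\bs\phi}}$, then gives
\[
0<\area(\Si)=f_{\bar L''}(\iy_{\bs\phi})-f_{\bar L''}(\iy_0)=A(L')-0=A(L'),
\]
establishing (a). Case (b) is identical after replacing \eq{la2eq25} by the distinguished triangle \eq{la2eq26} of Theorem \ref{la2thm3}(b) with $n=0$ and exchanging the r\^oles of $\cS_0$ and $\cS_{\bs\phi}$; the same area computation then evaluates to $-A(L')$, so positivity gives $A(L')<0$. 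The main obstacle is the perturbation step in the second paragraph: one must realize properties (i)--(iii) simultaneously, and in particular ensure that the Morse-theoretic model of the bump functions at $\iy_0,\iy_{\bs\phi}$ is compatible with both the intrinsic second-order asymptotic behaviour of $\bar L'$ (as described by Theorem \ref{la3thm4}) and the degree conditions in (ii), so that $\iy_0,\iy_{\bs\phi}$ are the unique intersection points of $\bar L''$ with $\cS_0,\cS_{\bs\phi}$ near themselves carrying the required degrees. This is a variant of the perturbation constructions in Propositions \ref{la4prop5}--\ref{la4prop6}, where the r\^ole of $\bar L$ is now played by $\cS_0\cup\cS_{\bs\phi}$.
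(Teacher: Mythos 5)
Your overall strategy is correct and is the same as the paper's: use Corollary \ref{la4cor} to place $\bar L'$ in a distinguished triangle, apply Theorem \ref{la2thm2} to produce a holomorphic triangle $\Si$, locate its corners at $0,\iy_0,\iy_{\bs\phi}$, and compute $\area(\Si)=A(L')>0$ via the potential. The issue is in the perturbation step, which you flag yourself as ``the main obstacle'' but do not correctly resolve.

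You propose a Hamiltonian perturbation $\bar L''$ of $\bar L'$ \emph{supported in neighbourhoods of $\iy_0,\iy_{\bs\phi}$}, and then at interior points $x\in L'\cap(\Pi_0\cup\Pi_{\bs\phi})$ you invoke Proposition \ref{la4prop1} ``at any transverse interior point'' to get $0<\mu<m$. This does not suffice: $L'$ has no reason to meet $\Pi_0$ or $\Pi_{\bs\phi}$ transversely in the interior of $\C^m$, and a perturbation supported only near $\iy_0,\iy_{\bs\phi}$ leaves such tangencies intact, so $\bar L''$ will not in general be transverse to $\cS_0\cup\cS_{\bs\phi}$ (contradicting your stated property (i)), and Theorem \ref{la2thm2} cannot be applied. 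Breaking these interior tangencies without destroying the index constraint $0<\mu<m$ is exactly the hard part: the paper's construction in Proposition \ref{la4prop5} perturbs $\bar L'$ \emph{globally over $\C^m$} by $\Psi(\Ga_{\ep\d H})$ with $H\approx r^{2-m}$ near infinity, and Lemmas \ref{la4lem2}--\ref{la4lem4} are precisely the machinery (resting on real analyticity of $L'$, $\Pi_0\cup\Pi_{\bs\phi}$, Theorems \ref{la3thm4} and \ref{laAthm1}) needed to show that the resulting transverse intersection points still have $0<\mu_{\bar L,\bar L''}(p)<m$ even near a non-transverse intersection. Note also that the last part of the statement of Proposition \ref{la4prop5} covers exactly the case $L=\Pi_0\cup\Pi_{\bs\phi}$ that you need, so no ``variant'' is required; the paper's proof of Proposition \ref{la4prop7} simply cites it. If you correct your second paragraph to invoke Proposition \ref{la4prop5} with $L=\Pi_0\cup\Pi_{\bs\phi}$ (a global, not infinity-only, perturbation) and delete the appeal to Proposition \ref{la4prop1} at interior points, your argument becomes the paper's.
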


\begin{proof} Let $(M,\om),\la$ be as in \S\ref{la24}, so that
$\bar L'=L'\cup\{\iy_0,\iy_{\bs\phi}\}$ is a compact, exact, graded
Lagrangian in $M$. Apply Proposition \ref{la4prop5} to $L'$ with
$L=\Pi_0\cup\Pi_{\bs\phi}$ to get a Hamiltonian perturbation $\bar
L''=L''\cup\{\iy_0,\iy_{\bs\phi}\}$ of $\bar L'$ in $M$, which
intersects $\bar L=\cS_0\cup\cS_{\bs\phi}$ transversely, with
$0<\mu_{\bar L,\bar L''}(p)<m$ for $p\in \bar L\cap \bar
L''\cap\C^m$, $f_{\bar L''}(\iy_0)=0$ and $f_{\bar
L''}(\iy_{\bs\phi})=A(L')$. Note that $\bar L'$ and $\bar L''$ are
isomorphic in $D^b\sF(M)$, as they are Hamiltonian isotopic.

In case (a), $L'$ satisfies Corollary \ref{la2cor}(a), and $\bar L'$
satisfies Theorem \ref{la2thm3}(a) with $n=0$. So \eq{la2eq25} and
$\bar L'\cong\bar L''$ give a distinguished triangle in $D^b\sF(M)$
\e
\xymatrix@C=31pt{ \cS_{\bs\phi}[-1] \ar[r] & \bar L'' \ar[r] & \cS_0
\ar[r] & \cS_{\bs\phi}. }
\label{la4eq50}
\e
Since $\bar L'',\cS_0,\cS_{\bs\phi}$ are pairwise transverse, we may
apply Theorem \ref{la2thm2} with $\cS_{\bs\phi}[-1],\bar L'',\cS_0$
in place of $L,L',L''$. This shows there exists a holomorphic
triangle $\Si$ as in Figure \ref{la4fig2}, with corners
$p\in\cS_{\bs\phi}[-1]\cap\bar L''$ with
$\mu_{\cS_{\bs\phi}[-1],\bar L''}(p)=0$, $q\in \bar L''\cap \cS_0$
with $\mu_{\bar L'',\cS_0}(q)=0$, and $r\in\cS_0\cap
\cS_{\bs\phi}[-1]$ with $\mu_{\cS_0,\cS_{\bs\phi}[-1]}(r)=1$.
\begin{figure}[htb]
\centerline{$\splinetolerance{.8pt}
\begin{xy}
0;<1mm,0mm>:
,(-20,0);(20,0)**\crv{(0,10)}
?(.95)="a"
?(.85)="b"
?(.75)="c"
?(.65)="d"
?(.55)="e"
?(.45)="f"
?(.35)="g"
?(.25)="h"
?(.15)="i"
?(.05)="j"
?(.5)="y"
,(-20,0);(-30,-6)**\crv{(-30,-5)}
,(20,0);(30,-6)**\crv{(30,-5)}
,(20,0);(0,-12)**\crv{(10,-4)}
?(.1)="k"
?(.3)="l"
?(.5)="m"
?(.7)="n"
?(.9)="o"
?(.6)="x"
,(-20,0);(0,-12)**\crv{(-10,-4)}
?(.9)="p"
?(.7)="q"
?(.5)="r"
?(.3)="s"
?(.1)="t"
?(.6)="z"
,(0,-12);(4,-17)**\crv{(2.5,-14)}
,(0,-12);(-4,-17)**\crv{(-2.5,-14)}
,(20,0);(30,3)**\crv{(25,2)}
,(-20,0);(-30,3)**\crv{(-25,2)}
,"x";(6.5,-8.6)**\crv{(6.5,-8.6)}
,"x";(4.5,-7)**\crv{}
,"z";(-10,-3.5)**\crv{(-10,-3.5)}
,"z";(-11,-5.5)**\crv{}
,"a";"k"**@{.}
,"b";"l"**@{.}
,"c";"m"**@{.}
,"d";"n"**@{.}
,"e";"o"**@{.}
,"f";"p"**@{.}
,"g";"q"**@{.}
,"h";"r"**@{.}
,"i";"s"**@{.}
,"j";"t"**@{.}
,"y"*{<}
,(0,-12)*{\bu}
,(-20,0)*{\bu}
,(-20,-3)*{q}
,(20,0)*{\bu}
,(20,-3)*{r}
,(3,-12.3)*{p}
,(0,-3)*{\Si}
,(-32,3.5)*{\bar L''}
,(-32.5,-5.5)*{\cS_0}
,(37.4,2.5)*{\cS_{\bs\phi}[-1]}
,(33,-5.5)*{\cS_0}
,(7,-16)*{\bar L''}
,(-10.2,-16)*{\cS_{\bs\phi}[-1]}
\end{xy}$}
\caption{Holomorphic triangle $\Si$ with boundary in
$\cS_{\bs\phi}[-1]\cup\bar L''\cup\cS_0$}
\label{la4fig2}
\end{figure}

Note that $\cS_0$ has phase $\th_{\cS_0}=0$, and $\cS_{\bs\phi}$
phase $\th_{\cS_{\bs\phi}}=\phi_1+\cdots+\phi_m=\pi$, so that
$\cS_{\bs\phi}[-1]$ has phase $\th_{\cS_{\bs\phi}[-1]}=0$ by
\eq{la2eq18}. Thus $\cS_0\cup\cS_{\bs\phi}[-1]$ has the correct
phase 0 to be the compactification $\bar L$ of the special
Lagrangian cone $L=\Pi_0\cup\Pi_{\bs\phi}$. Therefore Proposition
\ref{la4prop5}(b) gives $0<\mu_{\cS_0,\bar L''}(s)<m$ for all $s\in
\cS_0\cap\bar L''\cap\C^m$, and $0<\mu_{\cS_{\bs\phi}[-1],\bar
L''}(s)<m$ for all $s\in \cS_{\bs\phi}[-1]\cap\bar L''\cap\C^m$.
Since $\mu_{\cS_{\bs\phi}[-1],\bar L''}(p)=0$, $\mu_{\cS_0,\bar
L''}(q)=m$, this implies that $p,q\notin\C^m$, so the only
possibilities are $p=\iy_{\bs\phi}$ and $q=\iy_0$. Also
$\cS_0\cap\cS_{\bs\phi}=\{0\}$, so~$r=0$.

The method of \eq{la2eq3} now gives
\ea
&\mathop{\rm area}(\Si)=
\int_{\!\!\xymatrix@C=26pt{\mathop{\bu}\limits_0 \ar[r]_{\text{in
$\cS_0$}} & \mathop{\bu}\limits_{\iy_0} }}
\!\!\!\!\!\!\!\!\!\!\!\!\!\!\!\!\!\!\!\!\!\!\! \d
f_{\cS_0}\,\,\,\,\,\,\,\,\,
+\int_{\!\!\xymatrix@C=21pt{\mathop{\bu}\limits_{\iy_0}
\ar[r]_{\text{in $\bar L''$}} & \mathop{\bu}\limits_{\iy_{\bs\phi}}
}} \!\!\!\!\!\!\!\!\!\!\!\!\!\!\!\!\!\!\!\!\!\!\!\!\!\! \d f_{\bar
L''}\,\,\,\,\,\,\,\,\,\,
+\int_{\!\!\xymatrix@C=35pt{\mathop{\bu}\limits_{\iy_{\bs\phi}}
\ar[r]_{\text{in $\cS_{\bs\phi}[-1]$}} & \mathop{\bu}\limits_0 }}
\!\!\!\!\!\!\!\!\!\!\!\!\!\!\!\!\!\!\!\!\!\!\!\!\!\!\!\!\!\!\! \d
f_{\cS_{\bs\phi}[-1]}
\nonumber\\
&\;=f_{\cS_0}(\iy_0)\!-\!f_{\cS_0}(0)\!+\!
f_{\bar L''}(\iy_{\bs\phi})\!-\!f_{\bar L''}(\iy_0)\!+\!
f_{\cS_{\bs\phi}[-1]}(0)\!-\!f_{\cS_{\bs\phi}[-1]}(\iy_{\bs\phi})
\nonumber\\
&\;=0-0+A(L')-0+0-0=A(L'),
\label{la4eq51}
\ea
using $f_{\cS_0}=f_{\cS_{\bs\phi}[-1]}=0$ and Proposition
\ref{la4prop5}(c). As $\mathop{\rm area}(\Si)>0$ we have $A(L')>0$,
proving part (a).

For part (b), $L'$ satisfies Corollary \ref{la2cor}(a), and $\bar
L'$ Theorem \ref{la2thm3}(a) with $n=0$, so as for \eq{la4eq50},
equation \eq{la2eq26} gives a distinguished triangle in $D^b\sF(M)$
\begin{equation*}
\xymatrix@C=31pt{  \cS_0 \ar[r] & \bar L'' \ar[r] &
\cS_{\bs\phi}[1-m] \ar[r] & \cS_{\bs\phi}. }
\end{equation*}
The argument above with $\cS_{\bs\phi}[1-m],\cS_0$ in place of
$\cS_0,\cS_{\bs\phi}[-1]$ yields $\mathop{\rm area}(\Si)=-A(L')$, so
$A(L')<0$. This completes the proof.
\end{proof}

Here is an analogue of Corollary \ref{la4cor} and Proposition
\ref{la4prop7} for AC LMCF expanders, determining both the possible
cones $C=\Pi_0\cup\Pi_{\bs\phi}$, and the sign of $A(L)$. The
possibilities in (a),(b) agree exactly with those for the
Joyce--Lee--Tsui expanders $L_{\bs\phi}^\al,\ti L_{\bs\phi}^\al$ in
Example~\ref{la3ex2}.

\begin{prop} Suppose $L'$ is a closed, exact AC Lagrangian MCF
expander in $\C^m$ for $m\ge 3$ with rate $\rho<2$ and cone
$C=\Pi_0\cup\Pi_{\bs\phi}$ defined using
$\phi_1,\ldots,\phi_m\in(0,\pi),$ and satisfying the expander
equation $H=\al F^\perp$ for $\al>0$. Then either:
\begin{itemize}
\setlength{\parsep}{0pt}
\setlength{\itemsep}{0pt}
\item[{\bf(a)}] There is a unique grading $\th_{L'}$ on $L'$ such
that\/ $\th_{L'}\ra 0$ as $r\ra\iy$ in the end of\/ $L'$
asymptotic to $\Pi_0,$ and\/ $\th_{L'}\ra
\phi_1\!+\!\cdots\!+\!\phi_m\!-\!\pi$ as $r\ra\iy$ in the end
asymptotic to $\Pi_{\bs\phi}$. As for \eq{la3eq47}, Definition\/
{\rm\ref{la2def6}} and\/ \eq{la3eq16} give
\e
A(L')=2\bigl(\pi-\phi_1-\cdots-\phi_m\bigr)/\al.
\label{la4eq52}
\e
In this case $A(L')>0,$ so $0<\phi_1+\cdots+\phi_m<\pi$. Or:
\item[{\bf(b)}] There is a unique grading $\th_{L'}$ on $L'$ such
that\/ $\th_{L'}\ra 0$ as $r\ra\iy$ in the end of\/ $L'$
asymptotic to $\Pi_0,$ and\/ $\th_{L'}\ra
\phi_1+\cdots+\phi_m-(m-1)\pi$ as $r\ra\iy$ in the end
asymptotic to $\Pi_{\bs\phi},$ so
\e
A(L')=2\bigl((m-1)\pi-\phi_1-\cdots-\phi_m\bigr)/\al.
\label{la4eq53}
\e
In this case $A(L')<0,$ so $(m-1)\pi<\phi_1+\cdots+\phi_m<m\pi$.
\end{itemize}
Thus, there exist no closed, exact, AC LMCF expanders asymptotic to
$C=\Pi_0\cup\Pi_{\bs\phi}$ when $\pi\le\phi_1+\cdots+\phi_m\le
(m-1)\pi$.
\label{la4prop8}
\end{prop}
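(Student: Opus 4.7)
The plan mirrors the proof of Proposition \ref{la4prop7}, substituting Proposition \ref{la4prop6} for Proposition \ref{la4prop5} throughout. First I would fix the unique grading on the connected Lagrangian $L'$ with $\th_{L'}\to 0$ on the $\Pi_0$ end, noting that exactness, Maslov-zeroness (so that a grading exists via \eqref{la3eq16}), and the rate $\rho<2$ (which by Theorem \ref{la3thm5} can be improved to any $\rho<2$, in particular making $\bar L'$ smooth at $\iy_0,\iy_{\bs\phi}$) place $L'$ in the scope of Corollary \ref{la2cor}. The two cases {\bf(a)} and {\bf(b)} of that corollary produce precisely the grading limits at the $\Pi_{\bs\phi}$ end stated in Proposition \ref{la4prop8}, and Definition \ref{la2def6} together with the expander identity $f_{L'}=-2\th_{L'}/\al$ from \eqref{la3eq16} then yield the formulas \eqref{la4eq52} and \eqref{la4eq53} for $A(L')$ directly from $c_0=0$ and $c_{\bs\phi}=-2\lim\th_{L'}/\al$ at the $\Pi_{\bs\phi}$ end.

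The sign of $A(L')$ in case {\bf(a)} will be extracted as follows. Apply Proposition \ref{la4prop6} with $L=\Pi_0\cup\Pi_{\bs\phi}$---which is trivially an expander since $H=0$ and $F^\perp=0$ on each plane through $0$---to obtain a Hamiltonian perturbation $\bar L''$ of $\bar L'$ in $(M,\om)$ transverse to $\bar L=\cS_0\cup\cS_{\bs\phi}$, with the grading on the $\cS_{\bs\phi}$ factor shifted by $[-1]$ so that the expander convention $f=-2\th/\al$ is compatible with $f_{\cS_0}\equiv 0$ and $f_{\cS_{\bs\phi}[-1]}\equiv A(L')$. Proposition \ref{la4prop6}(c) then gives $f_{\bar L''}(\iy_0)=0$ and $f_{\bar L''}(\iy_{\bs\phi})=A(L')$. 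Since $\bar L''\cong\bar L'$ in $D^b\sF(M)$ and $\bar L'$ satisfies Theorem \ref{la2thm3}(a) with $n=0$, we have the distinguished triangle $\cS_{\bs\phi}[-1]\to\bar L''\to\cS_0\to\cS_{\bs\phi}$, so Theorem \ref{la2thm2} produces a holomorphic triangle $\Si$ with corners $p\in\cS_{\bs\phi}[-1]\cap\bar L''$ of degree $0$, $q\in\bar L''\cap\cS_0$ with $\mu_{\bar L'',\cS_0}(q)=0$, and $r=0\in\cS_0\cap\cS_{\bs\phi}$. A Stokes computation parallel to \eqref{la4eq51}, using the fact that both sphere potentials are constant along the relevant boundary arcs, collapses to
\e
\mathop{\rm area}(\Si)=f_{\bar L''}(p)-f_{\bar L''}(q).
\e
For interior $p$, Proposition \ref{la4prop6}(b) gives $\frac{\al}{2\pi}(f_{\bar L''}(p)-A(L'))<\mu_{\cS_{\bs\phi}[-1],\bar L''}(p)=0$, i.e.\ $f_{\bar L''}(p)<A(L')$; at $p=\iy_{\bs\phi}$ Proposition \ref{la4prop6}(c) forces equality $f_{\bar L''}(p)=A(L')$. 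Similarly $f_{\bar L''}(q)\ge 0$, with equality iff $q=\iy_0$. Hence
\e
0<\mathop{\rm area}(\Si)=f_{\bar L''}(p)-f_{\bar L''}(q)\le A(L')-0=A(L'),
\e
forcing $A(L')>0$. Case {\bf(b)} is treated symmetrically using the distinguished triangle $\cS_0\to\bar L''\to\cS_{\bs\phi}[1-m]\to\cS_0[1]$ from Theorem \ref{la2thm3}(b) with $n=0$; the analogous area estimate becomes $\mathop{\rm area}(\Si)\le -A(L')$, giving $A(L')<0$. The non-existence claim for $\pi\le\phi_1+\cdots+\phi_m\le(m-1)\pi$ is then immediate, since cases {\bf(a)} and {\bf(b)} are the only possibilities and they require $\sum\phi_i<\pi$ and $\sum\phi_i>(m-1)\pi$ respectively.

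The main obstacle, relative to Proposition \ref{la4prop7}, is that Proposition \ref{la4prop6}(b) provides only the ``soft'' one-sided inequality \eqref{la4eq37} at interior transverse intersections rather than the strict two-sided bound $0<\mu_{\bar L,\bar L''}<m$ of Proposition \ref{la4prop5}(b). In the SL setting the latter forces both triangle corners to lie at infinity, where the area reduces tautologically to $A(L')$. Here the corners may be interior, but the crucial observation is that by shifting the grading of $\cS_{\bs\phi}$ so that the constant potential equals $A(L')$, the inequality \eqref{la4eq37} at $p$ becomes exactly the one-sided bound $f_{\bar L''}(p)<A(L')$ needed for the area estimate; the infinity bound of Proposition \ref{la4prop6}(c) promotes this to $\le A(L')$ with equality iff $p=\iy_{\bs\phi}$, and the argument pushes through. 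The main technical check is therefore to verify that Proposition \ref{la4prop6}, applied to the singular ``trivial expander'' $L=\Pi_0\cup\Pi_{\bs\phi}$, produces exactly this shifted-grading-plus-shifted-potential normalization, and that the interior bound from (b) and the infinity constraint from (c) combine to give the clean one-sided estimates $f_{\bar L''}(p)\le A(L')$ and $f_{\bar L''}(q)\ge 0$ used above.
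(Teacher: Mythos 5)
Your proposal is correct and follows essentially the same path as the paper's own proof: deducing the grading/potential formulas from Corollary \ref{la2cor}, Definition \ref{la2def6} and \eq{la3eq16}, then applying Proposition \ref{la4prop6} with $L=\Pi_0\cup\Pi_{\bs\phi}$, Theorem \ref{la2thm3}(a), and Theorem \ref{la2thm2} to produce the holomorphic triangle, and combining the one-sided bounds from Proposition \ref{la4prop6}(b),(c) with $\mathop{\rm area}(\Si)>0$ to force $A(L')>0$ (resp.\ $<0$ in case (b)). The one minor imprecision is the phrase about ``shifting the grading of $\cS_{\bs\phi}$ so that the constant potential equals $A(L')$'' --- the shift $[-1]$ affects the phase but not the potential; what fixes $f_{\cS_{\bs\phi}}=A(L')$ is the normalization in the last sentence of Proposition \ref{la4prop6}, coming from $f_L=-2\th_L/\al$ with the appropriate choice of grading on the disconnected $\bar L$.
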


\begin{proof} The first parts of (a),(b) follow from Corollary \ref{la2cor} (where we use Theorem \ref{la3thm6}(b) to improve the asymptotic rate $\rho<2$ to rate $\rho<0$ so that Corollary \ref{la2cor} applies), and \eq{la4eq52}--\eq{la4eq53} follow from these,
Definition \ref{la2def6}, and \eq{la3eq16}. It remains only to prove
that $A(L')>0$ in case (a), and $A(L')<0$ in case~(b).

For part (a), follow the first half of the proof of Proposition
\ref{la4prop7}, but using Proposition \ref{la4prop6} rather than
Proposition \ref{la4prop5}. This yields a holomorphic triangle as in
Figure \ref{la4fig2}, with corners $p\in\cS_{\bs\phi}[-1]\cap\bar
L''$ with $\mu_{\cS_{\bs\phi}[-1],\bar L''}(p)=0$, $q\in \bar
L''\cap \cS_0$ with $\mu_{\bar L'',\cS_0}(q)=0$, and $r\in\cS_0\cap
\cS_{\bs\phi}[-1]$ with $\mu_{\cS_0,\cS_{\bs\phi}[-1]}(r)=1$. As
$\cS_0\cap\cS_{\bs\phi}=\{0\}$ we have $r=0$. 

Since $f_{\cS_0}=0$ and $f_{\cS_{\bs\phi}[-1]}=A(L')$, the analogue of \eq{la4eq51} gives
\e
\begin{split}
\mathop{\rm area}(\Si)&=f_{\cS_0}(q)\!-\!f_{\cS_0}(0)\!+\!
f_{\bar L''}(p)\!-\!f_{\bar L''}(q)\!+\!
f_{\cS_{\bs\phi}[-1]}(0)\!-\!f_{\cS_{\bs\phi}[-1]}(p)\\
&=A(L') - (f_{\cS_{\bs\phi}[-1]}(p)-f_{\bar L''}(p))-(f_{\bar L''}(q)-f_{\cS_0}(q))>0.
\end{split}
\label{la4eq54}
\e
We claim also that
\e
f_{\cS_{\bs\phi}[-1]}(p)-f_{\bar L''}(p)\ge 0\quad\text{and}\quad
f_{\bar L''}(q)-f_{\cS_0}(q)\ge 0.
\label{la4eq55}
\e
The first equation of \eq{la4eq55} follows from $f_{\cS_{\bs\phi}[-1]}=A(L')$ and the second line of \eq{la4eq38} if $p=\iy_{\bs\phi}$, and from $\mu_{\cS_{\bs\phi}[-1],\bar L''}(p)=0$ and the first inequality of \eq{la4eq37} if $p\ne\iy_{\bs\phi}$. Similarly, the second equation of \eq{la4eq55} follows from $f_{\cS_0}=0$ and the first line of \eq{la4eq38} if $q=\iy_0$, and from $\mu_{\cS_0,\bar L''}(q)=m$ and the second inequality of \eq{la4eq37} if $q\ne\iy_0$. Adding up \eq{la4eq54}--\eq{la4eq55} shows that $A(L')>0$, proving (a). Part (b) is similar, with $\cS_{\bs\phi}[1-m],\cS_0$ in place of $\cS_0,\cS_{\bs\phi}[-1]$ in the above, as in the proof of Proposition~\ref{la4prop7}(b).
\end{proof}

\subsection{Proof of Theorem \ref{la4thm1}}
\label{la46}

First suppose $m\ge 3$ and $\phi_1,\ldots,\phi_m\in(0,\pi)$ with
$\phi_1+\cdots+\phi_m=\pi$, and $L$ is a closed, embedded, exact, AC
SL $m$-fold $\C^m$ asymptotic with rate $\rho<0$ to
$C=\Pi_0\cup\Pi_{\bs\phi}$. Then Proposition \ref{la4prop7}(a) shows
that $A(L)>0$. Thus, by Example \ref{la3ex1} there is a unique
`Lawlor neck' $L_{\bs\phi,A}$ with cone $C=\Pi_0\cup\Pi_{\bs\phi}$
and $A(L_{\bs\phi,A})=A:=A(L)$. Suppose for a contradiction
that~$L\ne L_{\bs\phi,A}$.

Apply Proposition \ref{la4prop5} to $L$ and $L'=L_{\bs\phi,A}$. This
defines a Hamiltonian perturbation $\bar
L''=L''\cup\{\iy_0,\iy_{\bs\phi}\}$ of $\bar L_{\bs\phi,A}$ in $M$
which intersects $\bar L=L\cup\{\iy_0,\iy_{\bs\phi}\}$ transversely.
As both $L,L_{\bs\phi,A}$ satisfy Corollary \ref{la2cor}(a), their
compactifications $\bar L,\bar L_{\bs\phi,A}$ satisfy Theorem
\ref{la2thm3}(a) with $n=0$, and this implies that $\bar L\cong \bar
L_{\bs\phi,A}$ in $D^b\sF(M)$. Hence $\bar L\cong \bar L''$ in
$D^b\sF(M)$ as $\bar L''$ and $\bar L_{\bs\phi,A}$ are Hamiltonian
isotopic. Therefore Theorem \ref{la2thm1}(b) applies to $\bar L,\bar
L''$, and shows there exists a $J$-holomorphic disc $\Si$ in $M$ of
the form shown in Figure \ref{la4fig3}, with corners $p,q\in\bar
L\cap\bar L''$ with $\mu_{\bar L,\bar L''}(p)=0$ and $\mu_{\bar
L,\bar L''}(q)=m$.
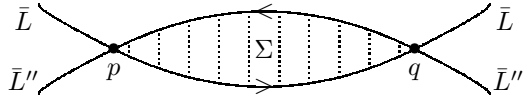
\begin{figure}[htb]
\centerline{$\splinetolerance{.8pt}
\begin{xy}
0;<1mm,0mm>:
,(-20,0);(20,0)**\crv{(0,10)}
?(.95)="a"
?(.85)="b"
?(.75)="c"
?(.65)="d"
?(.55)="e"
?(.45)="f"
?(.35)="g"
?(.25)="h"
?(.15)="i"
?(.05)="j"
?(.5)="y"
,(-20,0);(-30,-6)**\crv{(-30,-5)}
,(20,0);(30,-6)**\crv{(30,-5)}
,(-20,0);(20,0)**\crv{(0,-10)}
?(.95)="k"
?(.85)="l"
?(.75)="m"
?(.65)="n"
?(.55)="o"
?(.45)="p"
?(.35)="q"
?(.25)="r"
?(.15)="s"
?(.05)="t"
?(.5)="z"
,(-20,0);(-30,6)**\crv{(-30,5)}
,(20,0);(30,6)**\crv{(30,5)}
,"a";"k"**@{.}
,"b";"l"**@{.}
,"c";"m"**@{.}
,"d";"n"**@{.}
,"e";"o"**@{.}
,"f";"p"**@{.}
,"g";"q"**@{.}
,"h";"r"**@{.}
,"i";"s"**@{.}
,"j";"t"**@{.}
,"y"*{<}
,"z"*{>}
,(-20,0)*{\bu}
,(-20,-3)*{p}
,(20,0)*{\bu}
,(20,-3)*{q}
,(0,0)*{\Si}
,(-32,4)*{\bar L}
,(-32,-4)*{\bar L''}
,(32,4)*{\bar L}
,(32.5,-4)*{\bar L''}
\end{xy}$}
\caption{Holomorphic disc $\Si$ with boundary in $\bar L\cup\bar L''$}
\label{la4fig3}
\end{figure}

Proposition \ref{la4prop5}(b) shows that $0<\mu_{\bar L,\bar
L''}(r)<m$ for any $r\in\bar L\cap\bar L''\cap\C^m$, so $p,q\in
(\bar L\cap\bar L'')\sm\C^m=\{\iy_0,\iy_{\bs\phi}\}$. By Proposition
\ref{la4prop5}(c) we have
\e
\begin{split}
f_{\bar L''}(\iy_0)&=f_{\bar L_{\bs\phi,A}}(\iy_0)=0=f_{\bar L}(\iy_0),\\
f_{\bar L''}(\iy_{\bs\phi})&=f_{\bar L_{\bs\phi,A}}(\iy_{\bs\phi})=
A(L_{\bs\phi,A})=A=A(L)=f_{\bar L}(\iy_{\bs\phi}).
\end{split}
\label{la4eq56}
\e
So as $p,q\in\{\iy_0,\iy_{\bs\phi}\}$ we see that $f_{\bar L''}(p)=
f_{\bar L}(p)$ and $f_{\bar L''}(q)= f_{\bar L}(q)$. Thus
\eq{la2eq3} gives $\mathop{\rm area}(\Si)=f_{\bar L}(q)-f_{\bar
L}(p)+f_{\bar L''}(p)-f_{\bar L''}(q)=0,$ a contradiction. Hence
$L=L_{\bs\phi,A}$ for $A=A(L)$, proving the first part of
Theorem~\ref{la4thm1}.

For the second part, suppose $L$ has rate $\rho<2$ rather than
$\rho<0$. Then Proposition \ref{la4prop4} gives a unique $\bs
c\in\C^m$ with $L'=L-\bs c$ asymptotic to $C$ at rate $\rho'<0$. The
first part now shows that $L'=L_{\bs\phi,A}$, so
$L=L_{\bs\phi,A}+\bs c$, as we want.

The third part, with $\phi_1+\cdots+\phi_m=(m-1)\pi$, is proved as
for the first and second parts, but using part (b) of Proposition
\ref{la4prop7}, Corollary \ref{la2cor} and Theorem \ref{la2thm3}
rather than part (a), and $\ti L_{\bs\phi,A}$ for $A<0$ from Example
\ref{la3ex1} rather than $L_{\bs\phi,A}$ for $A>0$. The fourth part,
that there exist no exact AC SL $m$-folds $L$ with cone
$\Pi_0\cup\Pi_{\bs\phi}$ if $\phi_1+\cdots+\phi_m=k\pi$ for
$1<k<m-1$, is the last part of Corollary \ref{la4cor}. The final
part, concerning immersed AC SL $m$-folds, follows from
Proposition~\ref{la4prop3}.

\begin{rem} The proof above involves showing that for each exact AC
SL $m$-fold $L$ with cone $C=\Pi_0\cup\Pi_{\bs\phi}$, there is a
unique $L_{\bs\phi,A}$ with $A(L)=A(L_{\bs\phi,A})$. As
$A(L)=A(L_{\bs\phi,A})$ we have $f_{\bar L}(\iy_0)=f_{\bar
L_{\bs\phi,A}}(\iy_0)$ and $f_{\bar L}(\iy_{\bs\phi})=f_{\bar
L_{\bs\phi,A}}(\iy_{\bs\phi})$. These are used to compute the
(positive) area of the curve $\Si$ given by Theorem \ref{la2thm1}(b)
if it has corners at $\iy_0$ or $\iy_{\bs\phi}$, and prove a
contradiction if~$L\ne L_{\bs\phi,A}$.

We now sketch an alternative method. Roughly speaking, the invariant
$A(L)\in\R$ of AC Lagrangians $L$ asymptotic to
$\Pi_0\cup\Pi_{\bs\phi}$ from \S\ref{la23} depends on a compactified
version of the relative cohomology class $[\om]$ in
$H^2(\C^m,L;\R)$. Following Joyce \cite[Def.~7.2]{Joyc2}, one can
define an invariant $Z(L)\in\R$ of connected AC special Lagrangians
$L$ asymptotic to $\Pi_0\cup\Pi_{\bs\phi}$, depending on the
relative cohomology class $[\Im\Om]$ in $H^m(\C^m,L;\R)$. This
$Z(L)$ controls the asymptotic behaviour of $L$: writing the end of
$L$ asymptotic to $\Pi_0$ as a graph $\Ga_{\d f}$ as in \S\ref{la32}
for $f:\R^m\ra\R$ defined near infinity in $\R^m$,
then~$f=Z(L)r^{2-m}+O(r^{1-m})$.

Rather than comparing $L$ to $L_{\bs\phi,A}$ with
$A(L)=A(L_{\bs\phi,A})$, we compare $L$ to $L_{\bs\phi,A}$ with
$Z(L)=Z(L_{\bs\phi,A})$. In the analogue of Proposition
\ref{la4prop5} with $L'=L_{\bs\phi,A}$, we do not prescribe $f_{\bar
L''}(\iy_0),f_{\bar L''}(\iy_{\bs\phi})$, but instead use
$Z(L)=Z(L')$ to show we can choose $\bar L''$ with $\mu_{\bar L,\bar
L''}(\iy_0),\mu_{\bar L,\bar L''}(\iy_{\bs\phi})\ne 0,m$, roughly
because $f_{\bar L'}-f_{\bar L}$ cannot have a maximum or minimum at
$\iy_0,\iy_{\bs\phi}$. Then we prove a contradiction if $L\ne
L_{\bs\phi,A}$ using Theorem \ref{la2thm1}(a) rather than
Theorem~\ref{la2thm1}(b).

We did not use this method for two reasons. Firstly, $Z(L)$ does not
make sense for LMCF expanders, so this approach would not work for
Theorem \ref{la4thm2}. Secondly, we would need an analogue of
Proposition \ref{la4prop7}, that $Z(L)>0$ if
$\phi_1+\cdots+\phi_m=\pi$ and $Z(L)<0$ if $\phi_1+\cdots+
\phi_m=(m-1)\pi$. The authors know of no direct proof, though one
can deduce it from Proposition \ref{la4prop7} by showing that $A(L)$
and $Z(L)$ have the same sign, since $A(L)Z(L)$ is proportional
to~$\Vert F^\perp\Vert_{L^2}^2$.

\label{la4rem}
\end{rem}

\subsection{Proof of Theorem \ref{la4thm2}}
\label{la47}

First suppose $m\ge 3$ and $\phi_1,\ldots,\phi_m\in(0,\pi)$ with
$0<\phi_1+\cdots+\phi_m<\pi$, and $L$ is a closed, embedded, exact,
Asymptotically Conical Lagrangian MCF expander $\C^m$ asymptotic
with rate $\rho<2$ to $C=\Pi_0\cup\Pi_{\bs\phi}$, satisfying $H=\al
F^\perp$ for $\al>0$. Then Example \ref{la3ex2} defines a
Joyce--Lee--Tsui expander $L_{\bs\phi}^\al$. Suppose for a
contradiction that~$L\ne L_{\bs\phi}^\al$.

Theorem \ref{la3thm5} implies that $L$ is asymptotic to $C$ with any
rate $\rho'<2$, so we can take $\rho'<0$, and thus the
compactification $\bar L$ in $(M,\om)$ exists as in \S\ref{la24}.
Apply Proposition \ref{la4prop6} to $L$ and $L'=L_{\bs\phi}^\al$.
This defines a Hamiltonian perturbation $\bar
L''=L''\cup\{\iy_0,\iy_{\bs\phi}\}$ of $\bar L_{\bs\phi}^\al$ in $M$
which intersects $\bar L=L\cup\{\iy_0,\iy_{\bs\phi}\}$ transversely.
As both $L,L_{\bs\phi}^\al$ satisfy Proposition \ref{la4prop8}(a)
and hence Corollary \ref{la2cor}(a), their compactifications $\bar
L,\bar L_{\bs\phi}^\al$ satisfy Theorem \ref{la2thm3}(a) with $n=0$,
and thus $\bar L\cong \bar L_{\bs\phi,A}$ in $D^b\sF(M)$. Hence
$\bar L\cong \bar L''$ in $D^b\sF(M)$ as $\bar L''$ and $\bar
L_{\bs\phi}^\al$ are Hamiltonian isotopic. Therefore Theorem
\ref{la2thm1}(b) applies to $\bar L,\bar L''$, and shows there
exists a $J$-holomorphic disc $\Si$ in $M$ of the form shown in
Figure \ref{la4fig3}, with corners $p,q\in\bar L\cap\bar L''$ with
$\mu_{\bar L,\bar L''}(p)=0$ and~$\mu_{\bar L,\bar L''}(q)=m$.

We claim that $f_{\bar L}(p)-f_{\bar L''}(p)\ge 0$ and $f_{\bar
L}(q)-f_{\bar L''}(q)\le 0$. To prove these, note that if $p$ or $q$
lie in $\{\iy_0,\iy_{\bs\phi}\}$ then as in \eq{la4eq56} we have
$f_{\bar L}(p)=f_{\bar L''}(p)$ or $f_{\bar L}(q)=f_{\bar L''}(q)$,
since as for \eq{la4eq56}, using Proposition \ref{la4prop6}(c) we
have
\begin{align*}
f_{\bar L''}(\iy_0)&=f_{\bar L_{\bs\phi,A}}(\iy_0)=0=f_{\bar L}(\iy_0),\\
f_{\bar L''}(\iy_{\bs\phi})&=f_{\bar L_{\bs\phi}^\al}(\iy_{\bs\phi})=
A(L_{\bs\phi,A})=A(L)=f_{\bar L}(\iy_{\bs\phi}).
\end{align*}
If $p\in\bar L\cap\bar L''\cap\C^m$ then the first inequality of \eq{la4eq37} at $p$ gives $f_{\bar L}(p)-f_{\bar L''}(p)>0$, as $\mu_{\bar L,\bar L''}(p)=0$. If $q\in\bar L\cap\bar L''\cap\C^m$ then the second inequality of \eq{la4eq37} at $q$ gives $f_{\bar L}(q)-f_{\bar L''}(q)<0$, as $\mu_{\bar L,\bar L''}(q)=m$, proving the claim. But this contradicts
\begin{equation*}
0< \mathop{\rm area}(\Si)=f_{\bar L}(q)-f_{\bar L}(p)+f_{\bar
L''}(p)-f_{\bar L''}(q),
\end{equation*}
by \eq{la2eq3}. Hence $L=L_{\bs\phi}^\al$, proving the first part of
Theorem~\ref{la4thm2}.

The second part, with $(m-1)\pi<\phi_1+\cdots+\phi_m<m\pi$, is
proved as for the first part, but using part (b) of Proposition
\ref{la4prop8}, Corollary \ref{la2cor} and Theorem \ref{la2thm3}
rather than part (a), and $\ti L_{\bs\phi}^\al$ from Example
\ref{la3ex2} rather than $L_{\bs\phi}^\al$. The third part, that
there exist no exact AC LMCF expanders $L$ with cone
$\Pi_0\cup\Pi_{\bs\phi}$ if $\pi\le\phi_1+\cdots+\phi_m\le
(m-1)\pi$, is the last part of Proposition \ref{la4prop8}. The final
part, concerning immersed AC LMCF expanders, follows from
Proposition~\ref{la4prop3}.

\appendix

\section{Additional material}
\label{laA}

Finally we prove two new results that were used in \S\ref{la43}--\S\ref{la44} above, on real analytic functions and Morse theory.

\subsection{A real analytic ``Taylor's Theorem'' type result}
\label{laA1}

In Theorem \ref{laAthm1} we prove a result similar to Taylor's theorem for real analytic functions, which was used in the proofs of Propositions \ref{la4prop5} and \ref{la4prop6}. So far as the authors know it is new, and it may be of independent interest. 

The usual versions of Taylor's Theorem are of the form
\begin{equation*}
f(x_1,\ldots,x_m)=\sum_{j=0}^k\sum_{i_1,\ldots,i_j=1}^m \frac{1}{j!}\cdot x_{i_1}\cdots x_{i_j}\frac{\pd^jf}{\pd x_{i_1}\cdots\pd x_{i_j}}(0)+(\text{error term}),
\end{equation*}
where $0\in\R^m$ is thought of as fixed and $(x_1,\ldots,x_m)$ as varying, and the `error term' may be estimated in a variety of ways. In our theorem we exchange the r\^oles of fixed point $0$ and varying point $(x_1,\ldots,x_m)$, and we estimate the error term in terms of a multiple of the next term in the Taylor series. Example \ref{laAex} below shows the real analytic condition on $f$ in Theorem \ref{laAthm1} is essential.

\begin{thm} Let\/ $U$ be an open neighbourhood of\/ $0$ in $\R^m,$ $f:U\ra\R$ a real analytic function, and\/ $k=0,1,2,\ldots.$ Then there exist an open neighbourhood\/ $V$ of\/ $0$ in $U$ and a constant\/ $C\ge 0$ such that for all\/ $(x_1,\ldots,x_m)\in V$ we have
\ea
&\biggl\vert f(0)-\sum_{j=0}^k\sum_{i_1,\ldots,i_j=1\!\!\!\!\!}^m \frac{(-1)^j}{j!}\cdot x_{i_1}\cdots x_{i_j}\frac{\pd^jf}{\pd x_{i_1}\cdots\pd x_{i_j}}(x_1,\ldots,x_m)\biggr\vert 
\label{laAeq1}\\
&\le C (x_1^2\!+\!\cdots\!+\!x_m^2)^{(k+1)/2}
\biggl(\,\,\sum_{i_1,\ldots,i_{k+1}=1}^m
\frac{\pd^{k+1}f}{\pd x_{i_1}\cdots\pd x_{i_{k+1}}}(x_1,\ldots,x_m)^2\biggr)^{1/2}\!\!.
\nonumber
\ea
The same statement holds if\/ $f(x_1,\ldots,x_m)=(x_1^2+\cdots+x_m^2)^{1/2}g(x_1,\ldots,x_m)$ for $g:U\ra\R$ real analytic, taking $(x_1,\ldots,x_m)\ne 0$ in\/~\eq{laAeq1}.
\label{laAthm1}
\end{thm}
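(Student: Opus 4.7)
The plan is to reduce to the case where $f$ vanishes to order at least $k+1$ at $0$, and then exploit the algebraic structure of homogeneous components. Writing $f=P_k+R$ with $P_k$ the degree-$k$ Taylor polynomial of $f$ at $0$, neither side of \eq{laAeq1} changes on replacing $f$ by $R$: for each homogeneous piece $f_n$ of degree $n\le k$, the identity $T_j f_n=\ts\frac{n!}{(n-j)!}f_n$ (valid for $j\le n$ by iterated Euler, where $T_jf := \sum_{i_1,\ldots,i_j}x_{i_1}\cdots x_{i_j}\pd_{i_1}\cdots\pd_{i_j}f$) combined with $\sum_{j=0}^n(-1)^j\binom{n}{j}=0$ makes the LHS of \eq{laAeq1} vanish, while $\nabla^{k+1}P_k=0$ leaves the RHS unchanged. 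So assume $f=\sum_{n\ge d}f_n$ with $d\ge k+1$ and $f_d\not\equiv 0$ (the case $f\equiv 0$ is trivial). Using the identity $\sum_{j=0}^k(-1)^j\binom{n}{j}=(-1)^k\binom{n-1}{k}$, the LHS of \eq{laAeq1} then equals $\bmd{\sum_{n\ge d}(-1)^{k+1}\binom{n-1}{k}f_n(x)}$.

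The key algebraic step is a pointwise identity. For any homogeneous polynomial $P$ of degree $n\ge k+1$ on $\R^m$, iterating Euler's formula $\sum_i x_i\pd_i P=nP$ a total of $k+1$ times gives
\[
P(x) = \frac{(n-k-1)!}{n!}\sum_{i_1,\ldots,i_{k+1}=1}^m x_{i_1}\cdots x_{i_{k+1}}\,\pd_{i_1}\cdots\pd_{i_{k+1}}P(x),
\]
and Cauchy--Schwarz on the $m^{k+1}$-fold sum, using the multinomial identity $\sum_{i_1,\ldots,i_{k+1}}(x_{i_1}\cdots x_{i_{k+1}})^2 = (x_1^2+\cdots+x_m^2)^{k+1}$, yields the \emph{pointwise} bound $|P(x)|\le\ts\frac{(n-k-1)!}{n!}|x|^{k+1}\nm{\nabla^{k+1}P(x)}$ for every $x\in\R^m$, where $\nm{\nabla^{k+1}P}^2$ denotes the sum of squares on the RHS of \eq{laAeq1}. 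Applying this to each $f_n$ and using $\binom{n-1}{k}\ts\frac{(n-k-1)!}{n!} = \frac{1}{n\cdot k!}\le\frac{1}{(k+1)!}$ for $n\ge k+1$, one obtains
\[
\text{LHS of \eq{laAeq1}}\;\le\;\frac{|x|^{k+1}}{(k+1)!}\sum_{n\ge d}\nm{\nabla^{k+1}f_n(x)}.
\]

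The main obstacle is the remaining step: bounding $\sum_n\nm{\nabla^{k+1}f_n(x)}$ by a constant multiple of $\nm{\nabla^{k+1}f(x)}=\nm{\sum_n\nabla^{k+1}f_n(x)}$ on some neighbourhood $V$ of $0$. The sum-of-norms can exceed the norm-of-sum through cancellation, so this fails globally; it holds near $0$ because of the real-analytic structure. The leading piece $\nabla^{k+1}f_d(x)$ has order $|x|^{d-k-1}$ while the tail $\sum_{n>d}\nm{\nabla^{k+1}f_n(x)}$ is $O(|x|^{d-k})$, so wherever $\nm{\nabla^{k+1}f_d(x)}\gtrsim|x|^{d-k-1}$ one has $\nm{\nabla^{k+1}f(x)}\ge\ha\nm{\nabla^{k+1}f_d(x)}$ and the bound follows immediately. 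Along tangential directions near the real analytic subvariety $Z=\{\nabla^{k+1}f_d=0\}\cap\cS^{m-1}$, the pointwise identity above applied to $f_d$ forces $f_d(x)=0$ as well, so the leading contribution to the LHS of \eq{laAeq1} also vanishes and control passes to $f_{d+1}$; iterating this dichotomy finitely often, or equivalently applying a \L{}ojasiewicz-type comparison of the two real analytic functions $(\text{LHS of \eq{laAeq1}})^2$ and $|x|^{2(k+1)}\nm{\nabla^{k+1}f(x)}^2$ on $\cS^{m-1}$ together with compactness, yields the uniform bound. The neighbourhood $V$ is shrunk if necessary to exclude any spurious zeros of $\nabla^{k+1}f$ away from $0$, which is always possible since these form a nowhere dense real analytic set.

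For the second statement, where $f(x)=(x_1^2+\cdots+x_m^2)^{1/2}g(x)$ is only real analytic on $U\sm\{0\}$, each summand $|x|g_n(x)$ in the expansion $g=\sum_n g_n$ is positive-homogeneous of degree $n+1$ on $\R^m\sm\{0\}$; the iterated Euler identity and the Cauchy--Schwarz pointwise estimate still hold at each $x\ne 0$, so the argument above carries through with ``neighbourhood of $0$'' replaced by a punctured one.
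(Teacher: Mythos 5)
Your reduction to the case $f=\sum_{n\ge d}f_n$ with $d\ge k+1$, the computation of the alternating sum via iterated Euler and the identity $\sum_{j=0}^k(-1)^j\binom{n}{j}=(-1)^k\binom{n-1}{k}$, and the Cauchy--Schwarz pointwise bound $|P(x)|\le\frac{(n-k-1)!}{n!}|x|^{k+1}\nm{\nabla^{k+1}P(x)}$ for homogeneous $P$ of degree $n$, are all correct and neatly arranged. But the step you yourself flag as ``the main obstacle'' is a genuine gap, and the arguments you offer for closing it do not succeed.

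First, the dichotomy/iteration sketch does not converge to a proof. Near the zero set $Z$ of $\nabla^{k+1}f_d$ on the sphere, you claim ``control passes to $f_{d+1}$''; but the contributions of $\nabla^{k+1}f_d$ and $\nabla^{k+1}f_{d+1}$ to $\nabla^{k+1}f$ have the \emph{same} order of magnitude in a thin tube of width $\sim|x|$ around the cone over $Z$, and they may cancel there. That cancellation is precisely where $\sum_n\nm{\nabla^{k+1}f_n}$ can exceed $\nm{\nabla^{k+1}f}$ by an unbounded factor, and ``iterating finitely often'' produces a nested sequence of such tubes around analytic subvarieties, with no a priori bound on the depth of nesting or on the growth of the constants. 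Second, the appeal to a \L{}ojasiewicz-type comparison gives strictly less than what is claimed: \L{}ojasiewicz's inequality for two real analytic functions $g,h\ge 0$ with $h^{-1}(0)\subseteq g^{-1}(0)$ yields $g\le C\,h^{\beta}$ for \emph{some} $\beta\in(0,1]$, not necessarily $\beta=1$, so applying it to $g=(\text{LHS of \eq{laAeq1}})^2$ and $h=|x|^{2(k+1)}\nm{\nabla^{k+1}f}^2$ gives a weaker, $\beta$-power inequality. The whole content of Theorem \ref{laAthm1} is exactly that $\beta=1$ works here, and the paper's proof establishes this by resolving the singularities of $f$ \`a la Hironaka: after writing $f\circ\pi$ locally as $F\prod_i y_i^{d_i}$ with a unit factor $F$, the estimate $|f\circ\pi|\lesssim|y_k|\,|\pd_{y_k}(f\circ\pi)|$ is a tautology, and factoring $\pi$ through the blow-up of $0$ gives $|y_k|\lesssim|x|$; compactness of $\pi^{-1}(0)$ then patches the local constants. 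It is this explicit control of vanishing orders on the resolution that pins down the exponent exactly. Your proposal is thus a different decomposition (homogeneous expansion rather than resolution charts), but it leaves the crucial comparison unproved; to rescue it you would need a quantitative statement guaranteeing that the \L{}ojasiewicz exponent for this particular pair is $1$, which is essentially as hard as the theorem itself and which the homogeneous decomposition alone does not supply.
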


\begin{proof} We first prove the case $k=0$ of the theorem, supposing $f(0)=0$. We must show that we can find an open $0\in V\subseteq U$ and $C\ge 0$ such that
\e
\bmd{f(\bs x)}\le C(x_1^2\!+\!\cdots\!+\!x_m^2)^{1/2}\bigl(\ts\sum_{i=1}^m\frac{\pd f}{\pd x_i}(\bs x)^2\bigr)^{1/2} 
\label{laAeq2}
\e
for all $\bs x=(x_1,\ldots,x_m)$ in $V$.

By replacing $U$ by an open neighbourhood $U'$ of 0 in $U$, we can suppose that $f$ has no nonzero critical values, that is, $\d f(x_1,\ldots,x_m)=0$ implies $f(x_1,\ldots,x_m)=0$. Now Hironaka's Theorem holds in the real analytic category, \cite[\S 7]{Hiro}. Thus, we may choose an {\it embedded resolution of singularities\/ $\pi:\ti U\ra U$ of\/}~$f:U\ra\R$.

This means that $\ti U$ is a real analytic manifold, $\pi:\ti U\ra U$ is a proper, surjective, real analytic map, $D_0,\ldots,D_n$ are closed real analytic hypersurfaces in $\ti U$ meeting with normal crossings with $D_0$ the proper transform of $f^{-1}(0)\subset U$ and $\pi^{-1}(\Crit(f))=D_1\cup\cdots\cup D_n$, and there are integers $d_0=1$ and $d_1,\ldots,d_n\ge 2$ such that for any $p\in\ti U$ there exists an open neighbourhood $\ti V_p$ of $p$ in $\ti U$ with
\e
f\ci\pi\vert_{\ti V_p}=F\cdot\ts\prod_{i\in I}y_i^{d_i},
\label{laAeq3}
\e
where $I=\{i=0,\ldots,m:p\in D_i\}$, and $y_i:\ti V_p\ra\R$ is real analytic and vanishes with multiplicity 1 on $D_i\cap \ti V_p$ for $i\in I$, and $F:\ti V_p\ra\R\sm\{0\}$ is real analytic.

Choose the resolution $\pi:\ti U\ra U$ to factor through the real blow-up $\hat\pi:\hat U\ra U$ of $U$ at 0. Then there are unique integers $e_0,\ldots,e_n\ge 0$ such that $\pi^*(I_0)=\prod_{i=0}^nI_{D_i}^{e_i}$, where $I_0$ is the ideal of smooth functions vanishing at 0 in $U$, and $I_{D_i}$ the ideal of smooth functions vanishing along $D_i$ in $\ti U$, and $e_i>0$ if and only if $D_i\subseteq\pi^{-1}(\{0\})$. By treating $(x_1^2+\cdots+x_m^2)\ci\pi$ as for $f\ci\pi$ above and taking square roots, we find that for any $p\in\ti U$ there exists an open neighbourhood $\ti V_p$ of $p$ in $\ti U$ with
\e
(x_1^2+\cdots+x_m^2)^{1/2}\ci\pi\vert_{\ti V_p}=G\cdot\ts\prod_{i\in I}\md{y_i}^{e_i},
\label{laAeq4}
\e
where $I,y_i$ are as above and $G:\ti V_p\ra(0,\iy)$ is real analytic. As $f(0)=0$, making $U$ smaller if necessary we see that $\md{f(x_1,\ldots,x_m)}\le E(x_1^2+\cdots+x_m^2)^{1/2}$ for some $E>0$ and all $(x_1,\ldots,x_m)\in U$. So comparing \eq{laAeq3} and \eq{laAeq4} we see that $0\le e_i\le d_i$ for~$i=0,\ldots,n$.

Suppose $p\in\pi^{-1}(0)$, and let $\ti V_p,I,y_i,F,G$ be as above. Then $p\in D_k$ for at least one $k=1,\ldots,n$ with $e_k>0$, so $k\in I$. Making $\ti V_p$ smaller if necessary, extend $(y_i:i\in I)$ to a real analytic coordinate system $(y_i:i\in I$, $z_1,\ldots,z_l)$ on $\ti V_p$, where $l=m-\md{I}$. Then we have
\begin{equation*}
\pi^*(\d x_1^2+\cdots+\d x_m^2)=\sum_{i,j\in I}A_{ij}\d y_i\d y_j+\sum_{i\in I,\;1\le a\le l}B_{ia}\d y_i\d z_a
+\sum_{1\le a,b\le l}C_{ab}\d z_a\d z_b,
\end{equation*}
for real analytic $A_{ij},B_{ia},C_{ab}:\ti V_p\ra\R$. For $c=1,\ldots,m$, write
\begin{equation*}
x_c\ci\pi=H_c\cdot\ts\prod_{i\in I}y_i^{n_i^c}\quad\text{on $\ti V_p$,}
\end{equation*}
where $H_c:\ti V_p\ra\R$ is real analytic and not divisible by any $y_i$, and $n_i^c\ge 0$.  Then calculation shows that
\ea
A_{ij}&=\sum_{c=1}^m\prod_{i'\in I}y_{i'}^{2n_{i'}^c}\cdot\ts\bigl(\frac{\pd}{\pd y_i}H_c+n_i^c y_i^{-1}H_c\bigr)\bigl(\frac{\pd}{\pd y_j}H_c+n_j^c y_j^{-1}H_c\bigr),
\label{laAeq5}\\
B_{ia}&=\sum_{c=1}^m\prod_{i'\in I}y_{i'}^{2n_{i'}^c}\cdot\ts\bigl(\frac{\pd}{\pd y_i}H_c+n_i^c y_i^{-1}H_c\bigr)\frac{\pd}{\pd z_a}H_c,
\label{laAeq6}\\
C_{ab}&=\sum_{c=1}^m\prod_{i'\in I}y_{i'}^{2n_{i'}^c}\cdot\ts\frac{\pd}{\pd z_a}H_c\,\frac{\pd}{\pd z_b}H_c.
\label{laAeq7}
\ea
Since $x_c^2\le x_1^2+\cdots+x_m^2$, equation \eq{laAeq4} implies that $n_i^c\ge e_i$ for all $c,i$. Therefore making $\ti V_p$ smaller if necessary, we see from \eq{laAeq5}--\eq{laAeq7} that there exists $D>0$ such that for all $\al_i:i\in I$, $\be_1,\ldots,\be_l$ in $\R$ and $q\in \ti V_p$ we have
\e
\begin{split}
\ts\sum_{i,j\in I}A_{ij}(q)\al_i\al_j+\sum_{i\in I,1\le a\le l}B_{ia}(q)\al_i\be_a
+\sum_{1\le a,b\le l}C_{ab}(q)\be_a\be_b&\\
\le D\bigl(\ts\sum_{i\in I}y_i^{\max(2e_i-2,0)}\cdot\prod_{i\ne j\in I}y_j^{2e_j}\cdot\al_i^2+\sum_{a=1}^l\prod_{j\in I}y_j^{2e_j}\be_a^2\bigr)&.
\end{split}
\label{laAeq8}
\e

Since $F\ne 0$ on $\ti V_p$ and $y_k(p)=0$, making $\ti V_p$ smaller, we may suppose that 
\e
\md{y_k}\cdot\bmd{\ts\frac{\pd F}{\pd y_k}(q)}\le \ha d_k \bmd{F(q)}
\qquad\text{for all $q\in \ti V_p$.}
\label{laAeq9}
\e
As $G>0$ on $\ti V_p$, making $\ti V_p$ smaller, we can choose $C_p>0$ such that
\e
\ts\frac{2}{d_k}D^{1/2}\le C_pG(q)\qquad\text{for all $q\in \ti V_p$.}
\label{laAeq10}
\e
Let $q\in\ti V_p$ with $\pi(q)=\bs x=(x_1,\ldots,x_m)$ in $U$. Then we have
\ea
&\ts\bmd{f(\bs x)}=\md{f\ci\pi(q)}=\md{F(q)}\prod_{i\in I}\md{y_i}^{d_i}\le \frac{2}{d_k}\bigl(d_k\md{F(q)}-\md{y_k}\md{\frac{\pd F}{\pd y_k}(q)}\bigr)\prod_{i\in I}\md{y_i}^{d_i}
\nonumber\\
&\le\ts \frac{2}{d_k}\md{y_k}\cdot\bmd{\bigl(d_kF(q)+y_k\frac{\pd F}{\pd y_k}(q)\bigr)y_k^{d_k-1}\prod_{i\in I\sm\{k\}}y_i^{d_i}} 
=\frac{2}{d_k}\md{y_k}\bmd{\ts\frac{\pd}{\pd y_k}(f\ci\pi)(q)}
\nonumber\\
&\le\ts\frac{2}{d_k}D^{1/2}\prod_{i\in I}\md{y_i}^{e_i}\cdot D^{-1/2}\bigl[\ts\sum_{i\in I}y_i^{-\max(2e_i-2,0)}\prod_{i\ne j\in I}y_j^{-2e_j}
\nonumber\\
&\qquad\ts\cdot\ms{\ts\frac{\pd}{\pd y_i}(f\ci\pi)(q)}+\sum_{a=1}^l\prod_{j\in I}y_j^{-2e_j}\ms{\ts\frac{\pd}{\pd z_a}(f\ci\pi)(q)}\bigr]^{1/2} 
\nonumber\\
&\le C_pG(q)\ts\prod_{i\in I}\md{y_i}^{e_i}\cdot\bigl[\pi^*(\d x_1^2+\cdots+\d x_m^2)^{-1}\cdot\bigl(\d (f\ci\pi)(q)\ot \d (f\ci \pi)(q)\bigr)\bigr]^{1/2}
\nonumber\\
&=C_p(x_1^2+\cdots+x_m^2)^{1/2}\bigl[\ts\sum_{i=1}^m\frac{\pd f}{\pd x_i}(\bs x)^2\bigr]^{1/2},
\label{laAeq11}
\ea
using $\pi(q)=\bs x$ in the first step, \eq{laAeq3} in the second and fifth, \eq{laAeq9} in the third, $e_k>0$ in the sixth, \eq{laAeq10} and the square root of the analogue of \eq{laAeq8} for the inverse metrics $\sum_i\frac{\pd^2}{\pd x_i^2},\sum_i\bigl(\frac{\pd}{\pd y_i})^2+\sum_a\bigl(\frac{\pd}{\pd z_a}\bigr)^2$ rather than $\sum_i\d x_i^2,\sum_i\d y_i^2+\sum_a\d z_a^2$ in the seventh, and \eq{laAeq4} in the eighth.

We have shown that each $p\in\pi^{-1}(0)$ has an open neighbourhood $\ti V_p$ in $\ti U$ and a constant $C_p>0$ such that \eq{laAeq11} holds for all $\bs x\in\pi(\ti V_p)$. Since $\pi$ is proper, $\pi^{-1}(0)$ is compact, so we can choose $p_1,\ldots,p_N\in\pi^{-1}(0)$ and $\ti V_{p_1},\ldots,\ti V_{p_N}\subset\ti U$, $C_{p_1},\ldots,C_{p_N}>0$ as above such that $\pi^{-1}(0)\subset\ti V_{p_1}\cup\cdots\cup\ti V_{p_N}$. Define $C=\max(C_{p_1},\ldots,C_{p_N})$. Since $\pi$ is proper and $\ti V_{p_1}\cup\cdots\cup\ti V_{p_N}$ is an open neighbourhood of $\pi^{-1}(0)$ in $\ti U$, there exists an open $0\in V\subseteq U$ with $\pi^{-1}(V)\subseteq\ti V_{p_1}\cup\cdots\cup\ti V_{p_N}$. Let $(x_1,\ldots,x_m)\in V$. Then $(x_1,\ldots,x_m)=\pi(q)$ for $q\in\ti V_{p_i}$, for some $i=1,\ldots,N$, as $\pi$ is surjective, so \eq{laAeq11} gives
\begin{align*}
\bmd{f(\bs x)}&\le C_{p_i}(x_1^2+\cdots+x_m^2)^{1/2}\bigl(\ts\sum_{i=1}^m\frac{\pd f}{\pd x_i}(\bs x)^2\bigr)^{1/2}\\
&\le C(x_1^2+\cdots+x_m^2)^{1/2}\bigl(\ts\sum_{i=1}^m\frac{\pd f}{\pd x_i}(\bs x)^2\bigr)^{1/2}.
\end{align*}
This proves equation \eq{laAeq2}, and the case $k=0$, $f(0)=0$ of the theorem.

For the general case, given $f:U\ra\R$, $k=0,1,\ldots$ and $\de>0$, apply the previous case to the function $\hat f:U\ra\R$ given by
\begin{equation*}
\hat f(x_1,\ldots,x_m)=f(0)-\sum_{j=0}^k\sum_{i_1,\ldots,i_j=1\!\!\!\!\!}^m \frac{(-1)^j}{j!}\cdot x_{i_1}\cdots x_{i_j}\frac{\pd^jf}{\pd x_{i_1}\cdots\pd x_{i_j}}(x_1,\ldots,x_m).
\end{equation*}
Then $\hat f(0)=0$, and for $a=1,\ldots,m$ we have
\begin{equation*}
\frac{\pd\hat f}{\pd x_a}(x_1,\ldots,x_m)=\sum_{i_1,\ldots,i_k=1}^m
\frac{(-1)^k}{k!}\cdot x_{i_1}\cdots x_{i_k}\frac{\pd^{k+1}f}{\pd x_{i_1}\cdots\pd x_{i_k}\pd x_a}(x_1,\ldots,x_m),
\end{equation*}
so that
\begin{align*}
&(\ts\sum_{i=1}^m\frac{\pd\hat f}{\pd x_i}(x_1,\ldots,x_m)^2\bigr)^{1/2}\\
&\le(x_1^2+\cdots+x_m^2)^{k/2}
\biggl(\,\,\sum_{i_1,\ldots,i_{k+1}=1}^m
\frac{\pd^{k+1}f}{\pd x_{i_1}\cdots\pd x_{i_{k+1}}}(x_1,\ldots,x_m)^2\biggr)^{1/2}.
\end{align*}
Hence equation \eq{laAeq2} for $\hat f$ implies \eq{laAeq1} for $f$.

Finally, for the last part with $f(\bs x)=(x_1^2+\cdots+x_m^2)^{1/2}g(\bs x)$ for $g:U\ra\R$ real analytic, note that $g$ has an embedded resolution of singularities $\pi:\ti U\ra U$ such that \eq{laAeq3} holds with $g$ in place of $f$. Multiplying this by \eq{laAeq4} gives
\e
f\ci\pi\vert_{\ti V_p}=F\cdot G\cdot\ts\prod_{i\in I}y_i^{d_i}\md{y_i}^{e_i}.
\label{laAeq12}
\e
We can now follow the proof above for $f$ using \eq{laAeq12} as a substitute for \eq{laAeq3}, with $FG$ in place of $F$ and $d_i+e_i$ in place of $d_i$. Note that $fg$ is not smooth at $\bs x=0$, just as $\md{y_i}^{e_i}$ in \eq{laAeq12} is not smooth where $y_i=0$ and $e_i>0$, so we must restrict to $\bs x\ne 0$ in \eq{laAeq1}. This completes the proof.
\end{proof}

\begin{ex} Define $f:\R\ra\R$ by $f(x)=\int_0^xe^{-1/y}\sin^2(\pi/y)\d y$, so that $\frac{\d f}{\d x}=e^{-1/x}\sin^2(\pi/x)$ for $x\ne 0$ and $\frac{\d f}{\d x}(0)=0$. Then $f$ is smooth, but not real analytic near $x=0$ in $\R$. For $n=1,2,\ldots$ we see that $f(1/n)>0$ but $\frac{\d f}{\d x}(1/n)=0$. Thus equations \eq{laAeq1} for $k=0$ and \eq{laAeq2} do not hold at $(x_1,\ldots,x_m)=x_1=1/n$, so there exists no open neighbourhood $V$ of 0 in $\R$ such that \eq{laAeq1}--\eq{laAeq2} hold, as any such $V$ would contain $1/n$ for~$n\gg 0$. 
\label{laAex}
\end{ex}

\subsection{A result from Morse theory}
\label{laA2}

We now use Morse theory to show that under some assumptions we can perturb a smooth function $f:N\ra\R$ on an open set $M^\ci\subset N$ to a new function $\hat f:N\ra\R$ which is Morse on $M^\ci$ with no local maxima or minima. This was used in the proof of Proposition \ref{la4prop6} to perturb two Lagrangians to intersect transversely with controlled Maslov indices at intersections. The proof is similar to that of a result of Milnor \cite[Th~8.1]{Miln}, who shows one can cancel critical points for Morse functions $f:M\ra[0,1]$ with~$\pd M=f^{-1}(\{0,1\})$.

\begin{thm} Let\/ $N$ be a manifold of dimension $m\ge 3,$ and\/ $M$ a compact, connected manifold with boundary embedded in $N$ with\/ $\dim M=\dim N=m$. We shall denote by $M^\ci$ the interior of\/ $M,$ and by $\pd M$ the boundary of\/ $M$. Suppose $f:N\ra\R$ is smooth, such that\/ $f$ has no critical point in $\pd M,$ and\/ $f\vert_{M}$ attains its maximum and minimum only on $\pd M$.

Then there exists a smooth function $\hat f:N\ra\R,$ such that\/ $\hat f\vert_{N\sm M^\ci}=f\vert_{N\sm M^\ci},$ and\/ $\hat f\vert_{M^\ci}$ is Morse, with no critical points of index\/ $0$ or\/~$m$.
\label{laAthm2}
\end{thm}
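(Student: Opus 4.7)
The plan is to first make $f$ Morse on $M^\circ$ via a compactly supported transversality perturbation, and then eliminate critical points of index $0$ and $m$ by a cancellation argument in the spirit of Milnor, exploiting the hypothesis that the extrema of $f|_M$ are attained on $\pd M$. By replacing $\hat f$ with $-\hat f$ in the argument, it suffices to eliminate index-$0$ critical points; the hypothesis on $f$ is symmetric under this replacement.

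For Step~1, a standard transversality argument yields a $C^\infty$-small perturbation of $f$ supported in a compact subset of $M^\circ$ such that the resulting $\hat f|_{M^\circ}$ is Morse; taking the perturbation small enough preserves $\d\hat f\ne 0$ on $\pd M$ and the location of the extrema of $\hat f|_M$. For Step~2, given an index-$0$ critical point $p\in M^\circ$, the hypothesis furnishes $x_0\in\pd M$ with $\hat f(x_0)<\hat f(p)$. Choose a Morse--Smale gradient-like vector field $\xi$ for $\hat f$. As the level $c$ crosses $\hat f(p)$, a new connected component $V_p(c)$ of $\{\hat f\le c\}\cap M$ is born at $p$ and is distinct from the component $V_\pd(c)$ containing $x_0$. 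Since $M$ is connected these coincide for $c$ large, so they must merge at some first level $c_q>\hat f(p)$. When the merging is carried out by an interior index-$1$ critical point $q\in M^\circ$, a generic choice of $\xi$ (available since $m\ge 3$) ensures that exactly one $(-\xi)$-trajectory from $q$ ends at $p$, and Milnor's cancellation theorem \cite[Th.~5.4]{Miln}, applied in a tubular neighbourhood of this trajectory (contained in $M^\circ$), modifies $\hat f$ so as to remove the pair $(p,q)$ while introducing no new critical points.

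If instead the merging is mediated by a critical point of $\hat f|_{\pd M}$ on the boundary, which can occur even when $\d\hat f\ne 0$ on $\pd M$, then no interior cancellation partner for $p$ is directly available, and we cannot modify $\hat f$ outside $M^\circ$. To handle this case, we perform a Cerf-theoretic ``birth'' move in a small ball $B\subset M^\circ$: introduce an auxiliary cancelling pair $(p',q')$ of indices $0$ and $1$, with $B$ chosen so that, after a generic perturbation of $\xi$, a $(-\xi)$-trajectory connects $q'$ to $p$ rather than to $p'$. Milnor cancellation of the pair $(p,q')$ then removes $p$, while the residual index-$0$ critical point $p'$ is subsequently cancelled by an interior index-$1$ partner as in the first case. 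Iterating through the finitely many index-$0$ critical points removes them all; applying the same procedure to $-\hat f$ then eliminates the index-$m$ critical points.

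The main obstacle is the boundary-mediated merging subcase: arranging the birth of $(p',q')$ with the correct connectivity to $p$ so that Milnor's theorem applies requires careful handle sliding and Cerf-theoretic arguments, both of which depend essentially on the transversality available when $m\ge 3$. A technically simpler alternative that bypasses the merging analysis is to directly choose an embedded path $\gamma$ in $M$ from $p$ to $x_0$ with $\gamma((0,1))\subset M^\circ$ avoiding the other critical points of $\hat f|_{M^\circ}$, and in a tubular neighbourhood of $\gamma$ (an $(m-1)$-disc bundle) modify $\hat f$ by a compactly supported term $\chi\cdot\psi$ with $\psi$ strictly monotone along $\gamma$ and $\chi$ a cutoff equal to $1$ near $p$; this destroys the critical point at $p$, and for $m\ge 3$ the transverse directions supply enough room to choose $\chi,\psi$ so that no new critical points of index $0$ or $m$ appear in the support of the modification.
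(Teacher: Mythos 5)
Your proposal diverges from the paper's proof in a way that leaves a genuine gap. The paper (after making $f$ Morse on $M^\ci$ and on $\pd\check M$ for a slightly shrunken copy $\check M$ of $M$) divides the boundary critical points of $\check f|_{\pd\check M}$ into Neumann-type and Dirichlet-type, \emph{first eliminates Neumann-type critical points of index $1$} by a local modification that creates only interior critical points of indices $1$ and $2$ (here $m\ge 3$ is used), and then invokes Laudenbach's Morse complex for manifolds with boundary. The fact that this complex --- generated by interior critical points and Neumann-type boundary critical points --- computes $H_*(\check M;\Z_2)\cong H_*(\text{pt})$ in degree $0$, combined with the prior removal of all index-$1$ Neumann boundary generators, \emph{forces} every interior index-$0$ critical point to have an interior index-$1$ cancellation partner. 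That forcing step is exactly what your proposal lacks: you correctly identify that the merging of the sublevel component born at $p$ may be ``mediated by the boundary'' (your case two, the paper's Neumann-type index-$1$ boundary critical point), but your birth--slide--cancel fix is circular. After you birth a cancelling pair $(p',q')$, slide $q'$'s descending trajectory onto $p$, and cancel $(p,q')$, the surviving critical point $p'$ inherits (up to a small ball) the basin, the component structure, and the merging mechanism of $p$; its sublevel component will again merge at the same boundary-type event, so there is no reason an interior index-$1$ partner for $p'$ should now exist. Nothing in the construction reduces the number of ``boundary-mediated'' index-$0$ points, so the iteration does not terminate.

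Your ``technically simpler alternative'' (tilting $\hat f$ in a tube around a path $\gamma$ from $p$ to a boundary point $x_0$) also does not work as stated. The perturbation must be supported in a compact subset of $M^\ci$, but $\gamma(1)=x_0\in\pd M$, so the cutoff $\chi$ must die out strictly inside $M^\ci$. Adding $\chi\psi$ with $\psi$ monotone along $\gamma$ does remove the critical point at $p$, but a local minimum cannot simply vanish by a compactly supported perturbation: its ``basin'' is pushed to wherever the support ends, reappearing there as a new index-$0$ critical point (or cancelling against a newly created index-$1$ point, which is precisely the cancellation you were trying to avoid constructing). The extra dimensions for $m\ge 3$ control the index of \emph{new} critical points created in the transverse directions, but they do not let a local minimum escape from a compactly supported modification. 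In short, the proposal is missing the two ideas that carry the paper's proof: (i) the elimination of index-$1$ Neumann-type boundary critical points, so that the algebraic cancellation partner must be interior, and (ii) Laudenbach's Morse complex for manifolds with boundary, which supplies that partner from $H_0(\check M;\Z_2)\cong\Z_2$.
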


\begin{proof} First define $\check M\subset M^\ci$ to be the subset of $x\in M^\ci$ a distance at least $\ep$ from $\pd M$, for some smooth metric on $M$ and small $\ep>0$. Then we can suppose $\check M$ is diffeomorphic to $M$, and $M\sm\check M\cong \pd M\t[0,\ep)$, and $f$ has no critical points in $M\sm\check M^\ci$, and $f\vert_{\smash{\check M}}$ attains its maximum and minimum only on~$\pd\check M$.

Next, choose smooth $\check f:N\ra\R$ such that $\check f-f:N\ra\R$ is a  $C^1$-small function on $N$ supported on $M^\ci\subset N$, and is generic under this condition. As $\check f$ is generic on $M^\ci\supset\check M$, we can suppose that $\check f$ is Morse on $M^\ci$ and $\check f\vert_{\pd\check M}$ is Morse on $\pd\check M$. Since $\check f-f$ is $C^1$-small, we can suppose that $\check f$ has no critical points in $M\sm\check M^\ci$ and $\check f\vert_{\smash{\check M}}$ attains its maximum and minimum only on $\pd\check M$, since $f$ satisfies this and it is an open condition in~$C^1$.

Let $p\in\pd\check M$ be a critical point of the Morse function $\check f\vert_{\smash{\pd\check M}}$. Since $\check f$ has no critical points on $\pd\check M$, if $w\in T_pN$ is a vector pointing inward from $\pd\check M$, then $w\cdot\d f\vert_p\ne 0$. Thus we may divide the critical points of $\check f\vert_{\smash{\pd\check M}}$ into two types. We say that $p$ is of {\it Neumann-type} if $w\cdot\d f\vert_p>0$, and of {\it Dirichlet-type} if we have $w\cdot\d f\vert_p<0$. This is independent of the choice of~$w$.

Now if $\check f\vert_{\pd\check M}$ had Neumann-type critical points of index 1, that would cause problems in the next part of the argument. So we perturb $\check f$ to eliminate such points, as follows. If $p\in\pd\check M$ is an index 1 Neumann-type critical point of $\check f\vert_{\smash{\pd\check M}}$ then we may choose local coordinates $(x_1,\ldots,x_m)$ on $N$ near $p$ such that $p=(0,\ldots,0)$, and $\check M$ is locally given by $x_1\ge 0$, so that $\pd\check M$ is locally $x_1=0$ and $(x_2,\ldots,x_m)$ are local coordinates on $\pd\check M$, and near 0 in $\R^m$ we have
\begin{equation*}
\check f(x_1,\ldots,x_m)=f(p)+x_1-x_2^2+x_3^2+\cdots+x_m^2.
\end{equation*}

We make a $C^0$-small perturbation of $\check f$ near $p$, so that near 0 it becomes
\begin{equation*}
\check f(x_1,\ldots,x_m)=f(p)+x_1\frac{x_1^2+\cdots+x_m^2-\ep^2}{x_1^2+\cdots+x_m^2+\ep^2}-x_2^2+x_3^2+\cdots+x_m^2,
\end{equation*}
for small $\ep>0$. After the perturbation, $\check f\vert_{\pd\check M}$ now has a {\it Dirichlet-type\/} critical point of index 1 at $p$, and two new Morse critical points, one of index 1 in $\check M^\ci$ and one of index 2 in $M\sm\check M$, both at distance $O(\ep)$ from $p$. As $m\ge 3$, this does not introduce any new critical points of index $0,m$. Since $p$ is not a maximum or minimum of $f\vert_{\check M}$, by making $\ep$ small we can still suppose $\check f\vert_{\smash{\check M}}$ attains its maximum and minimum only on $\pd\check M$. In this way we eliminate all index 1 Neumann-type critical point of~$\check f\vert_{\smash{\pd\check M}}$. 

We now use a version of Morse theory for compact manifolds with boundary due to Laudenbach \cite{Laud}, for the Morse function $\check f\vert_{\check M}:\check M\ra\R$. A good reference for Morse theory on manifolds without boundary is Schwarz \cite{Schw}. For each $i=0,1,\ldots,m,$ let $C_i$ be the vector space generated (over $\Z_2$ in our application) by the critical points of index $i$ for $\check f:\check M^\ci\ra\R$, and the {\it Neumann-type} critical points of index $i$ for $\check f\vert_{\smash{\pd\check M}}$ only. 

As in Laudenbach \cite[\S 2.1]{Laud}, we may choose a gradient-like vector field $v$ on $\check M$ satisfying certain properties, and define a boundary operator $\pd:C_i\ra C_{i-1}$ by counting the $v$-trajectories from generators $p$ of $C_i$ to generators $q$ of $C_{i-1}$. One can prove that $\pd^2=0$, so that $(C_*,\pd)$ is a complex, whose homology is isomorphic to $H_*(\check M;\Z_2)$. We have $H_0(\check M;\Z_2)\cong \Z_2$ as $\check M$ is connected, and the map $C_0\ra H_0(C_*,\pd)\cong H_0(\check M;\Z_2)\cong \Z_2$ is~$\sum_pc_p\,p\mapsto\sum_pc_p$.

Let $\check f\vert_{\pd\check M}$ attain its global minimum at $p\in\pd\check M$. Then $p$ is an index 0 critical point of $\check f\vert_{\pd\check M}$, and it is of Neumann-type since $\check f(p)$ is the minimum value of $\check f$ on $\check M$, so that $w\cdot\d f\vert_p\ge 0$ for $w\in T_pN$ inward-pointing. Thus $p$ is a generator of $C_0$. Suppose $\check f$ has a critical point $q$ of index $0$ in $\check M^\ci$, so that $q$ is also a generator of $C_0$. Since $H_0(C_*,\pd)\cong H_0(\check M;\Z_2)\cong \Z_2$, we see that $0\ne [p]=[q]\in H_0(C_*,\pd)$, so $p-q=\pd \al$ for some $\al\in C_1$. Hence there exists an index 1 critical point $r$ and a gradient trajectory $\ga$ from $r$ to~$q$.

As we have ensured there are no index 1 Neumann-type critical points in $\pd\check M$, $r$ must lie in $\check M^\ci$. Therefore by a theorem of Milnor \cite[Th.~5.4]{Miln}, we can modify $\check f$ in $\check M^\ci$ to cancel the critical points $q$ and $r$. By repeating this we can cancel all the critical points of index $0$ in $\check M^\ci$. To cancel all the critical points of index $m$, we apply the same procedure to $-\check f$. Thus, we can construct a smooth $\hat f:N\ra\R$ such that $\hat f-f$ is supported in $M^\ci$, and $\hat f\vert_{M^\ci}$ is Morse with no critical points of index 0 or $m$, as we have to prove.
\end{proof}

\medskip

\noindent Address for Yohsuke Imagi:

\noindent Department of Mathematics, Kyoto University, Kyoto
606-8502, Japan.

\noindent E-mail: {\tt imagi@math.kyoto-u.ac.jp}.
\medskip

\noindent Address for Dominic Joyce:

\noindent The Mathematical Institute, Radcliffe Observatory Quarter,
Woodstock Road, Oxford, OX2 6GG, U.K.

\noindent E-mail: {\tt joyce@maths.ox.ac.uk.}
\medskip

\noindent Address for Joana Oliveira dos Santos:

\noindent Department of Mechanical Engineering and Mathematical Sciences,
Faculty of Technology, Design and Environment,
Oxford Brookes University,
Oxford, 

\noindent OX33 1HX, U.K.

\noindent E-mail: {\tt joliveira-dos-santos-amorim@brookes.ac.uk.}


\begin{thebibliography}{99}
\addcontentsline{toc}{section}{References}

\bibitem{AbSe} M. Abouzaid and P. Seidel, {\it `An open string
analogue of Viterbo functoriality'}, Geom. Topol. 14 (2010),
627--718. arXiv:0712.3177.

\bibitem{AbSm} M. Abouzaid and I. Smith, {\it `Exact Lagrangians in plumbings'}, Geom. Funct. Anal. 22 (2012), 785--831. arXiv:1107.0129.

\bibitem{Floe} A. Floer, {\it `Morse theory for Lagrangian
intersections'}, J. Diff. Geom. 28 (1988), 513--547.

\bibitem{Fuka1} K. Fukaya, {\it `Floer homology and mirror symmetry. II'}, Adv. Stud. Pure Math. 34 (2002), 31--127.

\bibitem{Fuka2} K. Fukaya, {\it `Floer homology of Lagrangian
submanifolds'}, Sugaku Expositions 26 (2013), 99--127. arXiv:1106.4882.

\bibitem{FOOO} K. Fukaya, Y.-G. Oh, H. Ohta and K. Ono,
{\it `Lagrangian intersection Floer theory --- anomaly and
obstruction'}, Parts I \& II. AMS/IP Studies in Advanced Mathematics,
46.1 \& 46.2, A.M.S./International Press, 2009.

\bibitem{FSS} K. Fukaya, P. Seidel and I. Smith, {\it `The
symplectic geometry of cotangent bundles from a categorical
viewpoint'}, in {\it `Homological mirror symmetry'} Lecture Notes in
Phys. 757 (2009), 1--26. arXiv:0705.3450.

\bibitem{Harv} R. Harvey, {\it `Spinors and calibrations'}, Academic Press,
San Diego, 1990.

\bibitem{HaLa} R. Harvey and H.B. Lawson, {\it `Calibrated
geometries'}, Acta Math. 148 (1982), 47--157.

\bibitem{Hiro} H. Hironaka, {\it Resolution of singularities of an algebraic variety over a field of characteristic zero. I}, Ann. Math. 79 (1964), 109--203.

\bibitem{Imag} Y. Imagi, {\it `Surjectivity of a gluing for stable $T^2$-cones in special Lagrangian geometry'}, arXiv:1112.4309, 2011.

\bibitem{Joyc1} D. Joyce, {\it `Constructing special Lagrangian
$m$-folds in $\C^m$ by evolving quadrics'}, Math. Ann. 320 (2001),
757--797. math.DG/0008155.

\bibitem{Joyc2} D. Joyce, {\it `Special Lagrangian submanifolds
with isolated conical singularities I. Regularity'}, Ann. Global
Anal. Geom. 25 (2003), 201--258. math.DG/0211294.

\bibitem{Joyc3} D. Joyce, {\it `Special Lagrangian submanifolds
with conical singularities. II. Moduli spaces'}, Ann. Global Anal.
Geom. 25 (2004), 301--352. \hfil\break math.DG/0211295.

\bibitem{Joyc4} D. Joyce, {\it `Special Lagrangian submanifolds with
isolated conical singularities. III. Desingularization, the unobstructed
case'}, Ann. Global Anal. Geom. 26 (2004), 1--58. math.DG/0302355.

\bibitem{Joyc5} D. Joyce, {\it `Special Lagrangian submanifolds with
isolated conical singularities. IV. Desingularization, obstructions
and families'},  Ann. Global Anal. Geom. 26 (2004), 117--174. math.DG/0302356.

\bibitem{Joyc6} D. Joyce, {\it `Special Lagrangian submanifolds
with isolated conical singularities V. Survey and applications'}, J.
Diff. Geom. 63 (2003), 279--347. math.DG/0303272.

\bibitem{Joyc7} D. Joyce, {\it `Riemannian holonomy groups and
calibrated geometry'}, Oxford Graduate Texts in Mathematics 12,
Oxford University Press, 2007.

\bibitem{Joyc8} D. Joyce, {\it `Conjectures on Bridgeland stability for Fukaya categories of Calabi--Yau manifolds, special Lagrangians, and Lagrangian mean curvature flow'}, to appear in EMS Surveys in Mathematical Sciences, 2015. arXiv:1401.4949.

\bibitem{JLT} D. Joyce, Y.-I. Lee and M.-P. Tsui, {\it `Self-similar
solutions and translating solitons for Lagrangian mean
curvature flow'}, J. Diff. Geom. 84 (2010), 127--161. arXiv:0801.3721.

\bibitem{Kont} M. Kontsevich, {\it `Homological algebra of mirror
symmetry'}, Proc. ICM Z\"urich 1994, vol. I, pages 120--139, Birkh\"auser, 1995. alg-geom/9411018.

\bibitem{Laud} F. Laudenbach, {\it `A Morse complex on manifolds
with boundary'}, Geom. Dedicata 153 (2011), 47--57. arXiv:1003.5077.

\bibitem{Lawl} G. Lawlor, {\it `The angle criterion'}, Invent. Math.
95 (1989), 437--446.

\bibitem{Lock} R. Lockhart, {\it `Fredholm, Hodge and Liouville Theorems
on noncompact manifolds'}, Trans. A.M.S. 301 (1987), 1--35.

\bibitem{LoNe} J.D. Lotay and A. Neves, {\it `Uniqueness of
Lagrangian self-expanders'}, Geometry and Topology 17 (2013), 2689--2729 arXiv:1208.2729.

\bibitem{Mars} S.P. Marshall, {\it `Deformations of special Lagrangian
submanifolds'}, Oxford D.Phil. thesis, 2002.

\bibitem{Miln} J. Milnor, {\it `Lectures on the h-cobordism
theorem'}, notes by L. Siebenmann and J. Sondow, Princeton
University Press, 1965.

\bibitem{McLe} R.C. McLean, {\it `Deformations of calibrated
submanifolds'}, Comm. Anal. Geom. 6 (1998), 705--747.

\bibitem{Neve} A. Neves, {\it `Recent progress on singularities of
Lagrangian Mean Curvature Flow'}, pages 413--418 in {\it `Surveys in
geometric analysis and relativity'}, Adv. Lect. Math. 20, Int. Press,
Somerville, MA, 2011. arXiv:1012.2055.

\bibitem{McSa} D. McDuff and D. Salamon, {\it `Introduction to
symplectic topology'}, Oxford University Press, 1995.

\bibitem{Paci} T. Pacini, {\it `Deformations of Asymptotically Conical special Lagrangian submanifolds'},  Pacific J. Math. 215 (2004), 151--181.  math.DG/0207144.

\bibitem{Schw} M. Schwarz, {\it `Morse homology'}, Progr. Math. 111, Birkh\"auser, Basel, 1993.

\bibitem{Seid1} P. Seidel, {\it `Graded Lagrangian submanifolds'},
Bull. Soc. Math. France 128 (2000), 103--148. math.SG/9903049.

\bibitem{Seid2} P. Seidel, {\it `Fukaya categories and
Picard--Lefschetz theory'}, EMS, 2008.

\bibitem{Smoc1} K. Smoczyk, {\it `A canonical way to deform a
Lagrangian submanifold'}, dg-ga/9605005, 1996.

\bibitem{ThYa} R.P. Thomas and S.-T. Yau, {\it `Special
Lagrangians, stable bundles and mean curvature flow'}, Comm. Anal.
Geom. 10 (2002), 1075--1113. \hfil\break math.DG/0104197.

\bibitem{Wein} A. Weinstein, {\it `Symplectic manifolds and their
Lagrangian submanifolds'}, Adv. in Math. 6 (1971) 329--346.

\end{thebibliography}
\end{document}